\documentclass[11pt]{amsart}
\pdfoutput=1

\usepackage[letterpaper,margin=1in]{geometry}

\usepackage[dvipsnames]{xcolor}
\usepackage{amssymb, mathrsfs}
\usepackage{mathtools}
\usepackage{scalerel}
\usepackage{stackrel}
\usepackage{bbm}
\usepackage[square]{natbib}
\usepackage[citecolor=PineGreen,colorlinks=true,linkcolor=RoyalBlue,urlcolor=BrickRed,pdfstartview=FitV,pagebackref=false]{hyperref}
\usepackage[export]{adjustbox}
\usepackage{caption}
\usepackage{subcaption}
\usepackage{esint}

\newtheorem{proposition}{Proposition}
\newtheorem{theorem}[proposition]{Theorem}
\newtheorem{lemma}[proposition]{Lemma}
\newtheorem{corollary}[proposition]{Corollary}

\theoremstyle{remark}

\theoremstyle{definition}

\newtheorem{assumption}[proposition]{Assumption}

\numberwithin{equation}{section}
\numberwithin{proposition}{section}
\numberwithin{table}{section}

\newcommand{\E}{\mathbb{E}}
\renewcommand{\a}{\mathbf{a}}
\renewcommand{\b}{\mathbf{b}}
\newcommand{\R}{\mathbb{R}}
\newcommand{\N}{\mathbb{N}}
\newcommand{\Q}{\mathbb{Q}}
\newcommand{\ZZ}{\mathbb{Z}}

\renewcommand{\tilde}{\widetilde}
\renewcommand{\epsilon}{\varepsilon}
\newcommand{\ahom}{\bar{\mathbf{a}}}
\newcommand{\cu}{\square}
\newcommand{\X}{\mathcal{X}}

\makeatletter
\newif\ifv@
\newif\ifh@
\newcommand{\negphantom}{\v@true\h@true\negph@nt}
\newcommand{\neghphantom}{\v@false\h@true\negph@nt}
\newcommand{\negph@nt}{\ifmmode\expandafter\mathpalette
	\expandafter\mathnegph@nt\else\expandafter\makenegph@nt\fi}
\newcommand{\makenegph@nt}[1]{%
	\setbox\z@\hbox{\color@begingroup#1\color@endgroup}\finnegph@nt}
\newcommand{\finnegph@nt}{%
	\setbox\tw@\null
	\ifv@ \ht\tw@\ht\z@\dp\tw@\dp\z@\fi
	\ifh@\wd\tw@-\wd\z@\fi\box\tw@}
\newcommand{\mathnegph@nt}[2]{%
	\setbox\z@\hbox{$\m@th #1{#2}$}\finnegph@nt}
\makeatother

\newcommand{\Hminusuls}[1]{\hat{\phantom{H}}\negphantom{H}\underline{H}^{#1}}
\newcommand{\Wminusul}[2]{\hat{\phantom{W}}\negphantom{W}\underline{W}^{#1,#2}}

\def\Xint#1{\mathchoice
{\XXint\displaystyle\textstyle{#1}}%
{\XXint\textstyle\scriptstyle{#1}}%
{\XXint\scriptstyle\scriptscriptstyle{#1}}%
{\XXint\scriptscriptstyle\scriptscriptstyle{#1}}%
\!\int}
\def\XXint#1#2#3{{\setbox0=\hbox{$#1{#2#3}{\int}$}
\vcenter{\hbox{$#2#3$}}\kern-.5\wd0}}

\def\fint{\Xint-}

\title[Quantitative Einstein relation]{Quantitative Einstein relation for reversible diffusions in a random environment}

\author{Ahmed Bou-Rabee}
\address{Department of Mathematics, University of Pennsylvania, Philadelphia, PA 19104}
\email{ahmedb@sas.upenn.edu}

\author{Ruizhe Xu}
\address{School of Mathematical Sciences, Zhejiang University, Hangzhou, China}
\email{12135028@zju.edu.cn}

\begin{document}

\begin{abstract}
The Einstein relation describes the response of a diffusing particle to a small constant external force. It states that, as the force tends to zero, the ratio of the limiting velocity to the force magnitude converges to the diffusivity matrix of the unforced particle, evaluated in the force direction.  \citet*{Einstein2012gantert} proved this identity for reversible diffusions in random environments. We prove a quantitative version, with an explicit quenched algebraic rate.
\end{abstract}

\maketitle

\section{Introduction}\label{sec.intro}

A particle diffusing in a disordered medium, subjected to a small constant force, eventually drifts at an asymptotic velocity. To leading order, this velocity is a linear function of the force; the proportionality matrix is called the \emph{mobility}. The \emph{Einstein relation} predicts that the mobility equals the diffusivity matrix of the unforced particle. The relation admits several mathematical formulations, depending on the order in which the small-force and long-time limits are taken (see Section~\ref{ss.intro-context} for further discussion). We work with the formulation of \citet*{Einstein2012gantert} for reversible diffusions in a random environment, and we prove a quantitative relation, showing that the mobility error is bounded by an explicit power of the force magnitude (Theorem~\ref{t.main}).

\begin{figure}[hb]
	\centering
	\includegraphics[width=0.9\textwidth]{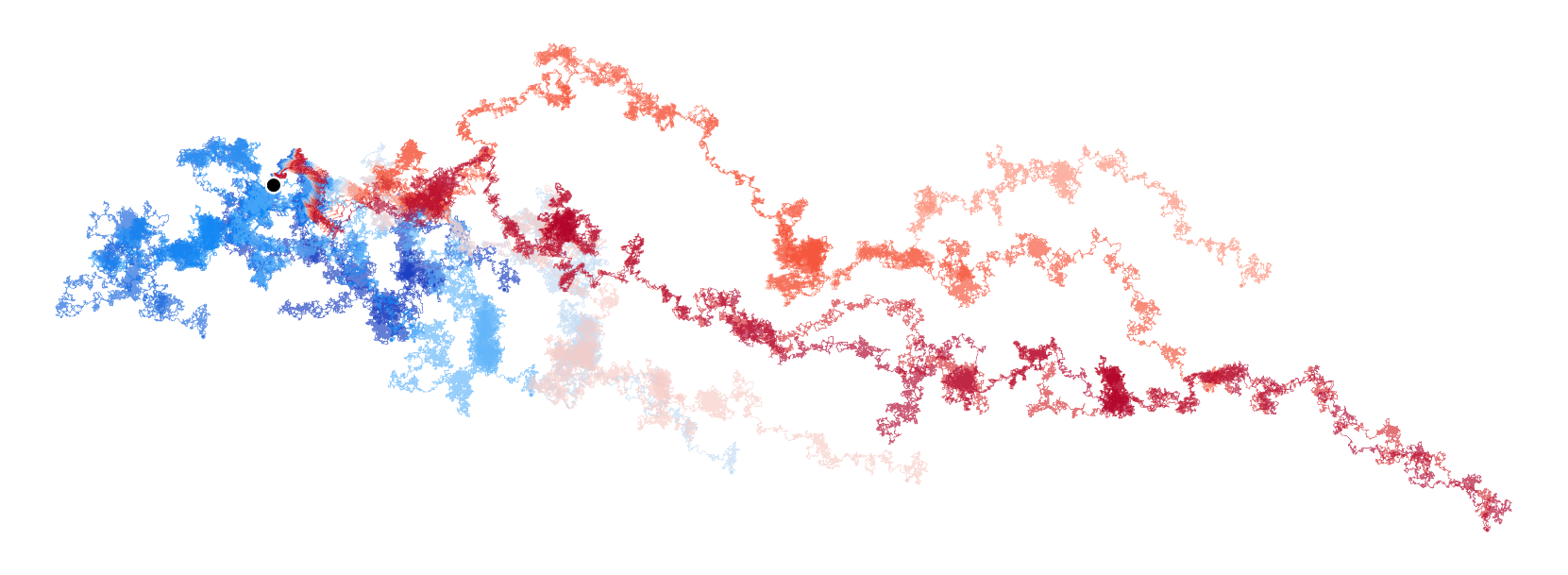}
	\caption{Sample paths of Brownian motions with drift~$\lambda e_1$, started from a common origin (black dot), with $\lambda$ varying continuously from $\lambda=0$ (blue) to a small positive value (red).}
	\label{fig.einstein-illustration}
\end{figure}

\subsection{Setting}\label{ss.intro-setting}

Let $\a$ be a uniformly elliptic, Lipschitz random matrix field on $\R^d$ and let $V$ be a bounded, Lipschitz random potential. Both are stationary with finite range of dependence; precise hypotheses are stated in Assumption~\ref{a.environment} below. Let $X$ be the reversible Markov diffusion on $\R^d$ with generator
\[
	\mathcal Lf=\tfrac12 e^{2V}\nabla\cdot\bigl(e^{-2V}\a\nabla f\bigr)\, .
\]
Under Assumption~\ref{a.environment}, $X$ satisfies the invariance principle of \citet*{DeMasiFerrariGoldsteinWick1989} (see also \citealt{KipnisVaradhan1986,varadhan1982,kozlov1985}): for $\Q$-a.e.\ environment, 
\[
	\epsilon X(\epsilon^{-2}t) \xrightarrow[\epsilon\downarrow 0]{} W_t\quad\text{in law},\qquad \E[W_t\otimes W_t]=\Sigma_X t\, ,
\]
for a deterministic effective diffusivity $\Sigma_X$.

Applying a constant force of magnitude $\lambda\in(0,1]$ in the direction $e_1=(1,0,\ldots,0)$ adds a drift term to the generator:
\begin{equation}\label{eq.intro-generator-tilted}
	\mathcal L^\lambda f\coloneqq \mathcal Lf+\lambda \a e_1\cdot\nabla f\, .
\end{equation}
The resulting biased diffusion $X^\lambda$, started at the origin, satisfies a quenched law of large numbers \citep*{ballistic2003shen,Einstein2012gantert}: for almost every environment,
\[
	\frac{X^\lambda(t)}{t}\xrightarrow[t\to\infty]{}\ell_X(\lambda)\quad\text{almost surely}\, ,
\]
for a deterministic effective velocity $\ell_X(\lambda)$. The choice of direction~$e_1$ is without loss of generality. The Einstein relation predicts
\begin{equation}\label{eq.intro-einstein}
	\lim_{\lambda\downarrow 0}\frac{\ell_X(\lambda)}{\lambda}=\Sigma_X e_1\, .
\end{equation}
\citet*{Einstein2012gantert} established \eqref{eq.intro-einstein} as a qualitative limit.

\subsection{Main result}\label{ss.intro-main}
Throughout, we assume:

\begin{assumption}\label{a.environment}
	Let $(\Omega,\mathcal F,\Q)$ be a probability space, let $\Lambda\ge 1$, and let $r_0\ge 1$ be an integer. Let $(\a,V)$ be a measurable random field on $\Omega\times\R^d$, with $\a\colon\Omega\times\R^d\to\R^{d\times d}_{\mathrm{sym},+}$ and $V\colon\Omega\times\R^d\to\R$. Suppose that $(\a,V)$ satisfies, for every $\omega\in\Omega$:
	\begin{enumerate}
		\item[(S1)] (\emph{Uniform ellipticity.}) $\Lambda^{-1}|\xi|^2\le\xi\cdot\a(x)\xi\le\Lambda|\xi|^2$ for every $x,\xi\in\R^d$.
		\item[(S2)] (\emph{Bounded potential.}) $|V(x)|\le\Lambda$ for every $x\in\R^d$.
		\item[(S3)] (\emph{Lipschitz regularity.}) Both $\a$ and $V$ are $\Lambda$-Lipschitz on $\R^d$, with $\a$ measured in the operator norm.
	\end{enumerate}
	In addition, the law of $(\a,V)$ under $\Q$ satisfies:
	\begin{enumerate}
		\item[(S4)] (\emph{Stationarity.}) For every $z\in\R^d$, $(\a(\cdot+z),V(\cdot+z))$ and $(\a,V)$ have the same joint law.
		\item[(S5)] (\emph{Finite range of dependence.}) For every pair of Borel sets $U_1,U_2\subseteq\R^d$ with $\mathrm{dist}(U_1,U_2)>r_0$, the $\sigma$-algebras of $(\a,V)|_{U_1}$ and $(\a,V)|_{U_2}$ are $\Q$-independent.
	\end{enumerate}
\end{assumption}

All deterministic constants will depend only on $(d,\Lambda,r_0)$ unless explicitly specified.

\begin{theorem}\label{t.main}
	Under Assumption~\ref{a.environment} with $d\ge 2$, $\Q$-almost surely,
	\begin{equation}\label{eq.intro-rate}
		\Bigl|\frac{\ell_X(\lambda)}{\lambda}-\Sigma_X e_1\Bigr|
		\le \lambda^{\beta^*}
		\qquad\text{for every }0<\lambda\le\lambda_0(\omega)\, ,
	\end{equation}
	where $\beta^*=\beta^*(d)\in(0,1/4)$ is a deterministic exponent and $\lambda_0\colon\Omega\to(0,1]$ is a random scale satisfying the tail bound
	\begin{equation}\label{eq.intro-rate-tail}
		\Q\bigl(\lambda_0\le\lambda\bigr)\le C\exp\bigl(-c (\log(1/\lambda))^\sigma\bigr)\qquad\text{for every }\lambda\in(0,1]\, ,
	\end{equation}
	with $C=C(d,\Lambda,r_0)<\infty$, $c=c(d,\Lambda,r_0)>0$, and an exponent $\sigma=\sigma(d,\Lambda,r_0)\in(0,1)$.
\end{theorem}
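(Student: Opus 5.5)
The plan is to follow the regeneration/renewal structure of \citet*{Einstein2012gantert} and make each qualitative step quantitative with homogenization tools for the divergence-form operator $\nabla\cdot(e^{-2V}\a\nabla)$. For the tilted process the measure $e^{2\lambda x_1-2V}\,dx$ is reversible, so that
\[
	\mathcal L^\lambda f=\tfrac12 e^{2V-2\lambda x_1}\nabla\cdot\bigl(e^{2\lambda x_1-2V}\a\nabla f\bigr).
\]
I will introduce an intermediate time scale $t=t(\lambda)=\lambda^{-2+\delta}$, or equivalently a spatial scale $L=\lambda^{-1+\delta/2}$, on which the bias is still perturbative, and decompose the error as
\[
	\frac{\ell_X(\lambda)}{\lambda}-\Sigma_Xe_1
	=\Bigl(\frac{\ell_X(\lambda)}{\lambda}-\frac{E^\lambda_\omega[X^\lambda(t)]}{\lambda t}\Bigr)
	+\Bigl(\frac{E^\lambda_\omega[X^\lambda(t)]}{\lambda t}-\frac{E_\omega[X(t)\,M_t]}{t}\Bigr)
	+\Bigl(\frac{E_\omega[X(t)\,M_t]}{t}-\Sigma_Xe_1\Bigr),
\]
where $M_t$ denotes the martingale in the Girsanov density $dP^\lambda/dP=\exp(\lambda M_t-\tfrac{\lambda^2}2\langle M\rangle_t)$, with $M_t=\int_0^t e_1\cdot\sigma\,dB$ and $\sigma\sigma^T=\a$.

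The middle and last terms are handled first.

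\textbf{Girsanov step.} Expanding the exponential gives the middle term as $O(\lambda^2 t)^{1/2}$ times moments of $|X(t)|/\sqrt t$. These moments are bounded uniformly by Aronson-type Gaussian heat kernel bounds, which hold deterministically under (S1)--(S3). This contributes an error of order $\lambda^{\delta/2}$.

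\textbf{Covariance step.} For the last term I use the corrector decomposition $X(t)=\chi(X(t))-\chi(0)+N_t$, where $N_t$ is a martingale. Then
\[
	E_\omega[X(t)M_t]=E_\omega[\langle N,M\rangle_t]+E_\omega[(\chi(X(t))-\chi(0))M_t].
\]
The first piece is $\int_0^t E_\omega[(e_i+\nabla\chi_i)\cdot\a e_1(X_s)]\,ds$, a time average of a stationary field along the unbiased path. I compare it to $t\,\Sigma_Xe_1$ via quantitative homogenization under finite range dependence (S5): sublinearity of the corrector with algebraic rate, $|\chi(x)|\le \mathcal X\,|x|^{1-\alpha}$ with $\mathcal X$ having stretched exponential moments, together with quantitative ergodicity of the environment seen from the particle, via variance decay of $\int_0^tP_s g\,ds$. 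The second piece is bounded by Cauchy--Schwarz by $t^{(1-\alpha)/2}\cdot t^{1/2}$, giving a relative error of $t^{-\alpha/2}$.

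\textbf{Ballistic step (the hardest).} The first term compares the asymptotic velocity with the mean displacement at the intermediate time. Here I follow the Gantert--Mathieu--Piatnitski idea: use regeneration times, or alternatively the periodization/invariant measure approach, to show
\[
	\ell_X(\lambda)=\lim_{n\to\infty}\frac{E[X^\lambda(nt)]}{nt},
\]
and that the increments over successive blocks of length $t$ are almost stationary. The plan is to construct an invariant measure for the environment seen from the biased particle, with density $\rho_\lambda$ relative to $\Q$ satisfying $\|\rho_\lambda-1\|\lesssim\lambda^{\gamma}$ on scale $L$. I would obtain this by solving the adjoint equation on boxes of size $\lambda^{-1}$ and using the quantitative homogenization estimates to control the drift-induced tilt. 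Then
\[
	\ell_X(\lambda)=E_{\rho_\lambda\Q}[\text{local drift}],
\]
and the difference from the time-$t$ average is bounded by the distance of $\rho_\lambda$ from $1$ plus mixing over time $t$. This is the main obstacle: a uniform in $\lambda$ quantitative bound on $\rho_\lambda$ (or on regeneration tails at scale $\lambda^{-1}$), where the bias and the homogenization scale compete. I would try first to obtain it from Harnack plus the large scale regularity of $\mathcal L^\lambda$-harmonic functions below scale $\lambda^{-1}$.

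\textbf{Quenched statement.} Each estimate holds on an event where random minimal scales $\mathcal X$ are below $\lambda^{-\epsilon}$. Applying Borel--Cantelli over dyadic $\lambda=2^{-k}$ and interpolating in $\lambda$ by continuity of $\ell_X$ yields $\lambda_0(\omega)$ with the tail bound \eqref{eq.intro-rate-tail}. Finally, $\beta^*$ is chosen by optimizing $\delta$ against $\alpha,\gamma$; this only gives a small exponent, below $1/4$.
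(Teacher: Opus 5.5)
There is a genuine gap, and it is in the step you flag as hardest. With $t=\lambda^{-2+\delta}$, your Girsanov and covariance steps are designed to show $E^\lambda_\omega[X^\lambda(t)]/(\lambda t)=\Sigma_Xe_1+O(\lambda^{c})$. So the ``ballistic step'' $\ell_X(\lambda)/\lambda-E^\lambda_\omega[X^\lambda(t)]/(\lambda t)$ is equivalent to the theorem itself: the decomposition moves the difficulty without reducing it.

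Regeneration cannot close this step. Blocks have duration $\asymp\lambda^{-2}$ and displacement $\asymp\lambda^{-1}$, so the best available bound is $E|X^\lambda(t)-\ell(\lambda)t|\lesssim\sqrt t$ for $t\ge\lambda^{-2}$. That is an error $(\lambda\sqrt t)^{-1}$ in $\ell/\lambda$, which is small only when $\lambda^2t\gg1$. Your linear-response expansion, by contrast, needs $\lambda^2t\ll1$. The two windows do not overlap, so some argument must cross the critical scale $t\asymp\lambda^{-2}$, where the bias is of order one.

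Your proposed crossing is an invariant density $\rho_\lambda$ for the environment seen from the biased particle, with $\|\rho_\lambda-1\|\lesssim\lambda^\gamma$ uniformly in $\lambda$, plus relaxation from the initial environment over time $t$. This is not a routine input; it is open. The steady-state results of Mathieu--Piatnitski and Gantert--Guo--Nagel are qualitative. The first-order expansion of the steady state is known only when a stationary corrector exists ($d\ge3$), whereas the theorem covers $d=2$. The estimate would also have to control $\lambda^{-1}$ times the pairing of $\rho_\lambda-1$ with the order-one unbiased drift, since that pairing is what turns an average of $\a e_1$ into $\Sigma_Xe_1$. ``Harnack plus large-scale regularity below scale $\lambda^{-1}$'' gives no evident handle on this, because the relevant deviation is generated at and beyond the critical scale, where the tilt is non-perturbative.

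The idea you are missing is to place the comparison \emph{above} the critical scale and treat the non-perturbative tilt by PDE homogenization rather than by Girsanov. The paper works with the time-changed process and solves the resolvent equation \eqref{eq.resolvent-equation} on $\cu_{m+h}$, with $3^m=\rho/\lambda$ and $\rho\asymp\lambda^{-\alpha/(1-\alpha)}\to\infty$ slowly. By Feynman--Kac, $3^{-m}u_m(0)$ is an exponentially weighted time average with mean time $\rho\lambda^{-2}\gg\lambda^{-2}$. The uniform-in-$\lambda$ regeneration law of large numbers (Corollary~\ref{cor.uniform-renewal}) therefore compares it with $\ell(\lambda)/\lambda$ at cost $\rho^{-1/2+\theta}$. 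The deviation $3^{-m}u_m(0)-\ahom e_1$ is then controlled in two moves:
\begin{itemize}
\item an energy estimate for the tilted operator, in which, after subtracting finite-volume correctors, the $\ahom$-part of the drift integral vanishes and the $(\a-\ahom)$-part is absorbed by coarse-graining (Lemma~\ref{lem.drift-bound});
\item an interior $L^\infty$ bound on a subcube where $e^{2\lambda x_1}$ stays bounded.
\end{itemize}
No steady state is needed.

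Two smaller points. Your Girsanov step omits the zeroth-order term $E_\omega[X(t)]/(\lambda t)$. Moment bounds only give $(\lambda\sqrt t)^{-1}=\lambda^{-\delta/2}$ for it, so it needs corrector sublinearity and forces $\delta$ to be small relative to your sublinearity exponent. Also, interpolating between dyadic values of $\lambda$ requires a quantitative modulus for $\ell_X$, not just continuity; the paper obtains a H\"older modulus from Girsanov and the renewal bounds.
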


The exponent $\beta^*$ is not expected to be optimal: we conjecture the sharp rate is $\lambda$ in $d\ge 3$ and $\lambda\sqrt{\log(1/\lambda)}$ in $d=2$; see Subsection~\ref{ss.intro-sharpness} for further discussion. The one-dimensional setting has a separate literature with sharper, often explicit, results; see \citet{LamDepauw2016} and \citet{FaggionatoSalvi2019} for reversible one-dimensional random walks, and \citet*{FaggionatoGantertSalvi2019} and \citet*{GantertMeinersMueller2019} for related one-dimensional models.

\subsection{Proof strategy}\label{ss.intro-strategy}

The starting point is a balance of scales. Under a force of magnitude $\lambda$, the ballistic displacement $\lambda t$ and the diffusive fluctuation $\sqrt t$ are comparable precisely when $t\asymp\lambda^{-2}$, that is, on spatial scales of order $\lambda^{-1}$. This scale is called the \emph{critical length scale} and is the scale at which the proof operates. 

Our strategy is as follows. First, a time change absorbs the potential into the divergence-form coefficient, which makes the tilted operator amenable to homogenization. Second, on a cube of side slightly larger than $\lambda^{-1}$, we solve a resolvent equation whose solution, via the Feynman--Kac formula, encodes the mobility error as an exponentially weighted average of the path. Third, we expand this average into four contributions --- short-time, long-time, exit, and homogenization --- and estimate each separately. The short-time and exit terms are controlled directly. The long-time term is controlled by a law of large numbers uniform in $\lambda$ (Section~\ref{sec.renewal}), and the homogenization term by a quantitative estimate for the tilted resolvent (Section~\ref{sec.tilted-resolvent}). Optimizing the free parameters in the resulting bound yields the algebraic rate of Theorem~\ref{t.main}.

\subsubsection{Time-change reduction to divergence form}

	Multiplying $\mathcal L^\lambda$ by $e^{-2V}$ moves the potential into the divergence-form coefficient, leaving a stationary coefficient field $e^{-2V}\a$ and an explicit first-order tilted drift:
\[
	e^{-2V}\mathcal L^\lambda f  =  \tfrac12\nabla\cdot(e^{-2V}\a\nabla f)  +  \lambda e^{-2V}\a e_1\cdot\nabla f \, .
\]
The diffusion $Y^\lambda$ generated by $e^{-2V}\mathcal L^\lambda$ is a time change of $X^\lambda$. Set
\[
	A^\lambda(s) \coloneqq \int_0^s e^{-2V(Y^\lambda(u))} du \, .
\]
The quantity $A^\lambda(s)$ is the elapsed $X^\lambda$-clock at $Y^\lambda$-time $s$. The two processes are related pathwise by
\begin{equation}\label{eq.intro-time-change}
	X^\lambda(t) = Y^\lambda\circ(A^\lambda)^{-1}(t) \, . 
\end{equation}

\smallskip

Let $\ell(\lambda)\coloneqq\lim_{t\to\infty}Y^\lambda(t)/t$ and $\eta(\lambda)\coloneqq\lim_{t\to\infty}A^\lambda(t)/t$ be the asymptotic velocity and clock rate of $Y^\lambda$. Let $\ahom$ be the homogenized matrix of $-\nabla\cdot(e^{-2V}\a\nabla)$, the analogue of $\Sigma_X$ for $Y^0$. Combining \eqref{eq.intro-time-change} with these limits gives
\begin{equation}\label{eq.intro-time-change-identities}
	\ell_X(\lambda)  =  \eta(\lambda)^{-1} \ell(\lambda), \qquad \Sigma_X  =  \E[e^{-2V(0)}]^{-1} \ahom\, ,
\end{equation}
so Theorem~\ref{t.main} reduces to two intermediate estimates:
\begin{equation}\label{eq.intro-mobility-rate}
	\Bigl|\frac{\ell(\lambda)}{\lambda}-\ahom e_1\Bigr|  \le  C\lambda^{\beta^*}
\end{equation}
and
\[
	|\eta(\lambda)-\E[e^{-2V(0)}]|  \le   C\lambda^{\beta^*}\, .
\]

\subsubsection{Feynman--Kac decomposition at the critical scale}

We describe the mobility case~\eqref{eq.intro-mobility-rate}; the estimate of the other error term is similar. Choose an integer $m$ at the critical length scale, so that $3^m=\rho/\lambda$, with $\rho\ge 1$ a free parameter.

\smallskip

Fix a buffer~$h \geq 0$. On the triadic cube $\cu_{m+h}$ of side $3^{m+h}$, let $u_m\colon\cu_{m+h}\to\R^d$ solve the velocity resolvent equation
\begin{equation}\label{eq.intro-resolvent}
	(\lambda/3^m) u_m - e^{-2V}\mathcal L^\lambda u_m = \mathbf b \quad\text{in }\cu_{m+h}, \qquad u_m = 3^m \ahom e_1 \quad\text{on }\partial\cu_{m+h}\, ,
\end{equation}
	where $\mathbf b\coloneqq\tfrac12\nabla\cdot(e^{-2V}\a)+\lambda e^{-2V}\a e_1$ is the It\^o drift of $Y^\lambda$. Replacing $e^{-2V}\a$ by $\ahom$ in \eqref{eq.intro-resolvent} gives the homogenized tilted equation. Since $\ahom$ is constant, the drift term $\lambda\ahom e_1\cdot\nabla\bar u$ and the divergence $\tfrac12\nabla\cdot(\ahom\nabla\bar u)$ both vanish for constant $\bar u$, so $\bar u\equiv 3^m\ahom e_1$ solves
	\[
		(\lambda/3^m)\bar u=\lambda\ahom e_1
		\qquad\text{in }\cu_{m+h},\qquad \bar u=3^m\ahom e_1\quad\text{on }\partial\cu_{m+h}\, .
	\]
	The deviation $3^{-m}u_m(0)-\ahom e_1$ measures the homogenization error at scale $3^m$.

\smallskip

	Let $\tau_{m,h}\coloneqq\inf\{t\ge 0:Y^\lambda(t)\notin\cu_{m+h}\}$ be the exit time. The Feynman--Kac formula for $Y^\lambda$ started at the origin gives
\begin{equation}\label{eq.intro-FK}
	u_m(0)  =  \frac{\lambda}{3^m} E_0^{\lambda,\omega} \biggl[\int_0^{\tau_{m,h}}   e^{-(\lambda/3^m)t} Y^\lambda(t) dt\biggr]  +  E_0^{\lambda,\omega} \Bigl[e^{-(\lambda/3^m)\tau_{m,h}}\bigl(Y^\lambda(\tau_{m,h})+3^m \ahom e_1\bigr)\Bigr]\, .
\end{equation}
At $3^m\sim\lambda^{-1}$, the integral concentrates on the diffusive timescale $t\sim\lambda^{-2}$, and substituting the law-of-large-numbers approximation $Y^\lambda(t)\approx\ell(\lambda)t$ gives $u_m(0)/3^m\approx\ell(\lambda)/\lambda$. The mobility error then decomposes as
\begin{equation}\label{eq.intro-velocity-decomp}
	\frac{\ell(\lambda)}{\lambda}-\ahom e_1
	 = \underbrace{\Bigl(\frac{\ell(\lambda)}{\lambda}-\frac{u_m(0)}{3^m}\Bigr)}_{\text{time-average error}}
	 + \underbrace{\Bigl(\frac{u_m(0)}{3^m}-\ahom e_1\Bigr)}_{\text{homogenization error}}\, .
\end{equation}

\subsubsection{The four error contributions}

The time-average error in~\eqref{eq.intro-velocity-decomp} splits into three terms: a short-time term where the law-of-large-numbers approximation $Y^\lambda(t)\approx\ell(\lambda)t$ fails; a long-time term controlled by the uniform-in-$\lambda$ quenched LLN of Corollary~\ref{cor.uniform-renewal}; and a boundary term of order $e^{-c 3^h}$ controlled by an exit-time bound (Proposition~\ref{pro.exit}). The homogenization error in~\eqref{eq.intro-velocity-decomp} is the fourth, controlled by a resolvent homogenization estimate (Proposition~\ref{pro.velocity-resolvent-infty}). With $\rho\coloneqq\lambda 3^m\ge 1$, these combine to give
\[
	\Bigl|\frac{\ell(\lambda)}{\lambda}-\ahom e_1\Bigr|
	 \le \underbrace{C\rho^{-2}\lambda^{-2\epsilon}}_{\text{short-time}}
	+\underbrace{C\rho^{-1/2+\theta}}_{\text{LLN rate}}
	+\underbrace{Ce^{-c 3^h}}_{\text{boundary}}
	+\underbrace{C\rho^{1/2} 3^{(d/2+1-\beta_h)h-\delta m}}_{\text{homogenization}}\, ,
\]
where $\beta_h>0$ is a homogenization exponent supplied by \citet{AK_book} (Proposition~\ref{pro.ak-inputs}), $\delta>0$ is the exponent of our homogenization result, Proposition~\ref{pro.velocity-resolvent-infty} below, and $\theta,\epsilon>0$ are small parameters. Optimizing the choice of parameters then gives an algebraic rate. 

\smallskip

The uniform-in-$\lambda$ LLN required by this argument (Corollary~\ref{cor.uniform-renewal}) is established by a conditional renewal identity at regeneration times (Lemma~\ref{lem.conditional-renewal-tau-k}), using the regeneration structure of \citet*{ballistic2003shen} and the $\lambda$-dependent adaptation in \citet[Section~5]{Einstein2012gantert}.

\subsection{Heuristic for the conjectural rate}\label{ss.intro-sharpness}

We conjecture the sharp dimension-dependent upper bound
\begin{equation}\label{eq.sharp}
	\Bigl|\frac{\ell_X(\lambda)}{\lambda}-\Sigma_X e_1\Bigr|
	 \lesssim
	\begin{cases}
		\lambda, & d\ge 3\, ,\\[1mm]
		\lambda\sqrt{\log(1/\lambda)}, & d=2\, .
	\end{cases}
\end{equation}
By~\eqref{eq.intro-time-change-identities}, it suffices to bound the velocity response $\ell(\lambda)/\lambda - \ahom e_1$ and the clock error $\eta(\lambda) - \E[e^{-2V(0)}]$ at the same rates. We sketch the heuristic for the velocity. The decomposition~\eqref{eq.intro-velocity-decomp} splits the error into a time-average contribution and a homogenization contribution; we conjecture an optimal rate for each.

\smallskip

Refining the Feynman--Kac decomposition~\eqref{eq.intro-FK}, we decompose the expected displacement into a ballistic part $\ell(\lambda) t$ and a remainder $R^\omega_\lambda(t)$:
\[
	E_0^{\lambda,\omega}[Y^\lambda(t)] = \ell(\lambda) t + R^\omega_\lambda(t)\, .
\]
Substituting this into~\eqref{eq.intro-FK}, the ballistic part contributes $3^m\ell(\lambda)/\lambda$ to $u_m(0)$, while the boundary term gives an exit-tail $O(e^{-c 3^h})$. Dividing by $3^m$ yields
\[
	\frac{u_m(0)}{3^m} \approx \frac{\ell(\lambda)}{\lambda} + \frac{\bar R^\omega_\lambda}{3^m} + O(e^{-c 3^h})\, ,
\]
where
\[
	\bar R^\omega_\lambda \coloneqq \frac{\lambda}{3^m}\int_0^\infty e^{-(\lambda/3^m)t} R^\omega_\lambda(t) dt
\]
is the time-average of $R^\omega_\lambda(t)$ against an exponential kernel of mean $3^m/\lambda$. At the critical length scale $3^m\sim\lambda^{-1}$, this mean is the diffusive timescale $\lambda^{-2}$.

\smallskip

At $\lambda=0$, the homogenization corrector $\chi$ \citep{kozlov1985} --- the sublinear vector field with $\chi(0)=0$ and $\nabla\cdot(e^{-2V}\a\nabla(x_i+\chi_i))=0$ for each $i$ --- makes $Y^0+\chi(Y^0)$ a martingale, so
\[
	E_0^{0,\omega}[Y^0(t)] = -E_0^{0,\omega}[\chi(Y^0(t))]\, .
\]
	Heuristically, at time $t\sim 3^{2m}$ the heat kernel for $Y^0$ averages mostly over $\cu_m$, suggesting that $|R^\omega_0(t)|$ should be controlled by the corrector's amplitude on $\cu_m$:
\[
	A_m \coloneqq \max_{1\le i\le d} \|\chi_i\|_{\underline L^2(\cu_m)},\qquad A_m \lesssim \begin{cases} 1, & d\ge 3\, ,\\[1mm] \sqrt{m}, & d=2\, ,\end{cases}
\]
	where the inequality on the right follows from~\citet[Theorem~4.1 and Proposition~4.27]{AKM2019book}. This gives the heuristic estimate $|R^\omega_0(t)| \lesssim A_m$.

\smallskip

We conjecture that this corrector control persists for the biased process at the critical length scale $3^m\asymp\lambda^{-1}$:
\begin{equation}\label{eq.remainder-conj}
	\bar R^\omega_\lambda = O(A_m)\quad\text{uniformly as }\lambda\to 0\, .
\end{equation}
The existing argument, which relies on regeneration alone, gives $\bar R^\omega_\lambda = O(\lambda^{-1})$: blocks have duration $\asymp\lambda^{-2}$ and displacement $\asymp\lambda^{-1}$, and the exponential kernel of mean $\lambda^{-2}$ averages over $O(1)$ blocks. The conjecture \eqref{eq.remainder-conj} requires sharper cancellation, analogous to the corrector mechanism of the symmetric case above. Granting \eqref{eq.remainder-conj}, the remainder term $\bar R^\omega_\lambda/3^m$ in the schematic expansion has size $\lesssim A_m/3^m$. Combined with the exit-tail, this gives the conjectural time-average rate
\[
	\Bigl|\frac{\ell(\lambda)}{\lambda} - \frac{u_m(0)}{3^m}\Bigr| \lesssim \frac{A_m}{3^m} + e^{-c 3^h} \asymp \lambda A_m\, ,
\]
equal to $\lambda$ in $d\ge 3$ and $\lambda\sqrt{\log(1/\lambda)}$ in $d=2$.

\smallskip

We also expect the homogenization error to be controlled by the interior slope of the corrector across $\cu_m$: an oscillation of size $A_m$ across a cube of side $3^m$ produces a slope $A_m/3^m$, so
\begin{equation}\label{eq.flux-discrepancy}
	\Bigl|\frac{u_m(0)}{3^m} - \ahom e_1\Bigr| \lesssim \frac{A_m}{3^m}\, .
\end{equation}
Adding the two contributions gives \eqref{eq.sharp}.

\smallskip

Making this heuristic rigorous requires resolving two problems, one for each contribution.

For the time-average contribution, the bound $|R^\omega_0(t)|\lesssim A_m$ comes from the martingale $Y^0+\chi$, which exists because $\chi$ corrects the self-adjoint generator $\mathcal L^0$; the biased generator $\mathcal L^\lambda$ has no analogous corrector with quantitative control of its amplitude. The steady-state approach gives the leading small-$\lambda$ behavior of the weak steady state $\nu_\lambda$: for centered observables $f$,
\[
	\nu_\lambda(f)=\lambda\bar\Gamma(f)+o(\lambda),
	\qquad \bar\Gamma(f)=-2\int\chi f d\Q\, .
\]
This is proved in \citet[Theorem~1.5, Corollary~5.2, and eq.~(1.7)]{Steady2018mathieu} when a stationary corrector exists, for instance under finite range in $d\ge3$; \citet*{GantertGuoNagel2017} prove the analogous expansion on $\ZZ^d$ in $d\ge3$. The conjecture~\eqref{eq.remainder-conj} is different: it is a finite-time statement at the critical timescale $t\sim\lambda^{-2}$. Even a quantitative second-order bound such as $|\nu_\lambda(b)-\lambda\bar\Gamma(b)|\le C\lambda^2 A_m$ for the velocity observable $b$ would only control the infinite-time velocity component. One would also need a critical-scale relaxation estimate from the initial environment to $\nu_\lambda$. In $d\ge3$, this second-order input is a $C^{1,1}$-type regularity statement for $\nu_\lambda$ at $\lambda=0$; in $d=2$, the same formal argument predicts the additional $\sqrt{\log(1/\lambda)}$ loss.

	For the homogenization contribution, our proof uses the sharp homogenization estimates of \citet{AK_book} but does not recover the conjectural rate $A_m/3^m$ of~\eqref{eq.flux-discrepancy}. Two passages introduce slack. First, the resolvent equation carries a first-order drift and a spectral parameter that the corrector equation does not, and these inflate the energy bound. Second, we need the value at a single point, while the energy estimate gives an $L^2$ average over the full cube. To extract a pointwise bound, we apply a deterministic interior $L^\infty$ estimate, which requires the elliptic operator to be uniformly elliptic. The tilted operator $-\nabla\cdot(e^{2\lambda x_1}\a\nabla)$ has ellipticity constants that depend on $e^{2\lambda x_1}$, which ranges over $e^{\pm\lambda 3^{m+h}}$ on the full cube $\cu_{m+h}$ and is unbounded as $m\to\infty$. We therefore restrict to a concentric subcube $\cu_{m_1}$ small enough that $e^{2\lambda x_1}$ is bounded by an absolute constant. The corresponding volume ratio is $3^{d(m-m_1+h)/2}$: the loss from $m-m_1$ is absorbed by the homogenization exponent, while the $h$-dependent loss is controlled in the final parameter choice.

\subsection{Background}\label{ss.intro-context}

The Einstein relation is a prototype of \emph{fluctuation--dissipation}: the principle that the equilibrium fluctuations of a system determine its response to a small external perturbation. Here the fluctuation is the diffusive spreading of the unforced particle, and the response is the drift induced by the force. In its classical form the relation reads
\begin{equation*}
  D = \mu\, k_B T \, ,
\end{equation*}
where $D$ is the diffusion constant, $\mu$ the mobility, $T$ the temperature, and $k_B$ the Boltzmann constant.

For a test particle in a stationary random environment, the Einstein relation has been approached in three ways, differing in how the small-force and long-time limits are ordered. \citet{LebowitzRost1994} couple the two limits, scaling the force together with a diffusive rescaling of space and time; this sidesteps the need to interpret the mobility as a genuine derivative, at the price of a weaker conclusion. The remaining two approaches both send time to infinity at a fixed force, but extract different objects from that limit. \citet{Steady2018mathieu} and \citet*{GantertGuoNagel2017} encode it in an invariant measure --- a \emph{steady state} --- for the environment viewed from the biased particle, and recover the Einstein relation from the small-force expansion of that measure. We follow the third route, due to \citet*{Einstein2012gantert}: for each fixed $\lambda$ the long-time limit yields the velocity $\ell_X(\lambda)$, and the small-force limit of $\ell_X(\lambda)/\lambda$ is taken afterward.

A parallel theory for tagged particles in interacting particle systems was initiated by \citet{LebowitzSpohn1982}, with a general framework given in \citet*{FerrariGoldsteinLebowitz1985}.

The general principle of fluctuation--dissipation, expressing transport coefficients as equilibrium time correlations, was developed in \citet{Nyquist1928,Onsager1931,Onsager1931II,CallenWelton1951}; a rigorous Markov-process formulation appears in \citet{DemboDeuschel2010}. The invariant-measure approach described above embeds the Einstein relation in this framework through the weak steady state $\nu_\lambda$ of the environment seen from the biased particle, with early constructions in related settings due to \citet{KomorowskiKrupa2004}. In our continuous reversible setting, \citet{Steady2018mathieu} construct $\nu_\lambda$, prove the Lipschitz bound $|\nu_\lambda(f)|\le C\lambda\|f\|_{H^{-1}_\infty}$, and identify $\partial_\lambda\nu_\lambda|_{\lambda=0}$ in terms of the corrector. On $\ZZ^d$ in $d\ge 3$, \citet*{GantertGuoNagel2017} obtain the analogous first-order expansion of $\nu_\lambda$, building on the variance-decay estimates of \citet{vardecay} and \citet{deBuyerMourrat2015}. A different perturbative expansion is obtained by \citet*{CamposRamirez2017} in the low-disorder regime on $\ZZ^d$, where the small parameter is the disorder rather than the bias $\lambda$.

The Einstein relation has been studied in many related discrete models. On $\ZZ^d$ in $d\ge 3$, qualitative versions are known for nearest-neighbor walks with two-valued random jump rates \citep{KomorowskiOllaRWRE2005}, for balanced i.i.d.\ walks under a Kalikow-type ballisticity condition and for Sznitman-ballistic i.i.d.\ walks \citep{Guo2016}, and for tagged tracers in time-mixing environments \citep{KomorowskiOlla2005}. One-dimensional models admit separate techniques: reversible walks in ergodic conductances \citep{LamDepauw2016}, random walks in random environment \citep{FaggionatoSalvi2019}, biased walks on a one-dimensional percolation model \citep*{GantertMeinersMueller2019}, Mott variable-range hopping \citep*{FaggionatoGantertSalvi2019}, and the complex (frequency-dependent) mobility matrix for the random conductance model \citep{FaggionatoSalvi2025}.

On trees, the relation has been established for biased walks on Galton--Watson trees with deep traps \citep*{BenArousHuOllaZeitouni2013}. For interacting particle systems, \citet{Loulakis2002} proved the tagged-particle Einstein relation under a constant external force for symmetric simple exclusion in $d\ge 3$, building on \citet{KipnisVaradhan1986}; in one dimension under weak asymmetry, the driven tracer was treated earlier by \citet*{LandimOllaVolchan1998}.

Reversibility of the process is an essential hypothesis: for mean-zero asymmetric simple exclusion in $d\ge 3$, \citet{Loulakis2005} showed that the mobility need not coincide with the self-diffusion matrix.

Theorem~\ref{t.main} gives, to our knowledge, the first quantitative rate in $\lambda$ for the Einstein relation under a constant external force, in dimensions $d\ge 2$. The rate is quenched: it holds for $\Q$-a.e.\ environment, on a random window $\lambda\in(0,\lambda_0(\omega)]$ whose threshold has the tail~\eqref{eq.intro-rate-tail}. The physics conjecture~\eqref{eq.sharp} is quenched in the same sense.

Unlike previous work, our proof relies on the sharp quantitative homogenization estimates of \citet{AK_book, AK_HC, AK_cg_theory}. The renewal estimates of Section~\ref{sec.renewal}, by contrast, are fairly straightforward consequences of the regeneration structure established by \citet*{ballistic2003shen} and \citet*{Einstein2012gantert}. These quantitative homogenization results build on a series of works in quantitative homogenization, initiated by \citet{armstrong2016mesoscopic,armstrong2017additive,ArmstrongSmart2016} and separately \citet{GloriaOtto2011,gloria2020regularity}. The Armstrong--Kuusi--Mourrat--Smart quantitative homogenization theory has been applied to many other problems, including the supercritical percolation cluster~\citep*{armstrong2018elliptic,dario2021corrector,dario2021quantitative,gu2022efficient,bou2025rigidity,gu2025coupling}; the critical long-range random conductance model~\citep*{bourabee2026critical}; the $\nabla\phi$ interface model~\citep*{armstrong2022surface}; the closely related Villain rotator~\citep*{dario2024villain}; gradient interface dynamics~\citep*{armstrong2024hydrodynamic}; interacting particle systems~\citep*{GGMN,giunti2022quantitative,funaki2026hydrodynamic}; and random drifts~\citep*{ABKL_clt,algsuperdiff}.
  
\subsection{Outline of the paper}\label{ss.intro-outline}
Section~\ref{sec.notation} fixes notation and applies a time change that reduces the tilted operator to divergence form. Section~\ref{sec.coarse-graining-inputs} then states the quantitative homogenization estimates we use, essentially from \citet*{AK_book}. The main analytic contribution is a localized homogenization estimate for the tilted finite-volume resolvent at the critical length scale (Section~\ref{sec.tilted-resolvent}). Section~\ref{sec.renewal} upgrades the regeneration estimates of \citet*{ballistic2003shen} and \citet*{Einstein2012gantert} to a quenched law of large numbers uniform in $\lambda$. Section~\ref{sec.einstein-rate} combines these via a Feynman--Kac decomposition to prove Theorem~\ref{t.main}. Section~\ref{sec.shen-renewal-k} extends the Shen--GMP renewal structure to the form needed in Section~\ref{sec.renewal}.   

\subsection*{Acknowledgments}

We thank Scott Armstrong for suggesting this problem and for helpful discussions.

\section{Notation and preliminaries}\label{sec.notation}

\subsection{Notation}\label{ss.notation}

	We write $\N\coloneqq\{0,1,2,\ldots\}$ for the nonnegative integers.

	\subsubsection{Triadic cubes and volume-normalized integrals}

	For every $m\in\ZZ$, the triadic cube of side length $3^m$ centered at the origin is
	\[
		\cu_m\coloneqq \Bigl(-\tfrac{3^m}{2},\tfrac{3^m}{2}\Bigr)^d\subset\R^d\, .
	\]
	For a bounded domain $U\subseteq\R^d$, we use volume-normalized integrals and $L^p$ norms,
	\[
	(f)_U \coloneqq  \fint_U f(x) dx = \frac{1}{|U|}\int_U f(x) dx, \qquad \|f\|_{\underline{L}^p(U)} \coloneqq  \Bigl(\fint_U |f(x)|^p dx\Bigr)^{1/p}\, .
	\]

	\subsubsection{Volume-normalized Sobolev norms}

	For $s\in(0,1)$ and $p\in[1,\infty)$, the volume-normalized Sobolev seminorm is
	\begin{equation}\label{def.semiwsp}
		[f]_{\underline{W}^{s,p}(U)} \coloneqq  \Bigl(\fint_U \int_U\frac{|f(x)-f(y)|^p}{|x-y|^{d+sp}} dy dx\Bigr)^{1/p}\, .
	\end{equation}
	Here only the outer integral is volume-normalized.
	The full norm is
	\[
		\|f\|_{\underline{W}^{s,p}(U)}\coloneqq  \Bigl(|U|^{-sp/d}\|f\|_{\underline{L}^p(U)}^p+ [f]_{\underline{W}^{s,p}(U)}^p\Bigr)^{1/p}\, .
	\]
	For $s\in(0,1/2]$ and $p\in(1,\infty)$ with H\"older conjugate $p'=p/(p-1)$, define the volume-normalized negative dual norm by
	\begin{equation}\label{def.dual-Wsp}
		\|f\|_{\Wminusul{-s}{p'}(U)}\coloneqq\sup\Bigl\{\langle f,g\rangle_U:g\in\underline{W}^{s,p}(U),\ \|g\|_{\underline{W}^{s,p}(U)}\le 1\Bigr\}\, ,
	\end{equation}
	where $\langle f,g\rangle_U$ denotes the volume-normalized dual pairing, equal to $\fint_U fg$ when $f$ is a function. 
	Set $\underline{H}^s\coloneqq\underline{W}^{s,2}$ and $\Hminusuls{-s}\coloneqq\Wminusul{-s}{2}$. For $\R^d$-valued functions, norms are taken componentwise: if $f\colon U\to\R^d$, then
	\[
		\|f\|^2_{\underline{H}^s(U)}\coloneqq\sum_{i=1}^d\|f_i\|^2_{\underline{H}^s(U)},\qquad
		\|f\|^2_{\Hminusuls{-s}(U)}\coloneqq\sum_{i=1}^d\|f_i\|^2_{\Hminusuls{-s}(U)}\, .
	\]
	For matrix fields $F\colon U\to\R^{d\times d}$, we use the Frobenius component convention:
	\[
		\|F\|^2_{\underline{H}^s(U)}\coloneqq\sum_{i,j=1}^d\|F_{ij}\|^2_{\underline{H}^s(U)}\, ,
	\]
	and analogously for $\|F\|_{\Hminusuls{-s}(U)}$.

	For $f$ defined on $\cu_M$, set
	\[
		\tilde f(\tilde x)\coloneqq f(3^M\tilde x)\qquad(\tilde x\in \cu_0)\, .
	\]
	The same definition applies componentwise to $\R^d$- and $\R^{d\times d}$-valued $f$. Since $|\cu_0|=1$, the volume-normalized norms on $\cu_0$ coincide with the ordinary unnormalized ones, and we drop the underline in the identities that follow. We write $H^s(\cu_0)\coloneqq W^{s,2}(\cu_0)$ and $\hat H^{-s}(\cu_0)\coloneqq (H^s(\cu_0))^*$. A direct computation from \eqref{def.semiwsp}--\eqref{def.dual-Wsp} gives, for $s\in(0,1)$,
	\begin{equation}\label{eq.pullback-identities}
	\begin{aligned}
		\|f\|_{\underline L^2(\cu_M)}&=\|\tilde f\|_{L^2(\cu_0)}\, ,\\
		[f]_{\underline H^s(\cu_M)}&=3^{-sM} [\tilde f]_{H^s(\cu_0)}\, ,\\
		\|f\|_{\underline H^s(\cu_M)}&=3^{-sM} \|\tilde f\|_{H^s(\cu_0)}\, .
	\end{aligned}
	\end{equation}
	Additionally, for $s\in(0,1/2]$,
	\begin{equation}\label{eq.pullback-identities-dual}
		\|f\|_{\Hminusuls{-s}(\cu_M)}=3^{sM}\|\tilde f\|_{\hat H^{-s}(\cu_0)}\, .
	\end{equation}
	The gradient scales as $\|\nabla\tilde f\|_{L^2(\cu_0)}=3^M\|\nabla f\|_{\underline L^2(\cu_M)}$.

	\subsection{The reversible diffusion and its tilt}\label{ss.setting}

	This subsection gives the SDE form of the tilted diffusion and the probabilistic notation. After the time change in Subsection~\ref{ss.reduction}, the notation is simplified.

	The tilted generator \eqref{eq.intro-generator-tilted} admits the It\^o decomposition
	\[
		\mathcal L^\lambda f=\tfrac12\sum_{i,j=1}^d\a_{ij}\partial_{ij}f+\b^\lambda\cdot\nabla f\, ,
	\]
	with diffusion coefficient $\a$ and It\^o drift
	\[
		\b^\lambda\coloneqq\tfrac12\nabla\cdot\a-\a\nabla V+\lambda \a e_1,\qquad (\nabla\cdot\a)_j\coloneqq \sum_{i=1}^d\partial_i\a_{ij}\, ,
	\]
	where $\nabla\cdot\a$ and $\nabla V$ are taken in the a.e.\ sense. The corresponding SDE is
	\[
		dX^\lambda(t)=\b^\lambda(X^\lambda(t)) dt+\a^{1/2}(X^\lambda(t)) dW(t),\qquad X^\lambda(0)=x\, ,
	\]
	where $\a^{1/2}$ is the symmetric square root of $\a$. The Lipschitz assumptions give a unique-in-law solution. We write $P^{\lambda,\omega}_x,E^{\lambda,\omega}_x$ for the quenched law and expectation and $\mathbb E^\lambda_x\coloneqq \int E^{\lambda,\omega}_x d\Q(\omega)$ for the annealed expectation.

	\subsection{The time change}\label{ss.reduction}

	The potential $V$ enters \eqref{eq.intro-generator-tilted} both as a coefficient and through the equilibrium measure $e^{-2V} dx$ of the unperturbed operator. A time change absorbs the potential into the divergence-form coefficient, leaving it visible only through the clock. Write the generator as
\[
	\mathcal L^\lambda f=\tfrac12 e^{2V}\nabla\cdot(e^{-2V}\a\nabla f)+\lambda \a e_1\cdot\nabla f\, .
\]
	Multiplying by $e^{-2V}$ removes the prefactor in front of the divergence:
	\begin{equation}\label{eq.reduced-generator}
	\begin{aligned}
		e^{-2V} \mathcal L^\lambda f
		&=\tfrac12\nabla\cdot(e^{-2V}\a\nabla f)+\lambda e^{-2V}\a e_1\cdot\nabla f\\
		&=\tfrac12 e^{-2\lambda x_1}\nabla\cdot\bigl(e^{2\lambda x_1}e^{-2V}\a\nabla f\bigr)\, ,
	\end{aligned}
	\end{equation}
Thus $e^{-2V}\mathcal L^\lambda$ is in divergence form. Its coefficient matrix is $e^{-2V}\a$. The operator is symmetric in $L^2(e^{2\lambda x_1} dx)$.

The original generator and the divergence-form generator \eqref{eq.reduced-generator} correspond to the same diffusion with different time parametrizations. We record this time change in both directions. Let $t(s)$ be the inverse of the strictly increasing map
\[
	r\mapsto \int_0^r e^{2V(X^\lambda(u))} du\, .
\]
Then $dt/ds=e^{-2V(X^\lambda(t(s)))}$, and the time-changed process
\[
	Y^\lambda(s)\coloneqq X^\lambda(t(s))
\]
has generator $e^{-2V}\mathcal L^\lambda$.

In the other direction, the additive clock of $Y^\lambda$ is
\[
	A^\lambda(s)\coloneqq \int_0^s e^{-2V(Y^\lambda(u))} du\, .
\]
This clock equals $t(s)$, and inverting gives
\begin{equation}\label{eq.X-Y-relation}
	X^\lambda(t)=Y^\lambda\bigl((A^\lambda)^{-1}(t)\bigr)\qquad\text{(pathwise)}\, .
\end{equation}

For $p\in\R^d$, let $\chi_p$ be the corrector for the field $e^{-2V}\a$ in direction $p$, characterized by $\E_\Q[\nabla\chi_p]=0$ and
\[
	-\nabla\cdot\bigl(e^{-2V}\a(p+\nabla\chi_p)\bigr)=0\qquad\text{in }\R^d
\]
in the weak sense. The homogenized matrix of $-\nabla\cdot(e^{-2V}\a\nabla)$ is the matrix $\ahom$ defined by
\begin{equation}\label{eq.ahom-def}
	p\cdot\ahom q
	\coloneqq \E_\Q\Bigl[(p+\nabla\chi_p(0))\cdot e^{-2V(0)}\a(0)(q+\nabla\chi_q(0))\Bigr]\qquad(p,q\in\R^d)\, .
\end{equation}
The existence of these correctors and the deterministic matrix $\ahom$ follows from \citet{kozlov1985}. Set
\begin{equation}\label{eq.def-ell-eta}
	\ell(\lambda)\coloneqq\lim_{s\to\infty}s^{-1}Y^\lambda(s),\qquad
	\eta(\lambda)\coloneqq \lim_{s\to\infty}s^{-1}A^\lambda(s)\, ,
\end{equation}
the asymptotic velocity of the time-changed diffusion and its asymptotic clock rate; the existence of both limits is proved in Section~\ref{sec.renewal}. The time-change identities are
\begin{equation}\label{eq.time-change-identities}
	\ell_X(\lambda)=\eta(\lambda)^{-1} \ell(\lambda),\qquad \Sigma_X=\E[e^{-2V(0)}]^{-1} \ahom\, .
\end{equation}
The first identity follows from \eqref{eq.X-Y-relation}; the second is the identity $\Sigma_Y=\gamma\Sigma$ near the end of the proof of \citet[Proposition~3.1]{Einstein2012gantert}, applied with $\gamma=\E[e^{-2V(0)}]$ and $\Sigma_Y=\ahom$.

\subsection{Standing notation after the time change}\label{ss.standing}

All estimates in the remainder of the paper are carried out for the
divergence-form operator $e^{-2V}\mathcal L^\lambda$ obtained in
Subsection~\ref{ss.reduction}, whose coefficient field is $e^{-2V}\a$ and whose
associated diffusion is $Y^\lambda$; the operator $\mathcal L^\lambda$ and the
diffusion $X^\lambda$ in their original form are no longer needed. From this point onward, we write $\a$ for $e^{-2V}\a$ and $X^\lambda$ for $Y^\lambda$. The factor $e^{-2V}$ is absorbed into $\a$ and no longer appears as a coefficient; it appears separately only in the clock integrand.

After enlarging $\Lambda$ to absorb the constants from Assumption~\ref{a.environment}, the renamed $\a$ is symmetric, $\Lambda$-uniformly elliptic, and $\Lambda$-Lipschitz, and $e^{-2V}\in[\Lambda^{-1},\Lambda]$ is $\Lambda$-Lipschitz. The pair $(\a,e^{-2V})$ is stationary under $\Q$ with range of dependence $r_0$, and the matrix $\ahom$ from \eqref{eq.ahom-def} is the homogenized matrix of the renamed field $\a$, satisfying $\Lambda^{-1}I\le\ahom\le\Lambda I$.

	\section{Inputs from quantitative homogenization}\label{sec.coarse-graining-inputs}

The resolvent estimates of Section~\ref{sec.tilted-resolvent} reduce, after subtracting the finite-volume correctors, to bounds on the difference $\a-\ahom$ in a weak norm. This section records the two quantitative homogenization estimates that supply these bounds: a coarse-graining estimate for the flux error $(\a-\ahom)\nabla u$ of a solution $u$ (inequality~\eqref{eq.cg-RHS}), and a finite-volume corrector estimate (inequality~\eqref{eq.hc}). Both are from \citet{AK_book} and \citet{AK_HC}. We also record a stochastic integrability lemma and a negative-Sobolev bound on the centered clock source $e^{-2V}-\E[e^{-2V(0)}]$.

	\subsection{Finite-volume correctors and quantitative estimates}

	For every $M\in\N$ and $p\in\R^d$, let $w_{p,M}=w(\cdot,\cu_M,p)\in p\cdot x+H^1_0(\cu_M)$ be the Dirichlet corrector on $\cu_M$ in the sense of \citet{AK_HC}, that is, the unique weak solution of
	\begin{equation}\label{eq.ak-corrector-pde}
		-\nabla\cdot(\a\nabla w_{p,M})=0\quad\text{in }\cu_M\, ,
		\qquad
		w_{p,M}=p\cdot x\quad\text{on }\partial \cu_M\, .
	\end{equation}
	When the slope~$p$ is a unit basis vector~$e_j$, we abbreviate $w_{j,M}\coloneqq w_{e_j,M}$.

	The next proposition collects the two homogenization estimates used in Section~\ref{sec.tilted-resolvent}: a coarse-graining bound for solutions with a divergence-form right-hand side, and the finite-volume Dirichlet corrector estimate. Both hold beyond a random scale $\X_{\beta_h}$ with a stretched-exponential tail.

	\begin{proposition}[Quantitative homogenization estimates]\label{pro.ak-inputs}
		Fix any $\beta_h\in(0,1/8]$. There exist deterministic constants $C=C(d,\Lambda,r_0,\beta_h)<\infty$ and $c=c(d,\Lambda,r_0,\beta_h)>0$, a deterministic exponent $\sigma=\sigma(d,\Lambda,r_0,\beta_h)\in(0,1)$, and a random scale $\X_{\beta_h}\colon\Omega\to[0,\infty)$ with the following properties. First, the random scale has a stretched-exponential tail:
		\begin{equation}\label{eq.m-beta0-tail}
			\Q\bigl(\X_{\beta_h}\ge M\bigr)\le C\exp\bigl(-c M^\sigma\bigr)\qquad\text{for every }M\ge 0\, .
		\end{equation}
		Second, on a $\Q$-full-probability event, the following estimates hold for every integer $M\ge \X_{\beta_h}(\omega)$.
		\begin{itemize}
			\item Let $k\in\{1,d\}$, let $u\in H^1(\cu_M;\R^k)$, and let $f\in H^{1/6}(\cu_M;\R^k\otimes\R^d)$ satisfy $-\nabla\cdot(\a\nabla u)=\nabla\cdot f$ componentwise in $\cu_M$. Then
			\begin{equation}\label{eq.cg-RHS}
				3^{-M/6}\|(\a-\ahom)\nabla u\|_{\Hminusuls{-1/6}(\cu_M)}\le C 3^{-\beta_h M}\bigl(\|\nabla u\|_{\underline L^2(\cu_M)}+3^{M/6}[f]_{\underline H^{1/6}(\cu_M)}\bigr)\, .
			\end{equation}
			\item For every $j$ with $1\le j\le d$,
			\begin{equation}\label{eq.hc}
			\begin{aligned}
				&3^{-M}\|w_{j,M}-x_j\|_{\underline L^2(\cu_M)}
				+3^{-M/6}\|\nabla w_{j,M}-e_j\|_{\Hminusuls{-1/6}(\cu_M)}\\
				&\qquad
				+3^{-M/6}\|\a\nabla w_{j,M}-\ahom e_j\|_{\Hminusuls{-1/6}(\cu_M)}
				\le C 3^{-\beta_h M}\, .
			\end{aligned}
			\end{equation}
		\end{itemize}
	\end{proposition}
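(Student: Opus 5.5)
This proposition is essentially an import from \citet{AK_book} and \citet{AK_HC}. The plan is therefore to translate the statements there into the present normalization rather than to reprove homogenization. We will obtain the random scale in three steps.

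\emph{Step 1 (cube-by-cube estimates).} The coarse-graining theory of \citet{AK_book} applies to the stationary, finite-range, uniformly elliptic field $\a$ of Subsection~\ref{ss.standing}. For each triadic scale $M$ and each $s\in(0,1]$ it supplies random variables $\mathcal O_M$ with the following properties. On the event $\{\mathcal O_M\le 3^{-\beta M}\}$, the coarse-grained matrices of subcubes of $\cu_M$ are $3^{-\beta M}$-close to $\ahom$ in the appropriate weighted sense. Moreover,
\[
\Q\bigl(\mathcal O_M> 3^{-\beta M}\bigr)\le C\exp(-c3^{\sigma' M})
\]
for some $\beta>0$, which uses finite range through a concentration argument.

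\emph{Step 2 (deterministic implications).} On this event, the deterministic coarse-graining inequalities give the two estimates. The first is the flux estimate \eqref{eq.cg-RHS} for solutions of $-\nabla\cdot(\a\nabla u)=\nabla\cdot f$: write the flux error as a sum over mesoscopic subcubes of $(\a-\bar\a_{\square})\nabla u$ and test against $\underline H^{1/6}$ functions. The second is the Dirichlet corrector estimate \eqref{eq.hc}, which is \citet{AK_HC} with $p=e_j$. We take $\beta_h\le\min\{\beta,1/8\}$. Decreasing the exponent only weakens the statements, so any $\beta_h\in(0,1/8]$ below the available exponent is allowed. For larger $\beta_h$ in $(0,1/8]$ we invoke the sharp version of the estimates in \citet{AK_book}, whose exponent can be taken up to a dimension-free threshold, with constants depending on $\beta_h$.

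We also need the volume normalization to match. By \eqref{eq.pullback-identities}--\eqref{eq.pullback-identities-dual}, we rescale to $\cu_0$. There \eqref{eq.cg-RHS} reads
\[
\|(\tilde\a-\ahom)\nabla\tilde u\|_{\hat H^{-1/6}}\lesssim 3^{-\beta_h M}\bigl(\|\nabla\tilde u\|_{L^2}+[\tilde f]_{H^{1/6}}\bigr),
\]
which is the form in the reference. The case $k=d$ follows componentwise from $k=1$ by linearity and the componentwise norm convention.

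\emph{Step 3 (a single random scale).} Define
\[
\X_{\beta_h}\coloneqq\sup\bigl\{M\in\N:\ \mathcal O_M>3^{-\beta_h M}\bigr\},
\]
with the convention $\sup\emptyset=0$. A union bound gives
\[
\Q(\X_{\beta_h}\ge M)\le\sum_{M'\ge M}C\exp(-c3^{\sigma'M'})\le C\exp(-cM^\sigma).
\]
The actual tail is much better than required, and any $\sigma\in(0,1)$ suffices. Borel--Cantelli then gives $\X_{\beta_h}<\infty$ almost surely, which yields the full-probability event.

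The main obstacle is Step 2: verifying that the specific negative norm $\Hminusuls{-1/6}$ with the factor $3^{-M/6}$, and the right-hand side term $[f]_{\underline H^{1/6}}$, match the reference's statement. If the reference states its results with a different regularity index $s$, we would interpolate between the $\underline H^{-1}$ estimate and a trivial $\underline L^2$ bound, accepting a loss in the exponent that is absorbed by shrinking $\beta_h$.
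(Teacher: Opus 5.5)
Your route is the paper's. Finite range gives the concentration-for-sums hypothesis of \citet{AK_book}. The corrector bound \eqref{eq.hc} is read off from \citet{AK_HC}: the Dirichlet corrector with slope $e_j$, their displays (5.61)--(5.62) with $s=1/6$ and $a_1=\ahom$, and Theorem~B. The random scales are then merged by a maximum and a union bound, which keeps a stretched-exponential tail in $M$.

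The gap is \eqref{eq.cg-RHS}. Its only non-off-the-shelf content is the divergence-form source: the flux error must be bounded with $f$ entering solely through $3^{M/6}[f]_{\underline H^{1/6}(\cu_M)}$. Your sketch sums $(\a-\bar\a_{\cu})\nabla u$ over mesoscopic subcubes and tests against $\underline H^{1/6}$ functions. That is at best the argument for $\a$-harmonic $u$, and it never says where $f$ enters. On each subcube one needs a comparison of $u$ with an $\a$-harmonic replacement, whose error is paid for by the fractional seminorm of $f$. The paper does not rederive this; it imports the estimate with right-hand side from \citet[Lemma~2.12]{algsuperdiff}. You flag this step as the main obstacle and leave it open, so as written \eqref{eq.cg-RHS} is asserted rather than obtained.

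Your normalization check also drops a factor. After pulling back, the equation reads $-\nabla\cdot(\tilde\a\nabla\tilde u)=\nabla\cdot(3^M\tilde f)$. The source term in the rescaled inequality is therefore $3^M[\tilde f]_{H^{1/6}(\cu_0)}$, not $[\tilde f]_{H^{1/6}(\cu_0)}$.

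The second issue is the exponent. The paper's proof only takes $\beta_h\le 1/8$ no larger than the deterministic exponent furnished by \citet{AK_HC}, exactly as in your $\beta_h\le\min\{\beta,1/8\}$. Your further claim, that a ``sharp version'' of \citet{AK_book} reaches every $\beta_h\le 1/8$ with a dimension-free threshold, is not backed by any specific result. Your fallback interpolates between $\Hminusuls{-1}$ and $\underline L^2$ and absorbs the loss ``by shrinking $\beta_h$''. That proves a weaker statement than one where $\beta_h\in(0,1/8]$ is fixed in advance.

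This is not cosmetic. The proof of Theorem~\ref{t.main} sets $\beta_h=1/8$ and claims $\beta^*=\beta^*(d)$. Whatever range of $\beta_h$ is actually available therefore has to be tied to a cited theorem rather than asserted.
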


	\begin{proof}
		The finite range of dependence in Assumption~\ref{a.environment} implies the concentration-for-sums condition of \citet[Definition~3.2 and Section~3.2.1]{AK_book}. The coarse-graining estimate with a right-hand side, namely~\eqref{eq.cg-RHS}, is supplied by \citet[Lemma~2.12]{algsuperdiff}. For \eqref{eq.hc}, we use the finite-volume Dirichlet corrector estimate of \citet{AK_HC}: their display~(5.54) is the definition of $w(\cdot,\square_m,e)$ used in \eqref{eq.ak-corrector-pde}, and displays~(5.61)--(5.62), with $s=1/6$ and $a_1=\ahom$, reduce the three bounds in \eqref{eq.hc} to their large-scale finite-volume error estimate. In the uniformly elliptic finite-range setting, \citet[Theorem~B and displays~(5.102)--(5.103)]{AK_HC} imply that this error is at most $C3^{-\beta_h M}$ above a random scale with a stretched-exponential tail, after taking $\beta_h\le1/8$ no larger than the deterministic exponent supplied there. Finally, we choose $\X_{\beta_h}$ as the maximum of the scales required for \eqref{eq.cg-RHS} and \eqref{eq.hc}; a union bound preserves the tail bound \eqref{eq.m-beta0-tail}.
	\end{proof}

	Throughout the rest of the paper we will enlarge $\X_{\beta_h}$ by deterministic constants depending only on $(d,\Lambda,r_0,\beta_h)$ to absorb deterministic threshold conditions arising in later proofs. 

	The remaining bookkeeping lemma converts a stretched-exponential scale tail, together with a polynomial tail for a small-$\lambda$ threshold, into the small-$\lambda$ tail used in the proof of Theorem~\ref{t.main}.
	
	\begin{lemma}\label{lem.tail-conversion}
		Let $\alpha\in(0,1)$ and $\sigma\in(0,1]$. Let $T\colon\Omega\to[0,\infty)$ and $\lambda_0\colon\Omega\to(0,1]$ be random variables. Suppose there exist deterministic $C_T<\infty$ and $c_T>0$ such that
		\begin{equation}\label{eq.tail-conversion-T}
			\Q(T\ge M)\le C_T\exp(-c_T M^\sigma)\qquad\text{for every }M\ge 0\, ,
		\end{equation}
		and that for every $p>0$ there exists a deterministic $C_p<\infty$ such that
		\begin{equation}\label{eq.tail-conversion-lambda0}
			\Q(\lambda_0\le 2^{-k})\le C_p 2^{-pk}\qquad\text{for every }k\in\N\, .
		\end{equation}
		Let $\lambda_*(\omega)=3^{-(1-\alpha)T(\omega)}\wedge\lambda_0(\omega)\wedge 1$. Then there exist deterministic $C_*<\infty$ and $c_*>0$, depending only on $\alpha$, $\sigma$, $C_T$, $c_T$, and the family $(C_p)$, such that
		\begin{equation}\label{eq.tail-conversion-conclusion}
			\Q(\lambda_*\le\lambda)\le C_*\exp\bigl(-c_* (\log(1/\lambda))^\sigma\bigr)\qquad\text{for every }\lambda\in(0,1]\, .
		\end{equation}
	\end{lemma}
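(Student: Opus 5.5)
A union bound splits the event $\{\lambda_*\le\lambda\}$ into one event per term in the minimum, and each is bounded by the hypothesis it comes from. For $\lambda=1$ the claim is trivial once $C_*\ge 1$, so fix $\lambda\in(0,1)$. Since $\lambda_*$ is the minimum of three quantities, $\lambda_*\le\lambda$ forces one of them to be at most $\lambda$, and $1\le\lambda$ is impossible. Hence
\[
\{\lambda_*\le\lambda\}\subseteq\bigl\{3^{-(1-\alpha)T}\le\lambda\bigr\}\cup\{\lambda_0\le\lambda\}\, .
\]

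\textbf{Step 1: the term involving $T$.} The first event is $\{T\ge \log(1/\lambda)/((1-\alpha)\log 3)\}$. Apply \eqref{eq.tail-conversion-T} with $M=\log(1/\lambda)/((1-\alpha)\log 3)$. This gives
\[
\Q\bigl(3^{-(1-\alpha)T}\le\lambda\bigr)\le C_T\exp\bigl(-c_T((1-\alpha)\log 3)^{-\sigma}(\log(1/\lambda))^\sigma\bigr)\, .
\]
This is already of the required form.

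\textbf{Step 2: the term involving $\lambda_0$.} Choose $k=\lfloor\log_2(1/\lambda)\rfloor\in\N$, so that $2^{-k}\ge\lambda$ and hence $\{\lambda_0\le\lambda\}\subseteq\{\lambda_0\le 2^{-k}\}$. Apply \eqref{eq.tail-conversion-lambda0} with the single value $p=1$. Since $2^{-k}\le 2\lambda$, this gives $\Q(\lambda_0\le\lambda)\le 2C_1\lambda$. Write $\lambda=\exp(-L)$ with $L=\log(1/\lambda)>0$. Because $\sigma\le 1$, we have $L^\sigma\le 1+L$ (use $L^\sigma\le 1$ for $L\le1$ and $L^\sigma\le L$ for $L\ge 1$). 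Therefore
\[
\lambda=e^{-L}\le e\cdot e^{-L^\sigma}\, .
\]
So the polynomial tail is dominated by the stretched-exponential rate in $\log(1/\lambda)$.

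\textbf{Step 3: combining.} Set $c_*=\min\{c_T((1-\alpha)\log3)^{-\sigma},1\}$ and $C_*=\max\{C_T+2eC_1,1\}$. Adding the bounds from Steps 1 and 2 yields \eqref{eq.tail-conversion-conclusion}.

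There is no real obstacle. The only point needing care is the comparison $e^{-L}\lesssim e^{-L^\sigma}$ in Step 2, which is where the assumption $\sigma\le1$ enters. It also means only one value of $p$ is needed from \eqref{eq.tail-conversion-lambda0}.
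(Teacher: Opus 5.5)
Your proof is correct and follows the paper's argument: the same union bound, the same application of the $T$-tail with $M=\log(1/\lambda)/((1-\alpha)\log 3)$, and the same dyadic reduction of $\{\lambda_0\le\lambda\}$ to a polynomial bound. The only difference is cosmetic. You take $p=1$ and use $L^\sigma\le 1+L$ for all $L\ge0$, which gives $c_*=\min\{c_T((1-\alpha)\log 3)^{-\sigma},1\}$ with no case split. The paper instead takes $p\ge c_T((1-\alpha)\log 3)^{-\sigma}$, which keeps the $T$-rate as $c_*$, and it treats $\lambda\in(e^{-1},1]$ separately.
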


	\begin{proof}
		The event $\{\lambda_*\le\lambda\}$ is contained in $\{T\ge\log_3(1/\lambda)/(1-\alpha)\}\cup\{\lambda_0\le\lambda\}$. By \eqref{eq.tail-conversion-T}, the first event has $\Q$-probability at most $C_T\exp(-c_T((1-\alpha)\log 3)^{-\sigma}(\log(1/\lambda))^\sigma)$. For the second, dyadic discretization $2^{-k-1}<\lambda\le 2^{-k}$ and \eqref{eq.tail-conversion-lambda0} give $\Q(\lambda_0\le\lambda)\le 2^p C_p\lambda^p$ for every $p>0$; since $\sigma\le 1$ and $\lambda^p\le\exp(-p(\log(1/\lambda))^\sigma)$ for $\log(1/\lambda)\ge 1$, choosing $p\ge c_T((1-\alpha)\log 3)^{-\sigma}$ and applying the union bound gives \eqref{eq.tail-conversion-conclusion} on $\lambda\le e^{-1}$. For $\lambda\in(e^{-1},1]$, the trivial bound $\Q(\lambda_*\le\lambda)\le 1$ is dominated by $C_*\exp(-c_*(\log(1/\lambda))^\sigma)$ after enlarging $C_*$ by the factor $\exp(c_*)$.
	\end{proof}

\section{Tilted resolvent estimates}\label{sec.tilted-resolvent}

This section proves the analytic estimates used in the proof of
Theorem~\ref{t.main}. Recall from Section~\ref{sec.notation} that
$\ell(\lambda)$ and $\eta(\lambda)$ denote the asymptotic velocity and clock
rate of the time-changed diffusion, and that the time-change identities reduce
the mobility error $\ell_X(\lambda)/\lambda-\Sigma_X e_1$ to estimates on
$\ell(\lambda)/\lambda-\ahom e_1$ and
$\eta(\lambda)-\E[e^{-2V(0)}]$.

As explained in the introduction, the velocity error decomposes into a
time-average contribution and a homogenization contribution. The time-average
contribution is controlled by the renewal estimates of Section~\ref{sec.renewal}.
The homogenization contribution is the deviation
$3^{-m}u_m(0)-\ahom e_1$, where $u_m$ solves a resolvent equation on a cube
$\cu_{m+h}$ at the critical length scale $3^m\sim \lambda^{-1}$, with $h$
controlling the distance from the origin to the boundary. In a homogeneous
medium with constant coefficient $\ahom$, the resolvent has the constant
solution $3^m\ahom e_1$, so $3^{-m}u_m(0)-\ahom e_1$ measures how far the
heterogeneous resolvent departs from its homogenized value. 

The additional difficulty, compared to the homogenization estimates of
Section~\ref{sec.coarse-graining-inputs}, is the first-order drift
$\lambda(\a e_1)\cdot\nabla u$ in the resolvent equation. The estimates of
Section~\ref{sec.coarse-graining-inputs} apply to solutions of the symmetric
operator $-\nabla\cdot(\a\nabla u)=0$. The tilted resolvent instead solves an
equation of the form
$-\nabla\cdot(\a\nabla u)+\lambda(\a e_1)\cdot\nabla u=\cdots$, and the drift
term is not perturbative at the critical scale. To see why, test the equation
against its solution: in the symmetric case, this produces an energy identity
that controls $|\nabla u|_{L^2}$. The drift adds an integral
$\lambda\int u\,(\a e_1)\cdot\nabla u$, which at $\lambda\sim 3^{-m}$ carries
no small prefactor. We handle it by subtracting the finite-volume Dirichlet
correctors of Section~\ref{sec.coarse-graining-inputs}: the correctors absorb
the leading-order part of this integral, and the residual is small enough to
close the energy estimate (Lemma~\ref{lem.drift-bound}). This gives a global
$\underline L^2$ bound on the homogenization error. A pointwise bound at the
origin then follows by restricting to a concentric subcube on which the
exponential weight $e^{2\lambda x_1}$ is bounded, so that a deterministic
interior $L^\infty$ estimate applies. The velocity bound is
Proposition~\ref{pro.velocity-resolvent-infty}. The clock resolvent, which captures the homogenization contribution to the clock error $\eta(\lambda)-\E[e^{-2V(0)}]$, is treated by the same two-step argument, with the corrector bound replaced by cancellations in the negative-Sobolev norm of $e^{-2V}-\E[e^{-2V(0)}]$; the result is Proposition~\ref{pro.clock-resolvent-infty}. 

The exit-time bound (Proposition~\ref{pro.exit}) does not use homogenization; it transfers a crude
Aronson heat-kernel estimate from the symmetric diffusion to the tilted one via
Girsanov's theorem. A more refined exit-time bound is accessible via homogenization theory, but we do not pursue this.

Subsection~\ref{ss.resolvent-setup} sets up the equations and the
Feynman--Kac formula. Subsection~\ref{ss.apriori} records the analytic tools.
Subsections~\ref{ss.velocity-resolvent} and~\ref{ss.clock-resolvent} prove the
velocity and clock estimates, and Subsection~\ref{ss.exit-time} proves the
exit-time bound.

\subsection{Setup}\label{ss.resolvent-setup}

Following the convention of Subsection~\ref{ss.standing}, $\a$ denotes the renamed field $e^{-2V}\a$ and $X^\lambda$ denotes the time-changed diffusion $Y^\lambda$. The tilted generator is
\[
	\mathcal{L}^\lambda f(x)\coloneqq \tfrac12 e^{-2\lambda x_1}\nabla\cdot(e^{2\lambda x_1} \a(x)\nabla f)\, .
\]
The corresponding SDE is
\begin{equation}\label{eq.SDE-degenerate}
	dX^\lambda(t)=\b(X^\lambda(t)) dt+\a^{1/2}(X^\lambda(t)) dW(t)\, ,
\end{equation}
driven by a $d$-dimensional Brownian motion $W$, with $\a^{1/2}$ the symmetric Lipschitz square root and It\^o drift
\begin{equation}\label{eq.diffusivity_and_drift-degenerate}
	\b(x)=\tfrac12\nabla\cdot\a(x)+\lambda\a(x)e_1\, .
\end{equation}
Let $m,h\in\N$, let $\rho\ge 1$, and set $\lambda=\rho3^{-m}$. Let $u_m\in 3^m\ahom e_1+H_0^1(\cu_{m+h};\R^d)$ solve
\begin{equation}\label{eq.resolvent-equation}
	\begin{cases}
		\rho 3^{-2m}u_m-\mathcal{L}^\lambda u_m=\mathbf{b}&\text{in }\cu_{m+h}\, ,\\
		u_m=3^m\ahom e_1&\text{on }\partial\cu_{m+h}\, .
	\end{cases}
\end{equation}

Let $q_m\in H^1_0(\cu_{m+h})$ solve
\begin{equation}\label{eq.clock-resolvent}
	\rho3^{-2m}q_m-\mathcal L^\lambda q_m=\rho3^{-2m}\bigl(e^{-2V}-\E[e^{-2V(0)}]\bigr)
	\qquad\text{in }\cu_{m+h},\qquad q_m=0\quad\text{on }\partial\cu_{m+h}\, .
\end{equation}

The exit time from the cube is
\begin{equation}\label{eq.tau-mh}
	\tau_{m,h}\coloneqq\inf\{t\ge0:X^\lambda(t)\notin\cu_{m+h}\}\, .
\end{equation}
Set $v_m(x)\coloneqq u_m(x)+x$. Since $\mathcal L^\lambda x=\b$, $v_m$ solves
\[
	\rho3^{-2m}v_m-\mathcal L^\lambda v_m=\rho3^{-2m}x
	\quad\text{in }\cu_{m+h},\qquad
	v_m=x+3^m\ahom e_1\quad\text{on }\partial\cu_{m+h}.
\]
The Feynman--Kac formula gives
\begin{equation}\label{eq.FK-resolvent}
	\begin{aligned}
		u_m(x)
		&=\rho 3^{-2m} E^{\lambda,\omega}_x \biggl[\int_{0}^{\tau_{m,h}}e^{-\rho 3^{-2m} t} X^\lambda(t) dt\biggr]\\
		&\quad+E^{\lambda,\omega}_x \Bigl[e^{-\rho 3^{-2m} \tau_{m,h}}\bigl(X^\lambda(\tau_{m,h})+3^m\ahom e_1\bigr)\Bigr]-x\, .
	\end{aligned}
\end{equation}

Substituting the rescaled homogenization error
\begin{equation}\label{def.Um}
	U_m(x)\coloneqq 3^{-m}u_m(x)-\ahom e_1\in H^1_0(\cu_{m+h};\R^d)
\end{equation}
into \eqref{eq.resolvent-equation} produces
\begin{equation}\label{eq.resolvent-equation-Um}
	\rho 3^{-2m}U_m-\tfrac12 e^{-2\lambda x_1}\nabla\cdot(e^{2\lambda x_1}\a\nabla U_m)=\tfrac{3^{-m}}2 e^{-2\lambda x_1}\nabla\cdot(e^{2\lambda x_1}(\a-\ahom))\quad\text{in }\cu_{m+h}\, ,
\end{equation}
or componentwise, for $1\le i\le d$,
\begin{equation}\label{eq.resolvent-equation-Um-expand}
	\rho 3^{-2m}U_{m,i}-\tfrac12\nabla\cdot(\a\nabla U_{m,i})-\lambda(\a e_1)\cdot\nabla U_{m,i}
	=\tfrac{3^{-m}}{2}\nabla\cdot((\a-\ahom)e_i)+\lambda 3^{-m}((\a-\ahom)e_1)_i\, .
\end{equation}

\subsection{Divergence inverses, drift absorption, and interior boundedness}\label{ss.apriori}

	This subsection records basic deterministic and probabilistic estimates used in the resolvent bounds.

	\begin{lemma}\label{lem.divinverses}
		Let $M\in\N$.
		\begin{enumerate}
			\item[\rm(i)] For every $s\in(0,1)$ there exists $C=C(d,s)<\infty$ such that, for every $g\in L^2(\cu_M)$, there exists $G\in H^1(\cu_M;\R^d)$ with $\nabla\cdot G=g$ in $\cu_M$ and
			\begin{equation}\label{eq.bog}
				3^{sM}\|G\|_{\underline H^s(\cu_M)}\le C 3^M\|g\|_{\underline L^2(\cu_M)}\, .
			\end{equation}
			\item[\rm(ii)] For every $s\in(0,1/2]$ there exists $C=C(d,s)<\infty$ such that, for every $g\in\Hminusuls{-s}(\cu_M)$ with $\langle g,\mathbf 1\rangle_{\cu_M}=0$, there exists $G\in H^s(\cu_M;\R^d)$ satisfying 
			\begin{equation}\label{eq.weak-neumann-G}
				\fint_{\cu_M}\nabla\varphi\cdot G=-\langle g,\varphi\rangle_{\cu_M}\qquad\text{for every }\varphi\in C^\infty(\overline{\cu_M})\, ,
			\end{equation}
			and
			\begin{equation}\label{eq.neg-div-inv}
				\|G\|_{\underline L^2(\cu_M)}+3^{sM}[G]_{\underline H^{s}(\cu_M)}\le C 3^{(1-s)M}\|g\|_{\Hminusuls{-s}(\cu_M)}\, .
			\end{equation}
		\end{enumerate}
	\end{lemma}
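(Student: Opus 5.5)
First we reduce both parts to the unit cube $\cu_0$ by the pullback $\tilde f(\tilde x)=f(3^M\tilde x)$, and then use the scaling identities \eqref{eq.pullback-identities}--\eqref{eq.pullback-identities-dual}. If $G$ solves $\nabla\cdot G=g$ on $\cu_M$, then $\tilde G\coloneqq 3^{-M}\widetilde{G}$ satisfies $\nabla\cdot\tilde G=\tilde g$ on $\cu_0$, since $\nabla_{\tilde x}=3^M\nabla_x$. So it suffices to prove the $M=0$ statements with constants $C(d,s)$ and track powers of $3^M$.

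\textbf{Part (i).} On $\cu_0$, extend $\tilde g$ by zero to a larger ball $B$. Solve $\Delta \phi=\tilde g\mathbf 1_{\cu_0}$ in $B$ with $\phi\in H^1_0(B)$ (or in the whole space with a cutoff), and set $\tilde G\coloneqq\nabla\phi|_{\cu_0}$. Elliptic regularity on the smooth domain $B$ gives $\|\phi\|_{H^2(B)}\le C\|\tilde g\|_{L^2}$. Hence $\tilde G\in H^1(\cu_0)$, with $\|\tilde G\|_{H^s(\cu_0)}\le C\|\tilde G\|_{H^1(\cu_0)}\le C\|\tilde g\|_{L^2(\cu_0)}$, using the embedding $H^1\subset H^s$ on the Lipschitz domain $\cu_0$.

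To scale back, put $G(x)\coloneqq 3^M\tilde G(3^{-M}x)$, so that $\nabla\cdot G=g$. By \eqref{eq.pullback-identities}, $\|G\|_{\underline H^s(\cu_M)}=3^{-sM}\cdot 3^M\|\tilde G\|_{H^s(\cu_0)}\le C3^{(1-s)M}\|g\|_{\underline L^2(\cu_M)}$, which is \eqref{eq.bog}.

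\textbf{Part (ii).} We take a weak Neumann approach. On $\cu_0$, $\tilde g\in\hat H^{-s}$ annihilates constants, so we solve the Neumann problem: find $\phi\in H^1(\cu_0)$ with zero mean and $\int\nabla\psi\cdot\nabla\phi=-\langle\tilde g,\psi\rangle$ for all $\psi\in H^1$. We then set $\tilde G=\nabla\phi$; this is exactly \eqref{eq.weak-neumann-G}.

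The key estimate is $\|\nabla\phi\|_{H^s(\cu_0)}\le C\|\tilde g\|_{\hat H^{-s}}$, i.e.\ an $H^{1+s}$ bound for the Neumann problem with data in $(H^s)^*$ with $s\le 1/2$. This is the main obstacle, and I would get it by interpolation. The solution map $\tilde g\mapsto\nabla\phi$ is bounded $(H^1)^*\to L^2$ by Lax--Milgram together with Poincar\'e. It is also bounded $L^2\to H^1$ on the convex domain $\cu_0$: for Neumann data on a cube, the $H^2$ estimate holds by reflection/convexity (Grisvard). Restricted to mean-zero data, complex interpolation between $(H^1)^*$ and $L^2$ gives $(H^s)^*$. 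This identification is valid precisely for $s\le1/2$, where $H^s(\cu_0)=H^s_0$-type issues do not arise, which is why the range $s\in(0,1/2]$ appears. On the output side, interpolating $L^2$ and $H^1$ gives $H^s$.

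An alternative that avoids interpolation of duals is to reflect evenly across the faces to the torus $3\cu_0$. There the solution is an explicit Fourier multiplier $\nabla\Delta^{-1}$, which maps $H^{-s}\to H^{s}$ componentwise for all $s$. The Neumann condition is encoded by even reflection, and the dual pairing against reflected test functions is controlled since even extension is bounded on $H^s$ for $s<3/2$.

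Finally we rescale. Set $G(x)=3^M\tilde G(3^{-M}x)$ and check \eqref{eq.weak-neumann-G} by change of variables. By \eqref{eq.pullback-identities-dual}, $\|\tilde g\|_{\hat H^{-s}(\cu_0)}=3^{-sM}\|g\|_{\Hminusuls{-s}(\cu_M)}$. Then \eqref{eq.pullback-identities} gives $\|G\|_{\underline L^2}\le C3^{M}3^{-sM}\|g\|$ and $3^{sM}[G]_{\underline H^s}=3^M[\tilde G]_{H^s}\le C3^{(1-s)M}\|g\|$, which is \eqref{eq.neg-div-inv}.
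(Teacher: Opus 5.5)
Your argument is correct. In Part~(ii) you follow essentially the paper's route: Lax--Milgram for the zero-mean Neumann problem on $\cu_0$, $H^2$ Neumann regularity on the convex cube, interpolation, then rescaling through \eqref{eq.pullback-identities}--\eqref{eq.pullback-identities-dual}.

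Part~(i) takes a genuinely different route. The paper splits $g$ into its mean, absorbed by the affine field $(g)_{\cu_M}x_1e_1$, and a mean-zero remainder, inverted by the Bogovski\u\i\ operator on the cube with an $H^1_0$ output. You instead extend by zero, solve the Dirichlet Poisson problem on an enclosing ball, and take $G=\nabla\phi$. Your version needs only $H^2$ regularity on a smooth domain and no compatibility condition, so the mean/mean-zero split disappears. The paper's version works intrinsically on the cube and gives vanishing trace for the mean-zero part, but the lemma never uses that property, and the affine correction removes it anyway. Both then finish with the same $H^1\subset H^s$ embedding and the same scaling.

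One correction in Part~(ii): your interpolation parameters do not match. At the parameter where the input space is $(H^s)^*=[(H^1)^*,L^2]_{1-s}$, the output space is $[L^2,H^1]_{1-s}=H^{1-s}$, not $H^s$. What interpolation actually gives is $\|\nabla\phi\|_{H^{1-s}(\cu_0)}\le C\|\tilde g\|_{\hat H^{-s}(\cu_0)}$, i.e.\ $\phi\in H^{2-s}$ as in the paper, and you must then invoke $H^{1-s}(\cu_0)\subset H^{s}(\cu_0)$. That embedding is exactly where $s\le 1/2$ enters. It is not an $H^s$ versus $H^s_0$ issue, since $[(H^1)^*,L^2]_\theta=(H^{1-\theta})^*$ for every $\theta\in(0,1)$. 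The same applies to your torus alternative: $\nabla\Delta^{-1}$ maps $H^{-s}$ into $H^{1-s}$, which lies inside $H^s$ only for $s\le1/2$. (Also, the even-reflection torus has side $2$, not $3$.) With this one-line fix your key estimate holds and the rescaling is correct.
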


	\begin{proof}
		\emph{Part (i).} Let $A(x)\coloneqq(\fint_{\cu_M}g)x_1e_1$ and $g_0\coloneqq g-(g)_{\cu_M}$. The Bogovski\u\i\ operator on cubes \citep[Section~III.3]{galdi2011introduction}, rescaled from $\cu_0$ to $\cu_M$, gives $B\in H^1_0(\cu_M;\R^d)$ with $\nabla\cdot B=g_0$ and
		\[
			\|B\|_{\underline L^2(\cu_M)}+3^M\|\nabla B\|_{\underline L^2(\cu_M)}\le C3^M\|g_0\|_{\underline L^2(\cu_M)}\, .
		\]
		The affine field satisfies the same bound with $B$ and $g_0$ replaced by $A$ and $(g)_{\cu_M}$. For $G=A+B$, interpolation and the pullback identities \eqref{eq.pullback-identities} give \eqref{eq.bog}.

		\emph{Part (ii).} Let $\tilde g\in \hat H^{-s}(\cu_0)$ be defined by $\langle\tilde g,\tilde\varphi\rangle_{\cu_0}=\langle g,\varphi\rangle_{\cu_M}$ whenever $\tilde\varphi(\tilde x)=\varphi(3^M\tilde x)$; by \eqref{eq.pullback-identities-dual}, $\|\tilde g\|_{\hat H^{-s}(\cu_0)}=3^{-sM}\|g\|_{\Hminusuls{-s}(\cu_M)}$. On the unit cube, Lax--Milgram gives the zero-average solution $v$ of the Neumann problem with datum $\tilde g$. The regularity estimate obtained from $H^2$ Neumann regularity on cubes \citep[Lemma~B.19]{AKM2019book} and interpolation give
		\[
			\|v\|_{H^{2-s}(\cu_0)}\le C\|\tilde g\|_{\hat H^{-s}(\cu_0)},\qquad
			\int_{\cu_0}\nabla v\cdot\nabla\varphi=-\langle \tilde g,\varphi\rangle_{\cu_0}\qquad(\varphi\in H^1(\cu_0))\, .
		\]
		Set $G_0=\nabla v$, use $H^{1-s}(\cu_0)\subset H^s(\cu_0)$ for $s\le1/2$, and scale back with $G(x)=3^M G_0(3^{-M}x)$. The weak identity \eqref{eq.weak-neumann-G} and the bound \eqref{eq.neg-div-inv} follow from \eqref{eq.pullback-identities}--\eqref{eq.pullback-identities-dual}.
	\end{proof}

	The centered clock source $e^{-2V}-\E[e^{-2V(0)}]$ is small in the negative Sobolev norm at large scales.

	\begin{lemma}[Negative Sobolev bound on the centered clock source]\label{lem.clock-negnorm}
		Let $s\in(0,1/2]\cap(0,d/2)$ and $\varepsilon>0$. There exists $C_\varepsilon=C_\varepsilon(d,\Lambda,s,\varepsilon,r_0)<\infty$ such that, for every integer $N\ge0$,
		\begin{equation}\label{eq.clock-neg}
			\Q\Bigl(\text{there is an integer }M\ge N\text{ with }\|e^{-2V}-\E[e^{-2V(0)}]\|_{\Hminusuls{-s}(\cu_M)}>3^{\varepsilon M}\Bigr)\le C_\varepsilon 3^{-2\varepsilon N}\, .
		\end{equation}
		Moreover, for $\Q$-a.e.\ $\omega$, the bound $\|e^{-2V}-\E[e^{-2V(0)}]\|_{\Hminusuls{-s}(\cu_M)}\le 3^{\varepsilon M}$ holds for all sufficiently large integers $M$.
	\end{lemma}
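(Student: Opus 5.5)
We bound the $\Hminusuls{-s}(\cu_M)$ norm of the centered source $g\coloneqq e^{-2V}-\E[e^{-2V(0)}]$ by a weighted sum of its averages over triadic subcubes. A second-moment computation using finite range of dependence then controls these averages. Finally, Chebyshev's inequality, a union bound over $M\ge N$, and Borel--Cantelli give both conclusions.

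\emph{Step 1: multiscale bound.} We use the standard multiscale Poincaré-type estimate for negative norms, proved by duality against the test function $\varphi\in\underline H^s(\cu_M)$ with $\|\varphi\|_{\underline H^s(\cu_M)}\le1$. Decompose $\cu_M$ into the triadic subcubes $z+\cu_n$, $0\le n\le M$. Write $\fint_{\cu_M} g\varphi$ as a telescoping sum. At the finest level ($n=0$, unit cubes), $g$ is bounded by $\Lambda$ and $\varphi$ has $\underline L^2$ size $3^{sM}$ in the normalization $|U|^{-s/d}$, so this level contributes $C$. At level $n$, the contribution is a pairing between the averages $(g)_{z+\cu_n}$ and the differences $(\varphi)_{z+\cu_n}-(\varphi)_{z'+\cu_{n+1}}$. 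The latter are bounded in $\ell^2$ by $C3^{sn}[\varphi]_{\underline H^s(\cu_M)}$. This yields
\[
\|g\|_{\Hminusuls{-s}(\cu_M)}\le C+C\sum_{n=0}^{M}3^{sn}\Bigl(\,3^{-d(M-n)}\!\!\sum_{z\in3^n\ZZ^d\cap\cu_M}|(g)_{z+\cu_n}|^2\Bigr)^{1/2},
\]
together with a top-level term $3^{sM}|(g)_{\cu_M}|$. This term comes from the zeroth-order part of the norm: $|(\varphi)_{\cu_M}|\le 3^{sM}\|\varphi\|$.

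\emph{Step 2: moment bounds.} The field $g$ is stationary, bounded, mean zero and $r_0$-dependent. Hence $\E|(g)_{z+\cu_n}|^2\le C3^{-dn}$. Taking expectations and using Jensen on the square root, the expectation of the right side of Step 1 is at most
\[
C+C\sum_{n\le M}3^{(s-d/2)n}+C3^{(s-d/2)M}\le C,
\]
because $s<d/2$. So $\E\|g\|_{\Hminusuls{-s}(\cu_M)}\le C$ uniformly in $M$; a second-moment version gives the same bound for $\E\|g\|^2$.

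\emph{Step 3: probability bounds.} Chebyshev's inequality gives
\[
\Q\bigl(\|g\|_{\Hminusuls{-s}(\cu_M)}>3^{\varepsilon M}\bigr)\le C3^{-2\varepsilon M}.
\]
Summing over $M\ge N$ gives \eqref{eq.clock-neg} with $C_\varepsilon=C/(1-3^{-2\varepsilon})$. Since these probabilities are summable in $M$, Borel--Cantelli yields the almost-sure statement.

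The main obstacle is Step 1: one must check carefully that the volume-normalized $\underline W^{s,2}$ seminorm controls the level-$n$ differences of averages with the factor $3^{sn}$. This uses a Poincaré estimate on each cube at scale $3^{n+1}$, giving $\fint_{\cu}|\varphi-(\varphi)_{\cu}|^2\le C3^{2s(n+1)}[\varphi]^2$ summed over cubes, and so requires only $s<1$. If this multiscale lemma proves awkward, an alternative is to go to the unit cube via \eqref{eq.pullback-identities-dual} and bound $\|\tilde g\|_{\hat H^{-s}}$ by the $L^2$ norm of a mollification of $\tilde g$ at each dyadic scale. The same second-moment computation then applies there.
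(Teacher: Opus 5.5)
Your proposal is correct and follows essentially the same route as the paper. The paper bounds the $\Hminusuls{-s}(\cu_M)$ norm by triadic cube averages, importing this multiscale estimate from the Armstrong--Kuusi book together with the fractional Poincaré inequality, whereas you derive it by telescoping. It then uses the finite-range variance bound $\E_\Q[(F)_{y+\cu_n}^2]\le C3^{-dn}$ summed using $s<d/2$, Minkowski for the second moment, and finally Markov's inequality with a union bound over $M\ge N$, exactly as you do.

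One small correction to your Step 1. Write $\varphi_0$ for the piecewise average of $\varphi$ on unit cubes; the finest-level remainder $\fint_{\cu_M} g(\varphi-\varphi_0)$ must be bounded via the fractional Poincaré inequality on unit cubes, $\|\varphi-\varphi_0\|_{\underline L^2(\cu_M)}\le C[\varphi]_{\underline H^s(\cu_M)}$. This is the source of the $\|F\|_{\underline L^2(\cu_M)}$ term in the paper's display. Bounding it by the $\underline L^2$ size of $\varphi$, as you wrote, would only give $C3^{sM}$ rather than $C$.
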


	\begin{proof}
		Fix $M\ge0$ and let $F=e^{-2V}-\E[e^{-2V(0)}]$. For $0\le n\le M$, partitioning $\cu_M$ into triadic subcubes $y+\cu_n$ with centers ranging over $3^n\ZZ^d\cap\cu_M$, Proposition~1.10, display~(1.84), of \citet{AK_book}, applied to $H^s$ test functions and combined with the fractional Poincar\'e inequality on cubes \citep[Proposition~B.11]{AKM2019book}, gives
		\[
			\|F\|_{\Hminusuls{-s}(\cu_M)}
			\le C\left(\|F\|_{\underline L^2(\cu_M)}+\sum_{n=0}^M3^{sn}\left(3^{-(M-n)d}\sum_{y\in 3^n\ZZ^d\cap\cu_M}|(F)_{y+\cu_n}|^2\right)^{1/2}\right)\, .
		\]
		Since $F$ is bounded, the Minkowski inequality gives
		\[
		\begin{aligned}
		\left(\E_\Q\bigl[\|F\|_{\Hminusuls{-s}(\cu_M)}^2\bigr]\right)^{1/2}
		&\le C\left(1+\sum_{n=0}^M3^{sn}\left(3^{-(M-n)d}\sum_{y\in 3^n\ZZ^d\cap\cu_M}\E_\Q\bigl[(F)_{y+\cu_n}^2\bigr]\right)^{1/2}\right)\\
		&\le C\left(1+\sum_{n=0}^M3^{(s-d/2)n}\right)\le C\, ,
		\end{aligned}
		\]
		where the second inequality follows from $\E_\Q[(F)_{y+\cu_n}^2]\le C3^{-dn}$. Indeed,
		\[
			\E_\Q[(F)_{y+\cu_n}^2]
			=|{\cu_n}|^{-2}\int_{y+\cu_n}\int_{y+\cu_n}\E_\Q[F(x)F(z)] dx dz
			\le C|{\cu_n}|^{-1}
			=C3^{-dn}\, ,
		\]
		because the covariance vanishes when $|x-z|>r_0$ and $F$ is bounded. Markov's inequality and a union bound over $M\ge N$ give \eqref{eq.clock-neg}; the almost-sure eventual bound follows by letting $N\to\infty$.
	\end{proof}

\begin{lemma}[Tail of $\X_\mu$]\label{lem.m-mu-tail}
	Let $\X_\mu(\omega)$ be the smallest integer $n\ge \X_{\beta_h}(\omega)$ satisfying
	\begin{equation}\label{eq.m-mu}
		\|e^{-2V}-\E[e^{-2V(0)}]\|_{\Hminusuls{-1/6}(\cu_M)}\le 3^{M/24}\qquad\text{for every integer }M\ge n\, .
	\end{equation}
	There exist deterministic $C<\infty$ and $c>0$ such that
	\begin{equation}\label{eq.m-mu-tail}
		\Q(\X_\mu\ge M)\le C\exp(-cM^{\sigma})\qquad\text{for every }M\ge 0\, ,
	\end{equation}
	with $\sigma\in(0,1)$ as in \eqref{eq.m-beta0-tail}.
\end{lemma}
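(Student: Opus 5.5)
The plan is to split the event $\{\X_\mu\ge M\}$ according to which of the two conditions in the definition of $\X_\mu$ pushes it past $M$, and to bound each piece with an earlier tail estimate. By definition, $\X_\mu$ is the maximum of $\X_{\beta_h}$ (rounded up to an integer) and the smallest integer $n_\mu$ from which \eqref{eq.m-mu} holds for all $M'\ge n_\mu$. Lemma~\ref{lem.clock-negnorm} shows that $n_\mu$ is almost surely finite. Hence, for every $M\ge1$,
\[
	\{\X_\mu\ge M\}\subseteq\{\X_{\beta_h}> M-1\}\cup\Bigl\{\text{there is an integer }M'\ge M-1\text{ with }\|e^{-2V}-\E[e^{-2V(0)}]\|_{\Hminusuls{-1/6}(\cu_{M'})}>3^{M'/24}\Bigr\}\, .
\]
If $n_\mu\ge M$, then \eqref{eq.m-mu} fails at $n=M-1$, which yields a bad scale $M'\ge M-1$.

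The first event is controlled directly by \eqref{eq.m-beta0-tail}: its probability is at most $C\exp(-c(M-1)^\sigma)\le C'\exp(-cM^\sigma)$. For the second event, apply Lemma~\ref{lem.clock-negnorm} with $s=1/6$. This choice is admissible because $1/6\in(0,1/2]\cap(0,d/2)$. Take $\varepsilon=1/24$ and $N=M-1$. The lemma bounds the probability by $C3^{-(M-1)/12}$. Since $\sigma<1$, we have $3^{-M/12}=\exp(-(\log 3/12)M)\le\exp(-cM^\sigma)$ for $c\le\log3/12$ and $M\ge1$. So the polynomial-in-$3^{-M}$ (exponential-in-$M$) bound dominates the stretched-exponential one.

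A union bound then gives \eqref{eq.m-mu-tail} for $M\ge1$, after shrinking $c$ and enlarging $C$. For $M\in[0,1)$ the bound is trivial once $C\ge e^{c}$. There is no real obstacle here. The only point needing care is the off-by-one bookkeeping between "the smallest $n$ with the property for all $M'\ge n$" and the existence of a bad scale $M'\ge N$. This is handled by the inclusion above and by absorbing the shift $M\mapsto M-1$ into the constants.
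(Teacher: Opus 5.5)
Your proposal is correct and follows the paper's proof essentially verbatim. It uses the same inclusion of $\{\X_\mu\ge M\}$ into an $\X_{\beta_h}$-tail event and a bad-scale event at $M-1$. It bounds these by \eqref{eq.m-beta0-tail} and by Lemma~\ref{lem.clock-negnorm} with $s=1/6$, $\varepsilon=1/24$, and then absorbs $3^{-(M-1)/12}$ and the shift $M\mapsto M-1$ into the stretched-exponential constants. The only cosmetic point is that Lemma~\ref{lem.clock-negnorm} is stated for integer $N$. For non-integer $M$ you should therefore apply it with $N=\lceil M\rceil-1$, which changes nothing.
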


\begin{proof}
	For every integer $N\ge1$,
	\[
	\begin{aligned}
		\{\X_\mu\ge N\}\subseteq{}& \{\X_{\beta_h}\ge N-1\}\\
			&\cup
			\left\{
			\begin{aligned}
			&\text{there is an integer }K\ge N-1\text{ such that}\\
			&\|e^{-2V}-\E[e^{-2V(0)}]\|_{\Hminusuls{-1/6}(\cu_K)}>3^{K/24}
			\end{aligned}
			\right\}\, .
	\end{aligned}
	\]
	Combined with \eqref{eq.clock-neg} and \eqref{eq.m-beta0-tail}, this gives
	\[
		\Q(\X_\mu\ge N)\le C\exp(-c(N-1)^\sigma)+C 3^{-(N-1)/12}\le C\exp(-cN^{\sigma})
	\]
	after decreasing $c$ and absorbing constants. 
\end{proof}

For the rest of this section, $\beta_h$ is the exponent fixed in Proposition~\ref{pro.ak-inputs}, and
\begin{equation}\label{eq.eta-delta-def}
	\delta\coloneqq 2\beta_h/(d+4)\, .
\end{equation}
Let $\zeta_H=\zeta_H(d,\beta_h)\in(0,1)$ be a constant small enough that
\begin{align*}
	\tfrac{7\delta}{24}+\max(0,\tfrac16-\beta_h)\zeta_H&<\beta_h\, ,\\
	\tfrac{19\delta}{24}-\beta_h+\bigl(\tfrac76+\max(0,\tfrac16-\beta_h)-\tfrac16\bigr)\zeta_H&<0\, ,\\
	\tfrac{7\delta}{12}-2\beta_h+(2+\max(0,\tfrac13-2\beta_h))\zeta_H&<0
\end{align*}
are satisfied; this is possible since $\delta=2\beta_h/(d+4)$ gives $7\delta/24<\beta_h$, $19\delta/24<\beta_h$, and $7\delta/12<2\beta_h$. We consider only parameters $m,h\in\N$ and $\rho\ge 1$ in the admissible regime
\begin{equation}\label{eq.admissible}
	m\ge\X_{\beta_h}(\omega),\qquad h\le\zeta_H m,\qquad 1\le\rho\le 3^{\delta m/2}\, ,
\end{equation}
and we write $\lambda=\rho 3^{-m}$ and $M=m+h$. We allow deterministic constants to depend on $(d,\Lambda,r_0,\beta_h)$.

	\begin{lemma}\label{lem.drift-bound}
		There exists $C<\infty$ such that, for $\Q$-a.e.\ $\omega$ and every $(m,h,\rho)$ in the admissible regime~\eqref{eq.admissible}, the following holds. Suppose that $z=(z_1,\ldots,z_d)\in H^1_0(\cu_M;\R^d)$ solves, componentwise in $\cu_M$,
		\begin{equation}\label{eq.drift-bound-pde}
			-\nabla\cdot(\a\nabla z_i)
			=2\lambda 3^{-m} e_1 \cdot(\a\nabla w_{i,M}-\ahom e_i)-2\rho 3^{-3m}(w_{i,M}-x_i)+2\lambda(\a e_1) \cdot \nabla z_i-2\rho 3^{-2m}z_i\, .
		\end{equation}
		Then the drift integral satisfies
		\begin{equation}\label{eq.drift-bound-conclusion}
			\begin{aligned}
			\left|\lambda\sum_{i=1}^d\fint_{\cu_M} z_i\bigl((\a-\ahom)e_1\bigr)\cdot\nabla z_i\right|
			&\le \frac{1}{16}\left(\rho 3^{-2m}\|z\|_{\underline L^2(\cu_M)}^2
			+\frac12\sum_{i=1}^d\fint_{\cu_M}\nabla z_i\cdot\a\nabla z_i\right)\\
			&\quad+C \rho^2 3^{2h-2m} 3^{-2\beta_h M}\, .
			\end{aligned}
		\end{equation}
	\end{lemma}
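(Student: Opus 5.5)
The plan is to estimate the drift integral by duality, treating $(\a-\ahom)e_1$ as a field that is small in the negative norm $\Hminusuls{-1/6}(\cu_M)$, and the products $z_i\nabla z_i$ as bounded in the dual positive norm $\underline H^{1/6}$. The difficulty is that $(\a-\ahom)e_1$ is not small pointwise. Proposition~\ref{pro.ak-inputs} gives smallness only for fluxes of solutions. So I would first rewrite the integrand with the correctors.

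\textbf{Step 1 (corrector substitution).} Write $z_i\bigl((\a-\ahom)e_1\bigr)\cdot\nabla z_i=\tfrac12(\a-\ahom)e_1\cdot\nabla(z_i^2)$. Since $\a$ is symmetric,
\[
e_1\cdot(\a-\ahom)\nabla(z_i^2)=(\a\nabla w_{1,M}-\ahom e_1)\cdot\nabla(z_i^2)-\a\nabla(w_{1,M}-x_1)\cdot\nabla(z_i^2).
\]
The second term is handled by integrating by parts against the corrector equation \eqref{eq.ak-corrector-pde}. We have $z_i^2\in W^{1,1}_0$, and $-\nabla\cdot(\a\nabla w_{1,M})=0$ while $\nabla\cdot(\a e_1)$ is bounded. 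This converts that term into $\fint(w_{1,M}-x_1)\nabla\cdot(\a\nabla z_i^2)$. I would rather move the derivatives onto $z_i$ using the equation \eqref{eq.drift-bound-pde} for $z_i$, writing $\nabla\cdot(\a\nabla z_i^2)=2z_i\nabla\cdot(\a\nabla z_i)+2\nabla z_i\cdot\a\nabla z_i$. This produces:
\begin{itemize}
\item terms $\fint (w_{1,M}-x_1)\,\nabla z_i\cdot\a\nabla z_i$, which are small because $3^{-M}\|w_{1,M}-x_1\|_{\underline L^2}\le C3^{-\beta_hM}$, after pairing with $\lambda$ (recall $\lambda 3^M=\rho3^h$);
\item terms involving the right-hand side of \eqref{eq.drift-bound-pde}.
\end{itemize}
To avoid needing $L^\infty$ bounds on $w-x$, I would alternatively keep the form $\a\nabla(w_{1,M}-x_1)\cdot\nabla(z_i^2)$ and bound it in $\Hminusuls{-1/6}\times\underline H^{1/6}$ as well, using the middle term of \eqref{eq.hc} together with the Lipschitz bound on $\a$.

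\textbf{Step 2 (duality bound).} Using \eqref{eq.hc}, the flux term is controlled by
\[
\|\a\nabla w_{1,M}-\ahom e_1\|_{\Hminusuls{-1/6}}\,\|z_i\nabla z_i\|_{\underline W^{1/6,2}}\le C3^{M/6-\beta_hM}\,\|z_i\nabla z_i\|.
\]
To bound $z_i\nabla z_i$ in a fractional norm, I need $L^\infty$ control of $z$ and higher integrability of $\nabla z$. I would get these from the equation itself via De Giorgi / Meyers estimates. Alternatively, the cleaner route is to write $\nabla(z_i^2)$ and apply \eqref{eq.cg-RHS} to $z_i$ viewed as a solution of $-\nabla\cdot(\a\nabla z_i)=\nabla\cdot f$. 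Here $f$ is obtained from the right-hand side via Lemma~\ref{lem.divinverses}:
\begin{itemize}
\item part (ii) for the $\Hminusuls{-1/6}$ flux source,
\item part (i) for the $L^2$ sources $\rho3^{-3m}(w-x)$, $\lambda\a e_1\cdot\nabla z_i$ and $\rho3^{-2m}z_i$.
\end{itemize}
This gives $3^{-M/6}\|(\a-\ahom)\nabla z_i\|_{\Hminusuls{-1/6}}\le C3^{-\beta_hM}(\|\nabla z\|_{\underline L^2}+\text{data})$. One then pairs with $\lambda z_i\in\underline H^{1/6}$. This pairing controls $\fint \lambda z_i(\a-\ahom)\nabla z_i\cdot e_1$ after symmetrizing, since $\ahom$ and $\a$ are symmetric.

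\textbf{Step 3 (closing).} With $X:=\rho3^{-2m}\|z\|_{\underline L^2}^2+\|\nabla z\|_{\underline L^2}^2$, the bound takes the form
\[
\lambda\,3^{M/6}\|z\|_{\underline H^{1/6}}\cdot 3^{-\beta_hM}\bigl(\|\nabla z\|+\text{data}\bigr).
\]
By Poincar\'e on $\cu_M$, $\|z\|_{\underline H^{1/6}}\lesssim 3^{(5/6)M}\|\nabla z\|$, so the prefactor is $\lambda3^M3^{-\beta_hM}=\rho3^h3^{-\beta_hM}$. The admissible regime \eqref{eq.admissible} gives $h\le\zeta_Hm$ and $\rho\le3^{\delta m/2}$, with $\delta,\zeta_H$ chosen so that this is $\le1/100$. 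Young's inequality then yields the $\tfrac1{16}X$ term. The data contributions, of size $\lambda3^{-m}3^{M}3^{-\beta_hM}$ after the divergence inverses, give the additive term $C\rho^23^{2h-2m}3^{-2\beta_hM}$.

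The main obstacle is the self-referential term $2\lambda\a e_1\cdot\nabla z_i$ on the right of \eqref{eq.drift-bound-pde}. Its divergence inverse costs a factor $3^M\lambda=\rho3^h$, which is not small. I expect this loss to be absorbed only because it is multiplied by $3^{-\beta_hM}$. This is exactly where the constraints defining $\zeta_H$ enter.
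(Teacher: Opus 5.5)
Your ``cleaner route'' (second half of Step~2, closed in Step~3) is essentially the paper's proof: divergence inverses of the right-hand side of \eqref{eq.drift-bound-pde} via Lemma~\ref{lem.divinverses}, the coarse-graining bound \eqref{eq.cg-RHS} applied to $z$, duality of $(\a-\ahom)\nabla z_i$ against $\lambda z_i e_1$ in $\underline H^{1/6}$, and absorption using $\rho\le 3^{\delta m/2}$ and $h\le\zeta_H m$; Step~1 and the De~Giorgi/Meyers detour can be dropped, since $L^\infty$ and higher-integrability bounds would not put $z_i\nabla z_i$ in $\underline H^{1/6}$ anyway. Two details need fixing: Lemma~\ref{lem.divinverses}(ii) requires a mean-zero source, so you must first subtract the average $\bar c_i$ of $e_1\cdot(\a\nabla w_{i,M}-\ahom e_i)$ (at most $C3^{-\beta_h M}$ by \eqref{eq.hc} tested against constants) and treat it with part~(i); and your Poincar\'e bound $\|z\|_{\underline H^{1/6}}\lesssim 3^{5M/6}\|\nabla z\|_{\underline L^2}$ is cruder than the paper's interpolation $\|z\|_{\underline H^{1/6}}\lesssim\|z\|_{\underline L^2}^{5/6}\|\nabla z\|_{\underline L^2}^{1/6}$, so the self-referential term gets prefactor $\rho^2 3^{2h-\beta_h M}$ and closing needs $\delta-\beta_h+(2-\beta_h)\zeta_H<0$, which the paper's three conditions on $\zeta_H$ do not imply --- harmless, but $\zeta_H$ must then be taken smaller.
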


	\begin{proof}
		Let
		\[
			N_z=\rho^{1/2}3^{-m}\|z\|_{\underline L^2(\cu_M)},\qquad
			E_z^2=\tfrac12\sum_{i=1}^d\fint_{\cu_M}\nabla z_i\cdot\a\nabla z_i\, ,
		\]
		\[
			A_z\coloneqq\lambda\sum_{i=1}^d\fint_{\cu_M} z_i\bigl((\a-\ahom)e_1\bigr)\cdot\nabla z_i\, .
		\]
			Write $\mathsf E_z\coloneqq N_z^2+E_z^2$. In this proof, constants denoted by $C$ depend only on $(d,\Lambda,r_0,\beta_h)$. The three inequalities defining $\zeta_H$ are used in the absorption argument of Step~4.

		\emph{Step 1.} Denote the cube average
		\[
			\bar c_i\coloneqq e_1\cdot\fint_{\cu_M}(\a\nabla w_{i,M}-\ahom e_i)\, .
		\]
		By duality with the constant test function, and since
		$\|1\|_{\underline H^{1/6}(\cu_M)}=3^{-M/6}$, the estimate \eqref{eq.hc}
		gives, for $M\ge \X_{\beta_h}(\omega)$,
		\begin{equation}\label{eq.drift-cbar-bound}
			|\bar c_i|\le 3^{-M/6}\|\a\nabla w_{i,M}-\ahom e_i\|_{\Hminusuls{-1/6}(\cu_M)}\le C3^{-\beta_h M}\, .
		\end{equation}

		\emph{Step 2.} We rewrite the right-hand side of \eqref{eq.drift-bound-pde} in divergence form. Split it as $g^{(L^2)}_i+g^{(\mathrm{flux})}_i$ with
		\[
		\begin{aligned}
			g^{(L^2)}_i&\coloneqq-2\rho 3^{-3m}(w_{i,M}-x_i)+2\lambda(\a e_1)\cdot\nabla z_i-2\rho 3^{-2m}z_i+2\lambda 3^{-m}\bar c_i\, ,\\
			g^{(\mathrm{flux})}_i&\coloneqq 2\lambda 3^{-m}\bigl(e_1\cdot(\a\nabla w_{i,M}-\ahom e_i)-\bar c_i\bigr)\, .
		\end{aligned}
		\]
		Lemma~\ref{lem.divinverses}(i) applied to $g^{(L^2)}_i\in L^2(\cu_M)$ produces $G^{(L^2)}_i\in H^1(\cu_M;\R^d)$ with $\nabla\cdot G^{(L^2)}_i=g^{(L^2)}_i$ and
		\begin{equation}\label{eq.drift-bog-bound}
			3^{M/6}[G^{(L^2)}_i]_{\underline H^{1/6}(\cu_M)}\le C 3^M \|g^{(L^2)}_i\|_{\underline L^2(\cu_M)}\, .
		\end{equation}
		The construction makes $g^{(\mathrm{flux})}_i$ mean-zero, so Lemma~\ref{lem.divinverses}(ii) (with $s=1/6$) yields $G^{(\mathrm{flux})}_i\in H^{1/6}(\cu_M;\R^d)$ with $\nabla\cdot G^{(\mathrm{flux})}_i=g^{(\mathrm{flux})}_i$ and
		\begin{equation}\label{eq.drift-negdiv-bound}
			\|G^{(\mathrm{flux})}_i\|_{\underline L^2(\cu_M)}+3^{M/6}[G^{(\mathrm{flux})}_i]_{\underline H^{1/6}(\cu_M)}\le C 3^{5M/6} \|g^{(\mathrm{flux})}_i\|_{\Hminusuls{-1/6}(\cu_M)}\, .
		\end{equation}
		Setting $f_i\coloneqq G^{(L^2)}_i+G^{(\mathrm{flux})}_i\in H^{1/6}(\cu_M;\R^d)$, we have $-\nabla\cdot(\a\nabla z_i)=\nabla\cdot f_i$ componentwise.

		\emph{Step 3.} We bound the source $3^{M/6}[f_i]_{\underline H^{1/6}}$. Each term of $g^{(L^2)}_i$ is controlled directly:
		\begin{align*}
			2\rho 3^{-3m}\|w_{i,M}-x_i\|_{\underline L^2}&\le C\rho 3^{-2m+h}3^{-\beta_h M}&&\text{by \eqref{eq.hc},}\\
			2\lambda\|\a e_1\cdot\nabla z_i\|_{\underline L^2}&\le C\rho 3^{-m}E_z&&\text{by ellipticity and }\lambda=\rho 3^{-m}\, ,\\
			2\rho 3^{-2m}\|z_i\|_{\underline L^2}&\le 2\rho^{1/2}3^{-m}N_z&&\text{by the definition of }N_z\, ,\\
			2\lambda 3^{-m}|\bar c_i|&\le C\rho 3^{-2m}3^{-\beta_h M}&&\text{by \eqref{eq.drift-cbar-bound}.}
		\end{align*}
		Summing and inserting into \eqref{eq.drift-bog-bound} (with $3^M=3^{m+h}$) yields
		\begin{equation}\label{eq.drift-G-L2-Hs0}
			3^{M/6}[G^{(L^2)}_i]_{\underline H^{1/6}}\le C\bigl(\rho^{1/2}3^h N_z+\rho 3^h E_z+\rho 3^{2h-m}3^{-\beta_h M}\bigr)\, .
		\end{equation}

		\smallskip

			For every $w\in H^{1/6}(\cu_M)$, $|\fint_{\cu_M}w|\le\|w\|_{\underline L^2(\cu_M)}\le 3^{M/6}\|w\|_{\underline H^{1/6}(\cu_M)}$. Hence $\|1\|_{\Hminusuls{-1/6}(\cu_M)}\le 3^{M/6}$ and
		\[
			\|g^{(\mathrm{flux})}_i\|_{\Hminusuls{-1/6}}\le 2\lambda 3^{-m}\bigl(\|\a\nabla w_{i,M}-\ahom e_i\|_{\Hminusuls{-1/6}}+|\bar c_i| 3^{M/6}\bigr)\le C\rho 3^{-2m+M/6}3^{-\beta_h M}\, .
		\]
		Inserting into \eqref{eq.drift-negdiv-bound} and using $5M/6+M/6=M=m+h$,
		\begin{equation}\label{eq.drift-G-F-Hs0}
			3^{M/6}[G^{(\mathrm{flux})}_i]_{\underline H^{1/6}}\le C\rho 3^{h-m}3^{-\beta_h M}\, .
		\end{equation}
		Adding \eqref{eq.drift-G-L2-Hs0} and \eqref{eq.drift-G-F-Hs0} (and absorbing $\rho 3^{h-m}3^{-\beta_h M}\le\rho 3^{2h-m}3^{-\beta_h M}$),
		\[
			3^{M/6}[f_i]_{\underline H^{1/6}}\le C\bigl(\rho^{1/2}3^h N_z+\rho 3^h E_z+\rho 3^{2h-m}3^{-\beta_h M}\bigr)\, .
		\]

		\emph{Step 4.} We bound $|A_z|$ by combining the coarse-graining estimate with Sobolev interpolation. The estimate \eqref{eq.cg-RHS} applied to $z$ on $\cu_M$ gives, for $M\ge \X_{\beta_h}(\omega)$,
		\begin{equation}\label{eq.drift-cg-bound}
			\|(\a-\ahom)\nabla z\|_{\Hminusuls{-1/6}(\cu_M)}\le C 3^{(1/6-\beta_h)M}\Bigl(\sqrt{2\Lambda} E_z+\rho^{1/2}3^h N_z+\rho 3^h E_z+\rho 3^{2h-m}3^{-\beta_h M}\Bigr)\, .
		\end{equation}
		By the symmetry of $\a$ and $\ahom$ and duality,
		\begin{equation}\label{eq.drift-Az-duality}
			|A_z|\le \lambda \|z\|_{\underline H^{1/6}(\cu_M)} \|(\a-\ahom)\nabla z\|_{\Hminusuls{-1/6}(\cu_M)}\, .
		\end{equation}
		Interpolation gives
		\begin{equation}\label{eq.drift-interp}
			\|z\|_{\underline H^{1/6}(\cu_M)}\le C \|z\|_{\underline L^2(\cu_M)}^{5/6} \|\nabla z\|_{\underline L^2(\cu_M)}^{1/6}\qquad\text{for }z\in H^1_0(\cu_M;\R^d)\, .
		\end{equation}
		Substituting $\|z\|_{\underline L^2(\cu_M)}=\rho^{-1/2}3^m N_z$ and $\|\nabla z\|_{\underline L^2(\cu_M)}\le\sqrt{2\Lambda}E_z$,
		\begin{equation}\label{eq.drift-z-Hs0}
			\lambda \|z\|_{\underline H^{1/6}(\cu_M)}\le C\rho^{7/12} 3^{-m/6} N_z^{5/6} E_z^{1/6}\, .
		\end{equation}

		\emph{Step 5.} We absorb the four resulting terms into $\mathsf E_z$ via Young's inequality. Combining \eqref{eq.drift-Az-duality} with the bounds \eqref{eq.drift-cg-bound} and \eqref{eq.drift-z-Hs0} gives
		\[
			|A_z|\le\mathrm{(I)}+\mathrm{(II.a)}+\mathrm{(II.b)}+\mathrm{(II.c)}\, ,
		\]
		where the four terms are
		\begin{align*}
			\mathrm{(I)}&\coloneqq C \rho^{7/12} 3^{-m/6+(1/6-\beta_h)M} N_z^{5/6}E_z^{7/6}\, ,\\
			\mathrm{(II.a)}&\coloneqq C\rho^{13/12} 3^{-m/6+(1/6-\beta_h)M+h} N_z^{11/6}E_z^{1/6}\, ,\\
			\mathrm{(II.b)}&\coloneqq C\rho^{19/12} 3^{-m/6+(1/6-\beta_h)M+h} N_z^{5/6}E_z^{7/6}\, ,\\
			\mathrm{(II.c)}&\coloneqq C\rho^{19/12} 3^{-m/6+(1/6-\beta_h)M+2h-m} 3^{-\beta_h M} N_z^{5/6}E_z^{1/6}\, .
		\end{align*}

		For (I) and (II.b), Young's inequality with exponents $(12/5,12/7)$ gives $N_z^{5/6}E_z^{7/6}\le \mathsf E_z$. After substituting $\rho\le 3^{\delta m/2}$ and $h\le\zeta_H m$, the exponents in $m$ of the (I) and (II.b) prefactors are bounded above by $\tfrac{7\delta}{24}+\max(0,\tfrac16-\beta_h)\zeta_H-\beta_h$ and $\tfrac{19\delta}{24}-\beta_h+(\tfrac76+\max(0,\tfrac16-\beta_h)-\tfrac16)\zeta_H$, both strictly negative by the choice of $\zeta_H$. For $m\ge\X_{\beta_h}(\omega)$ enlarged by a deterministic constant depending only on $(d,\Lambda,r_0,\beta_h)$, the corresponding $3^{[\cdots]m}$ factors are at most $\tfrac{1}{64}$, so
		\[
			\mathrm{(I)}+\mathrm{(II.b)}\le \tfrac{1}{32}\mathsf E_z\, .
		\]

		For (II.a), Young's inequality with $(12/11,12)$ gives $N_z^{11/6}E_z^{1/6}\le\mathsf E_z$. The prefactor exponent of $m$ is at most that of (II.b) (since $13/12<19/12$), so this exponent is also strictly negative, and the same enlargement of $\X_{\beta_h}$ gives $\mathrm{(II.a)}\le \tfrac{1}{64}\mathsf E_z$.

		For (II.c), the degree of $(N_z,E_z)$ is $5/6+1/6=1$. The elementary inequality $N_z^{5/6}E_z^{1/6}\le N_z+E_z\le \sqrt{2\mathsf E_z}$ and Young's inequality,
		\[
			a \sqrt{2\mathsf E_z}\le \tfrac{1}{64}\mathsf E_z+32a^2,
		\]
		give
		\[
			\mathrm{(II.c)}\le \tfrac{1}{64}\mathsf E_z+C^2\rho^{19/6} 3^{-m/3+(1/3-2\beta_h)M+4h-2m} 3^{-2\beta_h M}\, .
		\]
		Substituting $\rho\le 3^{\delta m/2}$ and $h\le\zeta_H m$, the residual factors as
		\[
			C\rho^2 3^{2h-2m} 3^{-2\beta_h M}\cdot 3^{[7\delta/12-2\beta_h+(2+\max(0,1/3-2\beta_h))\zeta_H]m}\, ,
		\]
		with bracketed exponent strictly negative by the choice of $\zeta_H$. The same enlargement of $\X_{\beta_h}$ makes the $3^{[\cdots]m}$ factor $\le 1$, so the residual is bounded by $C\rho^2 3^{2h-2m}3^{-2\beta_h M}$.

		Adding the three contributions to the $\mathsf E_z$ coefficient ($\tfrac{1}{32}+\tfrac{1}{64}+\tfrac{1}{64}=\tfrac{1}{16}$),
		\[
				|A_z|\le \tfrac{1}{16}\mathsf E_z+C \rho^2 3^{2h-2m}3^{-2\beta_h M}\, ,
		\]
		which is \eqref{eq.drift-bound-conclusion}.
	\end{proof}

    We next record the interior $L^2$--$L^{\infty}$ estimate.

	\begin{lemma}[Interior $L^2$--$L^{\infty}$ estimate, Theorem 4.1 in \cite{han2000elliptic}]\label{lem.linfty}
		Let $M\in\N$, let $\a:\cu_M\to\R^{d\times d}$ be uniformly elliptic with constants $(\Lambda^{-1},\Lambda)$, let $c\in L^\infty(\cu_M)$ with $c\ge0$ and $\|c\|_{L^\infty}\le\Lambda$, and let $f\in L^\infty(\cu_M)$. If $u\in H^1(\cu_M)$ is a weak solution of $cu-\nabla\cdot(\a\nabla u)=f$ in $\cu_M$, then there exists $C=C(d,\Lambda)<\infty$ such that 
		\[
			\sup_{x\in\cu_{M-1}}|u(x)|\le C\bigl(\|u\|_{\underline L^2(\cu_M)}+3^{2M}\|f\|_{L^\infty(\cu_M)}\bigr)\, .
		\]
	\end{lemma}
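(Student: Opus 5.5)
This is the classical local maximum principle, De Giorgi--Nash--Moser type, and we obtain it by rescaling the version in \citet[Theorem~4.1]{han2000elliptic}. First we pull the equation back to the unit cube. Set $\tilde u(\tilde x)=u(3^M\tilde x)$ and define $\tilde\a$, $\tilde c$, $\tilde f$ in the same way. Then
\[
	3^{2M}\tilde c\,\tilde u-\nabla\cdot(\tilde\a\nabla\tilde u)=3^{2M}\tilde f \qquad\text{in }\cu_0\, .
\]
The difficulty is that the zeroth-order coefficient $3^{2M}\tilde c$ is not bounded uniformly in $M$. We avoid this by using $c\ge0$: split $f=f^+-f^-$ and apply the theorem to a sub/supersolution, where the absorbing term has a favourable sign. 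Concretely, the function $u^+$ satisfies $-\nabla\cdot(\a\nabla u^+)\le f^+\le\|f\|_{L^\infty}$ weakly on $\cu_M$. This follows from Kato's inequality, or equivalently from testing with $\varphi\,\mathbf 1_{\{u>k\}}(u-k)$ for $k\ge0$. The term $c u\,(u-k)^+\varphi$ is nonnegative and can be dropped. The same argument applies to $u^-$ with $f^-$.

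Next we apply the local boundedness estimate for subsolutions of $-\nabla\cdot(\tilde\a\nabla w)\le g$ on the unit cube, with $g\in L^\infty\subset L^{q}$ for some $q>d/2$. Take concentric cubes $\cu_{-1}\subset\cu_0$; note that $\cu_{M-1}$ pulls back to $\cu_{-1}$, which stays a fixed distance from $\partial\cu_0$. This gives
\[
	\sup_{\cu_{-1}}\tilde u^{\pm}\le C\bigl(\|\tilde u^\pm\|_{L^2(\cu_0)}+\|3^{2M}\tilde f\|_{L^\infty(\cu_0)}\bigr)\, ,
\]
with $C=C(d,\Lambda)$. The cube-versus-ball geometry only affects the constants, by a covering argument. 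The constant depends only on the ellipticity through the Moser iteration or De Giorgi levels, and not on $c$, since $c$ was discarded using its sign. Also, $\|\tilde u\|_{L^2(\cu_0)}=\|u\|_{\underline L^2(\cu_M)}$ by \eqref{eq.pullback-identities}.

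Finally we scale back. The source scales to $3^{2M}\|f\|_{L^\infty(\cu_M)}$, so summing the bounds for $u^+$ and $u^-$ gives the claimed estimate. The only step needing care is the sign argument, i.e., verifying in the weak $H^1$ setting that $u^\pm$ are subsolutions despite the term $cu$. If we prefer to cite the theorem directly with $c$ kept in the equation, we can instead require that the constant in Han--Lin be uniform over nonnegative $c$. That is true for the De Giorgi proof, since in the Caccioppoli step the term $\int c\,u(u-k)^+\varphi^2$ is nonnegative and is discarded. Either route makes the hypothesis $\|c\|_{L^\infty}\le\Lambda$ essentially unnecessary, and we do not use it.
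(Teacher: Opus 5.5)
Your proof is correct and follows the same route as the paper, which gives no argument beyond citing Theorem~4.1 of Han--Lin and leaves the rescaling to the unit cube implicit. Your use of $c\ge0$ to discard the zeroth-order term is a worthwhile addition. The cleanest way to see that $u^+$ is a subsolution is to test with $\varphi\min\{u^+/\epsilon,1\}$ and let $\epsilon\to0$; testing with $\varphi(u-k)^+$ instead gives the Caccioppoli step of your second route. The addition matters because after pulling back the coefficient becomes $3^{2M}\tilde c$, and Han--Lin's constant depends on a bound for the zeroth-order coefficient. A literal citation under only $\|c\|_{L^\infty}\le\Lambda$ would therefore not give a constant independent of $M$. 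In the paper's two applications $3^{2m_1}\|c\|_{L^\infty}$ is bounded anyway, so the paper's use of the lemma is unaffected.
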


	\subsection{Velocity resolvent}\label{ss.velocity-resolvent}
    We first prove an energy estimate for the velocity resolvent error and then obtain the pointwise estimate by passing to a subcube on which the exponential tilt is uniformly bounded.

	\begin{proposition}[Global $\underline L^2$ bound for the velocity resolvent]\label{pro.velocity-resolvent-L2}
		There exists $C<\infty$ such that, for $\Q$-a.e.\ $\omega$ and every $(m,h,\rho)$ in the admissible regime~\eqref{eq.admissible}, the homogenization error~$U_m$ defined by~\eqref{def.Um} satisfies
		\begin{equation}\label{eq.energy-l2-main}
			\|U_m\|_{\underline L^2(\cu_{m+h})}\le C \rho^{1/2}  3^{h} 3^{-\beta_h(m+h)}\, .
		\end{equation}
	\end{proposition}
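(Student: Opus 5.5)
The plan is to subtract the finite-volume Dirichlet correctors from $U_m$, so that the remainder solves exactly the equation treated in Lemma~\ref{lem.drift-bound}, and then run an energy estimate. Write $M=m+h$ and set $\phi_i\coloneqq 3^{-m}(w_{i,M}-x_i)\in H^1_0(\cu_M)$.

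\emph{Step 1: the equation for the remainder.} From \eqref{eq.ak-corrector-pde}, $-\tfrac12\nabla\cdot(\a\nabla\phi_i)=\tfrac12 3^{-m}\nabla\cdot(\a e_i)=\tfrac12 3^{-m}\nabla\cdot((\a-\ahom)e_i)$. So $\phi_i$ cancels the divergence source in \eqref{eq.resolvent-equation-Um-expand}. Set $z_i\coloneqq U_{m,i}-\phi_i\in H^1_0(\cu_M)$. Using the symmetry of $\a$ and $\ahom$, combine $\lambda 3^{-m}((\a-\ahom)e_1)_i$ with the drift term $\lambda\,\a e_1\cdot\nabla\phi_i=\lambda 3^{-m}e_1\cdot\a(\nabla w_{i,M}-e_i)$. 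This produces the single flux error $\lambda3^{-m}e_1\cdot(\a\nabla w_{i,M}-\ahom e_i)$. Multiplying by $2$, $z$ solves exactly \eqref{eq.drift-bound-pde}, so Lemma~\ref{lem.drift-bound} applies.

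\emph{Step 2: energy identity.} Test the $z_i$-equation with $z_i$, average over $\cu_M$, and sum over $i$. With $N_z,E_z$ as in the proof of Lemma~\ref{lem.drift-bound}, the left side gives $N_z^2+E_z^2$ minus the drift integral $\lambda\sum_i\fint z_i\,\a e_1\cdot\nabla z_i$. The key point is that the constant part vanishes: $\lambda\fint z_i\,\ahom e_1\cdot\nabla z_i=\tfrac{\lambda}{2}\fint \ahom e_1\cdot\nabla(z_i^2)=0$, because $z_i$ vanishes on $\partial\cu_M$. The remaining drift is $A_z=\lambda\sum_i\fint z_i((\a-\ahom)e_1)\cdot\nabla z_i$, and Lemma~\ref{lem.drift-bound} bounds it by $\tfrac1{16}(N_z^2+E_z^2)+C\rho^23^{2h-2m}3^{-2\beta_hM}$. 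This is the main obstacle, since the drift has no small prefactor at the critical scale, but it is already resolved by that lemma.

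\emph{Step 3: source terms.} Two terms remain on the right side.
\begin{itemize}
\item The $L^2$ term $\rho3^{-3m}\fint z_i(w_{i,M}-x_i)$. By \eqref{eq.hc} it is at most $\rho^{1/2}3^{-m}N_z\cdot C\rho^{1/2}3^{h}3^{-\beta_hM}$. Young's inequality bounds this by $\tfrac18N_z^2+C\rho3^{2h-2m}3^{-2\beta_hM}$.
\item The flux term $\lambda3^{-m}\fint z_i\,e_1\cdot(\a\nabla w_{i,M}-\ahom e_i)$. Bound it by duality using \eqref{eq.hc}, giving $\|\a\nabla w_{i,M}-\ahom e_i\|_{\Hminusuls{-1/6}}\le C3^{(1/6-\beta_h)M}$. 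Then use the interpolation \eqref{eq.drift-interp} for $\|z\|_{\underline H^{1/6}}$, as in \eqref{eq.drift-z-Hs0}. This gives a bound $C\rho^{19/12}3^{-7m/6}3^{(1/6-\beta_h)M}N_z^{5/6}E_z^{1/6}$, which Young's inequality turns into $\tfrac18(N_z^2+E_z^2)+C\rho^{19/6}3^{-7m/3}3^{(1/3-2\beta_h)M}$. Using $h\le\zeta_Hm$ and $\rho\le3^{\delta m/2}$, as in the (II.c) computation, this residual is at most $C\rho^23^{2h-2m}3^{-2\beta_hM}$, possibly after enlarging $\X_{\beta_h}$ by a constant.
\end{itemize}

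\emph{Step 4: conclusion.} Absorbing all terms gives $N_z^2\le C\rho^23^{2h-2m}3^{-2\beta_hM}$. Since $\|z\|_{\underline L^2}=\rho^{-1/2}3^mN_z$, this yields $\|z\|_{\underline L^2(\cu_M)}\le C\rho^{1/2}3^h3^{-\beta_hM}$. By \eqref{eq.hc}, $\|\phi\|_{\underline L^2}\le C3^{-m}3^{M}3^{-\beta_hM}=C3^h3^{-\beta_hM}$. Hence $\|U_m\|_{\underline L^2}\le\|z\|_{\underline L^2}+\|\phi\|_{\underline L^2}$ gives \eqref{eq.energy-l2-main}, using $\rho\ge1$.
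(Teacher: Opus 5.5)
Your decomposition is the paper's. You use the same $z_i=U_{m,i}-3^{-m}(w_{i,M}-x_i)$, the same source, the same energy identity with the $\ahom$-drift vanishing, Lemma~\ref{lem.drift-bound} for $A_z$, and the same split of the remaining source into an $L^2$ part and a flux part. The one real problem is the flux term: the power of $\rho$ is wrong, and the absorption you claim is false as written. By \eqref{eq.drift-z-Hs0} and \eqref{eq.hc},
\[
\lambda 3^{-m}\Bigl|\fint_{\cu_M} z_i\,e_1\cdot(\a\nabla w_{i,M}-\ahom e_i)\Bigr|\le 3^{-m}\cdot C\rho^{7/12}3^{-m/6}N_z^{5/6}E_z^{1/6}\cdot C3^{(1/6-\beta_h)M}=C\rho^{7/12}3^{-m+h/6}3^{-\beta_hM}N_z^{5/6}E_z^{1/6}.
\]
So the prefactor is $\rho^{7/12}$, not $\rho^{19/12}$; the factor $\lambda$ is already accounted for in \eqref{eq.drift-z-Hs0}. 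Young's inequality then leaves the residual $C\rho^{7/6}3^{-2m+h/3}3^{-2\beta_hM}$. This is at most $C\rho^23^{2h-2m}3^{-2\beta_hM}$ simply because $\rho\ge1$ and $h\ge0$, which is what the paper does. No use of the admissible regime and no enlargement of $\X_{\beta_h}$ is needed here.

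With your prefactor the residual is $C\rho^{19/6}3^{-2m+h/3}3^{-2\beta_hM}$. This exceeds the target $C\rho^23^{2h-2m}3^{-2\beta_hM}$ by the factor $\rho^{7/6}3^{-5h/3}$, which at $h=0$, $\rho=3^{\delta m/2}$ equals $3^{7\delta m/12}\to\infty$. Appealing to (II.c) does not fix this. There the residual carries $3^{-2\beta_hM}$ twice: once from the coarse-graining gain in \eqref{eq.drift-cg-bound} and once from the source bound. It is that extra $3^{-2\beta_hm}$ which beats $\rho^{7/6}\le 3^{7\delta m/12}$. Your flux residual has only one such factor, exactly matching the target. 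Left uncorrected, your argument would give only $\|z\|_{\underline L^2}\le C(\rho^{13/12}3^{h/6}+\rho^{1/2}3^h)3^{-\beta_hM}$, not \eqref{eq.energy-l2-main}.

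There is a similar but harmless slip in the $L^2$ term. The product there is $C\rho^{1/2}3^{h-m}3^{-\beta_hM}N_z$, with a single $\rho^{1/2}$, and your stated Young output $C\rho3^{2h-2m}3^{-2\beta_hM}$ is the correct one. With the flux exponent fixed, the proof goes through exactly as you outlined.
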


	\begin{proof}
			Throughout, $C$ denotes a deterministic constant depending only on $(d,\Lambda,r_0,\beta_h)$, which may change from line to line. The full-probability event is the intersection of the events from Proposition~\ref{pro.ak-inputs} and Lemma~\ref{lem.drift-bound}. Set $M\coloneqq m+h$.

		\emph{Step 1.} For $1\le i\le d$, set $z_i\coloneqq U_{m,i}-3^{-m}(w_{i,M}-x_i)$ and $z\coloneqq(z_1,\ldots,z_d)$. Since $U_m\in H^1_0(\cu_M;\R^d)$ from \eqref{def.Um} and $w_{i,M}-x_i\in H^1_0(\cu_M)$, $z\in H^1_0(\cu_M;\R^d)$. Since $w_{i,M}$ is $\a$-harmonic, $\nabla\cdot(\a\nabla(w_{i,M}-x_i))=-\nabla\cdot(\a e_i)$ in $\cu_M$. Substituting into \eqref{eq.resolvent-equation-Um-expand} and using $\nabla\cdot(\ahom e_i)=0$ yields
	\begin{equation}\label{eq.z-pde}
		\rho 3^{-2m}z_i-\tfrac12\nabla\cdot(\a\nabla z_i)-\lambda(\a e_1)\cdot\nabla z_i=T_i,\qquad T_i\coloneqq \lambda 3^{-m}e_1\cdot(\a\nabla w_{i,M}-\ahom e_i)-\rho 3^{-3m}(w_{i,M}-x_i)\, ,
	\end{equation}
	componentwise in $\cu_M$. Multiplying \eqref{eq.z-pde} by $2$ puts it in the form \eqref{eq.drift-bound-pde} of Lemma~\ref{lem.drift-bound}.

		Set
		\[
			N_z=\rho^{1/2}3^{-m}\|z\|_{\underline L^2(\cu_M)},\qquad
			E_z^2=\tfrac12\sum_{i=1}^d\fint_{\cu_M}\nabla z_i\cdot\a\nabla z_i,\qquad
			\mathsf E_z=N_z^2+E_z^2\, ,
		\]
		and
		\[
			A_z\coloneqq\lambda\sum_{i=1}^d\fint_{\cu_M} z_i\bigl((\a-\ahom)e_1\bigr)\cdot\nabla z_i\, .
		\]

		\emph{Step 2.} We derive the energy identity \eqref{eq.energy-id}. Test \eqref{eq.z-pde} against $z_i$, integrate $\fint_{\cu_M}$, sum over $i$, and use $z_i\in H^1_0(\cu_M)$ to integrate by parts: $-\fint_{\cu_M} z_i\nabla\cdot(\a\nabla z_i)=\fint_{\cu_M}\nabla z_i\cdot\a\nabla z_i$. With the notation above,
		\[
			N_z^2+E_z^2-\lambda\sum_{i=1}^d\fint_{\cu_M} z_i(\a e_1)\cdot\nabla z_i=\sum_{i=1}^d\fint_{\cu_M} z_i T_i\, .
		\]
		Split $\a=(\a-\ahom)+\ahom$ in the drift integral. The $\ahom$-piece vanishes by integration by parts:
		\[
			\lambda\sum_{i=1}^d\fint z_i(\ahom e_1)\cdot\nabla z_i
			=\frac{\lambda}{2}\fint(\ahom e_1)\cdot\nabla|z|^2=0.
		\]
		Here $|z|^2\in W^{1,1}_0(\cu_M)$ since $z\in H^1_0(\cu_M;\R^d)$ and $\nabla|z|^2=2\sum_{i=1}^d z_i\nabla z_i\in L^1$, while $\nabla\cdot(\ahom e_1)=0$. Therefore
		\begin{equation}\label{eq.energy-id}
			N_z^2+E_z^2=A_z+R,\qquad R\coloneqq \sum_{i=1}^d\fint_{\cu_M} z_iT_i\, .
		\end{equation}

	\emph{Step 3.} We bound $|R|$. Decompose $R=R_{\mathrm{flux}}+R_w$ where
	\begin{align*}
		R_{\mathrm{flux}}&\coloneqq \lambda 3^{-m}\sum_{i=1}^d\fint_{\cu_M} z_i\bigl(e_1\cdot(\a\nabla w_{i,M}-\ahom e_i)\bigr),&
		R_w&\coloneqq -\rho 3^{-3m}\sum_{i=1}^d\fint_{\cu_M} z_i(w_{i,M}-x_i)\, .
	\end{align*}
	For $R_{\mathrm{flux}}$, the duality $\underline H^{1/6}$--$\Hminusuls{-1/6}$ together with $\sum_{i=1}^d\|\a\nabla w_{i,M}-\ahom e_i\|^2_{\Hminusuls{-1/6}}\le C3^{M/3}3^{-2\beta_h M}$ from \eqref{eq.hc} and the interpolation \eqref{eq.drift-interp} give, with $\|z\|_{\underline L^2}=\rho^{-1/2}3^m N_z$ and $\|\nabla z\|_{\underline L^2}\le\sqrt{2\Lambda}E_z$,
	\begin{align*}
		|R_{\mathrm{flux}}|&\le \lambda 3^{-m} \sqrt d 3^{M/6}3^{-\beta_h M}\cdot C (\rho^{-1/2}3^m N_z)^{5/6}(\sqrt{2\Lambda}E_z)^{1/6}\\
		&\le C\rho^{7/12} 3^{-m+h/6} 3^{-\beta_h M} N_z^{5/6}E_z^{1/6}\, .
	\end{align*}
			The inequalities $r^a s^b\le r+s\le \sqrt{2(r^2+s^2)}$ for $a+b=1$ and Young's inequality $r\sqrt{\mathsf E_z}\le \tfrac{1}{32}\mathsf E_z+8r^2$ then give
		\[
		|R_{\mathrm{flux}}|\le \tfrac{1}{32}\mathsf E_z+C\rho^{7/6} 3^{-2m+h/3} 3^{-2\beta_h M}\le \tfrac{1}{32}\mathsf E_z+C\rho^2 3^{2h-2m} 3^{-2\beta_h M}\, ,
	\]
	using $\rho^{7/6}\le \rho^2$ (since $\rho\ge 1$) and $3^{h/3}\le 3^{2h}$. For $R_w$, Cauchy--Schwarz with $\sum_{i=1}^d\|w_{i,M}-x_i\|^2_{\underline L^2}\le C3^{2M}3^{-2\beta_h M}$ from \eqref{eq.hc} gives
	\[
		|R_w|\le \rho 3^{-3m} \rho^{-1/2}3^m N_z\cdot\sqrt d 3^M3^{-\beta_h M}=\sqrt d \rho^{1/2}3^{h-m}3^{-\beta_h M} N_z\le\tfrac{1}{32}N_z^2+C\rho 3^{2h-2m}3^{-2\beta_h M}\, .
	\]
		Adding the two bounds,
		\begin{equation}\label{eq.R-bound}
			|R|\le \tfrac{1}{16}\mathsf E_z+C\rho^2 3^{2h-2m} 3^{-2\beta_h M}\, .
		\end{equation}

		\emph{Step 4.} We close the energy estimate via Lemma~\ref{lem.drift-bound}. The PDE \eqref{eq.drift-bound-pde} required by that lemma is verified via Step 1; applying it on the regime $\rho\in[1,3^{\delta m/2}]$, with $h\le\zeta_H m$ and $m\ge\X_{\beta_h}(\omega)$, gives
		\begin{equation}\label{eq.Az-bound}
			|A_z|\le \tfrac{1}{16}\mathsf E_z+C \rho^2 3^{2h-2m} 3^{-2\beta_h M}\, .
		\end{equation}
		Combining \eqref{eq.energy-id}, \eqref{eq.R-bound}, and \eqref{eq.Az-bound},
		\[
			\mathsf E_z=A_z+R\le |A_z|+|R|\le \tfrac{1}{8}\mathsf E_z+C \rho^2 3^{2h-2m} 3^{-2\beta_h M}\, ,
		\]
		whence $\mathsf E_z\le \tfrac{8}{7}C \rho^2 3^{2h-2m} 3^{-2\beta_h M}$.

		\emph{Step 5.} We conclude. The bound $\mathsf E_z\le C\rho^2 3^{2h-2m}3^{-2\beta_h M}$ gives $\|z\|_{\underline L^2(\cu_M)}\le C \rho^{1/2}3^h3^{-\beta_h M}$. Since $U_{m,i}=z_i+3^{-m}(w_{i,M}-x_i)$ and $\sum_{i=1}^d\|3^{-m}(w_{i,M}-x_i)\|^2_{\underline L^2}\le C 3^{2h}3^{-2\beta_h M}$ by \eqref{eq.hc},
		\[
			\|U_m\|_{\underline L^2(\cu_M)}\le C \rho^{1/2} 3^h 3^{-\beta_h M}\, .
		\]
		Since $M=m+h$, this gives \eqref{eq.energy-l2-main} after enlarging $C$.
	\end{proof}

The global $\underline L^2$ estimate of Proposition~\ref{pro.velocity-resolvent-L2} holds on the full cube $\cu_{m+h}$, but the passage to a pointwise bound via the interior estimate of Lemma~\ref{lem.linfty} cannot be carried out there: the constant in Lemma~\ref{lem.linfty} depends on the ellipticity ratio of the divergence-form matrix $e^{2\lambda x_1}\a$, and on $\cu_{m+h}$ the weight $e^{2\lambda x_1}$ ranges over $e^{\pm\lambda 3^{m+h}}=e^{\pm\rho 3^h}$, which grows with $\rho$ and is unbounded in the admissible regime. The next lemma resolves this by restricting to a concentric subcube $\cu_{m_1}$, smaller than $\cu_m$ by a factor $3^{-\delta_0 m}$, on which $|2\lambda x_1|\le 3$ and the tilt weight is pinned in $[e^{-3},e^{3}]$.

On $\cu_{m_1}$ the matrix $e^{2\lambda x_1}\a$ is then uniformly elliptic with constants depending only on $(d,\Lambda)$, and Lemma~\ref{lem.linfty} applies with a constant independent of $\rho$ and $h$. Passing from the global $\underline L^2(\cu_{m+h})$ estimate to a pointwise bound on the smaller cube $\cu_{m_1}$ costs the volume ratio $3^{d(m-m_1+h)/2}$. The part coming from $m-m_1$ is absorbed by the homogenization gain through the choice of $\delta$. The $h$-dependent part is the price of keeping the origin separated from the boundary, and is controlled later by choosing $h$ much smaller than $m$.
\begin{lemma}\label{lem.bounded-tilt-subcube}
	Let $\delta_0\in(0,1)$, let $m\in\N$ with $m\ge 1/(1-\delta_0)$, and let $\rho\ge 1$
	satisfy $\rho\,3^{-\delta_0 m}\le 1$. With $\lambda=\rho 3^{-m}$ and
	$m_1\coloneqq\lfloor(1-\delta_0)m\rfloor$,
	\[
		m-m_1\in[\delta_0 m,\delta_0 m+1),\qquad 1\le m_1\le m\, ,
	\]
	and on $\cu_{m_1}$,
	\[
		|2\lambda x_1|\le 3,\qquad e^{-3}\le e^{\pm 2\lambda x_1}\le e^{3}\, .
	\]
\end{lemma}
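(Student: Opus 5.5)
The proof is elementary bookkeeping with floor functions and the geometry of the cube $\cu_{m_1}=(-3^{m_1}/2,3^{m_1}/2)^d$. We handle the integer part first and then the tilt bound.

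\emph{Step 1: the range of $m-m_1$ and of $m_1$.} Since $m_1=\lfloor(1-\delta_0)m\rfloor$, we have $(1-\delta_0)m-1<m_1\le(1-\delta_0)m$. Subtracting from $m$ gives
\[
	\delta_0 m\le m-m_1<\delta_0 m+1\, .
\]
The upper bound $m_1\le(1-\delta_0)m\le m$ holds because $\delta_0>0$. For the lower bound, the hypothesis $m\ge 1/(1-\delta_0)$ gives $(1-\delta_0)m\ge 1$. Since $1$ is an integer, this yields $m_1=\lfloor(1-\delta_0)m\rfloor\ge 1$. In particular $m_1\in\N$, so $\cu_{m_1}$ is a genuine triadic cube.

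\emph{Step 2: the tilt bound on $\cu_{m_1}$.} For $x\in\cu_{m_1}$ we have $|x_1|<3^{m_1}/2$. Combining this with $\lambda=\rho3^{-m}$ and $m_1\le m-\delta_0 m$ from Step 1,
\[
	|2\lambda x_1|\le \rho\,3^{-m}\,3^{m_1}\le \rho\,3^{-\delta_0 m}\le 1\le 3\, ,
\]
where the last two inequalities use the hypothesis $\rho3^{-\delta_0 m}\le1$. Exponentiating, $e^{\pm2\lambda x_1}\in[e^{-1},e^{1}]\subset[e^{-3},e^3]$. The same bound holds on the closure of the cube.

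Neither step presents a real obstacle. The only point needing care is to use the lower inequality $m-m_1\ge\delta_0 m$, rather than the upper one, when bounding $3^{m_1-m}$. The stated constant $3$ is not sharp; the argument in fact gives $1$. The slack presumably leaves room if one later applies the bound on a slightly enlarged cube such as $\cu_{m_1+1}$, where the same computation gives $|2\lambda x_1|\le 3\rho3^{-\delta_0 m}\le 3$. That is likely why the constant $3$ appears, since Lemma~\ref{lem.linfty} is applied on $\cu_{m_1}$ with conclusion on $\cu_{m_1-1}$.
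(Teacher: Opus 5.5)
Your proof is correct and follows the paper's argument: the floor bounds give $m-m_1\in[\delta_0 m,\delta_0 m+1)$ and $m_1\ge 1$, and then $|2\lambda x_1|\le \lambda 3^{m_1}=\rho\,3^{-(m-m_1)}$ on $\cu_{m_1}$. The only difference is that you use the sharp bound $m-m_1\ge\delta_0 m$ and get the constant $1$, whereas the paper loosens it to $m-m_1\ge\delta_0 m-1$, which is the only source of its constant $3$; your enlarged-cube explanation is not what happens, since Lemma~\ref{lem.linfty} is applied on $\cu_{m_1}$ itself with the conclusion on the smaller cube $\cu_{m_1-1}$.
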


\begin{proof}
	From $m_1=\lfloor(1-\delta_0)m\rfloor\in\bigl((1-\delta_0)m-1,(1-\delta_0)m\bigr]$ we
	get $m-m_1\in[\delta_0 m,\delta_0 m+1)\subseteq[\delta_0 m-1,\delta_0 m+1]$. The
	bound $m_1\le m$ holds since $\delta_0>0$, and $m_1\ge 1$ holds since
	$(1-\delta_0)m\ge 1$ by the hypothesis $m\ge 1/(1-\delta_0)$. On $\cu_{m_1}$,
	$|x_1|\le 3^{m_1}/2$, so
	\[
		|2\lambda x_1|\le\lambda\,3^{m_1}=\rho\,3^{-(m-m_1)}\le\rho\,3^{-\delta_0 m+1}\le 3\, ,
	\]
	using $m-m_1\ge\delta_0 m-1$ and the hypothesis $\rho\,3^{-\delta_0 m}\le 1$. The
	weight bound $e^{-3}\le e^{\pm 2\lambda x_1}\le e^{3}$ follows.
\end{proof}

    Combining the previous two lemmas gives the pointwise velocity resolvent bound.

	\begin{proposition}[Localized $L^\infty$ bound for the velocity resolvent]\label{pro.velocity-resolvent-infty}
		There exists $C<\infty$ such that, for $\Q$-a.e.\ $\omega$ and every $(m,h,\rho)$ in the admissible regime~\eqref{eq.admissible}, the velocity resolvent $u_m$ satisfies
		\begin{equation}\label{eq.energy-infty-main}
			\bigl|3^{-m}u_m(0)-\ahom e_1\bigr|=|U_m(0)|\le C \rho^{1/2} 3^{(d/2+1-\beta_h)h-\delta m}\, .
		\end{equation}
	\end{proposition}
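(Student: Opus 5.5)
We combine the global $\underline L^2$ bound of Proposition~\ref{pro.velocity-resolvent-L2} with the interior estimate of Lemma~\ref{lem.linfty}, applied on the subcube $\cu_{m_1}$ supplied by Lemma~\ref{lem.bounded-tilt-subcube}. The first choice is the subcube exponent. We take $\delta_0\coloneqq\delta/2$. Then $\rho\le 3^{\delta m/2}$ gives $\rho 3^{-\delta_0 m}\le 1$, and after enlarging $\X_{\beta_h}$ by a deterministic constant we have $m\ge 1/(1-\delta_0)$. Lemma~\ref{lem.bounded-tilt-subcube} then yields $m_1=\lfloor(1-\delta/2)m\rfloor$ with $m-m_1<\delta m/2+1$ and $e^{\pm2\lambda x_1}\in[e^{-3},e^3]$ on $\cu_{m_1}$.

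Next we put each component equation into the form required by Lemma~\ref{lem.linfty}. Multiplying \eqref{eq.resolvent-equation-Um} by $2e^{2\lambda x_1}$ gives
\[
2\rho3^{-2m}e^{2\lambda x_1}U_{m,i}-\nabla\cdot(e^{2\lambda x_1}\a\nabla U_{m,i})=3^{-m}\nabla\cdot\bigl(e^{2\lambda x_1}(\a-\ahom)e_i\bigr)\, .
\]
The right side is not in $L^\infty$ in the required sense; it is a divergence of a Lipschitz field. Since $\a$ is $\Lambda$-Lipschitz, we expand it to get $f_i=3^{-m}e^{2\lambda x_1}\bigl(\nabla\cdot(\a e_i)+2\lambda((\a-\ahom)e_1)_i\bigr)$, so $\|f_i\|_{L^\infty(\cu_{m_1})}\le C3^{-m}$. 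The zeroth-order coefficient is $c=2\rho3^{-2m}e^{2\lambda x_1}\le 2e^3\le\Lambda'$, and the matrix $e^{2\lambda x_1}\a$ is uniformly elliptic with constants depending only on $(d,\Lambda)$ on $\cu_{m_1}$. Lemma~\ref{lem.linfty} on $\cu_{m_1}$ (whose interior cube $\cu_{m_1-1}$ contains $0$) therefore gives
\[
|U_m(0)|\le C\bigl(\|U_m\|_{\underline L^2(\cu_{m_1})}+3^{2m_1}3^{-m}\bigr)\, .
\]
Here $3^{2m_1-m}\le 3^{(1-\delta)m}$ is far too large, so this forcing term is the main obstacle. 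The fix is to apply the interior estimate at a smaller scale. Because $m_1\ge(1-\delta/2)m-1$, we instead use a cube $\cu_{k}$ with $k\le m_1$ chosen so that $3^{2k-m}\le 3^{-\delta m}$, for instance $k\coloneqq\lfloor(1-\delta)m/2\rfloor$. The tilt bound persists on this smaller cube. The price is a larger volume ratio.

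Finally we balance the two terms. Averaging over the smaller cube costs
\[
\|U_m\|_{\underline L^2(\cu_k)}\le 3^{d(m+h-k)/2}\|U_m\|_{\underline L^2(\cu_{m+h})}\le C\rho^{1/2}3^{dh/2}3^{d(m-k)/2}3^{h-\beta_h(m+h)}\, .
\]
With $k\approx(1-\delta)m/2$ the factor $3^{d(m-k)/2}$ is of size about $3^{dm/4}$, which would swamp the gain $3^{-\beta_h m}$. So this balancing fails as stated.

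The remedy we would try first is to sharpen the forcing term rather than shrink the cube. We would not put $\nabla\cdot(\a e_i)$ into $f$. Instead we would subtract the corrector part, as in Proposition~\ref{pro.velocity-resolvent-L2}, and apply the $L^\infty$ estimate to $z_i=U_{m,i}-3^{-m}(w_{i,M}-x_i)$, plus the pointwise size of $3^{-m}(w_{i,M}-x_i)$. The forcing for $z_i$ then carries an extra factor $\lambda=\rho3^{-m}$, so that $3^{2m_1}\|f\|_\infty\lesssim \rho3^{2m_1-2m}\le 3^{-\delta m}$ on $\cu_{m_1}$. The leftover piece $3^{-m}(w_{i,M}-x_i)$ needs a pointwise bound, from Lipschitz regularity or large-scale $C^{0,1}$ estimates, which are part of the Armstrong--Kuusi inputs. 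Keeping the cube $\cu_{m_1}$, the volume ratio $3^{d(m-m_1+h)/2}\le C3^{d\delta m/4+dh/2}$ combines with $\rho^{1/2}3^h3^{-\beta_h(m+h)}$ to give
\[
C\rho^{1/2}3^{(d/2+1-\beta_h)h}3^{(d\delta/4-\beta_h)m}\, .
\]
Since $\delta=2\beta_h/(d+4)$, we have $d\delta/4-\beta_h\le-\delta$, which yields \eqref{eq.energy-infty-main}. We expect the pointwise control of the corrector term to be the delicate point.
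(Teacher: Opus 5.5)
Your proposal has a genuine gap at the step meant to rescue the forcing term. For $z_i=U_{m,i}-3^{-m}(w_{i,M}-x_i)$, multiplying \eqref{eq.z-pde} by $2e^{2\lambda x_1}$ gives the source $f=2e^{2\lambda x_1}T_i$, where $T_i=\lambda3^{-m}e_1\cdot(\a\nabla w_{i,M}-\ahom e_i)-\rho3^{-3m}(w_{i,M}-x_i)$.

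\emph{The unjustified bound.} Your claim $3^{2m_1}\|f\|_{L^\infty(\cu_{m_1})}\lesssim\rho3^{2m_1-2m}$ silently assumes $\|\nabla w_{i,M}\|_{L^\infty(\cu_{m_1})}\le C$. That is a pointwise gradient bound for the Dirichlet corrector, uniform over a cube of side $3^{m_1}$. Proposition~\ref{pro.ak-inputs} gives nothing of the sort: it controls $\a\nabla w_{i,M}-\ahom e_i$ only in the weak norm of \eqref{eq.hc}, and $w_{i,M}-x_i$ only in $\underline L^2$. Obtaining it would need large-scale $C^{0,1}$ regularity plus local Schauder estimates, with the minimal scale controlled simultaneously at every point of $\cu_{m_1}$. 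That is a new random scale, which the proposal neither constructs nor flags. You flag only the leftover $3^{-m}(w_{i,M}-x_i)(0)$, which is the easier piece.

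\emph{A bookkeeping slip.} With $\delta_0=\delta/2$ one has $\rho3^{2m_1-2m}\le\rho3^{-\delta m}$, not $3^{-\delta m}$. Since $\rho$ may be as large as $3^{\delta m/2}$, this exceeds the target $\rho^{1/2}3^{-\delta m}$. The same choice also makes the crude local bound $3^{m_1-m}$ for the leftover piece only of order $3^{-\delta m/2}$.

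\emph{The missing idea.} The paper does not subtract the global untilted corrector; it subtracts a \emph{local corrector for the tilted operator} on the small cube.
\begin{itemize}
\item Take $m_1=\lfloor(1-\delta)m\rfloor$, i.e.\ $\delta_0=\delta$, and let $\psi_j\in H^1_0(\cu_{m_1})$ solve $-\nabla\cdot\bigl(e^{2\lambda x_1}\a\nabla(\psi_j+x_j)\bigr)=0$ in $\cu_{m_1}$.
\item Then $r_m=U_m-3^{-m}\psi$ satisfies $\rho3^{-2m}r_m-\tfrac12e^{-2\lambda x_1}\nabla\cdot(e^{2\lambda x_1}\a\nabla r_m)=-\rho3^{-2m}(\ahom e_1+3^{-m}\psi)$. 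The corrector absorbs exactly the term $3^{-m}e^{-2\lambda x_1}\nabla\cdot(e^{2\lambda x_1}\a e_i)$ that defeated your first attempt, and no derivative of any corrector remains in the source.
\item The size of $\psi$ needs no homogenization input. Since $\psi_j+x_j$ solves the weighted equation with boundary data $x_j$, the maximum principle gives $|\psi|\le\sqrt d\,3^{m_1}$. Hence $3^{-m}|\psi|\le C3^{-\delta m}$, because the subcube is smaller than $3^m$ by the factor $3^{-\delta m}$.
\item The source then contributes $C\rho3^{2(m_1-m)}\le C\rho^{1/2}3^{-\delta m}$.
\item The volume loss $3^{d(m-m_1+h)/2}$ is still absorbed, since $\tfrac d2\delta-\beta_h=-2\delta$.
\end{itemize}
Lemma~\ref{lem.linfty} together with Proposition~\ref{pro.velocity-resolvent-L2} then closes the argument along the lines you intended.
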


	\begin{proof}
		Throughout this proof, $m_1\coloneqq\lfloor(1-\delta)m\rfloor$, where $\delta=2\beta_h/(d+4)$ as in \eqref{eq.eta-delta-def}. We may assume $m\ge 1/(1-\delta)$. The hypothesis $\rho\le 3^{\delta m/2}$ gives $\rho 3^{-\delta m}\le 3^{-\delta m/2}\le 1$, which is the input required by Lemma~\ref{lem.bounded-tilt-subcube}. Applied with $\delta_0=\delta$, that lemma gives, for $m\ge\X_{\beta_h}(\omega)$, $\cu_{m_1}\subseteq\cu_{m+h}$ and
		\begin{equation}\label{eq.tilt-bdd}
			e^{-3}\le e^{\pm 2\lambda x_1}\le e^{3}\qquad\text{on }\cu_{m_1}\, .
		\end{equation}

		\smallskip

		\emph{Step 1.} We construct a local corrector $\psi$ on $\cu_{m_1}$. The weighted operator $-\nabla\cdot(e^{2\lambda x_1}\a\nabla \cdot)$ on $H^1_0(\cu_{m_1})$ is uniformly elliptic with weight $e^{2\lambda x_1}\in[e^{-2\lambda 3^{m_1}/2},e^{2\lambda 3^{m_1}/2}]$ on $\cu_{m_1}$. Consider the boundary value problem
		\begin{equation}\label{eq.psi-local}
			-\frac12 e^{-2\lambda x_1}\nabla\cdot(e^{2\lambda x_1}\a \nabla\psi_j)=\frac12 e^{-2\lambda x_1}\nabla\cdot(e^{2\lambda x_1}\a e_j)\quad\text{in }\cu_{m_1},\quad \psi_j=0\text{ on }\partial\cu_{m_1}\, .
		\end{equation}
		Then $\psi+x$ satisfies $-\nabla\cdot(e^{2\lambda x_1}\a\nabla(\psi+x))=0$ in $\cu_{m_1}$ and $\psi+x=x$ on $\partial\cu_{m_1}$. Applied componentwise, the maximum principle for the elliptic operator $-\nabla\cdot(e^{2\lambda x_1}\a\nabla \cdot)$ gives
		\[
			\|\psi_j+x_j\|_{L^\infty(\cu_{m_1})}\le\sup_{x\in\partial\cu_{m_1}}|x_j|\le \frac12 3^{m_1}
			\qquad\text{for every }1\le j\le d\, .
		\]
		Therefore,
		\begin{equation}\label{eq.psi-infty}
			\|\psi\|_{L^\infty(\cu_{m_1})}\le\|\psi+x\|_{L^\infty(\cu_{m_1})}+\|x\|_{L^\infty(\cu_{m_1})}\le \tfrac12\sqrt d 3^{m_1}+\tfrac12\sqrt d 3^{m_1}\le \sqrt d 3^{m_1}\, .
		\end{equation}
		\emph{Step 2.} We derive the equation satisfied by $r_m\coloneqq U_m-3^{-m}\psi$ on $\cu_{m_1}$. Combining \eqref{eq.resolvent-equation-Um} with \eqref{eq.psi-local},
		\[
		\rho 3^{-2m}r_m-\frac12 e^{-2\lambda x_1}\nabla\cdot\bigl(e^{2\lambda x_1}\a\nabla r_m\bigr)=-\rho 3^{-2m}\bigl(\ahom e_1+3^{-m}\psi\bigr)\quad\text{in }\cu_{m_1}\, .
		\]
		Multiplying both sides by $2e^{2\lambda x_1}$ produces the divergence form
		\[
		c(x)r_m-\nabla\cdot\bigl(e^{2\lambda x_1}\a\nabla r_m\bigr)=f(x)\quad\text{in }\cu_{m_1}\, ,
		\]
		where, by \eqref{eq.tilt-bdd},
		\[
		c(x)\coloneqq 2\rho 3^{-2m}e^{2\lambda x_1},\qquad \|c\|_{L^\infty(\cu_{m_1})}\le 2e^3\rho 3^{-2m}\, ,
		\]
		\[
		f(x)\coloneqq -2\rho 3^{-2m}e^{2\lambda x_1}\bigl(\ahom e_1+3^{-m}\psi\bigr),\qquad \|f\|_{L^\infty(\cu_{m_1})}\le C\rho 3^{-2m}\, .
		\]
		Since $\rho 3^{-\delta m}\le 1$ and $\delta<1$, $\|c\|_{L^\infty(\cu_{m_1})}\le 2e^3 3^{-(2-\delta)m}\le 1\le e^3\Lambda$ for $m$ large enough. The matrix $e^{2\lambda x_1}\a$ is uniformly elliptic on $\cu_{m_1}$ with constants $(e^{-3}\Lambda^{-1},e^3\Lambda)$, and Lemma~\ref{lem.linfty} applies with $e^3\Lambda$ in place of $\Lambda$.

		\emph{Step 3.} We bound the~$L^2$ norm of~$r_m$. By \eqref{eq.psi-infty}, $\|3^{-m}\psi\|_{\underline L^2(\cu_{m_1})}\le \sqrt d 3^{-(m-m_1)}\le 3\sqrt d\cdot 3^{-\delta m}$. From $\cu_{m_1}\subseteq\cu_{m+h}$, $|\cu_{m+h}|/|\cu_{m_1}|=3^{d(m-m_1+h)}$, and Proposition~\ref{pro.velocity-resolvent-L2},
		\[
		\|U_m\|_{\underline L^2(\cu_{m_1})}\le 3^{d(m-m_1+h)/2}\|U_m\|_{\underline L^2(\cu_{m+h})}\le C \rho^{1/2} 3^{d(m-m_1+h)/2+h-\beta_h(m+h)}\, .
		\]
		Using $m-m_1\le\delta m+1$ and $\delta=2\beta_h/(d+4)$,
		\[
		\tfrac{d}{2}(m-m_1)-\beta_h m\le\tfrac{d\beta_h m}{d+4}-\beta_h m=-\tfrac{4\beta_h m}{d+4}=-2\delta m\le-\delta m\, ,
		\]
		up to an additive constant absorbed into $C$. The coefficient $d/2+1-\beta_h$ is positive since $\beta_h<1$. Hence
		\[
			\|U_m\|_{\underline L^2(\cu_{m_1})}\le C \rho^{1/2} 3^{(d/2+1-\beta_h)h-\delta m}\, .
		\]
		By the triangle inequality and $3^{-m}\|\psi\|_{L^\infty(\cu_{m_1})}\le 3\sqrt d\cdot 3^{-\delta m}$,
		\begin{equation}\label{eq.rm-l2}
			\|r_m\|_{\underline L^2(\cu_{m_1})}\le\|U_m\|_{\underline L^2(\cu_{m_1})}+3\sqrt d\cdot 3^{-\delta m}\le C \rho^{1/2} 3^{(d/2+1-\beta_h)h-\delta m}\, ,
		\end{equation}
		using $\rho\ge 1$ and $h\ge 0$ to absorb the additive $3\sqrt d\cdot 3^{-\delta m}$ into the leading term.

		\emph{Step 4.} Lemma~\ref{lem.linfty} applied componentwise to $r_m$ with $M=m_1$, with the elliptic matrix $e^{2\lambda x_1}\a$ and the $c,f$ bounds of Step~2, gives
		\[
		|r_m(0)|\le C\bigl(\|r_m\|_{\underline L^2(\cu_{m_1})}+3^{2m_1}\|f\|_{L^\infty(\cu_{m_1})}\bigr)\, .
		\]
		With $3^{2m_1}\|f\|_{L^\infty}\le C\rho 3^{2(m_1-m)}\le C\rho^{1/2}3^{-\delta m}$ (using $\rho 3^{-\delta m}\le 1$ and $m-m_1\ge\delta m$), the source contribution is dominated by \eqref{eq.rm-l2}. The triangle inequality and $3^{-m}\|\psi\|_{L^\infty}\le 3\sqrt d\cdot 3^{-\delta m}$ then give \eqref{eq.energy-infty-main}.
	\end{proof}

    \subsection{Clock resolvent}\label{ss.clock-resolvent}
    
    We next estimate the clock resolvent $q_m$ defined in \eqref{eq.clock-resolvent}. It is used in Section~\ref{sec.einstein-rate} to control the clock rate $\eta(\lambda)-\E[e^{-2V(0)}]$. Here the finite-volume corrector estimates are replaced by the negative-Sobolev estimate of Lemma~\ref{lem.clock-negnorm}.
    
    By the maximum principle for the elliptic operator $\rho 3^{-2m}\,\mathrm{Id}-\mathcal L^\lambda$
    on $\cu_{m+h}$ with zero boundary data,
    \begin{equation}\label{eq.qm-Linfty-apriori}
    	\|q_m\|_{L^\infty(\cu_{m+h})}\le \|e^{-2V}-\E[e^{-2V(0)}]\|_{L^\infty(\cu_{m+h})}\le 2\Lambda\, .
    \end{equation}

		\begin{proposition}[Global $\underline L^2$ bound for the clock resolvent]\label{pro.clock-resolvent-L2}
			There exists $C<\infty$ such that, for $\Q$-a.e.\ $\omega$ and every $m,h\in\N$ and $\rho\ge 1$ with $m\ge\X_\mu(\omega)$ and $\lambda=\rho 3^{-m}$, the clock resolvent $q_m$ defined by~\eqref{eq.clock-resolvent} satisfies
			\begin{equation}\label{eq.clock-l2-main}
			\|q_m\|_{\underline L^2(\cu_{m+h})}\le C\rho^{13/12}3^{(7/12-\beta_h/2)h-(\beta_h/2)m}\, .
		\end{equation}
	\end{proposition}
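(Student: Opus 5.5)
The plan is an energy estimate for $q_m$ on the full cube $\cu_M$, $M=m+h$, modelled on Proposition~\ref{pro.velocity-resolvent-L2}. The difference is that the corrector input~\eqref{eq.hc} is replaced by the negative-Sobolev bound~\eqref{eq.m-mu} on $F\coloneqq e^{-2V}-\E[e^{-2V(0)}]$, which is available because $m\ge\X_\mu(\omega)$ and $M\ge m$. I also plan to use the a priori bound $\|q_m\|_{L^\infty}\le 2\Lambda$ from~\eqref{eq.qm-Linfty-apriori} to close the estimate without restricting $\rho$ to the admissible regime.

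With $N_q=\rho^{1/2}3^{-m}\|q_m\|_{\underline L^2(\cu_M)}$ and $E_q^2=\tfrac12\fint\nabla q_m\cdot\a\nabla q_m$, I test~\eqref{eq.clock-resolvent} against $q_m\in H^1_0$ and integrate over $\cu_M$. This gives
\[
N_q^2+E_q^2=\lambda\fint_{\cu_M} q_m\,(\a e_1)\cdot\nabla q_m+\rho3^{-2m}\fint_{\cu_M}F\,q_m\, .
\]
In the drift term I split $\a=(\a-\ahom)+\ahom$; the $\ahom$ part vanishes because $q_m^2\in W^{1,1}_0$. The source term I bound by $\underline H^{1/6}$--$\Hminusuls{-1/6}$ duality, the interpolation inequality~\eqref{eq.drift-interp}, and~\eqref{eq.m-mu}:
\[
\rho3^{-2m}3^{M/24}\|q_m\|_{\underline L^2}^{5/6}\|\nabla q_m\|_{\underline L^2}^{1/6}\le C\rho^{19/12}3^{-m}3^{M/24}N_q^{5/6}E_q^{1/6}\, .
\]
Young's inequality then leaves a residual of order $\rho^{19/6}3^{-2m}3^{M/12}$.

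The remaining drift term is $A_q\coloneqq\lambda\fint q_m\,((\a-\ahom)e_1)\cdot\nabla q_m$, and this is where I expect the main difficulty. There are two crude bounds, and I will combine them.

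\emph{Route (a), coarse-graining.} I rewrite the equation for $q_m$ as $-\nabla\cdot(\a\nabla q_m)=\nabla\cdot f$. The field $f$ is built from Lemma~\ref{lem.divinverses}(i) applied to the $L^2$ terms $2\lambda(\a e_1)\cdot\nabla q_m-2\rho3^{-2m}q_m$, and from Lemma~\ref{lem.divinverses}(ii) applied to $2\rho3^{-2m}(F-(F)_{\cu_M})$, the constant average being handled as in Step~2 of Lemma~\ref{lem.drift-bound}. Then~\eqref{eq.cg-RHS} bounds $\|(\a-\ahom)\nabla q_m\|_{\Hminusuls{-1/6}}$ by $3^{(1/6-\beta_h)M}$ times $E_q$, $\rho 3^h E_q$, $\rho^{1/2}3^hN_q$ and source terms, as in~\eqref{eq.drift-cg-bound}.

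\emph{Route (b), the $L^\infty$ bound.} Since $|q_m|\le2\Lambda$, Cauchy--Schwarz gives $|A_q|\le C\lambda E_q$, which after Young leaves a residual of order $\rho^2 3^{-2m}$.

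Route (b) alone gives only $\|q_m\|_{\underline L^2}^2\lesssim\rho$, with no decay. Route (a) gains $3^{-\beta_h M}$, but outside the admissible regime its prefactors grow like powers of $\rho3^h$ and cannot be absorbed.

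I will therefore take the geometric mean of the two estimates for $A_q$:
\[
|A_q|=|A_q|^{1/2}|A_q|^{1/2}\, .
\]
The first factor is bounded by route (b) and the second by route (a) with duality and~\eqref{eq.drift-interp}, and the result is absorbed into $N_q^2+E_q^2$ by Young's inequality. This halving is what produces the exponents $\beta_h/2$ in $h$ and $m$ in~\eqref{eq.clock-l2-main}. Collecting the residuals should give
\[
N_q^2+E_q^2\le C\rho^{13/6+1}\,3^{-2m}\,3^{(7/6-\beta_h)h-\beta_h m}\, .
\]
Dividing by $\rho3^{-2m}$ and taking square roots yields~\eqref{eq.clock-l2-main}, since the residuals from the source and flux terms are dominated by this one after using $\rho\ge1$ and $M\ge m$.

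I anticipate that the bookkeeping in this last step is where the estimate may fail. I will verify that every $m$-exponent in front of $N_q^2+E_q^2$ is negative without any upper bound on $\rho$. If some term cannot be absorbed, I will keep it on the right-hand side and instead bound it using $\|q_m\|_{L^\infty}\le2\Lambda$, at the cost of extra powers of $\rho$; this is the likely source of the factor $\rho^{13/12}$ in~\eqref{eq.clock-l2-main}.
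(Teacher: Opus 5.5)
The gap is the geometric-mean step for $A_q$. Schematically, route (a) (via \eqref{eq.drift-interp} and \eqref{eq.cg-RHS}) gives $|A_q|\le c_a\,\mathsf E_q$, where $\mathsf E_q=N_q^2+E_q^2$ and $c_a\approx\rho^{19/12}3^{7h/6-\beta_h M}$. Route (b) gives $|A_q|\le c_b\,\mathsf E_q^{1/2}$ with $c_b\approx\rho 3^{-m}$. Your interpolation then gives $|A_q|\le (c_ac_b)^{1/2}\mathsf E_q^{3/4}$. Young's inequality leaves the residual $(c_ac_b)^2\approx\rho^{31/6}3^{-2m}3^{7h/3-2\beta_h M}$, not the $\rho^{19/6}3^{-2m}3^{(7/6-\beta_h)h-\beta_h m}$ you announce. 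Dividing by $\rho3^{-2m}$ yields only $\|q_m\|_{\underline L^2(\cu_M)}\lesssim\rho^{25/12}3^{(7/6-\beta_h)h-\beta_h m}$. This does not imply \eqref{eq.clock-l2-main} when $\rho\,3^{(7/12-\beta_h/2)h}\gg 3^{\beta_h m/2}$, and the statement is asserted for every $\rho\ge1$ and $h\in\N$.

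In fact this interpolation can never beat the better of your two routes. If $c_a\ge 1/2$, then $(c_ac_b)^2\gtrsim c_b^2$, which is no better than route (b) alone. If $c_a<1/2$, route (a) absorbs directly. Your account of the exponent is also off: the $\beta_h/2$ in \eqref{eq.clock-l2-main} does not come from halving $A_q$. It comes from taking the square root of a bound on $\|q_m\|^2_{\underline L^2}$ that gains $3^{-\beta_h m}$ once. Separately, your source-term prefactor should be $\rho^{7/12}3^{-7m/6}3^{M/24}$. With your $\rho^{19/12}3^{-m}3^{M/24}$, the residual divided by $\rho3^{-2m}$ is $\rho^{13/6}3^{M/12}$, which grows in $m$ and is not dominated as you claim.

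The missing idea is that nothing needs to be absorbed. Use route (b) once, as an a priori step. The bound \eqref{eq.qm-Linfty-apriori} and the energy identity give $\|q_m\|_{\underline L^2(\cu_M)}\le C$ and $\|\nabla q_m\|_{\underline L^2(\cu_M)}\le C\rho3^{-m}$. Interpolation then gives $\|q_m\|_{\underline H^{1/6}(\cu_M)}\le C\rho^{1/6}3^{-m/6}$.

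Now drop the gradient term on the left of the energy identity and bound each right-hand term by an explicit number.
\begin{itemize}
\item The source term is at most $C\rho^{7/6}3^{h/24-17m/8}$ by \eqref{eq.m-mu}.
\item For the drift, apply Lemma~\ref{lem.divinverses}(i) to the whole right-hand side $g=2\rho3^{-2m}(e^{-2V}-\E[e^{-2V(0)}]-q_m)+2\lambda(\a e_1)\cdot\nabla q_m$. Its $\underline L^2$ norm is at most $C\rho^23^{-2m}$ by the a priori bounds.
\item Then \eqref{eq.cg-RHS} gives $\|(\a-\ahom)\nabla q_m\|_{\Hminusuls{-1/6}(\cu_M)}\le C\rho^2 3^{(1/6-\beta_h)M+h-m}$, and hence $|A_q|\le C\rho^{19/6}3^{(7/6-\beta_h)h-(2+\beta_h)m}$.
\end{itemize}
Divide by $\rho3^{-2m}$, use $\beta_h\le 1/8$, and take square roots. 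This gives \eqref{eq.clock-l2-main} with no constraint on $\rho$ or $h$. Your closing fallback gestures at this, but the mechanism you actually propose does not deliver the stated bound.
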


	\begin{proof}
			Set $M\coloneqq m+h$. Test \eqref{eq.clock-resolvent} against $q_m\in H^1_0(\cu_M)$ and integrate by parts using $q_m=0$ on $\partial\cu_M$:
			\begin{equation}\label{eq.clock-l2-test}
			\rho 3^{-2m}\|q_m\|^2_{\underline L^2(\cu_M)}+\tfrac12\fint_{\cu_M}\nabla q_m\cdot\a\nabla q_m=\rho 3^{-2m}\fint_{\cu_M}\bigl(e^{-2V}-\E[e^{-2V(0)}]\bigr)q_m+\lambda\fint_{\cu_M}q_m (\a e_1)\cdot\nabla q_m\, .
		\end{equation}

		\emph{Step 1.}  Uniform ellipticity gives $\tfrac12\fint\nabla q_m\cdot\a\nabla q_m\ge\tfrac{1}{2\Lambda}\|\nabla q_m\|_{\underline L^2}^2$. The pointwise bound \eqref{eq.qm-Linfty-apriori} on $q_m$, together with the bounds $\|e^{-2V}-\E[e^{-2V(0)}]\|_\infty\le 2\Lambda$ and $|\a e_1|\le\Lambda$ and Cauchy--Schwarz, gives
		\[
			\Bigl|\rho 3^{-2m}\fint\bigl(e^{-2V}-\E[e^{-2V(0)}]\bigr) q_m\Bigr|\le C\rho 3^{-2m},\qquad\Bigl|\lambda\fint q_m (\a e_1)\cdot\nabla q_m\Bigr|\le C\rho 3^{-m}\|\nabla q_m\|_{\underline L^2}\, .
		\]
			Using the inequality
			\[
				C\rho 3^{-m}\|\nabla q_m\|_{\underline L^2}\le \tfrac{1}{4\Lambda}\|\nabla q_m\|_{\underline L^2}^2+C\rho^2 3^{-2m}
			\]
				and inserting the preceding bounds into \eqref{eq.clock-l2-test} yields the gradient bound in the next display. The maximum principle bound~\eqref{eq.qm-Linfty-apriori} gives the first bound in \eqref{eq.qm-grad-apriori}. Thus
		\begin{equation}\label{eq.qm-grad-apriori}
			\|q_m\|_{\underline L^2(\cu_M)}\le C,\qquad \|\nabla q_m\|_{\underline L^2(\cu_M)}\le C \rho 3^{-m}\, .
		\end{equation}

			\emph{Step 2.} We refine the energy identity. Split the drift term in \eqref{eq.clock-l2-test} using $\a=\ahom+(\a-\ahom)$. The $\ahom$-piece vanishes:
		\[
			\lambda\fint_{\cu_M}q_m (\ahom e_1)\cdot\nabla q_m=\tfrac{\lambda}{2}\fint_{\cu_M}(\ahom e_1)\cdot\nabla(q_m^2)=0\, ,
		\]
		since $q_m^2\in W^{1,1}_0(\cu_M)$ and $\ahom e_1$ is a constant vector. Combined with uniform ellipticity,
			\begin{equation}\label{eq.clock-l2-test-simplified}
			\rho 3^{-2m}\|q_m\|^2_{\underline L^2(\cu_M)}+\tfrac{1}{2\Lambda}\|\nabla q_m\|^2_{\underline L^2(\cu_M)}\le |(I)|+|(II)|\, ,
		\end{equation}
		where, using symmetry of $\a-\ahom$,
		\[
			(I)\coloneqq \rho 3^{-2m}\fint_{\cu_M}\bigl(e^{-2V}-\E[e^{-2V(0)}]\bigr)q_m,\qquad (II)\coloneqq \lambda\fint_{\cu_M}q_m ((\a-\ahom)\nabla q_m)\cdot e_1\, .
		\]

		\emph{Step 3.} We bound $\|q_m\|_{\underline H^{1/6}(\cu_M)}$. Since $q_m\in H^1_0(\cu_M)$, interpolation between $\underline L^2(\cu_M)$ and $\underline H^1_0(\cu_M)$, together with the volume-normalized scaling \eqref{eq.pullback-identities}, gives
		\[
			\|q_m\|_{\underline H^{1/6}(\cu_M)}\le C\|q_m\|_{\underline L^2(\cu_M)}^{5/6}\|\nabla q_m\|_{\underline L^2(\cu_M)}^{1/6}\, .
		\]
		Combining with \eqref{eq.qm-grad-apriori},
		\begin{equation}\label{eq.qm-Hs}
			\|q_m\|_{\underline H^{1/6}(\cu_M)}\le C\rho^{1/6}3^{-m/6}\, .
		\end{equation}

		\emph{Step 4.} We estimate the source term $(I)$. Since $m\ge \X_\mu(\omega)$ and $M=m+h$, \eqref{eq.m-mu} gives $\|e^{-2V}-\E[e^{-2V(0)}]\|_{\Hminusuls{-1/6}(\cu_M)}\le 3^{M/24}$. By the duality $\Hminusuls{-1/6}$--$\underline H^{1/6}$,
			\begin{equation}\label{eq.clock-I}
		\begin{aligned}
			|(I)|&\le \rho 3^{-2m}\|e^{-2V}-\E[e^{-2V(0)}]\|_{\Hminusuls{-1/6}(\cu_M)} \|q_m\|_{\underline H^{1/6}(\cu_M)}\\
			&\le C\rho^{7/6}3^{M/24-13m/6}=C\rho^{7/6}3^{h/24-17m/8}\, .
		\end{aligned}
		\end{equation}

			\emph{Step 5.} We estimate the drift term $(II)$. Multiplying \eqref{eq.clock-resolvent} by $-2$ rearranges to the divergence form
		\[
			-\nabla\cdot(\a\nabla q_m)=2\rho 3^{-2m}\bigl(e^{-2V}-\E[e^{-2V(0)}]-q_m\bigr)+2\lambda (\a e_1)\cdot\nabla q_m\qquad\text{in }\cu_M\, .
		\]
		Apply Lemma~\ref{lem.divinverses}(i) to the source $g\coloneqq 2\rho 3^{-2m}\bigl(e^{-2V}-\E[e^{-2V(0)}]-q_m\bigr)+2\lambda(\a e_1)\cdot\nabla q_m$ on $\cu_M$ to produce $f\in H^1(\cu_M;\R^d)$ with $\nabla\cdot f=g$ and $3^{M/6}[f]_{\underline H^{1/6}(\cu_M)}\le C 3^M\|g\|_{\underline L^2(\cu_M)}$. The source is bounded by
		\[
			\|g\|_{\underline L^2(\cu_M)}\le 2\rho 3^{-2m}(\|e^{-2V}-\E[e^{-2V(0)}]\|_\infty+\|q_m\|_\infty)+2\lambda \Lambda \|\nabla q_m\|_{\underline L^2(\cu_M)}\le C \rho^2 3^{-2m}\, ,
		\]
		using \eqref{eq.qm-grad-apriori}, $\rho\ge1$, and $\lambda=\rho 3^{-m}$. Hence
			\begin{equation}\label{eq.f-Hs-clock}
			3^{M/6}[f]_{\underline H^{1/6}(\cu_M)}\le C \rho^2 3^{M-2m}=C \rho^2 3^{h-m}\, .
		\end{equation}
			Since $M\ge \X_{\beta_h}(\omega)$ by the definition of $\X_\mu$, the estimate \eqref{eq.cg-RHS} applied to $q_m$ with the right-hand side $f$ constructed above and estimated in \eqref{eq.f-Hs-clock}, combined with \eqref{eq.qm-grad-apriori}, gives
		\[
			\|(\a-\ahom)\nabla q_m\|_{\Hminusuls{-1/6}(\cu_M)}\le C 3^{(1/6-\beta_h)M}\bigl(\rho 3^{-m}+\rho^2 3^{h-m}\bigr)\le C\rho^2 3^{(1/6-\beta_h)M+h-m}\, .
		\]
		By duality, $\|q_m e_1\|_{\underline H^{1/6}(\cu_M)}\le C\|q_m\|_{\underline H^{1/6}(\cu_M)}$, and \eqref{eq.qm-Hs},
		\[
			|(II)|\le \lambda \|(\a-\ahom)\nabla q_m\|_{\Hminusuls{-1/6}(\cu_M)} \|q_m e_1\|_{\underline H^{1/6}(\cu_M)}\le C\rho^{19/6}3^{(7/6-\beta_h)h-(2+\beta_h)m}\, .
		\]

			\emph{Step 6.} We combine. Inserting \eqref{eq.clock-I} and the bound on $|(II)|$ into \eqref{eq.clock-l2-test-simplified},
		\[
			\rho 3^{-2m}\|q_m\|^2_{\underline L^2(\cu_M)}\le C\rho^{7/6}3^{h/24-17m/8}+C\rho^{19/6}3^{(7/6-\beta_h)h-(2+\beta_h)m}\, .
		\]
			Dividing by $\rho 3^{-2m}$ and using $\rho\ge1$, we find that the exponent of $3^{-m}$ is at most $-\min\{1/8,\beta_h\}=-\beta_h$, while the $h$-coefficient is at most $7/6-\beta_h$ since $\beta_h\le1/8$; hence
		\[
			\|q_m\|^2_{\underline L^2(\cu_M)}\le C\rho^{13/6}3^{(7/6-\beta_h)h-\beta_h m}\, ,
		\]
			with $C$ deterministic. Taking square roots gives \eqref{eq.clock-l2-main}.
	\end{proof}

		\begin{proposition}[Localized $L^\infty$ bound for the clock resolvent]\label{pro.clock-resolvent-infty}
			There exists $C<\infty$ such that, for $\Q$-a.e.\ $\omega$ and every $m,h\in\N$ and $\rho\ge 1$ with $m\ge\X_\mu(\omega)$, $\rho 3^{-\delta m/2}\le 1$, and $\lambda=\rho 3^{-m}$, the clock resolvent $q_m$ defined by~\eqref{eq.clock-resolvent} satisfies
			\begin{equation}\label{eq.clock-energy-infty-main}
			|q_m(0)|\le C\rho^{13/12}3^{(d/2+7/12-\beta_h/2)h-\delta m}\, .
		\end{equation}
	\end{proposition}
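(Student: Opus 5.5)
The proof will mirror that of Proposition~\ref{pro.velocity-resolvent-infty}, with Proposition~\ref{pro.clock-resolvent-L2} in place of Proposition~\ref{pro.velocity-resolvent-L2}. Set $m_1\coloneqq\lfloor(1-\delta)m\rfloor$. The hypothesis $\rho3^{-\delta m/2}\le1$ gives $\rho3^{-\delta m}\le1$, so Lemma~\ref{lem.bounded-tilt-subcube} with $\delta_0=\delta$ applies. After enlarging $\X_\mu$ by a deterministic constant so that $m\ge1/(1-\delta)$, it yields $\cu_{m_1}\subseteq\cu_{m+h}$ and $e^{-3}\le e^{\pm2\lambda x_1}\le e^3$ on $\cu_{m_1}$. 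No local corrector is needed here, because the clock equation has no divergence-form source. Multiplying \eqref{eq.clock-resolvent} by $2e^{2\lambda x_1}$ puts it in the form
\[
c(x)q_m-\nabla\cdot(e^{2\lambda x_1}\a\nabla q_m)=f(x)\quad\text{in }\cu_{m_1},
\]
with $c=2\rho3^{-2m}e^{2\lambda x_1}$ and $f=2\rho3^{-2m}e^{2\lambda x_1}(e^{-2V}-\E[e^{-2V(0)}])$. These satisfy $\|c\|_{L^\infty}\le2e^3\rho3^{-2m}\le1$ for large $m$ and $\|f\|_{L^\infty}\le C\rho3^{-2m}$. The matrix $e^{2\lambda x_1}\a$ is uniformly elliptic with constants $(e^{-3}\Lambda^{-1},e^3\Lambda)$, so Lemma~\ref{lem.linfty} applies with $M=m_1$.

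Next I transfer the $L^2$ bound from the full cube to the subcube by the volume ratio:
\[
\|q_m\|_{\underline L^2(\cu_{m_1})}\le 3^{d(m-m_1+h)/2}\|q_m\|_{\underline L^2(\cu_{m+h})}\le C\rho^{13/12}3^{d(m-m_1)/2+(d/2+7/12-\beta_h/2)h-(\beta_h/2)m}.
\]
Using $m-m_1\le\delta m+1$ and $\delta=2\beta_h/(d+4)$, the $m$-exponent is at most
\[
\tfrac{d\delta}{2}m-\tfrac{\beta_h}{2}m=\beta_h m\Bigl(\tfrac{d}{d+4}-\tfrac12\Bigr),
\]
up to an additive constant. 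This is the main obstacle. The velocity case enjoyed the full rate $3^{-\beta_h m}$, which gave $-2\delta m$. Here only $3^{-(\beta_h/2)m}$ is available, and for $d\ge4$ the bracket $\tfrac{d}{d+4}-\tfrac12$ is nonnegative, so the naive bookkeeping fails to give $-\delta m$.

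I would try two remedies, in this order. First, shrink the subcube further. If $m_1$ is taken of the form $\lfloor(1-\delta')m\rfloor$ with $\delta'$ smaller, the volume loss drops. However, \eqref{eq.clock-energy-infty-main} needs the source term $3^{2m_1}\|f\|_\infty\le C\rho3^{-2(m-m_1)}$ to be at most $3^{-\delta m}$, which requires $m-m_1\ge\delta m$. That forces $\delta'\ge\delta$, so this remedy is in tension with the volume loss. Second, and more promising, avoid the volume loss on the main term. Write $q_m=\bar q+\tilde q$ on $\cu_{m_1}$, where $\bar q$ solves the same equation with $q_m$'s boundary data and zero source, and $\tilde q$ has the source $f$ and zero boundary data.

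For $\tilde q$, the maximum principle gives $|\tilde q|\le C\rho3^{-2m}3^{2m_1}\le C\rho3^{-2\delta m}$. For $\bar q$, apply Lemma~\ref{lem.linfty} with $f=0$. The resulting $L^2$ average still costs the volume ratio, so in higher dimensions this may still leave a gap. In that case I would fall back on checking whether the exponent in Proposition~\ref{pro.clock-resolvent-L2} can be improved to $\beta_h m$. The test term $(I)$ in that proof actually decays like $3^{-m/8}$, which is at least $3^{-\beta_h m}$ since $\beta_h\le1/8$.

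Assuming the $m$-exponent closes to at most $-\delta m$, the conclusion is routine. Lemma~\ref{lem.linfty} gives
\[
|q_m(0)|\le C\bigl(\|q_m\|_{\underline L^2(\cu_{m_1})}+3^{2m_1}\|f\|_{L^\infty}\bigr).
\]
The source term satisfies $3^{2m_1}\|f\|_{L^\infty}\le C\rho3^{-2(m-m_1)}\le C\rho3^{-2\delta m}$, which is dominated by the leading term since $\rho\ge1$ and $h\ge0$. This yields \eqref{eq.clock-energy-infty-main}, with the $h$-coefficient $d/2+7/12-\beta_h/2$ inherited directly from the volume factor $3^{dh/2}$ and Proposition~\ref{pro.clock-resolvent-L2}.
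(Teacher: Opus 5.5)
There is a genuine gap: your proposal never closes the $m$-exponent, and it ends by assuming the inequality that actually needs proof. The missing step is the choice of subcube. Take $m_1=\lfloor(1-\delta/2)m\rfloor$, i.e.\ apply Lemma~\ref{lem.bounded-tilt-subcube} with $\delta_0=\delta/2$. Its hypothesis $\rho3^{-\delta_0 m}\le1$ is then exactly the proposition's assumption $\rho3^{-\delta m/2}\le1$, which is why that assumption carries $\delta/2$.

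With this choice, $m-m_1\le\delta m/2+1$. The volume loss is therefore $\tfrac d2(m-m_1)\le\tfrac{d\delta}{4}m+C=\tfrac{d\beta_h}{2(d+4)}m+C$. Combined with the $-(\beta_h/2)m$ from Proposition~\ref{pro.clock-resolvent-L2}, this gives exactly $\bigl(\tfrac{d\beta_h}{2(d+4)}-\tfrac{\beta_h}{2}\bigr)m=-\tfrac{2\beta_h}{d+4}m=-\delta m$. The source term satisfies $3^{2m_1}\|f\|_{L^\infty}\le C\rho3^{-2(m-m_1)}\le C\rho3^{-\delta m}$, because $m-m_1\ge\delta m/2$. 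Since $\rho\le\rho^{13/12}$, it is dominated. Everything else you wrote then goes through unchanged: the divergence form after multiplying by $2e^{2\lambda x_1}$, the bounds on $c$ and $f$, the ellipticity constants $(e^{-3}\Lambda^{-1},e^3\Lambda)$, and Lemma~\ref{lem.linfty} at the origin.

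You actually considered this remedy and discarded it because of an arithmetic slip. The bound $C\rho3^{-2(m-m_1)}\le C\rho3^{-\delta m}$ requires only $m-m_1\ge\delta m/2$, not $m-m_1\ge\delta m$. So there is no tension between volume loss and source term: $\delta_0=\delta/2$ balances both exactly.

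With $\delta_0=\delta$, the bookkeeping fails in every dimension, not only $d\ge4$. You would need $\tfrac{d-4}{2(d+4)}\le-\tfrac{2}{d+4}$, i.e.\ $d\le0$.

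Your other fallbacks do not help either. The splitting $q_m=\bar q+\tilde q$ still pays the volume factor on $\bar q$. In fallback (3), the rates $3^{-m/8}$ from term $(I)$ and $3^{-\beta_h m}$ from term $(II)$ in the proof of Proposition~\ref{pro.clock-resolvent-L2} are decay rates for $\|q_m\|^2_{\underline L^2}$. Term $(II)$ caps the squared norm at $3^{-\beta_h m}$, so the norm itself only gets $3^{-(\beta_h/2)m}$. Improving it to $3^{-\beta_h m}$ is not available from that argument.
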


	\begin{proof}
		Set $m_1\coloneqq \lfloor(1-\delta/2)m\rfloor$.  Lemma~\ref{lem.bounded-tilt-subcube} applied with $\delta_0=\delta/2$ gives, for $m\ge\X_\mu(\omega)$, $\cu_{m_1}\subseteq\cu_{m+h}$ and
		\begin{equation}\label{eq.tilt-bdd-sc}
			e^{-3}\le e^{\pm 2\lambda x_1}\le e^{3}\qquad\text{on }\cu_{m_1}\, .
		\end{equation}

		\smallskip

			\emph{Step 1.} We rewrite \eqref{eq.clock-resolvent} in divergence form on $\cu_{m_1}$. Multiplying by $2e^{2\lambda x_1}$,
		\[
		c(x)q_m-\nabla\cdot\bigl(e^{2\lambda x_1}\a\nabla q_m\bigr)=f(x)\quad\text{in }\cu_{m_1}\subseteq\cu_{m+h}\, ,
		\]
		where, by \eqref{eq.tilt-bdd-sc},
		\[
		c(x)\coloneqq 2\rho 3^{-2m}e^{2\lambda x_1},\qquad \|c\|_{L^\infty(\cu_{m_1})}\le 2e^3\rho 3^{-2m}\, ,
		\]
		\[
		f(x)\coloneqq 2\rho 3^{-2m}e^{2\lambda x_1} \bigl(e^{-2V(x)}-\E[e^{-2V(0)}]\bigr),\qquad \|f\|_{L^\infty(\cu_{m_1})}\le C\rho 3^{-2m}\, .
		\]
		Since $\rho 3^{-\delta m/2}\le 1$ and $\delta<2$, $\|c\|_{L^\infty(\cu_{m_1})}\le 2e^3 3^{-(2-\delta/2)m}\le 1\le e^3\Lambda$ for $m$ large. The matrix $e^{2\lambda x_1}\a$ is uniformly elliptic on $\cu_{m_1}$ with constants $(e^{-3}\Lambda^{-1},e^3\Lambda)$, and Lemma~\ref{lem.linfty} applies with $e^3\Lambda$ in place of $\Lambda$.

			\emph{Step 2.} We control $\|q_m\|_{\underline L^2(\cu_{m_1})}$. From $\cu_{m_1}\subseteq\cu_{m+h}$, $|\cu_{m+h}|/|\cu_{m_1}|=3^{d(m-m_1+h)}$, and Proposition~\ref{pro.clock-resolvent-L2},
		\[
		\|q_m\|_{\underline L^2(\cu_{m_1})}\le 3^{d(m-m_1+h)/2}\|q_m\|_{\underline L^2(\cu_{m+h})}\le C\rho^{13/12}3^{d(m-m_1+h)/2+(7/12-\beta_h/2)h-(\beta_h/2)m}\, .
		\]
			Using $m-m_1\le\delta m/2+1=\beta_h m/(d+4)+1$ and $\delta=2\beta_h/(d+4)$, so that $\tfrac{d\delta}{4}m-(\beta_h/2)m=-\delta m$, the additive constant $1$ contributes a multiplicative factor absorbed into $C$, giving
		\begin{equation}\label{eq.qm-l2-mone}
			\|q_m\|_{\underline L^2(\cu_{m_1})}\le C\rho^{13/12}3^{(d/2+7/12-\beta_h/2)h-\delta m}\, .
		\end{equation}

		\emph{Step 3.} We apply Lemma~\ref{lem.linfty} at the origin with $M=m_1$, with the elliptic matrix $e^{2\lambda x_1}\a$ and the $c,f$ bounds of Step~1:
		\[
		|q_m(0)|\le C\bigl(\|q_m\|_{\underline L^2(\cu_{m_1})}+3^{2m_1}\|f\|_{L^\infty(\cu_{m_1})}\bigr)\, .
		\]
			With $3^{2m_1}\|f\|_{L^\infty}\le C\rho 3^{2(m_1-m)}\le C\rho 3^{-\delta m}$ (using $m-m_1\ge\delta m/2$) and $\rho\le\rho^{13/12}$ for $\rho\ge 1$, the source contribution is dominated by \eqref{eq.qm-l2-mone}, yielding \eqref{eq.clock-energy-infty-main}.
	\end{proof}

	\subsection{Exit-time estimate}\label{ss.exit-time}

	The last input for the Feynman--Kac decomposition is an exit estimate. We first derive a maximal displacement bound for the symmetric diffusion from Aronson's heat-kernel estimate, and then extend it to the tilted diffusion via Girsanov's theorem on a time interval $[0,T_0]$ with $T_0\asymp\rho^{-1}3^{2m+h}$.

	\begin{lemma}[Displacement tail for the symmetric diffusion]\label{lem.sym-exit}
		Fix an environment $\omega$. Let $Z_t$ be a Feller diffusion with generator $\tfrac12\nabla\cdot(\a\nabla)$, where $\a$ is symmetric, uniformly elliptic, and Lipschitz with constants $(\Lambda^{-1},\Lambda)$. There exist constants $c,C>0$ depending only on $(d,\Lambda)$ such that, for every $R>0$ and $T>0$ and every $x\in\R^d$,
		\begin{equation}\label{eq.sym-max-exit}
			P_x^{0,\omega} \left(\sup_{0\le s\le T}|Z_s-x|\ge R\right)\le C\exp \left(-c\frac{R^2}{T}\right)\, .
		\end{equation}
	\end{lemma}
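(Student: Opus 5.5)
The plan is to combine Aronson's Gaussian upper bound for the transition density of $Z$ with a reflection-type maximal argument. By the strong Markov property this reduces the supremum over $[0,T]$ to fixed-time tail estimates.

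Since $\a$ is symmetric and uniformly elliptic, the generator $\tfrac12\nabla\cdot(\a\nabla)$ is symmetric with respect to Lebesgue measure. Aronson's estimate therefore gives a transition density $p_t(x,y)$ with
\[
p_t(x,y)\le C_0\, t^{-d/2}\exp\bigl(-c_0|x-y|^2/t\bigr),
\]
where $C_0,c_0$ depend only on $(d,\Lambda)$. Integrating over $\{|y-x|\ge r\}$ yields the one-time tail
\[
P_x^{0,\omega}\bigl(|Z_t-x|\ge r\bigr)\le C\exp(-c r^2/t),
\]
uniformly in $x$ and $t>0$.

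Next I handle the supremum. Let $\sigma\coloneqq\inf\{s:|Z_s-x|\ge R\}$. On $\{\sigma\le T\}$ there are two possibilities: either $|Z_T-x|\ge R/2$, or $|Z_T-Z_\sigma|\ge R/2$ with $|Z_\sigma-x|=R$. For the second event, apply the strong Markov property at $\sigma$ and use the one-time tail started from $Z_\sigma$. This bounds the conditional probability by
\[
\sup_{y}\,\sup_{0\le t\le T}P_y^{0,\omega}\bigl(|Z_t-y|\ge R/2\bigr)\le C\exp\bigl(-cR^2/(4T)\bigr).
\]
That alone does not close the estimate, because we need $P(\sigma\le T)$ itself. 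The way around this is the standard trick: pick $\epsilon$ with $2C\exp(-cR^2/(4T))\le 1/2$. Then
\[
P_x(\sigma\le T,\ |Z_T-x|<R/2)\le P_x(\sigma\le T)\cdot \sup_{y,\,t\le T} P_y\bigl(|Z_t-y|\ge R/2\bigr),
\]
so
\[
P_x(\sigma\le T)\le P_x(|Z_T-x|\ge R/2)+\tfrac12 P_x(\sigma\le T).
\]
Rearranging gives $P_x(\sigma\le T)\le 2C\exp(-cR^2/(4T))$.

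In the complementary regime $R^2/T\le K$, for a constant $K=K(d,\Lambda)$, the claimed bound holds trivially once we enlarge $C$ so that $Ce^{-cK}\ge1$. Both regimes therefore give \eqref{eq.sym-max-exit} after adjusting constants.

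The main obstacle is justifying the strong Markov step uniformly in the starting point. The one-time tail must hold for every $y$ and every $t\le T$, which Aronson's bound supplies since its constants depend only on $(d,\Lambda)$. The Feller property, which holds given the Lipschitz coefficients, justifies the strong Markov property. An alternative, if the rearrangement feels delicate, is to bound each coordinate via Lévy's reflection-type inequality: for Markov processes one has
\[
P\Bigl(\sup_{s\le T}|Z_s-x|\ge R\Bigr)\le \frac{P(|Z_T-x|\ge R/2)}{1-\sup_{y,\,s\le T}P_y(|Z_{T-s}-y|\ge R/2)}.
\]
This Ottaviani-type inequality yields the same conclusion.
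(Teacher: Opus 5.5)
Your proposal is correct and follows essentially the same route as the paper: Aronson's Gaussian bound gives the fixed-time tail, the strong Markov property at the exit time $\sigma$ reduces the supremum to that tail in the regime where $R^2/T$ is large, and the regime $R^2/T\lesssim 1$ is trivial. The only cosmetic difference is that the paper shows the probability of being back in $B_{R/2}(x)$ at time $T$, starting from the exit sphere, is at most $1/2$. You instead bound by $1/2$ the probability of moving distance $R/2$ from the exit point and then rearrange. Both give $P_x(\sigma\le T)\le 2P_x(|Z_T-x|\ge R/2)$.
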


	\begin{proof}
	Apply the fundamental-solution estimate of \citet[Theorem~1, p.~891]{Aronson1967} to the equation $\partial_t v=\tfrac12\nabla\cdot(\a\nabla v)$. Since the coefficient matrix $\tfrac12\a$ has ellipticity constants depending only on $\Lambda$, the transition density $p^\omega(t,x,y)$ of $Z$ with respect to Lebesgue measure satisfies
		\begin{equation}\label{eq.aronson-symmetric}
			p^\omega(t,x,y)\le C t^{-d/2}\exp \left(-\frac{|x-y|^2}{C t}\right),\qquad t>0\, ,
		\end{equation}
		with $C<\infty$ depending only on $d$ and the ellipticity ratio $\Lambda$ (the factor $\tfrac12$ in the generator only changes $C$). In particular, integrating \eqref{eq.aronson-symmetric} over $\R^d\setminus B_r(x)$ gives
		\begin{equation}\label{eq.one-time-tail}
			P_x^{0,\omega}(|Z_t-x|\ge r)\le C\exp \left(-c\frac{r^2}{t}\right),\qquad r,t>0\, .
		\end{equation}

		Let $\tau_R\coloneqq \inf\{s\ge0:\ |Z_s-x|\ge R\}$. There exists $\vartheta_0=\vartheta_0(d,\Lambda)>0$ such that for $T>\vartheta_0 R^2$ the conclusion \eqref{eq.sym-max-exit} is trivial after enlarging $C$ by $\exp(c/\vartheta_0)$, so we may assume $T\le\vartheta_0 R^2$. For every $y$ with $|y-x|=R$ and every $z\in B_{R/2}(x)$, $|z-y|\ge R/2$. Applying \eqref{eq.aronson-symmetric} and integrating over $B_{R/2}(x)$, for every $0<u\le T\le\vartheta_0 R^2$,
		\[
		P_y^{0,\omega}(|Z_u-x|<R/2)\le C\left(\frac{R^2}{u}\right)^{d/2}\exp \left(-c\frac{R^2}{u}\right)\le C\exp(-c/\vartheta_0)\, ,
		\]
			using $r^{d/2}e^{-cr/2}\le C(d,c)$ for $r>0$, applied with $r=R^2/u$, in the last step. Choose $\vartheta_0$ so that $C\exp(-c/\vartheta_0)\le 1/2$. By continuity of paths and the strong Markov property at $\tau_R$,
		\[
		P_x^{0,\omega}(|Z_T-x|\ge R/2)
		\ge \frac12 P_x^{0,\omega}(\tau_R\le T)\, .
		\]
		Combining this with \eqref{eq.one-time-tail} applied with $r=R/2$ yields \eqref{eq.sym-max-exit}.
	\end{proof}

	We now transfer this displacement bound to the tilted diffusion.

	\begin{proposition}[Laplace transform of the exit time]\label{pro.exit}
		Let $\a$ be a symmetric uniformly elliptic Lipschitz coefficient field with constants $(\Lambda^{-1},\Lambda)$, and let $\tau_{m,h}$ be the exit time defined in~\eqref{eq.tau-mh}. For every $\kappa\in(0,1]$ there exist $c_\kappa,C_\kappa>0$ depending only on $(\kappa,d,\Lambda)$ such that, for every $\rho\ge 1$ and $m,h\in\N$ with $\lambda=\rho 3^{-m}$,
		\[
			E^{\lambda,\omega}_0\bigl[\exp(-\kappa\rho 3^{-2m}\tau_{m,h})\bigr]\le C_\kappa\exp(-c_\kappa 3^h)\, .
		\]
	\end{proposition}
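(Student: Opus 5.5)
The plan is to split the Laplace transform at a deterministic time $T_0\coloneqq \vartheta\rho^{-1}3^{2m+h}$, with $\vartheta=\vartheta(d,\Lambda)$ small to be chosen. This gives
\[
	E^{\lambda,\omega}_0\bigl[e^{-\kappa\rho3^{-2m}\tau_{m,h}}\bigr]\le P^{\lambda,\omega}_0(\tau_{m,h}\le T_0)+e^{-\kappa\rho 3^{-2m}T_0}\, .
\]
The second term equals $\exp(-\kappa\vartheta 3^h)$, which already has the required form. It therefore suffices to show
\[
	P^{\lambda,\omega}_0(\tau_{m,h}\le T_0)\le C\exp(-c3^h)\, .
\]
Since the origin lies at distance $R\coloneqq 3^{m+h}/2$ from $\partial\cu_{m+h}$, the event $\{\tau_{m,h}\le T_0\}$ is contained in $\{\sup_{s\le T_0}|X^\lambda(s)|\ge R\}$.

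To control this event, transfer Lemma~\ref{lem.sym-exit} to the tilted diffusion using Girsanov's theorem on $[0,T_0]$. The SDE~\eqref{eq.SDE-degenerate} differs from the symmetric one ($\lambda=0$) only by the drift $\lambda\a e_1=\a^{1/2}(\lambda\a^{1/2}e_1)$, so the density is
\[
	\frac{dP^{\lambda}}{dP^{0}}\Big|_{\mathcal F_{T_0}}=\exp\Bigl(\lambda\int_0^{T_0}\a^{1/2}e_1\cdot dW-\tfrac{\lambda^2}{2}\int_0^{T_0}|\a^{1/2}e_1|^2ds\Bigr)\eqqcolon \mathcal E_{T_0}\, .
\]
The integrand is bounded, so Novikov's condition holds trivially. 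By Cauchy--Schwarz,
\[
	P^\lambda(A)\le P^0(A)^{1/2}\,E^0[\mathcal E_{T_0}^2]^{1/2}\le P^0(A)^{1/2}\exp(C\lambda^2T_0)\, ,
\]
because $\mathcal E^2$ is itself an exponential martingale multiplied by $\exp(\lambda^2\int|\a^{1/2}e_1|^2)\le \exp(\Lambda\lambda^2T_0)$.

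Now plug in the parameters. On one side, $\lambda^2T_0=\rho^2 3^{-2m}\cdot\vartheta\rho^{-1}3^{2m+h}=\vartheta\rho3^h$. On the other, Lemma~\ref{lem.sym-exit} gives
\[
	P^0(A)\le C\exp\bigl(-cR^2/T_0\bigr)=C\exp\bigl(-c\rho3^{h}/(4\vartheta)\bigr)\, .
\]
Both exponents are of order $\rho3^h$. Choosing $\vartheta$ small enough that $c/(8\vartheta)-C\Lambda\vartheta\ge c'>0$ yields
\[
	P^\lambda(\tau_{m,h}\le T_0)\le C\exp(-c'\rho3^h)\le C\exp(-c'3^h)\, ,
\]
using $\rho\ge1$. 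Combining with the second term gives the claim, with $c_\kappa=\min(c',\kappa\vartheta)$.

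The main obstacle is exactly this competition between the Girsanov cost $\lambda^2T_0$ and the Aronson gain $R^2/T_0$. Both scale like $\rho3^h$, and the argument works only because $\vartheta$ enters them inversely, so shrinking $\vartheta$ favors the gain. If the drift term were not removable by a bounded Girsanov density, for instance because $\a$ is only Lipschitz, I would instead check that the symmetric process has the same diffusion coefficient, so that the Girsanov transform involves only $\lambda\a e_1$. That is the case here.
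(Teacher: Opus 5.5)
Your proposal is correct and follows the paper's proof essentially step for step. The paper also splits the Laplace transform at $T_0=(\epsilon_0/2)\rho^{-1}3^{2m+h}$ and bounds $P^{0,\omega}_0(\tau\le T_0)$ via Lemma~\ref{lem.sym-exit} with $R=3^{m+h}/2$. It then controls the Girsanov density through $E^{0,\omega}_0[D_{T_0}^2]\le\exp(\Lambda\lambda^2T_0)$ and Cauchy--Schwarz, and takes $\epsilon_0$ small so that the Aronson gain of order $\rho3^h/\epsilon_0$ beats the Girsanov cost of order $\epsilon_0\rho3^h$.
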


	\begin{proof}
		Pick a small $\epsilon_0=\epsilon_0(d,\Lambda)>0$ to be fixed below, and set $T_0\coloneqq (\epsilon_0/2)3^{2m+h}/\rho$. With this choice $\kappa\rho 3^{-2m}T_0=(\kappa\epsilon_0/2)3^h$. Let $\mathsf C\coloneqq C(\R_+;\R^d)$ be the canonical path space with coordinate process $Z_s(\xi)\coloneqq\xi(s)$ and canonical filtration. Let $P^{0,\omega}_0$ denote the law on $\mathsf C$ of the symmetric reversible diffusion with generator $\tfrac12\nabla\cdot(\a\nabla)$, and $P^{\lambda,\omega}_0$ the law of the tilted diffusion with generator $\tfrac12\nabla\cdot(\a\nabla)+\lambda\a e_1\cdot\nabla$. Set
		\[
		\tau\coloneqq \inf\{t\ge0:\ Z_t\notin\cu_{m+h}\}\, ,
		\]
		a stopping time for the canonical filtration; $\tau$ has the same distribution as $\tau_{m,h}$ under $P^{\lambda,\omega}_0$. Under $P^{0,\omega}_0$, the canonical martingale $M_s\coloneqq Z_s-\tfrac12\int_0^s\nabla\cdot\a(Z_u) du$ is a continuous local martingale with quadratic variation $\langle M^i,M^j\rangle_s=\int_0^s\a_{ij}(Z_u) du$. Since exit from $\cu_{m+h}$ before time $T_0$ implies $\sup_{0\le s\le T_0}|Z_s|\ge 3^{m+h}/2$, Lemma~\ref{lem.sym-exit} applied with $R=3^{m+h}/2$ gives
		\begin{equation}\label{eq.sym-exit}
			P^{0,\omega}_0(\tau\le T_0)\le C\exp \left(-c\frac{(3^{m+h}/2)^2}{T_0}\right)\le C\exp \left(-\frac{c}{2\epsilon_0}\rho 3^h\right)\, .
		\end{equation}
		By Girsanov's theorem on canonical path space, $P^{\lambda,\omega}_0$ is absolutely continuous on $\mathcal F_{T_0}$ with respect to $P^{0,\omega}_0$, with density
		\[
		D_{T_0}=\exp \left(\lambda\int_0^{T_0}e_1\cdot dM_s-\tfrac{\lambda^2}{2}\int_0^{T_0}e_1\cdot\a(Z_s)e_1 ds\right)\, .
		\]
		Set $N_t\coloneqq \lambda\int_0^t e_1\cdot dM_s$, so that $D_{T_0}=\exp(N_{T_0}-\tfrac12\langle N\rangle_{T_0})$. Since $e_1\cdot\a e_1\le\Lambda$, the quadratic variation $\langle N\rangle_{T_0}=\lambda^2\int_0^{T_0}e_1\cdot\a(Z_s)e_1 ds$ is bounded deterministically by $\Lambda\lambda^2T_0$. Moreover,
		\[
			D_{T_0}^2=\exp(2N_{T_0}-2\langle N\rangle_{T_0})\exp(\langle N\rangle_{T_0})\, .
		\]
		The first factor is a nonnegative $P^{0,\omega}_0$-local martingale and therefore a supermartingale of expectation at most $1$. The second factor is bounded by $\exp(\Lambda\lambda^2T_0)$, so $E^{0,\omega}_0[D_{T_0}^2]\le\exp(\Lambda\lambda^2T_0)=\exp(C\epsilon_0\rho 3^h)$. By Cauchy--Schwarz and \eqref{eq.sym-exit},
		\[
		P^{\lambda,\omega}_0(\tau\le T_0)\le E^{0,\omega}_0[D_{T_0}^2]^{1/2} P^{0,\omega}_0(\tau\le T_0)^{1/2}\le C\exp \left(\tfrac{C\epsilon_0}{2}\rho 3^h-\tfrac{c}{4\epsilon_0}\rho 3^h\right)\, .
		\]
		Choose $\epsilon_0$ so small that $c/(4\epsilon_0)-C\epsilon_0/2\ge c'>0$; then $P^{\lambda,\omega}_0(\tau\le T_0)\le C\exp(-c'\rho 3^h)$. Consequently,
		\[
		E^{\lambda,\omega}_0[e^{-\kappa\rho3^{-2m}\tau_{m,h}}]\le P^{\lambda,\omega}_0(\tau\le T_0)+e^{-\kappa\rho3^{-2m} T_0}\le C_\kappa e^{-c_\kappa 3^h}\, .
		\]
	\end{proof}

	\section{Renewal estimates}\label{sec.renewal}

 This section proves a quenched law of large numbers for the time-changed diffusion $X^\lambda$ that is uniform in the tilt parameter on a random window $\lambda\in(0,\lambda_0(\omega)]$. The main result is Corollary~\ref{cor.uniform-renewal}, which controls both the displacement error $E_0^{\lambda,\omega}|X^\lambda(t)-\ell(\lambda)t|$ and the clock error $\widehat E_0^{\lambda,\omega}|A^\lambda(t)-\eta(\lambda)t|$ uniformly on that window.

The argument builds on the regeneration construction of \citet*{ballistic2003shen}, in the $\lambda$-dependent form of \citet[Section~5]{Einstein2012gantert}. For a fixed $\lambda$, the construction cuts the path of $X^\lambda$ at random times $0=\tau_0<\tau_1<\tau_2<\cdots$ into blocks whose displacement increments $\Delta X_k^\lambda$ are i.i.d.\ and whose centered versions $\Delta X_k^\lambda-\ell(\lambda)\Delta\tau_k$ have mean zero. Each block has duration of order $\lambda^{-2}$ and displacement of order $\lambda^{-1}$. A law of large numbers at fixed $\lambda$ follows from the i.i.d.\ structure; this is \citet[Proposition~5.9]{Einstein2012gantert}. The block moment bounds, the exponential tail of $\tau_1$, and the linear velocity bound $|\ell(\lambda)|\le C\lambda$ are also from \citet[Section~4--5]{Einstein2012gantert} and are recorded below in Lemmas~\ref{lem.max}--\ref{lem.lbound}.

The new ingredient is uniformity in $\lambda$. The annealed moment bound is already uniform: an interval $[0,t]$ contains about $\lambda^2 t$ blocks, each contributing a centered displacement of size $\lambda^{-1}$, so the sum fluctuates on scale $\lambda^{-1}\sqrt{\lambda^2 t}=\sqrt t$. Rosenthal's inequality and the Doob maximal inequality make this cancellation precise (Proposition~\ref{pro.moment-renewal}). The upgrade to a quenched bound uniform in $\lambda$ then proceeds by discretization. On a dyadic grid in $\lambda$, Markov's inequality turns the annealed bound into a quenched bound at each grid point that fails with rapidly summable probability; Borel--Cantelli produces a random index beyond which all grid points are good. A Girsanov second-moment comparison (Lemma~\ref{lem.girsanov-lambda}) transfers the bound from each grid point to nearby values of $\lambda$, filling the gaps.

The clock $A^\lambda$ requires an additional step. Its block increments $\Delta A_k^\lambda=\int_{\tau_k}^{\tau_{k+1}}e^{-2V(X^\lambda(s))}\,ds$ depend on the environment along the path, so the existing renewal theorems of \citet*{ballistic2003shen} and \citet[Theorem~5.6]{Einstein2012gantert}, which record only the path, do not directly apply. A renewal identity at a general regeneration time that includes the environment (Lemma~\ref{lem.conditional-renewal-tau-k}, proved in Section~\ref{sec.shen-renewal-k}) shows that the clock blocks are stationary and $2$-dependent, after which the annealed clock bound (Proposition~\ref{pro.additive-renewal}) then follows by the same Rosenthal argument.

Subsection~\ref{ss.block-structure} records the i.i.d.\ and $2$-dependent block structure from \citet*{ballistic2003shen} and \citet*[Section~5]{Einstein2012gantert}; Subsection~\ref{ss.annealed-moments} derives the annealed bounds from these inputs; and Subsection~\ref{ss.uniform-renewal} carries out the upgrade to a uniform window.

\subsection{Setup}\label{ss.renewal-setup}

The regeneration times $0=\tau_0<\tau_1<\tau_2<\cdots$ are constructed following \citet[Section~5]{Einstein2012gantert}, which adapts the method of \citet*{ballistic2003shen} to a $\lambda$-dependent spatial scale. The construction requires auxiliary randomness: we enlarge the probability space to carry the diffusion path $X^\lambda$ together with i.i.d.\ Bernoulli marks $(Y_n)_{n\ge0}$. We write $\widehat P^{\lambda,\omega}_x$, $\widehat E^{\lambda,\omega}_x$ for the quenched law and expectation on this enlarged space, and $\widehat{\mathbb P}^\lambda_x\coloneqq \int\widehat P_x^{\lambda,\omega}\,d\Q(\omega)$, $\widehat{\mathbb E}^\lambda_x$ for the annealed versions. The $X$-marginal of $\widehat P_x^{\lambda,\omega}$ equals $P_x^{\lambda,\omega}$, so for functionals of the path alone, $\widehat E_x^{\lambda,\omega}$ and $E_x^{\lambda,\omega}$ agree.

The construction depends on a deterministic integer $l=l(d,\Lambda,r_0)\in\N$ and the induced spatial scale $R(\lambda)\coloneqq l/\lambda$. We summarize the procedure of \citet[(5.8)--(5.16)]{Einstein2012gantert}. Write
\[
	M(t)\coloneqq\sup_{0\le s\le t} e_1\cdot(X^\lambda(s)-X^\lambda(0))
\]
for the running forward maximum, and let $\lceil\cdot\rceil_\lambda$ denote rounding up to the next multiple of $\lambda^{-2}$, with $\lceil\infty\rceil_\lambda=\infty$. The successive ladder times $\lceil V_k\rceil_\lambda$ are the rounded times at which $e_1\cdot(X^\lambda-X^\lambda(0))$ first exceeds $M(\lceil V_{k-1}\rceil_\lambda)+R(\lambda)$. At each ladder time, the Bernoulli variable attached to that lattice time is checked; if it equals~$1$, the candidate proceeds to the bridge step. The successful ladder time is followed by one additional time step of duration $\lambda^{-2}$, giving a candidate time $S$; during this bridge step the path is confined to a ball of radius $6R(\lambda)$ by \citet[Proposition~5.4(iii)]{Einstein2012gantert}. At $S$ the no-backtracking condition is tested: with
\[
	T_{-R(\lambda)}\coloneqq\inf\{t\ge0:e_1\cdot(X^\lambda(t)-X^\lambda(0))\le-R(\lambda)\},
	\qquad D\coloneqq\lceil T_{-R(\lambda)}\rceil_\lambda\, ,
\]
the event $\{D=\infty\}$ is that the path never drops $R(\lambda)$ below its initial $e_1$-level. If $D\circ\theta_S=\infty$, then $\tau_1\coloneqq S$; if $D\circ\theta_S<\infty$, the ladder search resumes from the backtracking time $S+D\circ\theta_S$. Subsequent regeneration times satisfy $\tau_{k+1}=\tau_k+\tau_1\circ\theta_{\tau_k}$. The integer $l$ is fixed large enough that
\begin{equation}\label{eq.regen-scale-choice}
	2l>r_0\quad\text{and}\quad\widehat{\mathbb P}_0^\lambda(D=\infty)\ge c_0>0\quad
	\text{uniformly in }\lambda\in(0,1]\, ;
\end{equation}
the first condition gives finite-range decoupling at distance $2R(\lambda)$, while the second ensures that each regeneration attempt succeeds with probability bounded below uniformly in~$\lambda$.

By the construction \citet[(5.13)--(5.16)]{Einstein2012gantert}, all $\tau_k$ lie in $\lambda^{-2}\N$, and by \citet[Lemma~5.7(ii)]{Einstein2012gantert}, $\widehat{\mathbb P}_0^\lambda(\tau_k<\infty)=1$ for every $k\ge1$. The $k$-th block is the stretch of path on $[\tau_k,\tau_{k+1}]$; we record its increments
\[
	\Delta\tau_k\coloneqq\tau_{k+1}-\tau_k,\qquad
	\Delta X_k^\lambda\coloneqq X^\lambda(\tau_{k+1})-X^\lambda(\tau_k),\qquad
	\Delta A_k^\lambda\coloneqq\int_{\tau_k}^{\tau_{k+1}}e^{-2V(X^\lambda(s))}\,ds\, ,
\]
and the truncated block
\begin{equation}\label{eq.truncated-block}
	\mathcal Z_k\coloneqq\bigl(
	(X^\lambda((\tau_k+t)\wedge(\tau_{k+1}-\lambda^{-2}))-X^\lambda(\tau_k))_{t\ge0},\
	\Delta X_k^\lambda,\ \Delta\tau_k\bigr)\, ,
\end{equation}
which discards the final segment of duration $\lambda^{-2}$ from each block.

First, $\lambda^2\Delta\tau_k$ is a positive integer for every $k\ge0$:
\begin{equation}\label{eq.lattice-tau}
	\lambda^2\Delta\tau_k\in\{1,2,\ldots\}\qquad\widehat{\mathbb P}_0^\lambda\text{-a.s.}
\end{equation}
Indeed, the regeneration times lie in $\lambda^{-2}\N$ and increase strictly.
Moreover, the paragraph following \citet[(5.16)]{Einstein2012gantert} gives
$\tau_1\ge2\lambda^{-2}$, and the recursion
$\tau_{k+1}=\tau_k+\tau_1\circ\theta_{\tau_k}$ gives
$\tau_k\ge2k\lambda^{-2}$, so the deterministic slice
$\{\tau_k=n\lambda^{-2}\}$ is empty unless $n\ge2k$. Second, within each
block, the path during the last interval of duration $\lambda^{-2}$ remains in a bounded
neighborhood of its starting point:
\begin{equation}\label{eq.final-segment-bound}
	\sup_{\tau_{k+1}-\lambda^{-2}\le s\le\tau_{k+1}}
	|X^\lambda(s)-X^\lambda(\tau_{k+1}-\lambda^{-2})|\le 11R(\lambda)=\frac{11l}{\lambda}\, ,
\end{equation}
which follows from \citet[Proposition~5.4(iii)]{Einstein2012gantert}.

\subsection{Block structure}\label{ss.block-structure}

For $k\ge1$ the displacement increments are i.i.d.\ under $\widehat{\mathbb P}_0^\lambda$ (Lemma~\ref{lem.renewal}), while the clock--duration pairs $(\Delta A_k^\lambda,\Delta\tau_k)$ are stationary and $2$-dependent (Lemma~\ref{lem.two-dependent}).

\begin{lemma}[Regeneration blocks and velocity]\label{lem.renewal}
	The blocks $\{\mathcal Z_k\}_{k\ge0}$ are independent under $\widehat{\mathbb P}_0^\lambda$, and $\{\mathcal Z_k\}_{k\ge 1}$ are i.i.d.\ with common law equal to that of $\mathcal Z_0$ conditioned on $\{D=\infty\}$. The limit $\ell(\lambda)$ in \eqref{eq.def-ell-eta} exists $\widehat{\mathbb P}_0^\lambda$-almost surely and equals
		\[
			\ell(\lambda)=\frac{\widehat{\mathbb E}_0^\lambda[\Delta X_1^\lambda]}{\widehat{\mathbb E}_0^\lambda[\Delta\tau_1]}\, .
		\]
\end{lemma}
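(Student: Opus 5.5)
The plan is to import the renewal structure from \citet*{ballistic2003shen} in the $\lambda$-dependent form of \citet[Section~5]{Einstein2012gantert}. The only work is to check that the truncated blocks $\mathcal Z_k$ of \eqref{eq.truncated-block} are the objects covered by their renewal theorem. The key input is \citet[Theorem~5.6]{Einstein2012gantert}. It states that under $\widehat{\mathbb P}_0^\lambda$ the path after $\tau_1$, shifted to start at $X^\lambda(\tau_1)$, has the law of $\widehat{\mathbb P}_0^\lambda(\,\cdot\mid D=\infty)$, and that it is independent of the pre-$\tau_1$ information. That information is $\sigma\bigl(\tau_1, X^\lambda(\cdot\wedge\tau_1)\bigr)$, with the final bridge segment of length $\lambda^{-2}$ removed. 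The removal is needed because the environment near $X^\lambda(\tau_1)$ is shared across the regeneration. The finite-range decoupling at distance $2R(\lambda)>r_0$ from \eqref{eq.regen-scale-choice}, together with the confinement \eqref{eq.final-segment-bound} of that final segment, is exactly why the truncation is used.

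\textbf{Step 1: independence and identical distribution.} I would argue by induction on $k$. The block $\mathcal Z_0$ is measurable with respect to the truncated pre-$\tau_1$ $\sigma$-algebra, so by the renewal theorem it is independent of the shifted path $X^\lambda(\tau_1+\cdot)-X^\lambda(\tau_1)$. The future blocks are built from this shifted path, because $\tau_{k+1}=\tau_k+\tau_1\circ\theta_{\tau_k}$. Hence $\mathcal Z_k=\mathcal Z_{k-1}\circ\theta_{\tau_1}$ for $k\ge1$. This gives that $\mathcal Z_0$ is independent of $(\mathcal Z_k)_{k\ge1}$. It also gives that the law of $(\mathcal Z_k)_{k\ge1}$ equals the law of $(\mathcal Z_k)_{k\ge0}$ under $\widehat{\mathbb P}_0^\lambda(\cdot\mid D=\infty)$. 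Applying the same renewal theorem under the conditioned measure is the version in \citet[Theorem~5.6]{Einstein2012gantert} / \citet{ballistic2003shen} for the conditioned law. That application gives independence of the $\mathcal Z_k$ for $k\ge1$, each with the law of $\mathcal Z_0$ under $\{D=\infty\}$. This is also where $\widehat{\mathbb P}_0^\lambda(\tau_k<\infty)=1$ from \citet[Lemma~5.7(ii)]{Einstein2012gantert} is needed, so that the blocks are well defined.

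\textbf{Step 2: velocity formula.} I would write $X^\lambda(\tau_n)=\sum_{k<n}\Delta X_k^\lambda$ and $\tau_n=\sum_{k<n}\Delta\tau_k$. Integrability of $\Delta\tau_1$ and $\Delta X_1^\lambda$ comes from the exponential tail of $\tau_1$ and the moment bounds of \citet[Section~4--5]{Einstein2012gantert}. By the strong law for i.i.d.\ sums, $X^\lambda(\tau_n)/n\to\widehat{\mathbb E}_0^\lambda[\Delta X_1^\lambda]$ and $\tau_n/n\to\widehat{\mathbb E}_0^\lambda[\Delta\tau_1]>0$. The latter is positive by \eqref{eq.lattice-tau}. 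The $k=0$ term is finite and so negligible.

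To pass from the times $\tau_n$ to general $t$, let $n(t)$ be such that $\tau_{n(t)}\le t<\tau_{n(t)+1}$. It then suffices that $\sup_{s\in[\tau_n,\tau_{n+1}]}|X^\lambda(s)-X^\lambda(\tau_n)|/n\to0$ almost surely. This follows from Borel--Cantelli together with the i.i.d.\ structure, using the finite first moment of the maximal block displacement. That moment bound is the maximal-displacement bound of \citet{Einstein2012gantert} recorded as Lemma~\ref{lem.max}.

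\textbf{Main obstacle.} I expect the main obstacle to be precisely identifying the $\sigma$-algebra in the cited renewal theorem with the truncated blocks $\mathcal Z_k$, since each $\mathcal Z_k$ also includes $\Delta X_k^\lambda$ and $\Delta\tau_k$. The plan is to note two things. First, $\Delta\tau_k$ is determined by the truncated path, since it equals the truncation time plus $\lambda^{-2}$. Second, $\Delta X_k^\lambda$ is the endpoint displacement, which the Shen--GMP theorem explicitly includes in the pre-regeneration information. If this identification fails, I would instead cite the Markov structure at the bridge step directly.
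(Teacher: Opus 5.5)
Your proposal is correct and follows the paper's route. The paper's proof simply cites \citet[Theorem~5.6]{Einstein2012gantert}, with \citet[Lemma~5.7(ii)]{Einstein2012gantert} supplying finiteness of the $\tau_k$, for the independence and conditional law of the truncated blocks, and \citet[Proposition~5.9]{Einstein2012gantert} for the velocity formula; your induction via $\theta_{\tau_1}$ and your strong-law interpolation just unpack those citations.

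One small fix is needed in Step~2. The full-block maximum $\sup_{s\in[\tau_n,\tau_{n+1}]}|X^\lambda(s)-X^\lambda(\tau_n)|$ is not a functional of $\mathcal Z_n$, so the i.i.d.\ structure does not apply to it directly. To use Borel--Cantelli, first bound it by the truncated maximum plus $11l/\lambda$ via \eqref{eq.final-segment-bound}, as in Lemma~\ref{lem.block-moment-bounds}. The first moment of that truncated maximum needs the tail of $\tau_1$ in addition to Lemma~\ref{lem.max}.
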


\begin{proof}
	The a.s.\ finiteness hypothesis in \citet[Theorem~5.6]{Einstein2012gantert} is supplied by \citet[Lemma~5.7(ii)]{Einstein2012gantert}. Theorem~5.6 gives exactly the independence and common conditional law of the truncated path blocks $\mathcal Z_k$ in \eqref{eq.truncated-block}. The renewal formula for the velocity is \citet[Proposition~5.9]{Einstein2012gantert} applied to the same time-changed process.
\end{proof}

For a Borel set $S\subseteq\R^d$, write $\mathcal H_S$ for the $\sigma$-algebra generated by $(\a,e^{-2V})|_S$. When $S$ depends on a random variable $Y$, the notation $\mathcal H_{\{z:e_1\cdot z\le e_1\cdot Y-4R\}}$ denotes the $\sigma$-algebra generated by bounded variables $F(Y,\omega)$ such that $F(y,\cdot)$ is $\mathcal H_{\{z:e_1\cdot z\le e_1\cdot y-4R\}}$-measurable for every $y$.

		\begin{lemma}[Conditional renewal at regeneration times]\label{lem.conditional-renewal-tau-k}
		Let $k\ge 1$ and $R=R(\lambda)$. Define
		\[
			\mathsf L_k\coloneqq\{y\in\R^d:e_1\cdot y\le e_1\cdot X^\lambda(\tau_k)-4R\}\, ,
			\qquad
			\mathsf R_+\coloneqq\{y\in\R^d:e_1\cdot y\ge -2R\}\, .
		\]
		Let $\mathcal G_k$ denote the $\sigma$-algebra generated by:
		\begin{enumerate}
		\item[\rm(a)] the time $\tau_k$, the diffusion path $(X^\lambda(s))_{0\le s\le\tau_k-\lambda^{-2}}$, and the endpoint $X^\lambda(\tau_k)$;
		\item[\rm(b)] the auxiliary Bernoulli variables $(Y_n)_{0\le n<\lambda^2(\tau_k-\lambda^{-2})}$;
		\item[\rm(c)] the environment $(\a,e^{-2V})|_{\mathsf L_k}$.
	\end{enumerate}
	Let the shifted future data be
		\[
			\mathcal Y_k\coloneqq\bigl(
				(X^\lambda(\tau_k+t)-X^\lambda(\tau_k))_{t\ge0},\
				(Y_{\lambda^2\tau_k+n})_{n\ge 0},\
				T_{X^\lambda(\tau_k)}(\a,e^{-2V})\bigr|_{\mathsf R_+}
			\bigr)\, ,
		\]
		where $T_x(\a,e^{-2V})(z)\coloneqq(\a,e^{-2V})(x+z)$, and let $\mathcal Y_0$ be given by the same formula with $k=0$ and $\tau_0=0$. Then for every bounded measurable function $\Phi$ of such triples and every bounded $\mathcal G_k$-measurable random variable $H$,
		\begin{equation}\label{eq.shen-renewal}
			\widehat{\mathbb E}_0^\lambda[\Phi(\mathcal Y_k) H]
			=\widehat{\mathbb E}_0^\lambda[\Phi(\mathcal Y_0)\mid D=\infty]\,\widehat{\mathbb E}_0^\lambda[H]\, .
		\end{equation}
		In particular, $\mathcal Y_k$ is independent of $\mathcal G_k$ under $\widehat{\mathbb P}_0^\lambda$, and the law of $\mathcal Y_k$ equals the law of $\mathcal Y_0$ under $\widehat{\mathbb P}_0^\lambda[\,\cdot\mid D=\infty]$.
		Moreover, writing $M_i\coloneqq(\Delta A_i^\lambda,\Delta\tau_i)$ for the clock-duration block:
		\begin{enumerate}
			\item[\rm(i)] for every integer $i$ with $1\le i\le k-2$, the block $M_i$ is $\mathcal G_k$-measurable;
			\item[\rm(ii)] there is a measurable map $\Psi$, defined on the common state space of the shifted future data, such that
			\[
				(M_k,M_{k+1},\ldots)=\Psi(\mathcal Y_k)
			\]
			holds $\widehat{\mathbb P}_0^\lambda$-almost surely.
		\end{enumerate}
	\end{lemma}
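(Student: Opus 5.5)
We prove \eqref{eq.shen-renewal} by the usual Shen decomposition over the value of $\tau_k$ and the discrete regeneration data, carrying the environment along. For this it suffices to prove the identity for $H$ of product form: a function of $(\tau_k,(X^\lambda(s))_{s\le\tau_k-\lambda^{-2}},X^\lambda(\tau_k),(Y_n)_{n<\lambda^2\tau_k-1})$ times a bounded $\mathcal H_{\mathsf L_k}$-measurable variable. A monotone class argument then extends it to all bounded $\mathcal G_k$-measurable $H$.

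\emph{Step 1: decomposition.} Since $\tau_k\in\lambda^{-2}\N$ and $\tau_k\ge 2k\lambda^{-2}$, we write
\[
\widehat{\mathbb E}_0^\lambda[\Phi(\mathcal Y_k)H]=\sum_{n\ge 2k}\widehat{\mathbb E}_0^\lambda\bigl[\Phi(\mathcal Y_k)H\mathbf 1_{\{\tau_k=n\lambda^{-2}\}}\bigr].
\]
Following the construction (5.13)--(5.16) of \citet{Einstein2012gantert}, we decompose $\{\tau_k=n\lambda^{-2}\}$ as a disjoint union over the value $x$ of $X^\lambda((n-1)\lambda^{-2})$, discretized as in Shen. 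Each piece is the intersection of two events. The first is an event $B_n$ measurable with respect to the path up to $(n-1)\lambda^{-2}$, the marks $Y_j$ with $j<n-1$, the mark $Y_{n-1}=1$, and the bridge step. The second is the shifted no-backtracking event $\{D\circ\theta_{n\lambda^{-2}}=\infty\}$. On this event, the future path stays in the half-space $e_1\cdot y> e_1\cdot X^\lambda(\tau_k)-R$. The past path before $\tau_k-\lambda^{-2}$ lies at or below the running maximum, which is at most $e_1\cdot X^\lambda(\tau_k)-R+6R$ up to the bridge. Thus the information in $H$ (environment on $\mathsf L_k$) and the future data (environment on $X^\lambda(\tau_k)+\mathsf R_+$) are at distance at least $2R>r_0$. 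This is the choice $2l>r_0$ in \eqref{eq.regen-scale-choice}.

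\emph{Step 2: Markov property and finite range.} We apply the quenched Markov property at time $n\lambda^{-2}$. This gives
\[
\widehat E^{\lambda,\omega}_0\bigl[\Phi(\mathcal Y_k)\mathbf 1_{\{D\circ\theta=\infty\}}\mid\mathcal F_{n\lambda^{-2}}\bigr]=g(X^\lambda(n\lambda^{-2}),\omega),
\]
where $g(x,\omega)=\widehat E^{\lambda,\omega}_x[\Phi(\mathcal Y_0)\mathbf 1_{\{D=\infty\}}]$. By the no-backtracking confinement, $g(x,\cdot)$ depends only on the environment in $x+\mathsf R_+$; this requires the truncation argument of Shen, approximating $\{D=\infty\}$ by $\{D>T\}$ and using the locality of the diffusion. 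The past factor, namely the quenched probability of $B_n$ times the $\mathcal H_{\mathsf L_k}$-part of $H$, is not measurable on a fixed region. We handle this by the standard device of \citet{ballistic2003shen}: we enlarge to an annealed computation, integrating first over the environment in the strip between the two regions. Past quenched expectations are replaced by quantities depending on the environment on $\{e_1\cdot y\le e_1\cdot x+5R\}$ only through $\mathcal H$ of the left region plus a path-measurable remainder. The key point is that the past event forces the path to avoid $x+\mathsf R_+$ except during the bridge, which is discarded. Finite range of dependence (S5) and stationarity (S4) then factor the expectation as
\[
\widehat{\mathbb E}_0^\lambda[H\,;\,\tau_k=n\lambda^{-2}]/\widehat{\mathbb P}_0^\lambda(D=\infty)\times\widehat{\mathbb E}_0^\lambda[\Phi(\mathcal Y_0);D=\infty].
\]
Summing over $n$ and $x$ gives \eqref{eq.shen-renewal}; taking $\Phi\equiv1$ checks the normalization. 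The main obstacle is precisely this factorization. The quenched path probabilities of the past depend on the environment near $X^\lambda(\tau_k)$, since the bridge region lies within $6R$. This is why $\mathcal G_k$ only contains the path up to $\tau_k-\lambda^{-2}$ and the environment in $\mathsf L_k$, so that the bridge segment is integrated out on the future side. I would follow the proof of \citet[Theorem~5.6]{Einstein2012gantert} line by line, checking that adjoining the environment variables respects each conditioning step. This is deferred to Section~\ref{sec.shen-renewal-k}.

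\emph{Step 3: (i) and (ii).} For (ii), $\tau_{k+j}-\tau_k$ are functionals of the shifted path and marks by $\tau_{k+1}=\tau_k+\tau_1\circ\theta_{\tau_k}$. The clock increments $\int e^{-2V(X^\lambda(\tau_k)+\cdot)}$ use the environment only along the future path, which stays in $X^\lambda(\tau_k)+\mathsf R_+$ since it never backtracks by $R\le 2R$. Hence $(M_k,M_{k+1},\ldots)=\Psi(\mathcal Y_k)$. For (i), with $i\le k-2$, the block $[\tau_i,\tau_{i+1}]$ ends before $\tau_k-\lambda^{-2}$, so its path is $\mathcal G_k$-measurable. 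Its environment lies in $\{e_1\cdot y\le e_1\cdot X^\lambda(\tau_{i+1})+\ldots\}$: by no-backtracking after $\tau_{i+1}$ and the ladder gaps of size $R$ for each of the two subsequent regenerations, $e_1\cdot X^\lambda(\tau_k)\ge \max_{s\le\tau_{i+1}}e_1\cdot X^\lambda(s)+4R$. Hence the path of block $i$ lies in $\mathsf L_k$, and $M_i$ is $\mathcal G_k$-measurable.
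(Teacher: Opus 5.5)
There is a genuine gap. Your outline has the paper's architecture: slices $\{\tau_k=n\Delta\}$ with $\Delta\coloneqq\lambda^{-2}$, the Markov property at $n\Delta$, Shen's half-space measurability, (S5) plus stationarity, and a monotone class argument. Your argument for (ii) is the paper's. But the factorization, which is the whole content of \eqref{eq.shen-renewal}, is not carried out: you defer it to ``following Gantert et al.\ line by line,'' and the geometry you sketch for it is wrong.

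The bound ``at most $e_1\cdot X^\lambda(\tau_k)+5R$'' gives no separation from $X^\lambda(\tau_k)+\mathsf R_+$. The past path also does not avoid $x+\mathsf R_+$ for $x=X^\lambda(n\Delta-\Delta)$, since it sits at $x$ at that time. What is needed is the following.
\begin{itemize}
\item[(1)] On the slice, the pre-segment bound $\sup_{s\le n\Delta-\Delta}e_1\cdot X^\lambda(s)\le e_1\cdot X^\lambda(n\Delta-\Delta)+R$ holds.
\item[(2)] Every $y\in B_R(X^\lambda(n\Delta-\Delta)+9Re_1)$ has $e_1\cdot y\ge e_1\cdot X^\lambda(n\Delta-\Delta)+8R$, so this sup is at most $e_1\cdot y-7R$.
\item[(3)] You must then decompose over the bridge \emph{endpoint} $y$, not over $x$. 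Given the past and $\{Y_{n-1}=1\}$, the endpoint is uniform on that ball with the $\omega$-independent density $|B_R|^{-1}$, and the bridge path integrates out because neither $H$ nor $\Phi$ sees it.
\item[(4)] For each fixed $y$, the past factor $F(y,\omega)\,\widehat E_0^{\lambda,\omega}[\mathbf 1_{\Gamma_{k,n}}H_{\rm past}\mathbf 1_{\{y\in B^{X^\lambda(n\Delta-\Delta)}\}}]$ is $\mathcal H_{\mathsf L(e_1\cdot y-4R)}$-measurable, and the future factor $\widehat E_y^{\lambda,\omega}[\Phi(\mathcal Y_0);D=\infty]$ is $\mathcal H_{\mathsf R(e_1\cdot y-2R)}$-measurable.
\item[(5)] (S5) then factors $\E_\Q$ pointwise in $y$, and Fubini, translation covariance and stationarity finish.
\end{itemize}
No ``integration over the strip'' is needed, and as stated that device is not an argument.

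The second gap is the case $k\ge 2$. You assert that $\{\tau_k=n\Delta\}$ is a past-measurable event intersected with $\{D\circ\theta_{n\Delta}=\infty\}$. But $\{\tau_k=n\Delta\}$ also contains the intermediate no-backtracking events $\{D\circ\theta_{\tau_j}=\infty\}$, $j<k$, which depend on the path after $n\Delta$. So your Markov step at $n\Delta$ cannot be applied to $\mathbf 1_{\{\tau_k=n\Delta\}}H$ as written. The paper's Lemma~\ref{lem.tau-k-decomp} proves the decomposition by induction. Each intermediate event is replaced by the finite-time event that the path does not drop $R$ below $X^\lambda(j\Delta)$ on $[j\Delta,n\Delta-\Delta]$. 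The full event is then recovered on the slice from the $2R$ ladder advance, the bridge confinement \eqref{eq.final-segment-ball}, and the terminal no-backtracking.

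The same point is missing in (i). Regeneration times are not stopping times, so ``its path is $\mathcal G_k$-measurable'' presupposes that $\tau_i,\tau_{i+1}$ are determined by the pre-$(\tau_k-\Delta)$ data on each slice; this is what the finite-time cells of that induction supply. Your ``$+4R$ from ladder gaps of size $R$'' also does not follow from what you state. Containment of block $i$ in $\mathsf L_k$ follows directly from $\sup_{s\le\tau_k-\Delta}e_1\cdot X^\lambda(s)\le e_1\cdot X^\lambda(\tau_k)-7R$.
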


	In item~(a), the path is recorded only up to $\tau_k-\lambda^{-2}$, not up to $\tau_k$; the endpoint $X^\lambda(\tau_k)$ is included separately because it determines the boundary of $\mathsf L_k$ and the spatial shift in $\mathcal Y_k$. The proof first treats functions of the shifted future data that split into a path-mark factor and an environment factor, and then extends to the bounded measurable $\Phi$ stated here by a monotone-class argument. The proof of Lemma~\ref{lem.conditional-renewal-tau-k} is given in Section~\ref{sec.shen-renewal-k}.

	The displacement increments $\Delta X_k^\lambda$ are i.i.d.\ for $k\ge 1$ by Lemma~\ref{lem.renewal}. The clock-and-duration increments $(\Delta A_k^\lambda,\Delta\tau_k)$ are not i.i.d., because the post-$\tau_k$ clock contribution depends on $e^{-2V}$ on a half-space whose $\sigma$-algebra is independent of the past only at distance $2R$.

	\begin{lemma}[Clock blocks are stationary and 2-dependent]\label{lem.two-dependent}
		Let $M_k\coloneqq(\Delta A_k^\lambda,\Delta\tau_k)$ for $k\ge 1$. The sequence $\{M_k\}_{k\ge 1}$ is strictly stationary and $2$-dependent under $\widehat{\mathbb P}_0^\lambda$: for every $r\ge1$ the law of $(M_j,\ldots,M_{j+r-1})$ is independent of $j$, and for every $I,J\subseteq\{1,2,\ldots\}$ with $\mathrm{dist}(I,J)\ge 3$,
		\begin{equation}\label{eq.two-dep}
			\sigma(M_i:i\in I)\quad\text{and}\quad\sigma(M_j:j\in J)\quad\text{are independent}\, .
		\end{equation}
	\end{lemma}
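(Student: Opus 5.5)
The plan is to derive both properties from the conditional renewal identity of Lemma~\ref{lem.conditional-renewal-tau-k}, using items (i) and (ii).

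\emph{Stationarity.} Fix $j\ge1$ and $r\ge1$. By item (ii) with $k=j$, $(M_j,\ldots,M_{j+r-1})$ is a fixed measurable function of $\mathcal Y_j$: take the first $r$ coordinates of $\Psi(\mathcal Y_j)$. By \eqref{eq.shen-renewal} with $H\equiv1$, the law of $\mathcal Y_j$ equals the law of $\mathcal Y_0$ under $\widehat{\mathbb P}_0^\lambda[\,\cdot\mid D=\infty]$. This law does not depend on $j$, so the law of $(M_j,\ldots,M_{j+r-1})$ does not depend on $j$ either. One should check that the map $\Psi$ in (ii) is the same for all $k$. This is natural, because $\Psi$ only recomputes the successive regeneration times from the shifted path and marks, via $\tau_{k+1}=\tau_k+\tau_1\circ\theta_{\tau_k}$, and integrates the shifted field $e^{-2V}$ along the shifted path. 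The environment restricted to $\mathsf R_+$ suffices for this integral: after $\tau_k$ the path never drops below $e_1\cdot X^\lambda(\tau_k)-R$, because $D\circ\theta_{\tau_k}=\infty$.

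\emph{$2$-dependence.} It suffices to treat $I\subseteq\{1,\ldots,k-2\}$ and $J\subseteq\{k,k+1,\ldots\}$ for some $k$. Indeed, if $\mathrm{dist}(I,J)\ge3$ and $\max I<\min J$, we may take $k=\min J$; the general case of interleaved sets is not needed for the applications (block sums). If interleaving is required, I would iterate the argument over consecutive gaps. By item (i), $\sigma(M_i:i\in I)\subseteq\mathcal G_k$. By item (ii), $\sigma(M_j:j\in J)\subseteq\sigma(\mathcal Y_k)$. Now take $H=\mathbf 1_A$ with $A\in\sigma(M_i:i\in I)$ and $\Phi(\mathcal Y_k)=\mathbf 1_B(\Psi(\mathcal Y_k))$. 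Then \eqref{eq.shen-renewal} gives
\[
\widehat{\mathbb P}_0^\lambda(A\cap\{\Psi(\mathcal Y_k)\in B\})=\widehat{\mathbb P}_0^\lambda(\Psi(\mathcal Y_0)\in B\mid D=\infty)\,\widehat{\mathbb P}_0^\lambda(A)\, .
\]
Taking $A=\Omega$ identifies the first factor on the right as $\widehat{\mathbb P}_0^\lambda(\Psi(\mathcal Y_k)\in B)$. This yields \eqref{eq.two-dep}.

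\emph{Main obstacle.} The only delicate point is item (i): block $M_{k-1}$ is excluded, and that exclusion is exactly why the result is $2$-dependence rather than independence. Block $M_{k-1}$ uses the path on $[\tau_{k-1},\tau_k]$, which includes the final segment not recorded in $\mathcal G_k$. It also uses the environment near that path, which lies within $6R$ of $X^\lambda(\tau_k)$ and hence not inside $\mathsf L_k$. For $i\le k-2$, however, block $M_i$ ends by time $\tau_{k-1}\le\tau_k-\lambda^{-2}$. It also sits at $e_1$-levels at most $e_1\cdot X^\lambda(\tau_{k-1})+O(R)$, and this lies below $e_1\cdot X^\lambda(\tau_k)-4R$ because of the ladder gain of at least $R$ per attempt combined with the no-backtracking condition. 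Since the lemma states (i) as given, I simply invoke it here.
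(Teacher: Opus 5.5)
Your stationarity argument and your treatment of the ordered case $\max I<\min J$ match the paper's Steps~3 and~1. Items (i)--(ii) of Lemma~\ref{lem.conditional-renewal-tau-k} put the past blocks in $\mathcal G_k$ and the future blocks in $\sigma(\mathcal Y_k)$, and then \eqref{eq.shen-renewal} factors. Your caveat that $\Psi$ must not depend on $k$ is well taken, and it is resolved as you say: the construction in Lemma~\ref{lem.block-compatibility} runs the regeneration recursion on a generic triple and never refers to $k$. The paper applies the identity at $k=j-1$ and discards $M_{j-1}$. Your choice $k=\min J$ is sharper: it already gives independence whenever $\max I\le\min J-2$. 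Excluding the single block $M_{k-1}$ therefore amounts to $1$-dependence, not $2$-dependence as your closing remark suggests.

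The gap is the interleaved case, which you set aside even though the lemma asserts it for all $I,J$ with $\mathrm{dist}(I,J)\ge3$. You are right that the residue-class argument in Proposition~\ref{pro.additive-renewal} only needs mutual independence of $M_r,M_{r+3},M_{r+6},\dots$, and that follows by iterating the ordered case. But this does not prove the statement as given, and ``iterate over consecutive gaps'' needs one more observation before it is an argument.

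The paper supplies it as follows. For finite $I,J$, split $I\cup J$ into maximal runs in which consecutive indices differ by at most $2$. Consecutive runs are then separated by gaps of at least $3$. Each run lies entirely in $I$ or entirely in $J$, because a run meeting both would contain an $I$-index and a $J$-index at distance at most $2$. Applying the ordered independence at each boundary between runs shows that the run $\sigma$-fields are mutually independent. The $\sigma$-fields $\sigma(M_i:i\in I)$ and $\sigma(M_j:j\in J)$ are generated by disjoint subfamilies of these, so they are independent. Infinite $I,J$ are then handled with cylinder $\pi$-systems over finite subsets and Dynkin's $\pi$-$\lambda$ theorem.
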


	\begin{proof}
		Throughout, $\mathcal G_k$, $\mathcal Y_k$, and the measurable map $\Psi$ are as in Lemma~\ref{lem.conditional-renewal-tau-k}.

		\smallskip

		\emph{Step 1.} We prove independence of past and future at separation $3$. Fix $j\ge3$. By Lemma~\ref{lem.conditional-renewal-tau-k}(i), applied with $k=j-1$, every block $M_i$ with $1\le i\le j-3$ is $\mathcal G_{j-1}$-measurable. By Lemma~\ref{lem.conditional-renewal-tau-k}(ii), applied with $k=j-1$,
		\[
			(M_{j-1},M_j,M_{j+1},\ldots)=\Psi(\mathcal Y_{j-1})
		\]
		almost surely. Therefore $(M_j,M_{j+1},\ldots)$ is, up to a null set, a measurable function of $\mathcal Y_{j-1}$.

		\smallskip

		For bounded measurable $F$ and $G$, define $\Phi_G$ on the state space of the shifted future data by applying $G$ to the sequence obtained from $\Psi$ after deleting its first coordinate. Then
		\[
			G(M_j,M_{j+1},\ldots)=\Phi_G(\mathcal Y_{j-1})
		\]
		almost surely, and $F(M_1,\ldots,M_{j-3})$ is $\mathcal G_{j-1}$-measurable. Applying \eqref{eq.shen-renewal} at $k=j-1$ with $H=F(M_1,\ldots,M_{j-3})$ and $\Phi=\Phi_G$, and then applying the same identity with $H\equiv1$, gives
		\begin{equation}\label{eq.tail-indep}
			\widehat{\mathbb E}_0^\lambda[F(M_1,\ldots,M_{j-3}) G(M_j,M_{j+1},\ldots)]
			=\widehat{\mathbb E}_0^\lambda[F(M_1,\ldots,M_{j-3})] \widehat{\mathbb E}_0^\lambda[G(M_j,M_{j+1},\ldots)]\, ,
		\end{equation}
		so $\sigma(M_i:i\le j-3)\perp\sigma(M_j,M_{j+1},\dots)$ for every $j\ge 3$.

		\smallskip

		\emph{Step 2.} We deduce \eqref{eq.two-dep} from \eqref{eq.tail-indep}, first for finite sets. Let $I,J\subseteq\{1,2,\ldots\}$ be finite and disjoint with $\mathrm{dist}(I,J)\ge 3$. Partition $I\cup J$ into maximal runs $C_1,\ldots,C_q$ of indices in $I\cup J$ where consecutive indices differ by at most $2$. By definition, two consecutive runs are separated by a gap of at least $3$.

		No run contains both an $I$-index and a $J$-index. Indeed, otherwise traversing the run from first index to last would produce a transition from an $I$-index $i_l$ to a $J$-index $i_{l+1}$, or conversely, with $|i_{l+1}-i_l|\le 2$, contradicting $\mathrm{dist}(I,J)\ge 3$. Hence each run is contained in either $I$ or $J$.

		Iterating \eqref{eq.tail-indep} at the boundaries between runs shows that the run $\sigma$-fields $\sigma(M_i:i\in C_r)$ are mutually independent. Therefore $\sigma(M_i:i\in I)$ and $\sigma(M_j:j\in J)$ are products of disjoint subcollections of these mutually independent run $\sigma$-fields, and hence are independent.

			For general separated sets, let $\mathcal C_I$ be the cylinder $\pi$-system generated by events $\{(\Delta A_i^\lambda,\Delta\tau_i)\in B_i:i\in I_0\}$ with finite $I_0\subseteq I$, and define $\mathcal C_J$ similarly. The finite-set result gives independence on $\mathcal C_I\times\mathcal C_J$, and Dynkin's $\pi$-$\lambda$ theorem extends this to $\sigma(M_i:i\in I)$ and $\sigma(M_j:j\in J)$.

		\smallskip

		\emph{Step 3.} We prove stationarity. Fix $j\ge 1$ and $r\ge 1$. Let $\Psi_r$ be the first $r$ coordinates of the sequence-valued map $\Psi$. By Lemma~\ref{lem.conditional-renewal-tau-k}(ii),
		\[
			\Psi_r(\mathcal Y_j)=(M_j,\ldots,M_{j+r-1})
		\]
		almost surely. For every bounded measurable $g$ on $(\R\times[\lambda^{-2},\infty))^r$, Lemma~\ref{lem.conditional-renewal-tau-k} at $k=j$ with $H\equiv 1$ and $\Phi=g\circ\Psi_r$ gives
		\[
			\widehat{\mathbb E}_0^\lambda\bigl[g(M_j,\ldots,M_{j+r-1})\bigr]=\widehat{\mathbb E}_0^\lambda\bigl[g(\Psi_r(\mathcal Y_0))\mid D=\infty\bigr]\, .
		\]
		The right-hand side does not depend on $j$, so $\{M_k\}_{k\ge 1}$ is stationary.
	\end{proof}

	\subsection{Annealed moment bounds}\label{ss.annealed-moments}

	\begin{lemma}[Maximal moment bound, {\protect\citep[Lemma~4.5]{Einstein2012gantert}}]\label{lem.max}
		For every $p\ge 1$, there exists a finite constant $C=C(p,d,\Lambda)>0$ such that for every environment $\omega$ and every $\lambda\in(0,1]$,
		\begin{equation}
			E^{\lambda,\omega}_0\big[\max_{0\le s\le t}|X^\lambda(s)|^p\big]\le C(\lambda t)^p\qquad\text{for every }t\ge\lambda^{-2}\, .
		\end{equation}
	\end{lemma}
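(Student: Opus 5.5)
The bound is quenched and uniform in $\omega$, so I would prove it pathwise for a fixed environment. I would separate the ballistic drift from the martingale part in the It\^o decomposition \eqref{eq.SDE-degenerate}:
\[
X^\lambda(s)=\int_0^s \b(X^\lambda(u))\,du+M_s,\qquad M_s\coloneqq\int_0^s\a^{1/2}(X^\lambda(u))\,dW(u)\, .
\]
Here $\b=\tfrac12\nabla\cdot\a+\lambda\a e_1$. The piece $\lambda\int_0^s\a e_1\,du$ is bounded by $\Lambda\lambda t$ deterministically, which is the target size. The martingale is controlled by Burkholder--Davis--Gundy: $E\max_{s\le t}|M_s|^p\le C(\Lambda t)^{p/2}$. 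Since $t\ge\lambda^{-2}$ gives $\sqrt t\le\lambda t$, this is at most $C(\lambda t)^p$.

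The remaining term is the divergence drift $\tfrac12\int_0^s\nabla\cdot\a(X^\lambda(u))\,du$. It is bounded pointwise but not small: the crude bound is $Ct$, which is too large. I expect this term to be the main obstacle, since the Lipschitz bound alone does not give $\lambda t$. I would not estimate it directly. Instead I would remove it using the symmetric structure, in either of two ways.

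\emph{Girsanov route.} By Girsanov's theorem, exactly as in the proof of Proposition~\ref{pro.exit}, transfer to the symmetric diffusion $Z$ with generator $\tfrac12\nabla\cdot(\a\nabla)$. On $[0,t]$ the density $D_t$ satisfies $E[D_t^2]\le\exp(\Lambda\lambda^2 t)$. This is harmless only when $t\asymp\lambda^{-2}$. For such $t$, Cauchy--Schwarz combined with the Aronson-based tail of Lemma~\ref{lem.sym-exit} gives
\[
E^{\lambda,\omega}_0\bigl[\max_{s\le t}|X^\lambda(s)|^p\bigr]\le C\,t^{p/2}\le C(\lambda t)^p \qquad\text{for } t\in[\lambda^{-2},2\lambda^{-2}]\, ,
\]
since integrating the Gaussian tail $C\exp(-cR^2/t)$ in $R$ yields moments of order $t^{p/2}$. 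To reach general $t=n\lambda^{-2}$, split $[0,t]$ into $n$ intervals of length $\lambda^{-2}$. The Markov property applies at each junction, and since the bound above is uniform in the starting point and in $\omega$, each interval contributes a displacement with $p$-th moment at most $C\lambda^{-p}$. Minkowski's inequality for $\max_{s\le t}|X(s)|\le\sum_{j}\max_{\text{interval } j}|\text{increment}|$ then gives
\[
\bigl\|\max_{s\le t}|X^\lambda(s)|\bigr\|_{L^p}\le nC\lambda^{-1}=C\lambda t\, .
\]

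\emph{Alternative route.} Use the reversible measure $e^{2\lambda x_1}dx$ and a Lyons--Zheng forward--backward martingale decomposition to cancel the divergence drift. This is harder to make work from a point start, so I would use the Girsanov route.

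Finally, for a non-integer ratio $t/\lambda^{-2}$, round the number of intervals up to $n=\lceil\lambda^2 t\rceil\le2\lambda^2 t$, which only changes constants. Every constant depends on $(p,d,\Lambda)$ alone, which gives the uniformity in $\omega$ and in $\lambda$.
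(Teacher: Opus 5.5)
Your proof is correct. Note that the paper does not prove this lemma itself: it imports the bound from \citet[Lemma~4.5]{Einstein2012gantert}, so there is no internal argument to compare against. Your argument is a self-contained replacement built only from ingredients the paper establishes independently in Section~\ref{sec.tilted-resolvent}.

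On one block of length $\lambda^{-2}$, the Girsanov density relative to the untilted diffusion satisfies $E[D^2]\le e^{\Lambda}$, uniformly in $\lambda$, $\omega$ and the starting point. This is the same computation as in Proposition~\ref{pro.exit}. Cauchy--Schwarz against the $2p$-th moment, obtained by integrating the tail of Lemma~\ref{lem.sym-exit}, then gives a $C\lambda^{-p}$ bound for the maximal displacement over the block. Three further steps finish the proof:
\begin{itemize}
\item the pathwise inequality $\max_{s\le t}|X^\lambda(s)|\le\sum_{j<n}\xi_j$, where $\xi_j$ is the maximal increment over the $j$-th block;
\item the Markov property at the deterministic times $j\lambda^{-2}$;
\item Minkowski's inequality, which is where $p\ge1$ enters.
\end{itemize}
Together these give $\|\max_{s\le t}|X^\lambda(s)|\|_{L^p}\le nC^{1/p}\lambda^{-1}$ with $n=\lceil\lambda^2t\rceil\le 2\lambda^2 t$. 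There is no circularity, since neither Lemma~\ref{lem.sym-exit} nor the Girsanov bound uses any renewal input.

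What your route buys over the citation is transparency. The dependence of the constant on $(p,d,\Lambda)$ alone, and the uniformity in $\omega$ and in the starting point, are visible directly. They are also tied to the renamed field $e^{-2V}\a$ actually used in Section~\ref{sec.renewal}, rather than inherited from the original setting with a potential.

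Two simplifications: your opening BDG decomposition and the Lyons--Zheng aside are unnecessary once you take the Girsanov route. The troublesome drift $\tfrac12\nabla\cdot\a$ is handled implicitly by Aronson's bound for the symmetric diffusion. Also, only the case $t=\lambda^{-2}$ of the one-block estimate is needed.
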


	The first regeneration time has quenched exponential tails on the diffusive timescale $\lambda^{-2}$. After annealing, the uniform lower bound on $\widehat{\mathbb P}_0^\lambda(D=\infty)$ implies the same tail for every later block.
	
	\begin{lemma}[Regeneration-time tails]\label{lem.tau-tails}
		There exist constants $C=C(d,\Lambda,r_0)>0$ and $c=c(d,\Lambda,r_0)>0$ such that for every $\lambda\in(0,1]$ and $t>0$,
		\begin{equation}\label{eq.tail-tau1}
			\sup_\omega \widehat P^{\lambda,\omega}_0\bigl(\tau_1\ge\lambda^{-2}t\bigr)\le Ce^{-ct}\, ,
		\end{equation}
		and for every $k\ge 1$,
		\begin{equation}\label{eq.tail-increment}
			\widehat{\mathbb P}^\lambda_0\bigl(\Delta\tau_k\ge\lambda^{-2}t\bigr)\le Ce^{-ct}\, .
		\end{equation}
	\end{lemma}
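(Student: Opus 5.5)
The first bound \eqref{eq.tail-tau1} is essentially the quenched exponential tail of $\tau_1$ on the diffusive scale from \citet[Section~5]{Einstein2012gantert}, and I plan to recover it by a geometric-trials argument over successive regeneration attempts. Each attempt consists of three phases, each measured in units of $\lambda^{-2}$: a ladder phase, in which $e_1\cdot X^\lambda$ climbs by $R(\lambda)=l/\lambda$; a Bernoulli check plus one bridge step of length $\lambda^{-2}$; and, if the attempt fails, the backtracking time $D\circ\theta_S$.

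The key quenched inputs, all uniform in $\omega$ and $\lambda$, are the following.
\begin{enumerate}
\item[(a)] The ladder time to gain $R(\lambda)$ in direction $e_1$ has an exponential tail in units of $\lambda^{-2}$. I would obtain this from the Girsanov/Aronson comparison used in Proposition~\ref{pro.exit}, or more directly: over any interval of length $\lambda^{-2}$ the process has a uniformly positive probability of moving $R(\lambda)$ forward. The drift $\lambda\a e_1$ acts over time $\lambda^{-2}$ to produce a displacement of order $\lambda^{-1}$, and the Girsanov density has second moment bounded by $e^{C\Lambda}$. Iterating with the Markov property gives a geometric tail.
\item[(b)] The Bernoulli check succeeds with a fixed probability.
\item[(c)] A quenched uniform lower bound $P^{\lambda,\omega}_x(D=\infty)\ge c_0$. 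This is where I would rely on \citet{Einstein2012gantert}, Proposition~5.4 and Lemma~5.7, which give quenched lower bounds on non-backtracking.
\item[(d)] On $\{D<\infty\}$, the time $D$ itself has an exponential tail in $\lambda^{-2}$ units. I would argue that backtracking by $R$ must occur before the process escapes ballistically, combining Lemma~\ref{lem.max} with a lower bound on forward progress.
\end{enumerate}
The strong Markov property at the attempt times, together with the auxiliary marks, then shows that the number of attempts is dominated by a geometric random variable. The time spent per attempt is stochastically dominated by a sum of variables with exponential tails in $\lambda^{-2}$ units, uniformly in $\omega$. A standard exponential-moment bound on a geometric sum of such variables, namely choosing $\theta$ small so that $E[e^{\theta\lambda^2(\text{attempt duration})}\mathbf 1_{\text{fail}}]<1$, yields \eqref{eq.tail-tau1}.

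For \eqref{eq.tail-increment}, I use Lemma~\ref{lem.renewal}. For $k\ge1$, $\Delta\tau_k$ has the law of $\tau_1$ under $\widehat{\mathbb P}^\lambda_0[\,\cdot\mid D=\infty]$; note that $\Delta\tau_k$ is a coordinate of $\mathcal Z_k$. Hence
\[
\widehat{\mathbb P}^\lambda_0(\Delta\tau_k\ge\lambda^{-2}t)\le \frac{\widehat{\mathbb P}^\lambda_0(\tau_1\ge\lambda^{-2}t)}{\widehat{\mathbb P}^\lambda_0(D=\infty)}\le c_0^{-1}Ce^{-ct},
\]
using the annealed version of \eqref{eq.tail-tau1} and the uniform lower bound \eqref{eq.regen-scale-choice}. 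The case $k=0$ is \eqref{eq.tail-tau1} after annealing.

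The main obstacle is (d), the exponential tail of the finite backtracking time uniformly in the environment. I would handle it by noting that on each $\lambda^{-2}$-interval the walk has a uniformly positive probability of advancing by $C R$ without ever dropping by $R$. After such an advance, a later backtrack of size $R$ relative to the starting level requires a larger excursion, whose probability decays geometrically with the level gap. Summing over levels gives $P(t<D<\infty)\le Ce^{-ct}$. If this becomes delicate, I would cite the corresponding statement in \citet[Lemma~5.7]{Einstein2012gantert} directly, since that lemma is precisely the quenched tail input there.
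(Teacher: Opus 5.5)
\textbf{The bound on $\Delta\tau_k$.} Your argument for \eqref{eq.tail-increment} is exactly the paper's. $\Delta\tau_k$ is a coordinate of $\mathcal Z_k$, so by Lemma~\ref{lem.renewal} it has the law of $\tau_1$ under $\widehat{\mathbb P}_0^\lambda[\,\cdot\mid D=\infty]$. Dividing by $\widehat{\mathbb P}_0^\lambda(D=\infty)\ge c_0$ from \eqref{eq.regen-scale-choice} then transfers the annealed form of \eqref{eq.tail-tau1}.

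\textbf{The bound on $\tau_1$.} Here the paper re-derives nothing. It quotes the quenched bound \citet[Lemma~5.8, display~(5.24)]{Einstein2012gantert}. The paper cites Lemma~5.7(ii) there only for almost-sure finiteness of the $\tau_k$, so Lemma~5.7 is not the reference you want. If you use your fallback citation, your proof coincides with the paper's.

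\textbf{Where the self-contained route breaks.} Your geometric-trials reconstruction does not close as written. Items (a) and (d) both need a quenched, $\lambda$-uniform ballisticity (lower-deviation) estimate for $e_1\cdot X^\lambda$ on the diffusive scale. One form would be $\sup_\omega P_0^{\lambda,\omega}(e_1\cdot X^\lambda(t\lambda^{-2})\le ct/\lambda)\le Ce^{-ct}$. An equivalent form is a quenched bound, exponential in $\lambda L$, on the probability of ever backtracking by $L$.
\begin{itemize}
\item In (a), a uniformly positive chance of advancing $R(\lambda)$ in each window of length $\lambda^{-2}$ does not give a geometric tail for the ladder time. The target is a fixed level (running maximum plus $R$), and in failed windows the path can lose ground of order $\lambda^{-1}$. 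The mean gain per window is also of order $\lambda^{-1}$, comparable to the fluctuations. Knowing it is positive uniformly in $\omega$ is exactly the ballisticity statement.
\item The Girsanov/Aronson comparison cannot supply this over long times. The density's second moment is $\exp(C\lambda^2T)$, which is $e^{Ct}$ at $T=t\lambda^{-2}$. For the symmetric diffusion, the probability of not yet having reached a level $L\asymp\lambda^{-1}$ decays only polynomially in $T$. Cauchy--Schwarz therefore gives nothing.
\item In (d), the claim that ``a later backtrack requires a larger excursion whose probability decays geometrically with the level gap'' is that same unproven quenched estimate.
\item Item (c) also needs a quenched $\inf_{\omega,x}$ non-backtracking bound, whereas the paper records only the annealed \eqref{eq.regen-scale-choice}.
\end{itemize}

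\textbf{Two ways to fix it.} You can cite (5.24) directly, in which case (a)--(d) are unnecessary. Otherwise you must supply such a ballisticity bound. Once you have it, your exponential-moment argument over a geometric number of attempts is standard.
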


	\begin{proof}
		The first estimate is the quenched bound proved in \citet[Lemma~5.8, display~(5.24)]{Einstein2012gantert}. For $k\ge 1$, Lemma~\ref{lem.renewal} gives the law of $\Delta\tau_k$ as the law of $\tau_1$ conditioned on $\{D=\infty\}$. Together with \eqref{eq.regen-scale-choice},
		\[
			\widehat{\mathbb P}^\lambda_0\bigl(\Delta\tau_k\ge\lambda^{-2}t\bigr)
			\le c_0^{-1}\widehat{\mathbb P}^\lambda_0\bigl(\tau_1\ge\lambda^{-2}t\bigr)
			\le C e^{-ct}\, ,
		\]
		after increasing $C$.
	\end{proof}

	\begin{lemma}[Linear bound on the regeneration velocity, {\protect\citep[(4.25)]{Einstein2012gantert}}]\label{lem.lbound}
		There exists $C=C(d,\Lambda,r_0)<\infty$ such that for every $\lambda\in(0,1]$,
		\begin{equation}\label{eq.lbound}
			|\ell(\lambda)|\le C\lambda\, .
		\end{equation}
	\end{lemma}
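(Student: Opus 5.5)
The lemma is cited from \citet[(4.25)]{Einstein2012gantert}. To make the argument self-contained, I would derive it from the renewal formula of Lemma~\ref{lem.renewal} together with the moment bounds just recorded. By Lemma~\ref{lem.renewal},
\[
	\ell(\lambda)=\frac{\widehat{\mathbb E}_0^\lambda[\Delta X_1^\lambda]}{\widehat{\mathbb E}_0^\lambda[\Delta\tau_1]}\, ,
\]
so it suffices to prove two bounds: the denominator is at least $\lambda^{-2}$, and the numerator is at most $C\lambda^{-1}$.

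The denominator bound is immediate from \eqref{eq.lattice-tau}, which gives $\Delta\tau_1\ge\lambda^{-2}$ almost surely.

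For the numerator, Lemma~\ref{lem.renewal} identifies the law of $\Delta X_1^\lambda$ with that of $X^\lambda(\tau_1)$ under $\widehat{\mathbb P}_0^\lambda[\,\cdot\mid D=\infty]$. Combined with \eqref{eq.regen-scale-choice}, this gives
\[
	\widehat{\mathbb E}_0^\lambda|\Delta X_1^\lambda|\le c_0^{-1}\widehat{\mathbb E}_0^\lambda|X^\lambda(\tau_1)|\, .
\]
Next I decompose over the dyadic-in-time slices $\{\lambda^{-2}2^{j}\le\tau_1<\lambda^{-2}2^{j+1}\}$, together with the slice $\{\tau_1<\lambda^{-2}\}$, which is empty. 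On each slice I bound $|X^\lambda(\tau_1)|$ by the running maximum $\max_{s\le \lambda^{-2}2^{j+1}}|X^\lambda(s)|$ and apply Cauchy--Schwarz. Lemma~\ref{lem.max} with $p=2$ and $t=\lambda^{-2}2^{j+1}\ge\lambda^{-2}$ bounds the maximal factor by $C\lambda^{-1}2^{j+1}$, uniformly in $\omega$. The tail bound \eqref{eq.tail-tau1} bounds the probability factor by $(Ce^{-c2^{j}})^{1/2}$. Summing over $j\ge0$ converges and gives $\widehat E_0^{\lambda,\omega}|X^\lambda(\tau_1)|\le C\lambda^{-1}$ uniformly in $\omega$. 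Annealing over $\Q$ then yields $\widehat{\mathbb E}_0^\lambda|\Delta X_1^\lambda|\le C\lambda^{-1}$.

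Combining the two bounds gives $|\ell(\lambda)|\le C\lambda^{-1}\cdot\lambda^{2}=C\lambda$, which is \eqref{eq.lbound}.

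The only delicate point is the numerator. We need a bound on the displacement at a random time that is uniform in the environment. Lemma~\ref{lem.max} supplies this only for deterministic times $t\ge\lambda^{-2}$, and the dyadic slicing combined with the exponential tail of $\tau_1$ is what bridges the gap. We must also check that the $X$-marginal of the enlarged law is $P^{\lambda,\omega}_0$, so that Lemma~\ref{lem.max} applies on the enlarged space; this was recorded in the setup.
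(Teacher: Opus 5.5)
Your argument is correct. It differs from the paper in that the paper gives no proof: it imports the bound directly from display (4.25) of the cited Gantert--Mathieu--Piatnitski reference.

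Your derivation is sound and non-circular within the paper. The inputs you use are Lemma~\ref{lem.renewal}, \eqref{eq.regen-scale-choice}, \eqref{eq.lattice-tau}, Lemma~\ref{lem.max}, and \eqref{eq.tail-tau1}. All of them are stated independently of \eqref{eq.lbound}. Your numerator estimate is the first-moment case of Step~1 in the proof of Lemma~\ref{lem.block-moment-bounds}, and that lemma uses \eqref{eq.lbound} only for $W_k$, not for \eqref{eq.blockX-r}.

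The route does use more than necessary, though. By Lemma~\ref{lem.renewal}, $X^\lambda(t)/t\to\ell(\lambda)$ almost surely, and $\ell(\lambda)$ is deterministic, so you may fix a single $\omega$ for which the convergence holds $P_0^{\lambda,\omega}$-a.s. Lemma~\ref{lem.max} with $p=1$ gives $E_0^{\lambda,\omega}|X^\lambda(t)|\le C\lambda t$ for every $t\ge\lambda^{-2}$, uniformly in $\omega$. Fatou's lemma then gives
\[
|\ell(\lambda)|\le\liminf_{t\to\infty}t^{-1}E_0^{\lambda,\omega}|X^\lambda(t)|\le C\lambda
\]
in one line. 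This needs none of the following: the renewal identification of the law of $\Delta X_1^\lambda$, the uniform lower bound $c_0$ on $\widehat{\mathbb P}_0^\lambda(D=\infty)$, the lattice property of $\Delta\tau_1$, the exponential tail of $\tau_1$, or the slicing. It is the natural way to read the cited display off the maximal inequality that precedes it in the source.

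Your version stays inside the regeneration framework and produces the numerator bound $\widehat{\mathbb E}_0^\lambda|\Delta X_1^\lambda|\le C\lambda^{-1}$ along the way. The Fatou argument is shorter and rests on fewer imported estimates. That matters if one wants to be sure that none of the regeneration inputs taken from the source themselves rely on the velocity bound.
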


	\begin{lemma}[Regeneration block moment bounds]\label{lem.block-moment-bounds}
		Let
		\[
		R_k\coloneqq\max_{0\le s\le \Delta\tau_k}|X^\lambda(\tau_k+s)-X^\lambda(\tau_k)|\, .
		\]
		For every integer $r\ge1$, there exists $C=C(r,d,\Lambda,r_0)<\infty$ such that, for every $\lambda\in(0,1]$ and every $k\ge0$,
		\begin{equation}\label{eq.RD-r}
			\widehat{\mathbb E}_0^\lambda[(\Delta\tau_k)^r]\le C \lambda^{-2r},\qquad \widehat{\mathbb E}_0^\lambda[R_k^r]\le C \lambda^{-r}\, .
		\end{equation}
		Consequently,
		\begin{equation}\label{eq.blockX-r}
			\widehat{\mathbb E}_0^\lambda|\Delta X_k^\lambda|^r\le C \lambda^{-r}\qquad\text{for }k\ge0\, ,
		\end{equation}
		and, for
		\[
		W_k=\Delta X_k^\lambda-\ell(\lambda)\Delta\tau_k\, ,
		\]
		\begin{equation}\label{eq.W-r}
			\widehat{\mathbb E}_0^\lambda|W_k|^r\le C \lambda^{-r}\qquad\text{for }k\ge0\, .
		\end{equation}
	\end{lemma}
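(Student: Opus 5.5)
We reduce everything to the tail bound of Lemma~\ref{lem.tau-tails} and the maximal bound of Lemma~\ref{lem.max}. For the duration moments, write $\Delta\tau_k\ge0$ and use the layer-cake formula: $\widehat{\mathbb E}_0^\lambda[(\Delta\tau_k)^r]=\int_0^\infty r s^{r-1}\widehat{\mathbb P}_0^\lambda(\Delta\tau_k\ge s)\,ds$. Substituting $s=\lambda^{-2}t$ and inserting \eqref{eq.tail-increment} for $k\ge1$, or \eqref{eq.tail-tau1} averaged over $\Q$ for $k=0$, gives $\lambda^{-2r}\int_0^\infty rt^{r-1}Ce^{-ct}\,dt=C\lambda^{-2r}$.

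For $R_k$, treat $k=0$ first and then transfer to $k\ge1$.

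\emph{The case $k=0$.} Here $R_0=\max_{s\le\tau_1}|X^\lambda(s)|$. Decompose over the dyadic-in-time events $\{\tau_1\in[\lambda^{-2}2^{j-1},\lambda^{-2}2^j)\}$ for $j\ge1$, together with $\{\tau_1<\lambda^{-2}\}$. Set $t_j\coloneqq\lambda^{-2}2^j\ge\lambda^{-2}$. On the $j$-th event, $R_0\le\max_{s\le t_j}|X^\lambda(s)|$. By Cauchy--Schwarz, Lemma~\ref{lem.max} with exponent $2r$, and \eqref{eq.tail-tau1},
\[
\widehat E_0^{\lambda,\omega}\bigl[R_0^r\mathbf 1_{\{\tau_1\in[t_{j-1},t_j)\}}\bigr]\le \bigl(C(\lambda t_j)^{2r}\bigr)^{1/2}\bigl(Ce^{-c2^{j-1}}\bigr)^{1/2}=C\lambda^{-r}2^{jr}e^{-c2^{j-2}}\, .
\]
This is summable in $j$, uniformly in $\omega$. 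The event $\tau_1<\lambda^{-2}$ is handled by the $j=0$ term with $t_0=\lambda^{-2}$. Lemma~\ref{lem.max} involves only the path, so it applies equally under $\widehat E$, which has the same $X$-marginal. This gives a quenched bound, uniform in $\omega$, and hence the annealed bound for $k=0$.

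\emph{The case $k\ge1$.} This is the main obstacle, because the block after $\tau_k$ is not simply a fresh start under $\widehat P^{\lambda,\omega}_0$. Use Lemma~\ref{lem.conditional-renewal-tau-k} with $H\equiv1$ and $\Phi$ a bounded truncation $\min(R,N)^r$ of the map sending the shifted future path to its running maximum up to its first regeneration time. The first regeneration time of the shifted future is a measurable function of $\mathcal Y_k$, namely of the path and marks, with $\tau_{k+1}-\tau_k=\tau_1\circ\theta_{\tau_k}$. This gives $\widehat{\mathbb E}^\lambda_0[\min(R_k,N)^r]=\widehat{\mathbb E}^\lambda_0[\min(R_0,N)^r\mid D=\infty]\le c_0^{-1}\widehat{\mathbb E}_0^\lambda[R_0^r]$ by \eqref{eq.regen-scale-choice}. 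Letting $N\to\infty$ by monotone convergence finishes \eqref{eq.RD-r}.

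The remaining bounds follow directly. Since $|\Delta X_k^\lambda|\le R_k$, we get \eqref{eq.blockX-r}. For $W_k$, combine $|W_k|^r\le 2^{r-1}(|\Delta X_k^\lambda|^r+|\ell(\lambda)|^r(\Delta\tau_k)^r)$ with Lemma~\ref{lem.lbound}, which gives $|\ell(\lambda)|^r\widehat{\mathbb E}(\Delta\tau_k)^r\le C\lambda^r\lambda^{-2r}=C\lambda^{-r}$; this yields \eqref{eq.W-r}.
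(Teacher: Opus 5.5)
Your proof is correct. For $k\ge1$ it takes a different route from the paper; the rest matches up to cosmetic changes.

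\textbf{The paper's route.} The paper bounds $R_k$ through Lemma~\ref{lem.renewal}, which identifies only the law of the truncated block $\mathcal Z_k$ in \eqref{eq.truncated-block}, with the final segment of duration $\lambda^{-2}$ discarded. It therefore has to split $R_k$ into two pieces:
\begin{itemize}
\item the truncated maximum, which is a functional of $\mathcal Z_k$ and so is bounded by $c_0^{-1}$ times the $k=0$ estimate;
\item the final bridge segment, which is controlled deterministically by the confinement \eqref{eq.final-segment-bound}.
\end{itemize}
This gives $R_k\le R_k^{\rm trunc}+11l/\lambda$.

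\textbf{Your route.} You apply the environment-inclusive identity of Lemma~\ref{lem.conditional-renewal-tau-k} with $H\equiv1$. That identity gives the law of the entire shifted future path and marks after $\tau_k$, not just a truncated block. Since $\Delta\tau_k=\tau_1\circ\theta_{\tau_k}$ is a translation-invariant functional of the shifted path and marks, $R_k$ is a functional of $\mathcal Y_k$. The comparison with $R_0$ under $\{D=\infty\}$ is then immediate, with no truncation step.

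\textbf{What each buys.} Your argument is shorter and treats $\Delta\tau_k$ and $R_k$ in the same way. It does lean on the heavier result of Section~\ref{sec.shen-renewal-k}. That is legitimate here, because its proof does not use the present lemma. The paper's route needs only the path renewal of \citet[Theorem~5.6]{Einstein2012gantert} plus the explicit bridge confinement.

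The remaining differences are cosmetic: you use dyadic rather than unit time slices for $R_0$, and you derive the duration moments from \eqref{eq.tail-increment} rather than from the conditional comparison directly.
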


	\begin{proof}
		\emph{Step 1.} We first treat $k=0$. Integration by parts and the quenched tail \eqref{eq.tail-tau1} give
		\[
			\sup_\omega\widehat E_0^{\lambda,\omega}[\tau_1^r]
			=\sup_\omega r\int_0^\infty u^{r-1}\widehat P_0^{\lambda,\omega}(\tau_1\ge u) du
			\le r\int_0^\infty u^{r-1}\sup_\omega\widehat P_0^{\lambda,\omega}(\tau_1\ge u) du\le C\lambda^{-2r}\, .
		\]
		For $R_0$, decompose on the events $\{\tau_1\in[\lambda^{-2}n,\lambda^{-2}(n+1))\}$ for $n\ge 0$. Cauchy--Schwarz, Lemma~\ref{lem.max} with $p=2r$ at $t=\lambda^{-2}(n+1)\ge\lambda^{-2}$, and \eqref{eq.tail-tau1} give
			\[
				\widehat E_0^{\lambda,\omega}[R_0^r]
				\le\sum_{n\ge0}\bigl(\widehat E_0^{\lambda,\omega}[\max_{s\le\lambda^{-2}(n+1)}|X^\lambda(s)|^{2r}]\bigr)^{1/2}
				\widehat P_0^{\lambda,\omega}(\tau_1\ge\lambda^{-2}n)^{1/2}\, .
			\]
			The right-hand side is at most
			\[
				C\sum_{n\ge0}\lambda^{-r}(n+1)^r e^{-cn/2}\le C\lambda^{-r}
			\]
			uniformly in $\omega$. Integrating over $\Q$ proves \eqref{eq.RD-r} for $k=0$.

		\smallskip

		\emph{Step 2.} We next treat $k\ge 1$. By Lemma~\ref{lem.renewal}, the blocks $\{\mathcal Z_k\}_{k\ge 1}$ are i.i.d.\ with the law of $\mathcal Z_0$ conditioned on $\{D=\infty\}$. Since $\widehat{\mathbb P}_0^\lambda(D=\infty)\ge c_0>0$, every nonnegative functional $F$ of $\mathcal Z_0$ satisfies
		\[
			\widehat{\mathbb E}_0^\lambda[F\mid D=\infty]\le c_0^{-1}\widehat{\mathbb E}_0^\lambda[F]\, .
		\]
		Applying this to $F=(\Delta\tau_0)^r$ gives the first bound in \eqref{eq.RD-r} for every $k\ge 1$.

		\smallskip

		\emph{Step 3.} We control the range over a block. Decompose $R_k=R_k^{\rm trunc}\vee R_k^{\rm tail}$, where $R_k^{\rm trunc}$ runs over $s\le\Delta\tau_k-\lambda^{-2}$ and $R_k^{\rm tail}$ over $s\in[\Delta\tau_k-\lambda^{-2},\Delta\tau_k]$. The truncated max is a functional of $\mathcal Z_k$ alone, so Step~2 gives $\widehat{\mathbb E}_0^\lambda[(R_k^{\rm trunc})^r]\le C\lambda^{-r}$ for $k\ge 1$. On the final segment, \eqref{eq.final-segment-bound} and the triangle inequality give
		\[
			R_k^{\rm tail}\le R_k^{\rm trunc}+\frac{11l}{\lambda}.
		\]
		Hence $R_k\le R_k^{\rm trunc}+11l/\lambda$, and the second bound in \eqref{eq.RD-r} follows.

		\smallskip

		\emph{Step 4.} It remains to bound $\Delta X_k^\lambda$ and $W_k$. The bound \eqref{eq.blockX-r} follows from $|\Delta X_k^\lambda|\le R_k$, and \eqref{eq.W-r} follows from \eqref{eq.lbound}, \eqref{eq.RD-r}, and \eqref{eq.blockX-r}.
	\end{proof}

	\begin{proposition}[Annealed polynomial moment bound for the displacement]\label{pro.moment-renewal}
		For every integer $M\ge2$, there exists $C=C(M,d,\Lambda,r_0)<\infty$ such that for every $\lambda\in(0,1]$ and every $t\ge\lambda^{-2}$,
		\begin{equation}\label{eq.moment-renewal-Mth}
			\E_\Q \widehat E^{\lambda,\omega}_0\big|X^\lambda(t)-\ell(\lambda)t\big|^M
			\le Ct^{M/2}\, .
		\end{equation}
	\end{proposition}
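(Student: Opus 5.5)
The plan is to decompose $X^\lambda(t)-\ell(\lambda)t$ along the regeneration structure and reduce the estimate to a martingale sum of the centered block increments $W_k=\Delta X_k^\lambda-\ell(\lambda)\Delta\tau_k$. Let $K(t)\coloneqq\max\{k:\tau_k\le t\}$. Then
\[
	X^\lambda(t)-\ell(\lambda)t=\sum_{k=0}^{K(t)-1}W_k+\Bigl(X^\lambda(t)-X^\lambda(\tau_{K(t)})-\ell(\lambda)(t-\tau_{K(t)})\Bigr)\, .
\]
The proof then has three terms to control: the initial block $W_0$, the sum of the i.i.d.\ blocks $W_1,\dots,W_{K(t)-1}$, and the incomplete final block.

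\textbf{Terms $W_0$ and the final block.} By \eqref{eq.W-r}, $\widehat{\mathbb E}_0^\lambda|W_0|^M\le C\lambda^{-M}$, and $\lambda^{-M}\le t^{M/2}$ because $t\ge\lambda^{-2}$. For the final block, the displacement is at most $R_{K(t)}$ and the elapsed time is at most $\Delta\tau_{K(t)}$. Since $|\ell(\lambda)|\le C\lambda$ by Lemma~\ref{lem.lbound}, this term is bounded by $R_{K(t)}+C\lambda\Delta\tau_{K(t)}$. The index $K(t)$ is random, so I will not bound this block directly. Instead I use $K(t)\le\lambda^2t$, which holds because $\tau_k\ge2k\lambda^{-2}$, and dominate
\[
	|\cdot|^M\le\sum_{k\le\lambda^2t}\bigl(R_k+C\lambda\Delta\tau_k\bigr)^M\mathbbm 1\{\tau_k\le t<\tau_{k+1}\}\, .
\]
The crude union bound $\sum_{k\le\lambda^2t}\widehat{\mathbb E}_0^\lambda(\cdot)$ loses a factor $\lambda^2t$. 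To avoid this loss, I bound $\max_{k\le\lambda^2t}(R_k+\lambda\Delta\tau_k)^M$ through its $2M$-th moment. This gives $\widehat{\mathbb E}_0^\lambda[\max(\cdot)^M]\le(\lambda^2t)^{1/2}C\lambda^{-M}$, and the right side is at most $Ct^{M/2}$ because $(\lambda^2 t)^{1/2}\le(\lambda^2t)^{M/2}$ for $M\ge1$. More simply, $\max\le(\sum(\cdot)^{2M})^{1/2M}$ gives the factor $(\lambda^2t)^{1/2}$ with all moments taken from \eqref{eq.RD-r}.

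\textbf{Main term.} The increments $W_k$ for $k\ge1$ are i.i.d.\ by Lemma~\ref{lem.renewal} and have mean zero by the renewal formula for $\ell(\lambda)$. The partial sums $S_n=\sum_{k=1}^{n}W_k$ are therefore a martingale. Since $K(t)\le\lambda^2t+1$, I bound $\widehat{\mathbb E}_0^\lambda\max_{n\le\lambda^2t+1}|S_n|^M$ in two steps. Doob's $L^M$ maximal inequality removes the maximum. Rosenthal's inequality then gives
\[
	\widehat{\mathbb E}_0^\lambda|S_n|^M\le C\bigl(n\,\widehat{\mathbb E}|W_1|^M+(n\,\widehat{\mathbb E}|W_1|^2)^{M/2}\bigr)\le C\bigl(n\lambda^{-M}+(n\lambda^{-2})^{M/2}\bigr)\, ,
\]
using \eqref{eq.W-r}. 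With $n\le2\lambda^2t$, both terms are $\le Ct^{M/2}$; the first because $\lambda^2t\le(\lambda^2t)^{M/2}$ when $\lambda^2t\ge1$. The sum starts at $k=0$, but $W_0$ was already handled separately.

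\textbf{Main obstacle.} The key point to check carefully is that the index set $\{1,\dots,K(t)-1\}$ is random and may depend on the blocks. Because I take the maximum over the deterministic range $n\le2\lambda^2t$, this dependence causes no difficulty and no optional stopping is needed. A second point to check is that the centering uses exactly $\ell(\lambda)=\widehat{\mathbb E}\Delta X_1/\widehat{\mathbb E}\Delta\tau_1$, so that $\widehat{\mathbb E}W_1=0$. Finally, the annealed law $\E_\Q\widehat E^{\lambda,\omega}_0$ coincides with $\widehat{\mathbb E}_0^\lambda$ by definition, which gives \eqref{eq.moment-renewal-Mth}.
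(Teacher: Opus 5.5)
Your proposal is correct and follows the paper's proof: it uses the same split into the initial block $W_0$, the Doob--Rosenthal bound for the martingale of i.i.d.\ centered $W_k$, and the incomplete final block, with the random index replaced by a deterministic range. The only difference is that your $2M$-th moment refinement for the final block is unnecessary. The crude bound $\max_k a_k^M\le\sum_k a_k^M$ already gives $C\lambda^{2-M}t\le Ct^{M/2}$ for $M\ge 2$, and this is the bound the paper uses.
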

	
	\begin{proof}
		Let
		\[
		\mathbf n(t)\coloneqq \max\{n\ge0:\tau_n\le t\}\, .
		\]
		By the triangle inequality applied pointwise,
		\[
			|X^\lambda(t)-\ell(\lambda)t|^M\le 3^{M-1}\bigl(|X^\lambda(t)-X^\lambda(\tau_{\mathbf n(t)})|^M+|X^\lambda(\tau_{\mathbf n(t)})-\ell(\lambda)\tau_{\mathbf n(t)}|^M+|\ell(\lambda)|^M|t-\tau_{\mathbf n(t)}|^M\bigr)\, ,
		\]
		so taking $\widehat E^{\lambda,\omega}_0$ and then $\E_\Q$,
		\begin{equation}\label{eq.split-AM}
			\E_\Q \widehat E^{\lambda,\omega}_0\big|X^\lambda(t)-\ell(\lambda)t\big|^M\le 3^{M-1}\bigl(J_1(t)+J_2(t)+J_3(t)\bigr)\, ,
		\end{equation}
		with
		\begin{align*}
			J_1(t)&\coloneqq \widehat{\mathbb E}^\lambda_0\big|X^\lambda(t)-X^\lambda(\tau_{\mathbf n(t)})\big|^M\, ,\\
			J_2(t)&\coloneqq \widehat{\mathbb E}^\lambda_0\big|X^\lambda(\tau_{\mathbf n(t)})-\ell(\lambda)\tau_{\mathbf n(t)}\big|^M\, ,\\
			J_3(t)&\coloneqq |\ell(\lambda)|^M \widehat{\mathbb E}^\lambda_0\big|t-\tau_{\mathbf n(t)}\big|^M\, .
		\end{align*}

		\emph{Step 1.} We estimate $J_1$ and $J_3$ by replacing the random index $\mathbf n(t)$ with the deterministic bound $N_t+1$. Let $R_k$ and $W_k$ be as in Lemma~\ref{lem.block-moment-bounds}.

		The block moment bounds $\widehat{\mathbb E}_0^\lambda[R_k^M]\le C\lambda^{-M}$ and $\widehat{\mathbb E}_0^\lambda[(\Delta\tau_k)^M]\le C\lambda^{-2M}$ from \eqref{eq.RD-r} are used below uniformly in $k\ge 0$. Let $N_t\coloneqq\lfloor\lambda^2 t\rfloor$. The initial block is left out of this counting argument and will be included in the maxima below. The integer-valuedness of $\lambda^2\Delta\tau_k$ in \eqref{eq.lattice-tau} gives $\Delta\tau_k\ge\lambda^{-2}$ for $k\ge 1$, so
		\begin{equation}\label{eq.n-bound}
			(\mathbf n(t)-1)_+\lambda^{-2}
			\le\sum_{k=1}^{(\mathbf n(t)-1)_+}\Delta\tau_k
			\le\tau_{\mathbf n(t)}\le t,\qquad\text{hence}\qquad\mathbf n(t)\le N_t+1\, .
		\end{equation}
		Since $\mathbf n(t)\le N_t+1$ by \eqref{eq.n-bound}, while $t-\tau_{\mathbf n(t)}\le\Delta\tau_{\mathbf n(t)}$ and $|X^\lambda(t)-X^\lambda(\tau_{\mathbf n(t)})|\le R_{\mathbf n(t)}$,
		\[
		|X^\lambda(t)-X^\lambda(\tau_{\mathbf n(t)})|\le \max_{0\le k\le N_t+1}R_k,\qquad
		|t-\tau_{\mathbf n(t)}|\le \max_{0\le k\le N_t+1}\Delta\tau_k\, .
		\]
		The bound $\max_{0\le k\le N_t+1} a_k^M\le\sum_{k=0}^{N_t+1} a_k^M$ for nonnegative $a_k$ and \eqref{eq.RD-r} with $r=M$ give
		\[
		J_1(t)=\widehat{\mathbb E}^\lambda_0\big|X^\lambda(t)-X^\lambda(\tau_{\mathbf n(t)})\big|^M\le \widehat{\mathbb E}^\lambda_0\Big[\max_{0\le k\le N_t+1}R_k^M\Big]\le\sum_{k=0}^{N_t+1}\widehat{\mathbb E}^\lambda_0[R_k^M]\le C(N_t+2)\lambda^{-M}\, .
		\]
		Similarly, using $|\ell(\lambda)|\le C\lambda$ (Lemma~\ref{lem.lbound}) and \eqref{eq.RD-r} with $r=M$,
		\[
		J_3(t)\le C\lambda^M\sum_{k=0}^{N_t+1}\widehat{\mathbb E}^\lambda_0[(\Delta\tau_k)^M]\le C(N_t+2)\lambda^M\cdot\lambda^{-2M}=C(N_t+2)\lambda^{-M}\, .
		\]
		Since $t\ge\lambda^{-2}$, we have $(N_t+2)\le C\lambda^2 t$, and therefore
		\begin{equation}\label{eq.A1M}
			J_1(t)\le C\lambda^{2-M}t\le Ct^{M/2}\, ,
		\end{equation}
		\begin{equation}\label{eq.A3M}
			J_3(t)\le C\lambda^{2-M}t\le Ct^{M/2}\, ,
		\end{equation}
		using $t\ge\lambda^{-2}$, which implies $\lambda^{-1}\le\sqrt t$ and hence $\lambda^{2-M}t=\lambda^{-(M-2)}t\le t^{M/2}$.

		\smallskip

		\emph{Step 2.} We estimate $J_2$. We have
		\[
		X^\lambda(\tau_n)-\ell(\lambda)\tau_n=\sum_{k=0}^{n-1}W_k,\qquad J_2(t)=\widehat{\mathbb E}^\lambda_0\Big|\sum_{k=0}^{\mathbf n(t)-1}W_k\Big|^M\, .
		\]

		By Lemma~\ref{lem.renewal}, $\{W_k\}_{k\ge1}$ are i.i.d.\ and centered under $\widehat{\mathbb P}_0^\lambda$, while $W_0$ is independent of $\{W_k\}_{k\ge1}$. The deterministic bound \eqref{eq.n-bound} of Step~1 gives $\mathbf n(t)-1\le N_t=\lfloor\lambda^2 t\rfloor$. Set
		\[
		S_m\coloneqq \sum_{k=1}^m W_k,\qquad m\ge1,\qquad S_0\coloneqq 0\, .
		\]
		Then
		\[
		\Big|\sum_{k=0}^{\mathbf n(t)-1}W_k\Big|^M\le 2^{M-1}\Big(|W_0|^M+\max_{1\le m\le N_t}|S_m|^M\Big)\, ,
		\]
		and hence, by \eqref{eq.W-r} with $r=M$,
		\begin{equation}\label{eq.A2-reduce}
			J_2(t)\le C\lambda^{-M}+C \widehat{\mathbb E}_0^\lambda\Big[\max_{1\le m\le N_t}|S_m|^M\Big]\, .
		\end{equation}

		Now the sequence $\{S_m\}_{m\ge1}$ is an $\mathbb{R}^d$-valued martingale under $\widehat{\mathbb P}_0^\lambda$, so $\{|S_m|\}_{m\ge0}$ is a nonnegative submartingale. By Doob's $L^M$ maximal inequality,
		\begin{equation}\label{eq.Doob}
			\widehat{\mathbb E}_0^\lambda\Big[\max_{0\le m\le N_t}|S_m|^M\Big]\le \Big(\frac{M}{M-1}\Big)^M \widehat{\mathbb E}_0^\lambda|S_{N_t}|^M\, .
		\end{equation}

		Write $S_{N_t}=(S_{N_t}^{(1)},\dots,S_{N_t}^{(d)})$.
		Since
		\[
		|x|^M\le d^{M/2-1}\sum_{i=1}^d |x_i|^M,\qquad x\in\mathbb{R}^d\, ,
		\]
		it is enough to estimate each coordinate. For each $i$, the random variables $\{W_k^{(i)}\}_{k\ge 1}$ are i.i.d.\ mean-zero real-valued random variables under $\widehat{\mathbb P}_0^\lambda$. By Rosenthal's inequality and \eqref{eq.W-r} with $r=2$ and $r=M$,
		\[
		\begin{aligned}
			\widehat{\mathbb E}_0^\lambda|S_{N_t}^{(i)}|^M
			&\le C\left(N_t  \widehat{\mathbb E}_0^\lambda|W_1^{(i)}|^M+\big(N_t  \widehat{\mathbb E}_0^\lambda|W_1^{(i)}|^2\big)^{M/2}\right)\\
			&\le C\left(N_t\lambda^{-M}+N_t^{M/2}\lambda^{-M}\right)\le CN_t^{M/2}\lambda^{-M}\, ,
		\end{aligned}
		\]
		where in the last step we used $M\ge2$. Summing over $1\le i\le d$ yields
		\[
		\widehat{\mathbb E}_0^\lambda|S_{N_t}|^M\le C N_t^{M/2}\lambda^{-M}\, .
		\]
		Together with \eqref{eq.Doob},
		\[
		\widehat{\mathbb E}_0^\lambda\Big[\max_{0\le m\le N_t}|S_m|^M\Big]\le C N_t^{M/2}\lambda^{-M}\, .
		\]
		Since $N_t\le \lambda^2 t$,
		\[
		N_t^{M/2}\lambda^{-M}\le t^{M/2}\, .
		\]
		Substituting this into \eqref{eq.A2-reduce}, we obtain
		\begin{equation}\label{eq.A2M}
			J_2(t)\le C\big(t^{M/2}+\lambda^{-M}\big)\, .
		\end{equation}

		\medskip
		Combining \eqref{eq.split-AM}, \eqref{eq.A1M}, \eqref{eq.A3M}, and \eqref{eq.A2M},
		\[
			\E_\Q \widehat E^{\lambda,\omega}_0\big|X^\lambda(t)-\ell(\lambda)t\big|^M
			\le C\big(t^{M/2}+\lambda^{-M}\big)\le Ct^{M/2}\, ,
		\]
		where the last inequality uses $t\ge\lambda^{-2}$.
	\end{proof}

	Proposition~\ref{pro.additive-renewal} is the clock analogue of Proposition~\ref{pro.moment-renewal}. It differs in two ways: the bound carries an extra factor $\lambda^{-M}$, and, because the clock blocks are only $2$-dependent, the partial sums are handled by splitting into three i.i.d.\ residue classes rather than by a single application of Rosenthal's inequality.

\begin{proposition}[Annealed renewal for the additive clock]\label{pro.additive-renewal}
	For every $\lambda\in(0,1]$, the limit $\lim_{t\to\infty}t^{-1}A^\lambda(t)$ exists $\widehat{\mathbb P}_0^\lambda$-almost surely and in $L^1(\widehat{\mathbb P}_0^\lambda)$ and is the deterministic number
	\begin{equation}\label{eq.eta-def-renewal}
	\eta(\lambda)=\frac{\widehat{\mathbb E}_0^\lambda\left[\int_{\tau_1}^{\tau_2}e^{-2V(X^\lambda(s))} ds\right]}{\widehat{\mathbb E}_0^\lambda[\Delta\tau_1]}\, .
	\end{equation}
	Moreover,
	\begin{equation}\label{eq.eta-bound-renewal}
		\Lambda^{-1}\le\eta(\lambda)\le\Lambda\, .
	\end{equation}
		For every integer $M\ge2$, there exists a finite constant $C=C(M,d,\Lambda,r_0)>0$ such that for all $\lambda\in(0,1]$ and all $t\ge\lambda^{-2}$,
		\begin{equation}\label{eq.additive-renewal-Mth}
			\E_\Q \widehat E^{\lambda,\omega}_0\big|A^\lambda(t)-\eta(\lambda)t\big|^M
			\le C\lambda^{-M}t^{M/2}\, .
		\end{equation}
	\end{proposition}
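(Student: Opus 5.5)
The proof mirrors Proposition~\ref{pro.moment-renewal}, with the i.i.d.\ displacement blocks replaced by the stationary $2$-dependent clock blocks $M_k=(\Delta A_k^\lambda,\Delta\tau_k)$ of Lemma~\ref{lem.two-dependent}. Set $\Delta A_k\coloneqq\Delta A_k^\lambda$ and $\widetilde W_k\coloneqq\Delta A_k-\eta(\lambda)\Delta\tau_k$, with $\eta(\lambda)$ defined by \eqref{eq.eta-def-renewal}.

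\emph{Existence of the limit and \eqref{eq.eta-bound-renewal}.} Since $e^{-2V}\in[\Lambda^{-1},\Lambda]$, we have pointwise $\Lambda^{-1}\Delta\tau_k\le\Delta A_k\le\Lambda\Delta\tau_k$. Taking expectations with $k=1$ gives \eqref{eq.eta-bound-renewal}. By stationarity, $\widehat{\mathbb E}_0^\lambda[\widetilde W_k]=0$ for every $k\ge1$. Moreover, $2$-dependence together with stationarity makes the sequence $\{M_k\}_{k\ge1}$ ergodic; alternatively, we can split it into the three residue classes mod $3$, each of which is i.i.d.\ by Lemma~\ref{lem.two-dependent}. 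The strong law of large numbers then gives $n^{-1}\sum_{k<n}\Delta A_k\to\widehat{\mathbb E}_0^\lambda[\Delta A_1]$ and $n^{-1}\tau_n\to\widehat{\mathbb E}_0^\lambda[\Delta\tau_1]$ almost surely. The block $k=0$ is a.s.\ finite, since $\tau_1<\infty$, so it does not affect these limits. To pass from regeneration times to general $t$, I sandwich $\tau_{\mathbf n(t)}\le t<\tau_{\mathbf n(t)+1}$, use monotonicity of $A^\lambda$, and note that $\Delta\tau_n/n\to0$ a.s.\ because the blocks have finite second moments by \eqref{eq.RD-r}. This yields $A^\lambda(t)/t\to\eta(\lambda)$ almost surely. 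Convergence in $L^1$ follows from the deterministic bound $0\le A^\lambda(t)/t\le\Lambda$ and dominated convergence.

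\emph{The moment bound \eqref{eq.additive-renewal-Mth}.} I split $A^\lambda(t)-\eta t$ as in \eqref{eq.split-AM} into three terms: $A^\lambda(t)-A^\lambda(\tau_{\mathbf n(t)})$, then $\sum_{k<\mathbf n(t)}\widetilde W_k$, then $\eta(t-\tau_{\mathbf n(t)})$. The first and third terms are each bounded by $\Lambda\max_{0\le k\le N_t+1}\Delta\tau_k$. Using \eqref{eq.RD-r} with $r=M$ and a union bound over the blocks, their $M$-th moments are at most $C(N_t+2)\lambda^{-2M}\le C\lambda^{2-2M}t$. Since $t\ge\lambda^{-2}$, this is $\le C\lambda^{-M}t^{M/2}$. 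This step is the source of the extra factor $\lambda^{-M}$: the clock block has size $\lambda^{-2}$, not $\lambda^{-1}$. For the middle term, I write $\mathbf n(t)-1\le N_t$ and separate $\widetilde W_0$. Its moment is $\le C\lambda^{-2M}$, which is again $\le C\lambda^{-M}t^{M/2}$. The remaining sum I split as $\sum_{j=0}^{2}S^{(j)}_m$, where $S^{(j)}_m$ collects the $\widetilde W_k$ with $1\le k\le m$ and $k\equiv j\pmod 3$.

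By Lemma~\ref{lem.two-dependent}, each residue class consists of i.i.d.\ centered variables with $\widehat{\mathbb E}_0^\lambda|\widetilde W_k|^r\le C\lambda^{-2r}$. Hence $m\mapsto S^{(j)}_m$ is a martingale along its own indices, and Doob's inequality followed by Rosenthal's inequality gives
\[
\widehat{\mathbb E}_0^\lambda\Bigl[\max_{m\le N_t}|S^{(j)}_m|^M\Bigr]\le C N_t^{M/2}\lambda^{-2M}\le C\lambda^{-M}t^{M/2}.
\]

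The main obstacle I expect is the treatment of the random index $\mathbf n(t)$. In Proposition~\ref{pro.moment-renewal} this was handled by the deterministic bound $\mathbf n(t)\le N_t+1$ combined with the maximal inequality, and that transfers here unchanged, because the maximal inequality is applied separately to each of the three i.i.d.\ subsequences. The remaining care is in the first step: making the ergodic/LLN argument rigorous from $2$-dependence alone, via the residue-class i.i.d.\ splitting, rather than from an i.i.d.\ structure.
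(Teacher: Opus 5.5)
Your proposal is correct and follows essentially the paper's proof. The moment bound uses the same residue-class-mod-$3$ splitting of the centered $2$-dependent clock blocks, with Doob and Rosenthal applied on each class, and the same $\max_{0\le k\le N_t+1}\Delta\tau_k$ control of the remainder and the initial block. The only difference is the qualitative limit: the paper first obtains $L^1$ convergence from the $M=2$ moment bound and then a.s.\ convergence via the strong law on residue classes plus Borel--Cantelli for $\Delta\tau_k/k$, whereas you get a.s.\ convergence from the monotone renewal-reward sandwich and $L^1$ convergence by bounded convergence from $0\le A^\lambda(t)/t\le\Lambda$, which is equally valid and slightly shorter.
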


	\begin{proof}
		Let
		\[
			\bar\eta(\lambda)\coloneqq\frac{\widehat{\mathbb E}_0^\lambda[\Delta A_1^\lambda]}{\widehat{\mathbb E}_0^\lambda[\Delta\tau_1]}\, .
		\]
		Since $\Lambda^{-1}\Delta\tau_1\le \Delta A_1^\lambda\le \Lambda\Delta\tau_1$, we have $\Lambda^{-1}\le\bar\eta(\lambda)\le\Lambda$. Define
		\[
		Z_k\coloneqq \Delta A_k^\lambda-\bar\eta(\lambda)\Delta\tau_k\, .
		\]
	By Lemma~\ref{lem.two-dependent}, the pairs $(\Delta A_k^\lambda,\Delta\tau_k)$ for $k\ge 1$ are stationary and $2$-dependent under $\widehat{\mathbb P}_0^\lambda$. By stationarity and the definition of $\bar\eta(\lambda)$,
		\[
		\widehat{\mathbb E}_0^\lambda Z_k
		=
		\widehat{\mathbb E}_0^\lambda[\Delta A_1^\lambda]
		-\bar\eta(\lambda)\widehat{\mathbb E}_0^\lambda[\Delta\tau_1]
		=0\qquad\text{for every }k\ge 1\, ,
		\]
		so $\{Z_k\}_{k\ge 1}$ is stationary, $2$-dependent, and centered.
		Since $e^{-2V}\le \Lambda$ and $\bar\eta(\lambda)\le \Lambda$,
		\[
		|\Delta A_k^\lambda|+|Z_k|\le C\Delta\tau_k\, .
		\]
		Using Lemma~\ref{lem.block-moment-bounds}, for every integer $r\ge1$,
		\begin{equation}\label{eq.clock-block-moments}
			\widehat{\mathbb E}_0^\lambda|\Delta A_k^\lambda|^r+\widehat{\mathbb E}_0^\lambda|Z_k|^r\le C\lambda^{-2r}\qquad\text{for }k\ge0\, .
		\end{equation}

		Let $\mathbf n(t)\coloneqq \max\{n\ge0:\tau_n\le t\}$ and $N_t\coloneqq \lfloor\lambda^2t\rfloor$. As in \eqref{eq.n-bound}, $\mathbf n(t)-1\le N_t$. Decompose pointwise
		\[
		A^\lambda(t)-\bar\eta(\lambda)t
		=
		B_t+\mathrm{Rem}_t,\qquad B_t\coloneqq \sum_{k=0}^{\mathbf n(t)-1}Z_k,\quad
		\mathrm{Rem}_t\coloneqq A^\lambda(t)-A^\lambda(\tau_{\mathbf n(t)})-\bar\eta(\lambda)(t-\tau_{\mathbf n(t)})\, ,
		\]
		so $|A^\lambda(t)-\bar\eta(\lambda)t|^M\le 2^{M-1}(|B_t|^M+|\mathrm{Rem}_t|^M)$ pointwise. The remainder satisfies $|\mathrm{Rem}_t|\le C\Delta\tau_{\mathbf n(t)}\le C\max_{0\le k\le N_t+1}\Delta\tau_k$, so
		\[
			\widehat{\mathbb E}_0^\lambda|\mathrm{Rem}_t|^M\le C\widehat{\mathbb E}_0^\lambda\max_{0\le k\le N_t+1}(\Delta\tau_k)^M\le C(N_t+2)\lambda^{-2M}\le C\lambda^{-M}t^{M/2}
		\]
		using $t\ge\lambda^{-2}$.

		For the block sum, set $S_n\coloneqq \sum_{k=1}^n Z_k$ for $n\ge 1$ and $S_0\coloneqq 0$, so $B_t=Z_0\cdot\mathbf 1_{\{\mathbf n(t)\ge 1\}}+S_{(\mathbf n(t)-1)\vee 0}$ and $|B_t|^M\le 2^{M-1}(|Z_0|^M+\max_{0\le n\le N_t}|S_n|^M)$ pointwise. Hence
		\[
			\widehat{\mathbb E}_0^\lambda|B_t|^M\le C\widehat{\mathbb E}_0^\lambda|Z_0|^M+C\widehat{\mathbb E}_0^\lambda\max_{0\le n\le N_t}|S_n|^M\, .
		\]
		The initial block contributes
		\[
		\widehat{\mathbb E}_0^\lambda |Z_0|^M\le C\lambda^{-2M}\le C\lambda^{-M}t^{M/2}\, ,
		\]
			because $t\ge\lambda^{-2}$. For the centered $2$-dependent block sum, split into three residue classes modulo~$3$:
			\[
				S_n=\sum_{r\in\{0,1,2\}}S_n^{(r)},\qquad
				S_n^{(r)}\coloneqq\sum_{\substack{1\le k\le n\\ k\equiv r\pmod 3}}Z_k\, .
			\]
				Within each residue class, indices are at distance at least $3$. Lemma~\ref{lem.two-dependent} and stationarity therefore make each residue-class subsequence i.i.d.\ and centered. Pointwise,
			\[
				\max_{1\le n\le N_t}|S_n|^M
				\le C\sum_{r\in\{0,1,2\}}\max_{0\le m\le N_t+1}\left|\sum_{\substack{1\le k\le m\\ k\equiv r\pmod 3}}Z_k\right|^M\, .
			\]
			Doob's $L^M$ maximal inequality and Rosenthal's inequality applied to each reindexed i.i.d.\ centered subsequence, using \eqref{eq.clock-block-moments}, give
			\[
				\widehat{\mathbb E}_0^\lambda\max_{0\le m\le N_t+1}\left|\sum_{\substack{1\le k\le m\\ k\equiv r\pmod 3}}Z_k\right|^M
				\le C\left(N_t\lambda^{-2M}+N_t^{M/2}\lambda^{-2M}\right)
				\le C N_t^{M/2}\lambda^{-2M}\, .
			\]
		Therefore
		\[
		\widehat{\mathbb E}_0^\lambda\max_{1\le n\le N_t}|S_n|^M\le C N_t^{M/2}\lambda^{-2M}\, .
		\]
		Since $N_t\le\lambda^2t$, $\widehat{\mathbb E}_0^\lambda|B_t|^M\le C\lambda^{-M}t^{M/2}$. Combining with the remainder bound gives
		\begin{equation}\label{eq.additive-renewal-Mth-bareta}
			\E_\Q \widehat E^{\lambda,\omega}_0\big|A^\lambda(t)-\bar\eta(\lambda)t\big|^M
			\le C\lambda^{-M}t^{M/2}\, .
		\end{equation}
		With $M=2$, this gives
		\[
			\widehat{\mathbb E}_0^\lambda|t^{-1}A^\lambda(t)-\bar\eta(\lambda)|\le C\lambda^{-1}t^{-1/2}\to 0
			\qquad\text{as }t\to\infty\, .
		\]
		Thus the $L^1$ limit defining $\eta(\lambda)$ exists and equals
		$\bar\eta(\lambda)$, which proves \eqref{eq.eta-def-renewal} and
		\eqref{eq.eta-bound-renewal}. Replacing $\bar\eta(\lambda)$ by
		$\eta(\lambda)$ in \eqref{eq.additive-renewal-Mth-bareta} yields
		\eqref{eq.additive-renewal-Mth}. It remains to prove almost-sure
		convergence.

		For almost-sure convergence, again split $S_n=\sum_{k=1}^n Z_k$ by
		residues modulo~$3$. On each residue class, Lemma~\ref{lem.two-dependent}
		and stationarity give an i.i.d.\ centered sequence with finite variance by
		\eqref{eq.clock-block-moments}. Applying the strong law to each
		residue class gives $n^{-1}S_n\to 0$
		$\widehat{\mathbb P}_0^\lambda$-almost surely.

		For the remainder, fix an integer $m\ge 1$. Applying
		\eqref{eq.tail-increment} with $u=\lambda^2 k/m$ gives
			\[
				\sum_{k\ge 1}\widehat{\mathbb P}_0^\lambda(\Delta\tau_k\ge k/m)
				\le \sum_{k\ge 1}Ce^{-c\lambda^2 k/m}<\infty\, .
			\]
				Borel--Cantelli gives $\Delta\tau_k<k/m$ eventually for each fixed $m$. Intersecting over integers $m\ge 1$ yields $\Delta\tau_k/k\to 0$ almost surely. Since $\mathbf n(t)\to\infty$ and $\mathbf n(t)\le\lambda^2 t+2$ by \eqref{eq.n-bound}, for all sufficiently large $t$ this gives
			\[
				\frac{|\mathrm{Rem}_t|}{t}
				\le C\frac{\Delta\tau_{\mathbf n(t)}}{t}
				= C\frac{\Delta\tau_{\mathbf n(t)}}{\mathbf n(t)}\frac{\mathbf n(t)}{t}\to 0
			\]
			almost surely. The strong law also gives
			\[
				\frac{S_{(\mathbf n(t)-1)\vee0}}{t}
				=
				\frac{S_{(\mathbf n(t)-1)\vee0}}{(\mathbf n(t)-1)\vee1}
				\frac{(\mathbf n(t)-1)\vee1}{t}
				\longrightarrow0,
			\]
			using $\mathbf n(t)\le\lambda^2t+2$. Together with $Z_0/t\to 0$, the decomposition $A^\lambda(t)-\bar\eta(\lambda)t=Z_0\mathbf 1_{\{\mathbf n(t)\ge 1\}}+S_{(\mathbf n(t)-1)\vee 0}+\mathrm{Rem}_t$ gives $t^{-1}A^\lambda(t)\to\bar\eta(\lambda)$ $\widehat{\mathbb P}_0^\lambda$-almost surely.
	\end{proof}

	\subsection{Uniform quenched renewal control}\label{ss.uniform-renewal}

	The annealed bounds of Subsection~\ref{ss.annealed-moments} hold with constants uniform in $\lambda$. We upgrade them to a quenched bound uniform on a random window $\lambda\in(0,\lambda_0(\omega)]$, by a Borel--Cantelli argument over a dyadic grid with Girsanov transfer between nearby tilts.

	\begin{lemma}[Girsanov second-moment bound]\label{lem.girsanov-lambda}
		There exists $C=C(d,\Lambda)<\infty$ such that, for every environment
		$\omega$, every $\lambda,\lambda'\in(0,1]$, every $t>0$, and every
		nonnegative, possibly $\omega$-dependent, path functional $F$ measurable
		with respect to $\sigma(X(s):0\le s\le t)$,
		\begin{equation}\label{eq.girsanov-cs}
			E_0^{\lambda',\omega}[F]\le \exp\bigl(C(\lambda'-\lambda)^2 t/2\bigr) \bigl(E_0^{\lambda,\omega}[F^2]\bigr)^{1/2}\, .
		\end{equation}
		If in addition $|\lambda-\lambda'|^2 t\le 1$, then for every
		square-integrable, possibly $\omega$-dependent, path functional $G$
		measurable with respect to $\sigma(X(s):0\le s\le t)$,
		\begin{equation}\label{eq.girsanov-diff}
			\bigl|E_0^{\lambda',\omega}[G]-E_0^{\lambda,\omega}[G]\bigr|\le C|\lambda-\lambda'|\sqrt t \bigl(E_0^{\lambda,\omega}[G^2]\bigr)^{1/2}\, .
		\end{equation}
		The same estimates hold with the expectations $\widehat E_0^{\lambda,\omega}$ whenever $F$ and $G$ depend only on the diffusion path. For every integer $q\ge1$, the second estimate also holds for $\R^q$-valued $G$, with a constant depending additionally on $q$ and with $|G|^2$ in place of $G^2$.
	\end{lemma}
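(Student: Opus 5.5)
The approach is to write the law of $X$ under $P_0^{\lambda',\omega}$ as a Girsanov reweighting of its law under $P_0^{\lambda,\omega}$, and then estimate the density by the same supermartingale trick used in the proof of Proposition~\ref{pro.exit}.

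Under $P_0^{\lambda,\omega}$ the canonical process satisfies
\[
dX=\bigl(\tfrac12\nabla\cdot\a+\lambda\a e_1\bigr)(X)\,dt+\a^{1/2}(X)\,dW .
\]
Set $M^\lambda_s\coloneqq X_s-\int_0^s(\tfrac12\nabla\cdot\a+\lambda\a e_1)(X_u)\,du$. This is a continuous martingale with $\langle M^i,M^j\rangle_s=\int_0^s\a_{ij}(X_u)\,du$. Let $\mu\coloneqq\lambda'-\lambda$. The drift difference is $\mu\a e_1=\a(\mu e_1)$, so on $\mathcal F_t$ Girsanov's theorem gives
\[
\frac{dP_0^{\lambda',\omega}}{dP_0^{\lambda,\omega}}=D_t\coloneqq\exp\bigl(N_t-\tfrac12\langle N\rangle_t\bigr),\qquad N_s\coloneqq\mu\int_0^s e_1\cdot dM^\lambda_u .
\]
The quadratic variation $\langle N\rangle_t=\mu^2\int_0^t e_1\cdot\a e_1\,du$ is deterministically at most $\Lambda\mu^2 t$. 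Novikov's criterion therefore applies, and $D$ is a true martingale with mean one.

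For \eqref{eq.girsanov-cs}, write $D_t^2=\exp(2N_t-2\langle N\rangle_t)\exp(\langle N\rangle_t)$. The first factor is a mean-one martingale, which gives $E_0^{\lambda,\omega}[D_t^2]\le e^{\Lambda\mu^2t}$. Cauchy--Schwarz then yields
\[
E_0^{\lambda',\omega}[F]=E_0^{\lambda,\omega}[FD_t]\le e^{\Lambda\mu^2t/2}\bigl(E^{\lambda,\omega}_0[F^2]\bigr)^{1/2},
\]
which is \eqref{eq.girsanov-cs} with $C=\Lambda$.

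For \eqref{eq.girsanov-diff}, write
\[
E^{\lambda'}-E^{\lambda}=E^{\lambda}[G(D_t-1)],\qquad |E^{\lambda}[G(D_t-1)]|\le\bigl(E^\lambda G^2\bigr)^{1/2}\bigl(E^\lambda(D_t-1)^2\bigr)^{1/2}.
\]
Since $E^{\lambda}D_t=1$, we have $E^\lambda(D_t-1)^2=E^\lambda D_t^2-1\le e^{\Lambda\mu^2t}-1$. Under the hypothesis $\mu^2t\le1$, the elementary bound $e^x-1\le e^{\Lambda}x/\Lambda\cdot\Lambda$ for $x\in[0,\Lambda]$ gives $E^\lambda(D_t-1)^2\le C\mu^2t$. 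Taking square roots produces the factor $C|\lambda-\lambda'|\sqrt t$.

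The remaining cases follow quickly.
\begin{itemize}
\item For the enlarged space, the Bernoulli marks are independent of the path under $\widehat P$, so for path functionals $\widehat E$ coincides with $E$ and nothing changes.
\item For $\R^q$-valued $G$, apply the scalar bound to each coordinate and sum, using $\sum_i(E G_i^2)^{1/2}\le\sqrt q\,(E|G|^2)^{1/2}$.
\item Allowing $F$ or $G$ to depend on $\omega$ costs nothing, because $\omega$ is fixed throughout.
\end{itemize}

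The only delicate point is justifying Girsanov on canonical path space with Lipschitz coefficients: one needs uniqueness in law for both drifts, which holds under (S3). Because the quadratic variation is deterministically bounded, $D$ is a true martingale and no localization is required.
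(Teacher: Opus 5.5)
Your proposal is correct and follows the paper's own proof essentially step for step. The shared steps are: the Girsanov density $\exp(N_t-\tfrac12\langle N\rangle_t)$ with $\langle N\rangle_t\le\Lambda(\lambda'-\lambda)^2t$ deterministically, the splitting of $D_t^2$ into an exponential (super)martingale times $e^{\langle N\rangle_t}$, Cauchy--Schwarz, and the identity $E(D_t-1)^2=E D_t^2-1$. The differences are cosmetic: you use Novikov to make $\exp(2N-2\langle N\rangle)$ a true mean-one martingale where the paper only needs the supermartingale bound, and your garbled elementary inequality should simply read $e^x-1\le e^{\Lambda}x$ for $x\in[0,\Lambda]$.
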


	\begin{proof}
		Write $\b_\mu(x)\coloneqq\tfrac12\nabla\cdot\a(x)+\mu\a(x)e_1$. Under $P_0^{\lambda,\omega}$, the SDE \eqref{eq.SDE-degenerate} has drift $\b_\lambda$ and diffusion coefficient $\a^{1/2}$. Define
		\[
			M_t\coloneqq (\lambda'-\lambda)\int_0^t\a^{1/2}(X^\lambda(s))e_1\cdot dW_s,\qquad R_t^{\lambda,\lambda'}\coloneqq \exp\bigl(M_t-\tfrac12\langle M\rangle_t\bigr)\, .
		\]
		The quadratic variation $\langle M\rangle_t=(\lambda'-\lambda)^2\int_0^t e_1\cdot\a(X^\lambda(s))e_1 ds\le\Lambda(\lambda-\lambda')^2 t$ is bounded deterministically. This gives the exponential integrability needed for the exponential martingale, so $R_t^{\lambda,\lambda'}$ is a true $P_0^{\lambda,\omega}$-martingale with $E_0^{\lambda,\omega}[R_t^{\lambda,\lambda'}]=1$. Girsanov's theorem changes the drift from $\b_\lambda$ to $\b_{\lambda'}$, and uniqueness in law identifies $R_t^{\lambda,\lambda'}$ as the Radon--Nikodym derivative of $P_0^{\lambda',\omega}|_{\mathcal F_t}$ with respect to $P_0^{\lambda,\omega}|_{\mathcal F_t}$. Moreover,
		\[
			(R^{\lambda,\lambda'}_t)^2=\exp(2M_t-2\langle M\rangle_t)\exp(\langle M\rangle_t)\, .
		\]
		The first factor is a nonnegative $P_0^{\lambda,\omega}$-local martingale and therefore a supermartingale of expectation at most $1$. The second factor is bounded by $\exp(\Lambda(\lambda-\lambda')^2 t)$, so
		\[
			E_0^{\lambda,\omega}\bigl[(R_t^{\lambda,\lambda'})^2\bigr]\le\exp(C(\lambda-\lambda')^2 t)\, .
		\]
		Cauchy--Schwarz on $E_0^{\lambda',\omega}[F]=E_0^{\lambda,\omega}[R_t^{\lambda,\lambda'}F]$ gives \eqref{eq.girsanov-cs}. If $|\lambda-\lambda'|^2 t\le 1$, then $E_0^{\lambda,\omega}[(R_t^{\lambda,\lambda'}-1)^2]=E_0^{\lambda,\omega}[(R_t^{\lambda,\lambda'})^2]-1\le e^{C|\lambda-\lambda'|^2 t}-1\le C|\lambda-\lambda'|^2 t$, and Cauchy--Schwarz on $|E_0^{\lambda',\omega}[G]-E_0^{\lambda,\omega}[G]|=|E_0^{\lambda,\omega}[(R_t^{\lambda,\lambda'}-1)G]|$ gives \eqref{eq.girsanov-diff}.
	\end{proof}

	The annealed renewal estimates now give moment bounds for the quenched $L^2$ errors.

	\begin{lemma}[Quenched second-moment renewal bound]\label{lem.second-moment-renewal}
		Fix an integer $M\ge 2$. There is a constant $C=C(M,d,\Lambda,r_0)$ such that, for every $\lambda\in(0,1]$ and every $t\ge\lambda^{-2}$,
		\[
			\E_\Q\Bigl[\bigl(E_0^{\lambda,\omega}|X^\lambda(t)-\ell(\lambda)t|^2\bigr)^{M/2}\Bigr]\le C t^{M/2}\, ,
		\]
		and
		\[
			\E_\Q\Bigl[\bigl(\widehat E_0^{\lambda,\omega}|A^\lambda(t)-\eta(\lambda)t|^2\bigr)^{M/2}\Bigr]\le C \lambda^{-M} t^{M/2}\, .
		\]
	\end{lemma}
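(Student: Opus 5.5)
The plan is to deduce both bounds from the annealed moment bounds of Propositions~\ref{pro.moment-renewal} and~\ref{pro.additive-renewal} by Jensen's inequality applied to the quenched expectation. For the displacement, fix $\omega$ and set $Y(\omega)\coloneqq |X^\lambda(t)-\ell(\lambda)t|$, viewed as a random variable under $E_0^{\lambda,\omega}$. Since $M/2\ge1$, the map $x\mapsto x^{M/2}$ is convex on $[0,\infty)$. Jensen's inequality under the probability measure $P_0^{\lambda,\omega}$ therefore gives, pointwise in $\omega$,
\[
	\bigl(E_0^{\lambda,\omega}[Y^2]\bigr)^{M/2}\le E_0^{\lambda,\omega}[Y^M]\, .
\]
Since $Y$ is a functional of the path alone, $E_0^{\lambda,\omega}[Y^M]=\widehat E_0^{\lambda,\omega}[Y^M]$. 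Integrating over $\Q$ and invoking \eqref{eq.moment-renewal-Mth} with the same integer $M\ge2$ and $t\ge\lambda^{-2}$ yields $\E_\Q[(E_0^{\lambda,\omega}|X^\lambda(t)-\ell(\lambda)t|^2)^{M/2}]\le Ct^{M/2}$.

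The clock bound follows in the same way. Set $Y'\coloneqq|A^\lambda(t)-\eta(\lambda)t|$ and apply Jensen under $\widehat P_0^{\lambda,\omega}$ to get
\[
	\bigl(\widehat E_0^{\lambda,\omega}[Y'^2]\bigr)^{M/2}\le \widehat E_0^{\lambda,\omega}[Y'^M]\, .
\]
Integrating over $\Q$ and using \eqref{eq.additive-renewal-Mth} then gives the bound $C\lambda^{-M}t^{M/2}$. Here $\eta(\lambda)$ is the deterministic constant identified in Proposition~\ref{pro.additive-renewal}, so no centering issue arises.

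The only point needing care is measurability. The map $\omega\mapsto E_0^{\lambda,\omega}[\,\cdot\,]$ must be measurable so that the outer $\E_\Q$ makes sense. This is implicit in the definition of the annealed measure $\widehat{\mathbb E}^\lambda_0=\int\widehat E_0^{\lambda,\omega}\,d\Q(\omega)$ used throughout Section~\ref{sec.renewal}, so I would simply note it. Beyond this there is no real obstacle: the constants are exactly those of the two annealed propositions, and they are uniform in $\lambda\in(0,1]$ and in $t\ge\lambda^{-2}$.
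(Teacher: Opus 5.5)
Your proposal is correct and is essentially the paper's own proof. You apply Jensen's inequality pointwise in $\omega$ to get $\bigl(E_0^{\lambda,\omega}|Z|^2\bigr)^{M/2}\le E_0^{\lambda,\omega}|Z|^M$, then integrate over $\Q$ and invoke Proposition~\ref{pro.moment-renewal}, and you do the same for the clock with $\widehat E_0^{\lambda,\omega}$ and Proposition~\ref{pro.additive-renewal}.
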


	\begin{proof}
		Jensen's inequality gives, for every environment,
		\[
			\bigl(E_0^{\lambda,\omega}|Z|^2\bigr)^{M/2}\le E_0^{\lambda,\omega}|Z|^M
		\]
		for any path functional $Z$. Apply this with $Z=X^\lambda(t)-\ell(\lambda)t$, average over $\Q$, and use Proposition~\ref{pro.moment-renewal}. The clock estimate is identical, with $\widehat E_0^{\lambda,\omega}$ and Proposition~\ref{pro.additive-renewal}.
	\end{proof}

	To pass from dyadic grids of tilts to all tilts, we need a deterministic modulus of continuity for the limiting velocity and clock rate.

	\begin{lemma}[H\"older continuity of $\ell$ and $\eta$ in $\lambda$]\label{lem.speed-modulus}
		Let $I_k=(2^{-k-1},2^{-k}]$ and $L_k=2^{-k}$. There exists $C=C(d,\Lambda,r_0)<\infty$ such that for every $k\ge 1$ and every $\lambda,\lambda'\in I_k$,
		\begin{equation}\label{eq.ell-modulus}
			|\ell(\lambda)-\ell(\lambda')|\le C\bigl(L_k|\lambda-\lambda'|\bigr)^{1/2}\, ,
		\end{equation}
		and
		\begin{equation}\label{eq.eta-modulus}
			|\eta(\lambda)-\eta(\lambda')|\le C\bigl(|\lambda-\lambda'|/L_k\bigr)^{1/2}\, .
		\end{equation}
	\end{lemma}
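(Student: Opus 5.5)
The plan is to use the annealed second-moment bounds together with the Girsanov comparison, Lemma~\ref{lem.girsanov-lambda}, evaluated at a well-chosen finite time $t$. Since $\ell(\lambda)$ and $\eta(\lambda)$ are deterministic, for any $t\ge\lambda^{-2}$ we can write
\[
	\ell(\lambda)-\ell(\lambda')=\frac1t\Bigl(\mathbb E^{\lambda}_0[X(t)]-\mathbb E^{\lambda'}_0[X(t)]\Bigr)
	+\frac1t\,\mathbb E^{\lambda}_0\bigl[\ell(\lambda)t-X(t)\bigr]
	-\frac1t\,\mathbb E^{\lambda'}_0\bigl[\ell(\lambda')t-X(t)\bigr]\, ,
\]
where $\mathbb E^\lambda_0$ is the annealed expectation. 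By Proposition~\ref{pro.moment-renewal} with $M=2$, the last two terms are each at most $Ct^{-1/2}$, since $\lambda,\lambda'\in I_k$ gives $t\ge 4^{k+1}\ge\lambda^{-2}\vee\lambda'^{-2}$ once $t\ge 4L_k^{-2}$.

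For the first term, apply \eqref{eq.girsanov-diff} quenched, for each $\omega$, with the $\R^d$-valued $G=X(t)$, under the condition $|\lambda-\lambda'|^2t\le1$. This gives
\[
	|E^{\lambda',\omega}_0[X(t)]-E^{\lambda,\omega}_0[X(t)]|\le C|\lambda-\lambda'|\sqrt t\,\bigl(E^{\lambda,\omega}_0|X(t)|^2\bigr)^{1/2}\, .
\]
By Lemma~\ref{lem.max} with $p=2$, we have $E^{\lambda,\omega}_0|X(t)|^2\le C(\lambda t)^2$. Integrating over $\Q$ and dividing by $t$, the first term is at most $C|\lambda-\lambda'|\lambda\sqrt t\le C|\lambda-\lambda'|L_k\sqrt t$. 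Altogether,
\[
	|\ell(\lambda)-\ell(\lambda')|\le C\bigl(|\lambda-\lambda'|L_k t^{1/2}+t^{-1/2}\bigr)\, ,
\]
and we optimize by choosing $t=(L_k|\lambda-\lambda'|)^{-1}$. This choice yields $C(L_k|\lambda-\lambda'|)^{1/2}$, as required. It is admissible: $|\lambda-\lambda'|\le L_k$ gives $t\ge L_k^{-2}$, and after replacing $t$ by $4t$ at the cost of a constant, also $t\ge\lambda^{-2}$. Moreover $|\lambda-\lambda'|^2t=|\lambda-\lambda'|/L_k\le1$. The case $\lambda=\lambda'$ is trivial.

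For $\eta$, run the same decomposition with $A(t)$ in place of $X(t)$, working on the enlarged space. Since $A(t)=\int_0^te^{-2V(X(s))}ds$ depends only on the path and $\omega$, \eqref{eq.girsanov-diff} applies to $\widehat E$. Now $|A(t)|\le\Lambda t$, so the Girsanov term is at most $C|\lambda-\lambda'|\sqrt t$. Proposition~\ref{pro.additive-renewal} with $M=2$ bounds the renewal terms by $C\lambda^{-1}t^{-1/2}\le CL_k^{-1}t^{-1/2}$. We optimize $|\lambda-\lambda'|\sqrt t+L_k^{-1}t^{-1/2}$ by choosing $t=(L_k|\lambda-\lambda'|)^{-1}$ again, which gives $(|\lambda-\lambda'|/L_k)^{1/2}$ for both terms. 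The constraints are the same as before.

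The main point to check is the bookkeeping of admissibility. We need $t\ge\lambda^{-2}$ to invoke the renewal bounds, and $|\lambda-\lambda'|^2t\le1$ to invoke \eqref{eq.girsanov-diff}. Both hold because $\lambda,\lambda'$ lie in the same dyadic interval, so that $|\lambda-\lambda'|\le L_k/2$ and $\lambda\asymp L_k$. We also use that the quenched Girsanov bound integrates correctly over $\Q$, via Cauchy--Schwarz pointwise in $\omega$ followed by the uniform quenched bound of Lemma~\ref{lem.max}, so no annealed Girsanov statement is needed.
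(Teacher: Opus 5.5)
Your argument is correct and is essentially the paper's proof. It uses the same three-term split, the quenched bound \eqref{eq.girsanov-diff} combined with Lemma~\ref{lem.max} (or $0\le A\le\Lambda t$ for the clock), the annealed $M=2$ renewal bounds for the other two terms, and the same choice $t=(L_k|\lambda-\lambda'|)^{-1}$.

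The only slip is in the admissibility check you flagged as the main point. After replacing $t$ by $4t$ you get $|\lambda-\lambda'|^2\cdot 4t=4|\lambda-\lambda'|/L_k$, and this exceeds $1$ when $|\lambda-\lambda'|>L_k/4$. There are two easy fixes:
\begin{itemize}
\item Use $2t$ instead. Since $|\lambda-\lambda'|<L_k/2$, this gives $2t\ge 4L_k^{-2}>\lambda^{-2}\vee\lambda'^{-2}$ and $|\lambda-\lambda'|^2\cdot 2t<1$.
\item Alternatively, as the paper does, dispose of the case $|\lambda-\lambda'|\ge L_k/4$ directly. There $|\ell(\lambda)-\ell(\lambda')|\le C(\lambda+\lambda')\le CL_k\le C(L_k|\lambda-\lambda'|)^{1/2}$, and $|\eta(\lambda)-\eta(\lambda')|\le 2\Lambda\le C(|\lambda-\lambda'|/L_k)^{1/2}$.
\end{itemize}
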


	\begin{proof}
		Set $\Delta\coloneqq |\lambda-\lambda'|$. If $\Delta=0$, then both left sides are zero; assume $\Delta>0$. If $\Delta\ge L_k/4$, then by Lemma~\ref{lem.lbound} $|\ell(\lambda)-\ell(\lambda')|\le C(\lambda+\lambda')\le CL_k\le C(L_k\Delta)^{1/2}$, and $|\eta(\lambda)-\eta(\lambda')|\le 2\Lambda\le C(\Delta/L_k)^{1/2}$. Assume $\Delta<L_k/4$ and set $T\coloneqq (L_k\Delta)^{-1}$. Since $\lambda,\lambda'\ge L_k/2$, $T\ge 4L_k^{-2}\ge\lambda^{-2}\vee(\lambda')^{-2}$ and $\Delta^2 T=\Delta/L_k<1/4$.

		\smallskip

		\emph{Step 1.} We prove \eqref{eq.ell-modulus}. Jensen's inequality applied to $X^\lambda(T)-\ell(\lambda)T$, combined with the case $M=2$ of Lemma~\ref{lem.second-moment-renewal}, gives
		\[
			\E_\Q\bigl[(E_0^{\lambda,\omega}|X^\lambda(T)-\ell(\lambda)T|)^2\bigr]\le CT\, .
		\]
		The $L^2$-to-$L^1$ bound then gives
			\[
			\begin{aligned}
				\Bigl|T^{-1}\E_0^\lambda[X^\lambda(T)]-\ell(\lambda)\Bigr|
				&\le T^{-1}\E_0^\lambda|X^\lambda(T)-\ell(\lambda)T|\\
				&\le T^{-1}\bigl(\E_\Q\bigl[(E_0^{\lambda,\omega}|X^\lambda(T)-\ell(\lambda)T|)^2\bigr]\bigr)^{1/2}\\
				&\le CT^{-1/2}\, ,
			\end{aligned}
			\]
		and the same with $\lambda'$. Hence
		\begin{equation}\label{eq.ell-mod-decomp}
			|\ell(\lambda)-\ell(\lambda')|\le 2CT^{-1/2}+T^{-1}\bigl|\E_0^\lambda[X^\lambda(T)]-\E_0^{\lambda'}[X^{\lambda'}(T)]\bigr|\, .
		\end{equation}
			Applying \eqref{eq.girsanov-diff} with the canonical functional $G(\gamma)=\gamma(T)$ gives, for every $\omega$,
			\[
				|E_0^{\lambda',\omega}[X^{\lambda'}(T)]-E_0^{\lambda,\omega}[X^\lambda(T)]|
				\le C\Delta\sqrt T(E_0^{\lambda,\omega}|X^\lambda(T)|^2)^{1/2}\, .
			\]
			Lemma~\ref{lem.max} with $p=2$ gives $E_0^{\lambda,\omega}|X^\lambda(T)|^2\le C(\lambda T)^2\le C(L_kT)^2$. Averaging over $\Q$ and inserting into \eqref{eq.ell-mod-decomp}, we obtain
		\[
			|\ell(\lambda)-\ell(\lambda')|\le 2CT^{-1/2}+CL_k\Delta T^{1/2}=C(L_k\Delta)^{1/2}\, ,
		\]
		using $T^{-1/2}=(L_k\Delta)^{1/2}=L_k\Delta T^{1/2}$.

		\smallskip

		\emph{Step 2.} We prove \eqref{eq.eta-modulus}. Jensen's inequality applied to $A^\lambda(T)-\eta(\lambda)T$, combined with the case $M=2$ of Lemma~\ref{lem.second-moment-renewal}, gives
		\[
			\E_\Q\bigl[(\widehat E_0^{\lambda,\omega}|A^\lambda(T)-\eta(\lambda)T|)^2\bigr]\le C\lambda^{-2}T\, .
		\]
		Hence by Cauchy--Schwarz,
			\[
			\begin{aligned}
				\Bigl|T^{-1}\widehat{\mathbb E}_0^\lambda[A^\lambda(T)]-\eta(\lambda)\Bigr|
				&\le T^{-1}\widehat{\mathbb E}_0^\lambda|A^\lambda(T)-\eta(\lambda)T|\\
				&\le T^{-1}\bigl(\E_\Q\bigl[(\widehat E_0^{\lambda,\omega}|A^\lambda(T)-\eta(\lambda)T|)^2\bigr]\bigr)^{1/2}\\
				&\le C\lambda^{-1}T^{-1/2}\le CL_k^{-1}T^{-1/2}
				=C(\Delta/L_k)^{1/2}\, ,
			\end{aligned}
			\]
		and the same with $\lambda'$. Applying \eqref{eq.girsanov-diff} with the canonical functional $G(\gamma)=\int_0^Te^{-2V(\gamma_s)} ds$ and the deterministic bound $0\le G\le \Lambda T$ gives
		\[
			\bigl|\widehat E_0^{\lambda',\omega}[A^{\lambda'}(T)]-\widehat E_0^{\lambda,\omega}[A^\lambda(T)]\bigr|\le C\Delta\sqrt T\cdot T= C\Delta T^{3/2}\, .
		\]
		Dividing by $T$ gives $C\Delta T^{1/2}=C(\Delta/L_k)^{1/2}$. Combining yields \eqref{eq.eta-modulus}.
	\end{proof}

	The annealed bounds and the H\"older modulus combine via Markov's inequality and a Borel--Cantelli argument over a dyadic grid in $\lambda$ to produce a single quenched bound, uniform in $\lambda$ on a random window.

	\begin{corollary}[Quenched renewal estimate uniform in $\lambda$]\label{cor.uniform-renewal}
		Let $\theta\in(0,1/4)$ and $\varepsilon>0$. There exist a $\Q$-full-probability event $\Omega_{\theta,\varepsilon}$ and a deterministic constant $C=C(\theta,\varepsilon,d,\Lambda,r_0)<\infty$ such that, for every $\omega\in\Omega_{\theta,\varepsilon}$, there exists a finite random scale $\lambda_0(\omega)>0$ with the property that for every $0<\lambda\le\lambda_0(\omega)$ and every $t\ge\lambda^{-2-\varepsilon}$,
		\begin{equation}\label{eq.uniform-X}
			E_0^{\lambda,\omega}|X^\lambda(t)-\ell(\lambda)t|\le C\sqrt t (\lambda^2 t)^\theta\, ,
		\end{equation}
		and
		\begin{equation}\label{eq.uniform-A}
			\widehat E_0^{\lambda,\omega}|A^\lambda(t)-\eta(\lambda)t|\le C\lambda^{-1}\sqrt t (\lambda^2 t)^\theta\, .
		\end{equation}
		Moreover, $\lambda_0$ has polynomial tails of every order: for each $p>0$ there is $C_p=C_p(p,\theta,\varepsilon,d,\Lambda,r_0)<\infty$ such that
		\begin{equation}\label{eq.lambda0-tail}
			\Q\bigl(\lambda_0\le 2^{-k}\bigr)\le C_p 2^{-pk}\qquad\text{for every integer }k\ge 0\, .
		\end{equation}
	\end{corollary}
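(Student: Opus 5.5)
The proof discretizes both the tilt and time. The quenched statements hold at grid points by Markov's inequality and Borel--Cantelli, and Lemma~\ref{lem.girsanov-lambda} together with Lemma~\ref{lem.speed-modulus} fills in the gaps.

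\emph{Grid and events.} For each dyadic level $k$ (so $\lambda\in I_k=(2^{-k-1},2^{-k}]$), introduce tilt grid points $\lambda_{k,j}\in I_k$ with spacing $\Delta_k$. Introduce time grid points $t_{k,n}=2^{n}\cdot 2^{(2+\varepsilon)k}$, $n\ge0$, dyadic in time starting at $L_k^{-2-\varepsilon}$. At each pair $(\lambda_{k,j},t_{k,n})$ define the bad event
\[
	B_{k,j,n}\coloneqq\Bigl\{E_0^{\lambda_{k,j},\omega}|X(t)-\ell t|^2> t(\lambda_{k,j}^2t)^{2\theta}\Bigr\}\, ,\qquad t=t_{k,n}\, ,
\]
together with the clock analogue carrying the factor $\lambda^{-2}$. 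Lemma~\ref{lem.second-moment-renewal} with a large $M$, combined with Markov's inequality, gives $\Q(B_{k,j,n})\le C(\lambda^2t)^{-\theta M}$. Since $\lambda^2t\ge 2^{n}2^{\varepsilon k-2}$, this equals $C2^{-\theta M(n+\varepsilon k)}$ up to constants. Summing over $n$, over $j$ (polynomially many in $2^k$), and over $k\ge K$ gives at most $C_p2^{-pK}$ once $M$ is chosen large depending on $p,\theta,\varepsilon$. Define $\lambda_0(\omega)\coloneqq 2^{-K(\omega)}$, where $K(\omega)$ is the first level beyond which no bad event occurs. Then \eqref{eq.lambda0-tail} follows, and Borel--Cantelli makes $\lambda_0>0$ almost surely.

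\emph{Interpolation in $t$.} For $t\in[t_{k,n},t_{k,n+1}]$ with $\lambda$ a grid point, apply the Markov property at time $t_{k,n}$. The extra increment over time $s=t-t_{k,n}\le t$ is controlled quenched by Lemma~\ref{lem.max}: $E|X(t)-X(t_{k,n})|\le C\lambda s$. This is too crude, since it is not $\sqrt t$. I would instead make the time grid much finer, with geometric ratio $1+(\lambda^2 t)^{-1/2}$ or arithmetic spacing $\sim\sqrt t/\lambda$. That costs only polynomially many extra points, which the $M$ large absorbs. The increment is then $C\lambda\cdot\sqrt t\lambda^{-1}=C\sqrt t$ by Lemma~\ref{lem.max} applied from the quenched starting point at time $t_{k,n}$ (it is uniform over environments, hence over shifted starting points). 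The $|\ell|s$ term is at most $C\lambda s\le C\sqrt t$ as well. For the clock, the increment is at most $\Lambda s\le C\lambda^{-1}\sqrt t$.

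\emph{Interpolation in $\lambda$ (main obstacle).} For $\lambda$ between grid points, $|\lambda-\lambda_{k,j}|\le\Delta_k$. Apply \eqref{eq.girsanov-cs} with $F=|X(t)-\ell(\lambda_{k,j})t|$. The factor $\exp(C\Delta_k^2t)$ is bounded only if $\Delta_k^2t\lesssim1$, which forces the spacing to depend on $t$: take $\Delta_k(t)=t^{-1/2}$ at time scale $t$. The grid count is then $\sim L_k\sqrt t$, still polynomial in $\lambda^2t$ times $2^k$, so the summation above still closes with $M$ large. This gives $E^{\lambda}_0|X(t)-\ell(\lambda_{k,j})t|\le C\sqrt t(\lambda^2t)^\theta$. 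Finally $|\ell(\lambda)-\ell(\lambda_{k,j})|t\le C(L_k t^{-1/2})^{1/2}t$ by \eqref{eq.ell-modulus}, and this is not $\le\sqrt t$ in general. So the spacing should be refined further, to $\Delta=L_k^{-1}t^{-1}$, which makes $(L_k\Delta)^{1/2}t=\sqrt t$. The grid count remains polynomial ($\sim L_k^2 t\cdot$ const), so this is fine. For the clock, the modulus \eqref{eq.eta-modulus} gives $(\Delta/L_k)^{1/2}t=L_k^{-1}\sqrt t\le C\lambda^{-1}\sqrt t$, consistent with \eqref{eq.uniform-A}. The delicate part is bookkeeping that the grid cardinalities at each $(k,n)$ stay polynomial in $2^k\cdot 2^n$, so that $\sum\Q(B)$ converges with tail $2^{-pK}$. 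This relies on $t\ge\lambda^{-2-\varepsilon}$, which makes $(\lambda^2t)^{-\theta M}$ decay geometrically in $k$ as well. Taking $\Omega_{\theta,\varepsilon}=\{\lambda_0>0\}$ completes the proof.
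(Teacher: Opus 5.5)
Your proposal is correct and follows essentially the same route as the paper. Both arguments apply Markov's inequality with a large moment from Lemma~\ref{lem.second-moment-renewal} on time-dependent tilt nets, use Borel--Cantelli with a bad-event family independent of the moment exponent (which gives the polynomial tail of $\lambda_0$), transfer from net points via \eqref{eq.girsanov-cs} under $\Delta^2 t\le 1$, and correct the centering with Lemma~\ref{lem.speed-modulus}. The differences are only bookkeeping. The paper uses every integer time $n\ge N_k$, so the gap to real $t$ costs $O(1)$ by BDG, whereas your sparser time grid needs Lemma~\ref{lem.max} through the Markov property. The paper's tilt mesh $c_{\rm net}L_k^{4\theta-1}n^{2\theta-1}$ is coarser, matched to the target error $\sqrt n(L_k^2n)^\theta$, while your mesh $L_k^{-1}t^{-1}$ is matched to $\sqrt t$.
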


	\begin{proof}
		Define the quenched $L^2$ functionals
		\[
			\mathcal W_n^X(\lambda,\omega)\coloneqq \bigl(E_0^{\lambda,\omega}|X^\lambda(n)-\ell(\lambda)n|^2\bigr)^{1/2},\qquad
			\mathcal W_n^A(\lambda,\omega)\coloneqq \bigl(\widehat E_0^{\lambda,\omega}|A^\lambda(n)-\eta(\lambda)n|^2\bigr)^{1/2}\, .
		\]
		For $k\ge 1$, set $L_k\coloneqq 2^{-k}$, $I_k\coloneqq (2^{-k-1},2^{-k}]$, $N_k\coloneqq \lceil L_k^{-2-\varepsilon}\rceil$, and choose a $r_{k,n}$-net $\mathcal N_{k,n}\subseteq I_k$ with mesh
		\begin{equation}\label{eq.net-mesh}
			r_{k,n}\coloneqq c_{\rm net} L_k^{4\theta-1}n^{2\theta-1}
		\end{equation}
		for $n\ge N_k$, where $c_{\rm net}\in(0,1]$ is deterministic and depends on $(\theta,\varepsilon,d,\Lambda,r_0)$. Since $2\theta-1<0$ and $n\ge N_k$, choosing $c_{\rm net}$ small enough gives
		\[
			r_{k,n}\le Cc_{\rm net}L_k^{1+\varepsilon(1-2\theta)}\le L_k\, .
		\]
		Thus the net may be chosen so that $|\mathcal N_{k,n}|\le CL_k/r_{k,n}\le CL_k^{2-4\theta}n^{1-2\theta}$.

		\smallskip

		\emph{Step 1.} Choose an integer
		$M_0\ge 2$ with $M_0\theta>2/\varepsilon+2$.

		Lemma~\ref{lem.second-moment-renewal} requires the renewal time
		$n\ge(\lambda')^{-2}$. Since $\lambda'\ge L_k/2$ on $I_k$, we have
		$(\lambda')^{-2}\le 4L_k^{-2}\le N_k$ as soon as
		$L_k^{-\varepsilon}\ge 4$, i.e.,\ $k\ge k_*(\varepsilon)\coloneqq
		\lceil 2/\varepsilon\rceil$. For $k\ge k_*$, every $n\ge N_k$ satisfies
		$n\ge(\lambda')^{-2}$ for every $\lambda'\in I_k$.

		By Markov's inequality and Lemma~\ref{lem.second-moment-renewal}, for
		every $k\ge k_*$, every $\lambda'\in I_k$, and every $n\ge N_k$,
		\begin{align*}
			\Q\bigl(\mathcal W_n^X(\lambda',\cdot)>\tfrac12\sqrt n((\lambda')^2 n)^\theta\bigr)&\le C((\lambda')^2n)^{-M_0\theta}\, ,\\
			\Q\bigl(\mathcal W_n^A(\lambda',\cdot)>\tfrac12(\lambda')^{-1}\sqrt n((\lambda')^2 n)^\theta\bigr)&\le C((\lambda')^2n)^{-M_0\theta}\, .
		\end{align*}
		Let $B_{k,n}$ be the event that some $\xi\in\mathcal N_{k,n}$ violates
		either bound. Since $L_k/2<\xi\le L_k$ for $\xi\in I_k$, the union bound gives, for
		$k\ge k_*$,
		\[
			\Q(B_{k,n})\le C L_k^{2-4\theta-2M_0\theta}n^{1-2\theta-M_0\theta}\, .
		\]
		The choice $M_0\theta>2/\varepsilon+2$ makes the exponent of $n$
		strictly less than $-1-2/\varepsilon$, hence summable. Summing in
		$n\ge N_k$ and using $N_k=\lceil L_k^{-2-\varepsilon}\rceil$, the integral
		test gives
		$\sum_{n\ge N_k}\Q(B_{k,n})\le
		CL_k^{-2+\varepsilon(M_0\theta+2\theta-2)}$. The exponent of $L_k$ is
		positive, so with $E_k\coloneqq \bigcup_{n\ge N_k}B_{k,n}$,
		$\sum_{k\ge k_*}\Q(E_k)<\infty$. Borel--Cantelli yields a
		$\Q$-full-probability event $\Omega_{\theta,\varepsilon}$ on which only
		finitely many $E_k$ occur.

		\smallskip

		\emph{Step 2.} Fix $\omega\in\Omega_{\theta,\varepsilon}$. For the
		prefactor $\exp(C\Delta^2n/2)$ in \eqref{eq.girsanov-cs} to remain
		bounded, the mesh must satisfy $\Delta^2 n\le 1$. With $\Delta\le r_{k,n}$ from
		\eqref{eq.net-mesh} and $n\ge N_k=\lceil L_k^{-2-\varepsilon}\rceil$, and using
		$4\theta-1<0$ together with $n\ge N_k$,
		\[
			\Delta^2 n\le r_{k,n}^2 n=c_{\rm net}^2 L_k^{8\theta-2}n^{4\theta-1}\le c_{\rm net}^2 L_k^{8\theta-2}N_k^{4\theta-1}\le c_{\rm net}^2 L_k^{\varepsilon(1-4\theta)}\to 0
		\]
		as $k\to\infty$, since $\theta<1/4$. Decrease $c_{\rm net}$, if
		necessary, so that the constants multiplying $c_{\rm net}^{1/2}$ in the
		displacement and clock estimates are at most one, and then choose an
		integer $k_1=k_1(\theta,\varepsilon,d,\Lambda,r_0)<\infty$ with
		$c_{\rm net}^2 L_k^{\varepsilon(1-4\theta)}\le 1$ for every $k\ge k_1$.

		Define
		\begin{equation}\label{eq.k0-canonical}
			k_0(\omega)\coloneqq \max\Bigl\{k_1, 1+\sup\{j\ge k_*:E_j(\omega)\text{ occurs}\}\Bigr\}\, ,
		\end{equation}
		with the convention $\sup\emptyset\coloneqq k_*-1$, so that
		$k_0(\omega)<\infty$ on $\Omega_{\theta,\varepsilon}$. Then for every
		$k\ge k_0(\omega)$, $E_k(\omega)$ does not occur, hence $B_{k,n}^c$ for
		every $n\ge N_k$. For every $\lambda\in I_k$ with $k\ge k_0(\omega)$ and
		every integer $n\ge N_k$, choose $\xi\in\mathcal N_{k,n}$ with
		$\Delta\coloneqq |\lambda-\xi|\le r_{k,n}$, so that $\Delta^2 n\le 1$.
		Lemma~\ref{lem.girsanov-lambda} \eqref{eq.girsanov-cs} then gives, for
		every $\omega$,
		\[
			E_0^{\lambda,\omega}|X^\lambda(n)-\ell(\xi)n|\le e^{C\Delta^2 n/2}\bigl(E_0^{\xi,\omega}|X^\xi(n)-\ell(\xi)n|^2\bigr)^{1/2}\le C\mathcal W_n^X(\xi,\omega)\, .
		\]
		The triangle inequality and Lemma~\ref{lem.speed-modulus} \eqref{eq.ell-modulus} yield
		\[
			E_0^{\lambda,\omega}|X^\lambda(n)-\ell(\lambda)n|\le C\mathcal W_n^X(\xi,\omega)+n|\ell(\lambda)-\ell(\xi)|\le C\sqrt n(L_k^2 n)^\theta+Cn(L_k\Delta)^{1/2}\, .
		\]
		The second term satisfies $Cn(L_k\Delta)^{1/2}\le Cn(L_kr_{k,n})^{1/2}=Cc_{\rm net}^{1/2}\sqrt n(L_k^2 n)^\theta$ (using $\Delta\le r_{k,n}$), which is absorbed into the first term after fixing $c_{\rm net}$ small. Since $L_k/2<\lambda\le L_k$, $(L_k^2n)^\theta\le C(\lambda^2n)^\theta$, giving
		\begin{equation}\label{eq.uniform-int-X}
			E_0^{\lambda,\omega}|X^\lambda(n)-\ell(\lambda)n|\le C\sqrt n(\lambda^2 n)^\theta\qquad\text{for every integer }n\ge N_k\, .
		\end{equation}
		For the clock, Lemma~\ref{lem.girsanov-lambda} \eqref{eq.girsanov-cs} applied to the path functional $\gamma\mapsto \left|\int_0^n e^{-2V(\gamma_s)} ds-\eta(\xi)n\right|$ gives
		\[
			\widehat E_0^{\lambda,\omega}|A^\lambda(n)-\eta(\xi)n|
			\le e^{C\Delta^2 n/2}\bigl(\widehat E_0^{\xi,\omega}|A^\xi(n)-\eta(\xi)n|^2\bigr)^{1/2}
			\le C\mathcal W_n^A(\xi,\omega)\, .
		\]
		From \eqref{eq.eta-modulus} and $\Delta\le r_{k,n}$,
		\[
			n|\eta(\lambda)-\eta(\xi)|\le Cn(\Delta/L_k)^{1/2}\le Cc_{\rm net}^{1/2}L_k^{-1}\sqrt n(L_k^2n)^\theta\, ,
		\]
		again absorbed by choosing $c_{\rm net}$ small. Combining with $\mathcal W_n^A(\xi,\omega)\le C L_k^{-1}\sqrt n(L_k^2n)^\theta$,
		\begin{equation}\label{eq.uniform-int-A}
			\widehat E_0^{\lambda,\omega}|A^\lambda(n)-\eta(\lambda)n|\le C\lambda^{-1}\sqrt n(\lambda^2 n)^\theta\qquad\text{for every integer }n\ge N_k\, .
		\end{equation}

		\smallskip

		\emph{Step 3.} For arbitrary $t\ge\lambda^{-2-\varepsilon}$, set $n\coloneqq \lceil t\rceil$. Then $n-t\le 1$ and $n\ge N_k$. By boundedness of the drift and the Burkholder--Davis--Gundy inequality applied on $[t,n]$, $E_0^{\lambda,\omega}|X^\lambda(t)-X^\lambda(n)|\le C$. Together with $|\ell(\lambda)|\le C\lambda\le C$ from Lemma~\ref{lem.lbound} and \eqref{eq.uniform-int-X}, this gives $E_0^{\lambda,\omega}|X^\lambda(t)-\ell(\lambda)t|\le C\sqrt n(\lambda^2 n)^\theta+C$. Since $\sqrt t(\lambda^2t)^\theta\ge 1$ for $t\ge\lambda^{-2}$, the right-hand side is at most $C\sqrt t(\lambda^2 t)^\theta$, proving \eqref{eq.uniform-X}.

		For the clock, $0\le e^{-2V}\le\Lambda$ and \eqref{eq.eta-bound-renewal} give $|A^\lambda(t)-A^\lambda(n)|+|\eta(\lambda)| |t-n|\le C$; combining this with \eqref{eq.uniform-int-A} proves \eqref{eq.uniform-A}.

		Setting $\lambda_0(\omega)\coloneqq 2^{-k_0(\omega)}$ on $\Omega_{\theta,\varepsilon}$, and $\lambda_0(\omega)\coloneqq 1$ on $\Omega\setminus\Omega_{\theta,\varepsilon}$, defines $\lambda_0$ as a random variable on all of $\Omega$.

		\smallskip

		\emph{Step 4.} We prove the tail bound for $\lambda_0$. The thresholds in $B_{k,n}$ and $E_k$ do not depend on the moment exponent, so we may repeat Step~1 with a larger exponent. Fix $p>0$ and choose an integer $M_p\ge 2$ with $-2+\varepsilon(M_p\theta+2\theta-2)\ge p$. Using $M_p$ in Markov's inequality and summing over $n\ge N_k$ gives, for every $k\ge k_*$,
		\[
			\Q(E_k)\le \sum_{n\ge N_k}\Q(B_{k,n})\le C_p L_k^{-2+\varepsilon(M_p\theta+2\theta-2)}\le C_p 2^{-pk}\, ,
		\]
		with $C_p$ depending on $(p,\theta,\varepsilon,d,\Lambda,r_0)$. From the canonical definition \eqref{eq.k0-canonical}, $\{k_0(\omega)\ge k\}\subseteq \{k\le k_1\vee k_*\}\cup\bigcup_{j\ge \max\{k-1,k_*\}}E_j$. For $k\le k_1\vee k_*$ the inclusion is trivially absorbed by enlarging $C_p$; for $k> k_1\vee k_*$ the first term is empty, so
		\begin{equation*}
			\Q\bigl(\lambda_0\le 2^{-k}\bigr)=\Q\bigl(k_0(\omega)\ge k\bigr)\le \sum_{j\ge k-1}\Q(E_j)\le C_p 2^{-pk}\, ,
		\end{equation*}
		which is \eqref{eq.lambda0-tail} and completes the proof.
	\end{proof}

	\section{Proof of Theorem~\ref{t.main}}\label{sec.einstein-rate}

This section proves Theorem~\ref{t.main}. By the time change of
Subsection~\ref{ss.reduction}, the mobility error
$\ell_X(\lambda)/\lambda-\Sigma_X e_1$ is controlled, through the identities
\eqref{eq.time-change-identities}, by two errors of the time-changed
diffusion: the velocity error $\ell(\lambda)/\lambda-\ahom e_1$ and the clock
error $\eta(\lambda)-\E[e^{-2V(0)}]$. We prove algebraic rates in
$\lambda$ for these two quantities.

The Feynman--Kac formula connects the resolvent solution $u_m(0)$ from
Section~\ref{sec.tilted-resolvent} to the path of $X^\lambda$: it represents
$3^{-m}u_m(0)$ as an exponentially weighted time-average of the rescaled
displacement of the particle, plus an exponentially damped boundary term. On the diffusive time scale
$t\sim\lambda^{-2}$, the law of large numbers thus gives
$3^{-m}u_m(0)\approx\ell(\lambda)/\lambda$. The velocity error
$\ell(\lambda)/\lambda-\ahom e_1$ therefore splits into the difference
$\ell(\lambda)/\lambda-3^{-m}u_m(0)$, which measures the law-of-large-numbers
error inside the Feynman--Kac average, and the deviation
$3^{-m}u_m(0)-\ahom e_1$, which is the homogenization error estimated in
Section~\ref{sec.tilted-resolvent}.

The first difference decomposes further into three terms: a short-time term
where the law-of-large-numbers approximation has not yet taken effect; a
long-time term controlled by the uniform quenched law of large numbers of
Section~\ref{sec.renewal}; and a boundary term controlled by the exit estimate
of Proposition~\ref{pro.exit}. Together with the homogenization term, this gives
four contributions to the velocity error. The clock error is treated by the
same decomposition applied to the clock resolvent $q_m$ from
\eqref{eq.clock-resolvent}. Lemmas~\ref{lem.velocity-decomp}
and~\ref{lem.clock-decomp} make the four-term bounds explicit, and
Theorem~\ref{t.main} follows by optimizing the choices of parameters in each of these estimates. 

		We first record the scaled Feynman--Kac identity used below. Define
		\[
			v_\lambda(r)\coloneqq \frac{\lambda}{r}X^\lambda \Bigl(\frac{r}{\lambda^2}\Bigr),\qquad r>0\, .
		\]
		Set $v_\lambda(0)\coloneqq0$.
	The Feynman--Kac formula~\eqref{eq.FK-resolvent} gives, for $m,h\in\N$ with $\lambda=\rho3^{-m}$,
	\begin{equation}\label{eq.FK-scaled-final}
		3^{-m}u_m(0)=E^{\lambda,\omega}_0\int_0^{\rho3^{-2m}\tau_{m,h}}\xi e^{-\xi}v_\lambda(\rho \xi) d\xi+E^{\lambda,\omega}_0\left[e^{-\rho3^{-2m}\tau_{m,h}}\left(3^{-m}X^\lambda(\tau_{m,h})+\ahom e_1\right)\right]\, .
	\end{equation}

	For an exponent $\alpha\in(0,1)$, a parameter $\zeta>0$, and each $\lambda\in(0,1]$, write
	\[
		m(\lambda)\coloneqq \Bigl\lceil\frac{-\log_3\lambda}{1-\alpha}\Bigr\rceil,\qquad \rho(\lambda)\coloneqq \lambda 3^{m(\lambda)},\qquad h(\lambda)\coloneqq \lfloor\zeta m(\lambda)\rfloor\, ,
	\]
	so that
	\begin{align}\label{eq.rho-bounds}
		3^{\alpha m(\lambda)}&\le\rho(\lambda)\le 3^{1-\alpha} 3^{\alpha m(\lambda)}\, ,\notag\\
		\rho(\lambda)&\asymp\lambda^{-\alpha/(1-\alpha)}\, ,\\
		3^{-m(\lambda)}&\asymp\lambda^{1/(1-\alpha)}\, .\notag
	\end{align}

	Let $\beta_h$ and $\delta=2\beta_h/(d+4)$ be as in Section~\ref{sec.coarse-graining-inputs}, and let $\zeta_H$ be the constant fixed in Section~\ref{sec.tilted-resolvent}.

		The next lemma decomposes the time-changed velocity error into the four terms described above.

			\begin{lemma}[Velocity decomposition]\label{lem.velocity-decomp}
				Let $\alpha\in(0,\delta/2)$, let $\zeta\in(0,\zeta_H]$, let $\theta\in(0,1/4)$, and let $\varepsilon>0$ satisfy $\varepsilon(1-\alpha)<\alpha$.
				There exist deterministic constants $C,C'<\infty$ and $c>0$, a deterministic exponent $\sigma\in(0,1)$, and a random scale $\lambda_*^H\colon\Omega\to(0,1]$ such that
				\begin{equation}\label{eq.lambda-star-tail}
						\Q\bigl(\lambda_*^H\le\lambda\bigr)\le C'\exp\bigl(-c (\log(1/\lambda))^\sigma\bigr)\qquad\text{for every }\lambda\in(0,1]\, .
					\end{equation}
					The constants $C,C',c,\sigma$ depend only on the displayed parameters and on $(d,\Lambda,r_0,\beta_h)$. For $\Q$-a.e.\ $\omega$ and every $0<\lambda\le\lambda_*^H(\omega)$,
			\begin{equation}\label{eq.velocity-decomp-bound}
			\begin{aligned}
				\Bigl|\frac{\ell(\lambda)}{\lambda}-\ahom e_1\Bigr|
				&\le C\rho(\lambda)^{-2}\lambda^{-2\varepsilon}
				+C\rho(\lambda)^{-1/2+\theta}
				+Ce^{-c3^{h(\lambda)}}\\
				&\qquad
				+C \rho(\lambda)^{1/2}3^{(d/2+1-\beta_h)h(\lambda)-\delta m(\lambda)}\, .
			\end{aligned}
			\end{equation}
	\end{lemma}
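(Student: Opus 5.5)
Set $m=m(\lambda)$, $\rho=\rho(\lambda)$, $h=h(\lambda)$ and split
\[
\frac{\ell(\lambda)}{\lambda}-\ahom e_1=\Bigl(\frac{\ell(\lambda)}{\lambda}-3^{-m}u_m(0)\Bigr)+\bigl(3^{-m}u_m(0)-\ahom e_1\bigr).
\]
The second term will be bounded by Proposition~\ref{pro.velocity-resolvent-infty}, which gives exactly the last term of \eqref{eq.velocity-decomp-bound}. This requires $(m,h,\rho)$ to be admissible in the sense of \eqref{eq.admissible}. We have $h\le\zeta m\le\zeta_H m$. By \eqref{eq.rho-bounds}, $\rho\le 3^{1-\alpha}3^{\alpha m}\le 3^{\delta m/2}$ for $m$ large, because $\alpha<\delta/2$. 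Finally, $m\ge\X_{\beta_h}(\omega)$ holds once $\lambda\le 3^{-(1-\alpha)\X_{\beta_h}}$, up to a deterministic enlargement.

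For the first term, write $\ell(\lambda)/\lambda=\int_0^\infty \xi e^{-\xi}\,\ell(\lambda)\rho\xi/(\lambda\rho\xi)\,d\xi$, using $\int_0^\infty\xi e^{-\xi}\,d\xi=1$. Here $\ell(\lambda)/\lambda=\rho^{-1}\cdot\ell(\lambda)3^m$, and since $v_\lambda(r)\approx\ell(\lambda)/\lambda$ under the LLN, this is the target. Compare with \eqref{eq.FK-scaled-final}. Note that $\rho\xi/\lambda^2$ is the time corresponding to $\xi$, and $v_\lambda(\rho\xi)=X^\lambda(\rho\xi/\lambda^2)\lambda/(\rho\xi)$. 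The difference then splits into three pieces:
\begin{itemize}
\item[(a)] the boundary term $E[e^{-\rho3^{-2m}\tau}(3^{-m}X(\tau)+\ahom e_1)]$, together with the truncated tail $\int_{\rho3^{-2m}\tau}^\infty$ of the ideal integral;
\item[(b)] the short-time region $\rho\xi/\lambda^2<\lambda^{-2-\varepsilon}$, i.e.\ $\xi<\lambda^{-\varepsilon}/\rho$;
\item[(c)] the long-time region.
\end{itemize}

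For (a), $|X(\tau)|\le C3^{m+h}$ on the cube boundary, so the boundary term is at most $C3^h E[e^{-\rho3^{-2m}\tau}]$. The tail integral is bounded by $E[\int_{s}^\infty \xi e^{-\xi}|v_\lambda|\,d\xi]$ with $s=\rho3^{-2m}\tau$. Here I will use Lemma~\ref{lem.max} to get $E|v_\lambda(r)|\le C$ for $r\ge1$, together with $|\ell|\le C\lambda$. Cauchy--Schwarz against $P(\tau\text{ small})$ then controls this piece via Proposition~\ref{pro.exit} applied with $\kappa=1/2$. The resulting $3^h e^{-c3^h}$ becomes $Ce^{-c'3^h}$.

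For (b), $|v_\lambda(r)|$ has expectation $O(1)$ when $r\ge 1$. For $r<1$, Lemma~\ref{lem.max} with $t=\lambda^{-2}$ gives $E|X(r/\lambda^2)|\le C\lambda^{-1}$, so the integrand $\xi|v_\lambda(\rho\xi)|$ has expectation at most $C/\rho$. Integrating over $\xi<\lambda^{-\varepsilon}/\rho$ yields $C\rho^{-2}\lambda^{-2\varepsilon}$, which is the first term of \eqref{eq.velocity-decomp-bound}; I expect a careful bookkeeping of the constant here.

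For (c), times satisfy $t\ge\lambda^{-2-\varepsilon}$. Corollary~\ref{cor.uniform-renewal} gives
\[
E|v_\lambda(r)-\ell(\lambda)/\lambda|\le C r^{-1/2+\theta},\qquad r=\rho\xi,
\]
and integrating against $\xi e^{-\xi}$ produces $C\rho^{-1/2+\theta}$, which is finite since $\theta<1/2$. The condition $\varepsilon(1-\alpha)<\alpha$ ensures $\lambda^{-\varepsilon}/\rho\to0$, so the short-time region is a small window and $\rho\ge1$ is fine. The main obstacle is handling the exit-time truncation consistently, i.e.\ interchanging the random upper limit with the LLN bounds. I would do this by bounding the absolute integrand on $\{\xi>\rho3^{-2m}\tau\}$ via Cauchy--Schwarz with the second moments from Lemma~\ref{lem.max}, rather than using the LLN there.

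Finally, take $\lambda_*^H=3^{-(1-\alpha)\X_{\beta_h}}\wedge\lambda_0\wedge1$, with $\X_{\beta_h}$ deterministically enlarged and $\lambda_0$ from Corollary~\ref{cor.uniform-renewal}. Lemma~\ref{lem.tail-conversion}, combined with \eqref{eq.m-beta0-tail} and \eqref{eq.lambda0-tail}, then yields \eqref{eq.lambda-star-tail}.
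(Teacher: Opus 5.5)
Your proof is correct and has the same overall structure as the paper's: Feynman--Kac at scale $3^m$, a split at $\xi=\lambda^{-\varepsilon}/\rho$, Lemma~\ref{lem.max} for short times, Corollary~\ref{cor.uniform-renewal} for long times, Proposition~\ref{pro.exit} for the exit contributions, Proposition~\ref{pro.velocity-resolvent-infty} for the homogenization error, and Lemma~\ref{lem.tail-conversion} for the threshold. The one real difference is where you place the exit-time truncation.

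The paper compares $3^{-m}u_m(0)$ directly with $\ell(\lambda)/\lambda=\int_0^\infty\xi e^{-\xi}\,\ell(\lambda)/\lambda\,d\xi$. Writing $s=\rho3^{-2m}\tau_{m,h}$, this produces two terms besides the boundary term:
\begin{itemize}
\item the LLN error $E^{\lambda,\omega}_0\int_0^{s}\xi e^{-\xi}|v_\lambda(\rho\xi)-\ell(\lambda)/\lambda|\,d\xi$;
\item the tail $E^{\lambda,\omega}_0\int_s^\infty\xi e^{-\xi}|\ell(\lambda)/\lambda|\,d\xi$.
\end{itemize}
The first integrand is nonnegative, so the random upper limit can simply be extended to $[0,\infty)$ and Tonelli applied. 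The tail involves only the deterministic bound $|\ell(\lambda)/\lambda|\le C$. It is therefore at most $C\,E^{\lambda,\omega}_0[(s+1)e^{-s}]\le C\,E^{\lambda,\omega}_0[e^{-s/2}]\le Ce^{-c3^h}$, by Proposition~\ref{pro.exit} with $\kappa=1/2$.

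You instead compare with the untruncated integral $E^{\lambda,\omega}_0\int_0^\infty\xi e^{-\xi}v_\lambda(\rho\xi)\,d\xi$. This moves the random quantity $v_\lambda$ into the tail. It still works, but only with the steps you sketch, which should be written out:
\begin{itemize}
\item Cauchy--Schwarz, with $E|v_\lambda(r)|^2\le C(1+r^{-2})$ from Lemma~\ref{lem.max} with $p=2$;
\item the Chernoff bound $P(s<\xi)\le e^{\xi/2}E[e^{-s/2}]\le Ce^{\xi/2-c3^h}$, whose square root remains integrable against $\xi e^{-\xi}(1+(\rho\xi)^{-1})$.
\end{itemize}
So the ``main obstacle'' you identify comes from your arrangement, not from the problem. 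The paper's placement of the truncation avoids second moments and Cauchy--Schwarz altogether.
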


		\begin{proof}
			We work on the intersection of the $\Q$-full-probability events of Proposition~\ref{pro.velocity-resolvent-infty} and of Corollary~\ref{cor.uniform-renewal} applied with parameters $(\theta,\varepsilon)$. Throughout, write $m=m(\lambda)$, $\rho=\rho(\lambda)$, $h=h(\lambda)$.
			By \eqref{eq.rho-bounds} and $\alpha<\delta/2$, the deterministic scale conditions required by Proposition~\ref{pro.velocity-resolvent-infty}, including $\rho\le 3^{\delta m/2}$, hold for all sufficiently large $m$.

		\smallskip

		\emph{Step 1.} The boundary data $u_m=3^m\ahom e_1$ on $\partial\cu_{m+h}$ in \eqref{eq.resolvent-equation} accounts for the term $\ahom e_1$ in the boundary contribution of \eqref{eq.FK-scaled-final}. From \eqref{eq.FK-scaled-final}, $\int_0^\infty\xi e^{-\xi} d\xi=1$, and the triangle inequality,
		\begin{multline}\label{eq.main-split}
		\Bigl|\frac{\ell(\lambda)}{\lambda}-3^{-m}u_m(0)\Bigr|\le\underbrace{E^{\lambda,\omega}_0\int_0^{\rho3^{-2m}\tau_{m,h}}\xi e^{-\xi}\Bigl|v_\lambda(\rho\xi)-\frac{\ell(\lambda)}{\lambda}\Bigr|d\xi}_{(a)}\\
		+\underbrace{E^{\lambda,\omega}_0\int_{\rho3^{-2m}\tau_{m,h}}^\infty \xi e^{-\xi}\Bigl|\frac{\ell(\lambda)}{\lambda}\Bigr|d\xi}_{(b)}+
		\underbrace{E^{\lambda,\omega}_0\Bigl[e^{-\rho3^{-2m}\tau_{m,h}}\Bigl|3^{-m}X^\lambda(\tau_{m,h})+\ahom e_1\Bigr|\Bigr]}_{(c)}\, .
		\end{multline}

		\smallskip

		\emph{Step 2.} By \eqref{eq.rho-bounds}, $\lambda^{-\varepsilon}/\rho\le 3^{1-\alpha}\lambda^{\alpha/(1-\alpha)-\varepsilon}\to 0$ since $\varepsilon(1-\alpha)<\alpha$. Split the integral of~$(a)$ into two terms~$(a)=(a.1)+(a.2)$ at $(\lambda^{-\varepsilon}/\rho)\wedge \rho3^{-2m}\tau_{m,h}$. For $r\ge 1$, Lemma~\ref{lem.max} at time $r/\lambda^2$ gives $E_0^{\lambda,\omega}|v_\lambda(r)|\le C$. For $0<r<1$, use $|X^\lambda(r/\lambda^2)|\le\max_{s\le\lambda^{-2}}|X^\lambda(s)|$ and apply the same lemma at time $\lambda^{-2}$ to get $E_0^{\lambda,\omega}|v_\lambda(r)|\le Cr^{-1}$. Combined with $|\ell(\lambda)|\le C\lambda$,
		\[
			E_0^{\lambda,\omega}\Bigl|v_\lambda(r)-\frac{\ell(\lambda)}{\lambda}\Bigr|\le C(1+r^{-1}),\qquad r>0\, ,
		\]
		so
		\[
			(a.1)\le C\int_0^{\lambda^{-\varepsilon}/\rho}\xi e^{-\xi}\bigl(1+(\rho\xi)^{-1}\bigr)d\xi\le C\rho^{-2}\lambda^{-2\varepsilon}\, .
		\]

		\smallskip

		\emph{Step 3.} For $\xi\ge\lambda^{-\varepsilon}/\rho$, $t\coloneqq \rho\xi/\lambda^2\ge\lambda^{-2-\varepsilon}$. Corollary~\ref{cor.uniform-renewal} \eqref{eq.uniform-X} gives
		\[
			E_0^{\lambda,\omega}\Bigl|v_\lambda(\rho\xi)-\frac{\ell(\lambda)}{\lambda}\Bigr|=\frac{\lambda}{\rho\xi}E_0^{\lambda,\omega}|X^\lambda(t)-\ell(\lambda)t|\le\frac{\lambda}{\rho\xi}\cdot C\sqrt t(\lambda^2t)^\theta=C\rho^{-1/2+\theta}\xi^{-1/2+\theta}\, ,
		\]
		so
		\[
			(a.2)\le C\rho^{-1/2+\theta}\int_0^\infty\xi^{1/2+\theta}e^{-\xi} d\xi\le C\rho^{-1/2+\theta}\, .
		\]

		\smallskip

			\emph{Step 4.} The identity $\int_r^\infty \xi e^{-\xi} d\xi=(r+1)e^{-r}\le Ce^{-r/2}$ controls the exponential tail in $(b)$. Combining this with $|\ell(\lambda)|\le C\lambda$ and Proposition~\ref{pro.exit} ($\kappa=1/2$) gives $(b)\le Ce^{-c3^h}$. Since $X^\lambda(\tau_{m,h})\in\partial\cu_{m+h}$, the triangle inequality gives $|3^{-m}X^\lambda(\tau_{m,h})+\ahom e_1|\le C3^h$. Proposition~\ref{pro.exit} ($\kappa=1$) then gives $(c)\le C 3^h e^{-c3^h}\le Ce^{-c'3^h}$ after decreasing $c'$ and increasing $C$. Hence
		\begin{equation}\label{eq.geom-bc}
			(b)+(c)\le Ce^{-c3^h}\, .
		\end{equation}

		\smallskip

		\emph{Step 5.} Combining \eqref{eq.main-split}--\eqref{eq.geom-bc} with Proposition~\ref{pro.velocity-resolvent-infty} yields \eqref{eq.velocity-decomp-bound} with deterministic $C$ and the random threshold
		\[
			\lambda_*^H(\omega)\coloneqq 3^{-(1-\alpha)\X_{\beta_h}(\omega)}\wedge\lambda_0(\omega,\theta,\varepsilon)\wedge 1\, .
		\]
			The condition $\lambda\le\lambda_*^H(\omega)$ together with $m(\lambda)\ge -\log_3\lambda/(1-\alpha)$ guarantees the hypotheses of Proposition~\ref{pro.velocity-resolvent-infty}. The tail \eqref{eq.lambda-star-tail} follows from \eqref{eq.m-beta0-tail}, \eqref{eq.lambda0-tail}, and Lemma~\ref{lem.tail-conversion}.
	\end{proof}

	The clock resolvent $q_m$ from \eqref{eq.clock-resolvent} admits the same Feynman--Kac representation, but with the centered integrand $e^{-2V}-\E[e^{-2V(0)}]$ in place of the velocity. After integration by parts, this representation decomposes $q_m(0)$ into an exponentially weighted average of the time-averaged clock error and an exit term. Unlike the velocity resolvent estimate, Proposition~\ref{pro.clock-resolvent-infty} imposes no upper bound of the form $h\le\zeta_Hm$, so the next lemma allows arbitrary $\zeta>0$.

		\begin{lemma}[Clock decomposition]\label{lem.clock-decomp}
			Let $\alpha\in(0,\delta/2)$, let $\zeta>0$, let $\theta\in(0,1/4)$, and let $\varepsilon>0$ satisfy $\varepsilon(1-\alpha)<\alpha$.
			There exist deterministic constants $C,C'<\infty$ and $c>0$, a deterministic exponent $\sigma\in(0,1)$, and a random scale $\lambda_*^{\rm sc}\colon\Omega\to(0,1]$ such that
			\begin{equation}\label{eq.clock-lambda-star-tail}
					\Q\bigl(\lambda_*^{\rm sc}\le\lambda\bigr)\le C'\exp\bigl(-c (\log(1/\lambda))^\sigma\bigr)\qquad\text{for every }\lambda\in(0,1]\, .
				\end{equation}
				The constants $C,C',c,\sigma$ depend only on the displayed parameters and on $(d,\Lambda,r_0,\beta_h)$. For $\Q$-a.e.\ $\omega$ and every $0<\lambda\le\lambda_*^{\rm sc}(\omega)$,
			\begin{equation}\label{eq.clock-decomp-bound}
			\begin{aligned}
				|\eta(\lambda)-\E[e^{-2V(0)}]|
				&\le C\rho(\lambda)^{-2}\lambda^{-2\varepsilon}
				+C\rho(\lambda)^{-1/2+\theta}
				+Ce^{-c3^{h(\lambda)}}\\
				&\qquad
				+C\rho(\lambda)^{13/12}3^{(d/2+7/12-\beta_h/2)h(\lambda)-\delta m(\lambda)}\, .
			\end{aligned}
			\end{equation}
	\end{lemma}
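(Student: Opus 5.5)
We mirror the proof of Lemma~\ref{lem.velocity-decomp}, with the clock resolvent $q_m$ of \eqref{eq.clock-resolvent} in place of $u_m$. Write $\mu\coloneqq\E[e^{-2V(0)}]$, $m=m(\lambda)$, $\rho=\rho(\lambda)$, $h=h(\lambda)$ and $\gamma\coloneqq\rho3^{-2m}=\lambda^2/\rho$. We work on the intersection of the full-probability events of Proposition~\ref{pro.clock-resolvent-infty} and Corollary~\ref{cor.uniform-renewal} with parameters $(\theta,\varepsilon)$.

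\emph{Feynman--Kac and integration by parts.} The Feynman--Kac formula for \eqref{eq.clock-resolvent} with zero boundary data gives
\[
q_m(0)=\gamma E_0^{\lambda,\omega}\int_0^{\tau_{m,h}}e^{-\gamma t}\bigl(e^{-2V(X^\lambda(t))}-\mu\bigr)dt.
\]
Writing the integrand as $\frac{d}{dt}(A^\lambda(t)-\mu t)$ and integrating by parts gives
\[
q_m(0)=\gamma E_0^{\lambda,\omega}\bigl[e^{-\gamma\tau}(A^\lambda(\tau)-\mu\tau)\bigr]+\gamma^2E_0^{\lambda,\omega}\int_0^{\tau}e^{-\gamma t}(A^\lambda(t)-\mu t)\,dt,
\]
with $\tau=\tau_{m,h}$ and expectations on the enlarged space $\widehat E$, which is harmless for path functionals. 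Substituting $t=\rho\xi/\lambda^2$ makes the second term $E\int_0^{\gamma\tau}\xi e^{-\xi}w_\lambda(\rho\xi)\,d\xi$, where $w_\lambda(r)\coloneqq(\lambda^2/r)(A^\lambda(r/\lambda^2)-\mu r/\lambda^2)$. Since $\int_0^\infty\xi e^{-\xi}d\xi=1$, we obtain
\[
|\eta(\lambda)-\mu-q_m(0)|\le(a)+(b)+(c),
\]
the analogues of \eqref{eq.main-split}. Here $(a)$ is the integral of $\xi e^{-\xi}|w_\lambda(\rho\xi)-(\eta(\lambda)-\mu)|$ up to $\gamma\tau$; $(b)$ is the tail $\int_{\gamma\tau}^\infty\xi e^{-\xi}|\eta(\lambda)-\mu|$; and $(c)$ is the boundary term, bounded by $\gamma\tau e^{-\gamma\tau}\cdot C$ because $|A^\lambda(\tau)-\mu\tau|\le C\tau$.

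\emph{The four terms.} Since $e^{-2V}\in[\Lambda^{-1},\Lambda]$, the integrand is deterministically bounded: $|w_\lambda|\le C$ and $|\eta|\le\Lambda$. The short-time part $(a.1)$, with $\xi\le\lambda^{-\varepsilon}/\rho$, is therefore at most $C(\lambda^{-\varepsilon}/\rho)^2$, which is even simpler than in the velocity case.

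The long-time part $(a.2)$ uses \eqref{eq.uniform-A} at $t=\rho\xi/\lambda^2\ge\lambda^{-2-\varepsilon}$:
\[
E|w_\lambda(\rho\xi)-(\eta-\mu)|\le\frac{\lambda^2}{\rho\xi}\,C\lambda^{-1}\sqrt t\,(\lambda^2t)^\theta=C(\rho\xi)^{-1/2+\theta}.
\]
The factor $\lambda^{-1}$ cancels exactly against the scaling, so after integrating against $\xi e^{-\xi}$ we get $(a.2)\le C\rho^{-1/2+\theta}$.

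For $(b)$ and $(c)$, use $\int_r^\infty\xi e^{-\xi}\le Ce^{-r/2}$ and $re^{-r}\le Ce^{-r/2}$, then Proposition~\ref{pro.exit} with $\kappa=1/2$, to get $(b)+(c)\le Ce^{-c3^h}$.

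Finally, $|q_m(0)|$ is bounded by Proposition~\ref{pro.clock-resolvent-infty}. Its hypotheses are $m\ge\X_\mu$ and $\rho3^{-\delta m/2}\le1$; the latter holds for large $m$ by \eqref{eq.rho-bounds} since $\alpha<\delta/2$. That proposition needs no constraint on $h$, so any $\zeta>0$ is allowed.

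\emph{Threshold and main obstacle.} Set
\[
\lambda_*^{\rm sc}\coloneqq3^{-(1-\alpha)\X_\mu}\wedge\lambda_0(\omega,\theta,\varepsilon)\wedge1,
\]
enlarging $\X_\mu$ by a deterministic constant to absorb the ``$m$ large'' conditions. The tail \eqref{eq.clock-lambda-star-tail} then follows from Lemma~\ref{lem.m-mu-tail}, \eqref{eq.lambda0-tail} and Lemma~\ref{lem.tail-conversion}. The main point needing care is the integration by parts at the random exit time, including finiteness of $\tau$. I would handle it pathwise, since $t\mapsto e^{-\gamma t}(A^\lambda(t)-\mu t)$ is absolutely continuous with a bounded derivative, and then justify Fubini using the bound $|A^\lambda(t)-\mu t|\le Ct$.
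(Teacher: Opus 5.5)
Your proposal is correct and follows the paper's proof essentially step for step. It uses the same Feynman--Kac representation of $q_m(0)$, the same integration by parts and substitution $\xi=\rho3^{-2m}t$, and the same split into a short-time term, a long-time term controlled by \eqref{eq.uniform-A}, and exit terms controlled by Proposition~\ref{pro.exit}; it also uses the same threshold $3^{-(1-\alpha)\X_\mu}\wedge\lambda_0\wedge 1$ with its tail obtained from Lemma~\ref{lem.tail-conversion}.
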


	\begin{proof}
		We work on the intersection of the $\Q$-full-probability events of Proposition~\ref{pro.clock-resolvent-infty} and of Corollary~\ref{cor.uniform-renewal} applied with parameters $(\theta,\varepsilon)$. Write $m=m(\lambda),\rho=\rho(\lambda),h=h(\lambda)$.
		By \eqref{eq.rho-bounds} and $\alpha<\delta/2$, $\rho 3^{-\delta m/2}\le 1$ for all sufficiently large $m$; absorbing this deterministic threshold into $\X_\mu$ preserves the tail \eqref{eq.m-mu-tail} after increasing the prefactor.

		Throughout the proof we use $\widehat E_0^{\lambda,\omega}$ (the expectation on the enlarged probability space carrying the Bernoulli marks, defined in Subsection~\ref{ss.renewal-setup}) in place of $E_0^{\lambda,\omega}$; the two agree because the integrands depend only on the diffusion-path marginal. The Feynman--Kac formula for \eqref{eq.clock-resolvent} gives
		\[
			q_m(0)=\rho 3^{-2m}\widehat E_0^{\lambda,\omega}\biggl[\int_0^{\tau_{m,h}}e^{-\rho 3^{-2m}t}\bigl(e^{-2V(X^\lambda(t))}-\E[e^{-2V(0)}]\bigr) dt\biggr]\, .
		\]
		Set
		\[
			I(t)\coloneqq\int_0^t\bigl(e^{-2V(X^\lambda(s))}-\E[e^{-2V(0)}]\bigr) ds\, .
		\]
		Integration by parts in $t$ rewrites the inner integral as
		\[
			e^{-\rho 3^{-2m}\tau_{m,h}}I(\tau_{m,h})+\rho 3^{-2m}\int_0^{\tau_{m,h}}e^{-\rho 3^{-2m}t}I(t) dt\, .
		\]
		Substituting $\xi=\rho 3^{-2m}t$ in the second term gives
		\[
		\begin{aligned}
			q_m(0)
			&=\widehat E_0^{\lambda,\omega}\Bigl[\int_0^{\rho3^{-2m}\tau_{m,h}}\xi e^{-\xi}
			\frac{\lambda^2}{\rho\xi}\int_0^{\rho\xi/\lambda^2}\bigl(e^{-2V(X^\lambda(s))}-\E[e^{-2V(0)}]\bigr)ds d\xi\Bigr]\\
			&\quad+\rho3^{-2m}\widehat E_0^{\lambda,\omega}\Bigl[e^{-\rho3^{-2m}\tau_{m,h}}\int_0^{\tau_{m,h}}\bigl(e^{-2V(X^\lambda(s))}-\E[e^{-2V(0)}]\bigr)ds\Bigr]\, .
		\end{aligned}
		\]
		Since $\int_0^\infty\xi e^{-\xi} d\xi=1$, the triangle inequality gives $|\eta(\lambda)-\E[e^{-2V(0)}]-q_m(0)|\le A_1+A_2+A_3$ with
		\begin{align*}
			A_1&\coloneqq \widehat E_0^{\lambda,\omega}\int_0^{\rho3^{-2m}\tau_{m,h}}\xi e^{-\xi}\left|\frac{\lambda^2}{\rho\xi}\int_0^{\rho\xi/\lambda^2}\bigl(e^{-2V(X^\lambda(s))}-\E[e^{-2V(0)}]\bigr)ds-(\eta(\lambda)-\E[e^{-2V(0)}])\right| d\xi\, ,\\
			A_2&\coloneqq \widehat E_0^{\lambda,\omega}\int_{\rho3^{-2m}\tau_{m,h}}^\infty\xi e^{-\xi}|\eta(\lambda)-\E[e^{-2V(0)}]| d\xi\, ,\\
			A_3&\coloneqq \rho3^{-2m}\widehat E_0^{\lambda,\omega}\Bigl[e^{-\rho3^{-2m}\tau_{m,h}}\Bigl|\int_0^{\tau_{m,h}}\bigl(e^{-2V(X^\lambda(s))}-\E[e^{-2V(0)}]\bigr)ds\Bigr|\Bigr]\, .
		\end{align*}
		By \eqref{eq.rho-bounds} and $\varepsilon(1-\alpha)<\alpha$, $\lambda^{-\varepsilon}/\rho\le 3^{1-\alpha}\lambda^{\alpha/(1-\alpha)-\varepsilon}\to 0$. Split $A_1=A_{1,1}+A_{1,2}$ at $(\lambda^{-\varepsilon}/\rho)\wedge\rho3^{-2m}\tau_{m,h}$. Since the absolute value in $A_1$ is at most $C$,
		\[
			A_{1,1}\le C\int_0^{\lambda^{-\varepsilon}/\rho}\xi e^{-\xi} d\xi\le C\rho^{-2}\lambda^{-2\varepsilon}\, .
		\]
		For $\xi\ge\lambda^{-\varepsilon}/\rho$, $t\coloneqq \rho\xi/\lambda^2\ge\lambda^{-2-\varepsilon}$, and Corollary~\ref{cor.uniform-renewal} \eqref{eq.uniform-A} gives
		\[
		\begin{aligned}
			&\widehat E_0^{\lambda,\omega}\left|\frac{\lambda^2}{\rho\xi}\int_0^{\rho\xi/\lambda^2}\bigl(e^{-2V(X^\lambda(s))}-\E[e^{-2V(0)}]\bigr)ds-(\eta(\lambda)-\E[e^{-2V(0)}])\right|\\
			&=\frac{\lambda^2}{\rho\xi}\widehat E_0^{\lambda,\omega}|A^\lambda(t)-\eta(\lambda)t|\\
			&\le\frac{\lambda^2}{\rho\xi}\cdot C\lambda^{-1}\sqrt t(\lambda^2 t)^\theta
			=C\rho^{-1/2+\theta}\xi^{-1/2+\theta}\, ,
		\end{aligned}
		\]
		so $A_{1,2}\le C\rho^{-1/2+\theta}$. The bounds $A_2\le Ce^{-c3^h}$ and $A_3\le Ce^{-c3^h}$ follow from the same exit-time calculation as $(b)$ and $(c)$ in Step~4 of the proof of Lemma~\ref{lem.velocity-decomp}. For $A_2$, use $|\eta(\lambda)-\E[e^{-2V(0)}]|\le2\Lambda$; for $A_3$, use $|e^{-2V}-\E[e^{-2V(0)}]|\le 2\Lambda$. The remaining inputs are the identity $\int_r^\infty\xi e^{-\xi}\,d\xi=(r+1)e^{-r}$, the bound $\sup_{u\ge0}ue^{-u/2}<\infty$, and Proposition~\ref{pro.exit} with $\kappa=1/2$.
		Combining with Proposition~\ref{pro.clock-resolvent-infty} yields \eqref{eq.clock-decomp-bound} with deterministic $C$. The threshold
		\[
			\lambda_*^{\rm sc}(\omega)\coloneqq 3^{-(1-\alpha)\X_\mu(\omega)}\wedge\lambda_0(\omega,\theta,\varepsilon)\wedge 1
		\]
		guarantees the hypotheses of Proposition~\ref{pro.clock-resolvent-infty}. The tail \eqref{eq.clock-lambda-star-tail} follows from \eqref{eq.m-mu-tail}, \eqref{eq.lambda0-tail}, and Lemma~\ref{lem.tail-conversion}.
	\end{proof}

	For the final transfer through the time change, we record that \eqref{eq.eta-bound-renewal} and the bounds on $e^{-2V}$ in Subsection~\ref{ss.standing} give $\eta(\lambda),\E[e^{-2V(0)}]\in[\Lambda^{-1},\Lambda]$; also $|\ahom|_{\rm op}\le\Lambda$.

	\begin{proof}[Proof of Theorem~\ref{t.main}]
		Fix $\beta_h\coloneqq 1/8$, so that $\delta=2\beta_h/(d+4)=1/(4(d+4))$, and choose any $\beta^*\in(0,\delta/(2(2-\delta)))$. Fix also $\beta'>0$ such that $\beta^*+\beta'<\delta/(2(2-\delta))$.

		\emph{Step 1.} Velocity rate. Apply Lemma~\ref{lem.velocity-decomp} with parameters $\alpha\in(0,\delta/2)$, $\zeta\in(0,\zeta_H]$, $\theta\in(0,1/4)$, and $\varepsilon\in(0,\alpha/(1-\alpha))$. The first, second, and fourth terms of \eqref{eq.velocity-decomp-bound} translate to powers of $\lambda$ with exponents
		\[
		\begin{aligned}
			\alpha_1&\coloneqq\frac{2\alpha}{1-\alpha}-2\varepsilon\, ,\\
			\alpha_2&\coloneqq\frac{\alpha(1/2-\theta)}{1-\alpha}\, ,\\
			\alpha_3&\coloneqq\frac{\delta-\alpha/2-(d/2+1-\beta_h)\zeta}{1-\alpha}\, .
		\end{aligned}
		\]
		As $\alpha\uparrow\delta/2$ and $(\theta,\varepsilon,\zeta)\downarrow 0$, the limits of $\alpha_1,\alpha_2,\alpha_3$ are $2\delta/(2-\delta)$, $\delta/(2(2-\delta))$, and $3\delta/(2(2-\delta))$, with minimum $\delta/(2(2-\delta))>\beta^*+\beta'$ by the choice of $\beta'$.
		Choose $(\alpha,\zeta,\theta,\varepsilon)$ close enough to this limiting regime that all three exponents exceed $\beta^*+\beta'$. The same parameters also make the clock homogenization exponent in Step~2 exceed $\beta^*+\beta'$, since its limit $11\delta/(12(2-\delta))$ is strictly larger. The third term $e^{-c3^{h(\lambda)}}$ in \eqref{eq.velocity-decomp-bound} is super-polynomial in $\lambda$ since $h(\lambda)\ge\zeta m(\lambda)-1$ and $3^{-m(\lambda)}\asymp\lambda^{1/(1-\alpha)}$ by \eqref{eq.rho-bounds}. Hence Lemma~\ref{lem.velocity-decomp} gives, for $\Q$-a.e.\ $\omega$ and every $0<\lambda\le\lambda_*^H(\omega)$,
		\begin{equation}\label{eq.intermediate-velocity}
			\Bigl|\frac{\ell(\lambda)}{\lambda}-\ahom e_1\Bigr|\le C\lambda^{\beta^*+\beta'}\, ,
		\end{equation}
		with $\lambda_*^H$ a random scale satisfying \eqref{eq.lambda-star-tail}.

		\emph{Step 2.} Clock rate. For the clock bound, the first two algebraic exponents are unchanged. The homogenization exponent is
		\[
			\frac{\delta-\frac{13}{12}\alpha-(d/2+\frac{7}{12}-\frac{\beta_h}{2})\zeta}{1-\alpha}
			\longrightarrow
			\frac{\delta-\frac{13}{24}\delta}{1-\frac12\delta}
			=\frac{11\delta}{12(2-\delta)}
		\]
		as $\alpha\uparrow\delta/2$ and $\zeta\downarrow0$. This limit is strictly larger than $\beta^*+\beta'$, so the minimum is the same as in Step~1.
		Thus
		\begin{equation}\label{eq.intermediate-clock}
			|\eta(\lambda)-\E[e^{-2V(0)}]|\le C\lambda^{\beta^*+\beta'}
		\end{equation}
		for every $0<\lambda\le\lambda_*^{\rm sc}(\omega)$, with $\lambda_*^{\rm sc}$ satisfying \eqref{eq.clock-lambda-star-tail}.

		\emph{Step 3.} Transfer through the time change. The time-change identity
		\[
			\frac{\ell_X(\lambda)}{\lambda}-\Sigma_X e_1=\frac{1}{\eta(\lambda)}\Bigl(\frac{\ell(\lambda)}{\lambda}-\ahom e_1\Bigr)+\bigl(\eta(\lambda)^{-1}-\E[e^{-2V(0)}]^{-1}\bigr)\ahom e_1\, ,
		\]
		together with $\eta(\lambda),\E[e^{-2V(0)}]\in[\Lambda^{-1},\Lambda]$, $|\ahom e_1|\le\Lambda$, and the bounds \eqref{eq.intermediate-velocity}--\eqref{eq.intermediate-clock}, yields
		\[
			\Bigl|\frac{\ell_X(\lambda)}{\lambda}-\Sigma_X e_1\Bigr|\le C_0\lambda^{\beta^*+\beta'}
		\]
		for $\lambda\le\lambda_*^H(\omega)\wedge\lambda_*^{\rm sc}(\omega)$, with deterministic $C_0<\infty$. Setting
		\[
			\lambda_0(\omega)\coloneqq (C_0^{-1/\beta'}\wedge1)\bigl(\lambda_*^H(\omega)\wedge\lambda_*^{\rm sc}(\omega)\bigr)
		\]
		replaces the prefactor $C_0\lambda^{\beta^*+\beta'}$ by $\lambda^{\beta^*}$ on $\lambda\le\lambda_0(\omega)$, yielding~\eqref{eq.intro-rate}. The tail~\eqref{eq.intro-rate-tail} follows from a union bound over the two stretched-exponential tails of $\lambda_*^H$ and $\lambda_*^{\rm sc}$, after adjusting constants to account for the deterministic factor $C_0^{-1/\beta'}\wedge1$.
	\end{proof}

		\section{Renewal identity at regeneration times}\label{sec.shen-renewal-k}

			The goal of this section is to prove Lemma~\ref{lem.conditional-renewal-tau-k}. The clock estimates in Section~\ref{sec.renewal} require a renewal identity at a general regeneration time that includes the environment. The available results do not directly give this: \citet*{ballistic2003shen,ballistic2004shen} prove the first-regeneration decomposition and the half-space measurability that underlie the environment factorization, but only at $\tau_1$; \citet[Section~5]{Einstein2012gantert} adapts the construction to a $\lambda$-dependent scale and establishes the path renewal structure for all $\tau_k$, but does not track the environment beyond the first regeneration time. The clock increment
			\[
				\Delta A_k^\lambda=\int_{\tau_k}^{\tau_{k+1}}e^{-2V(X^\lambda(s))}\,ds
			\]
			depends on the environment after $\tau_k$, so the shifted future data in Lemma~\ref{lem.conditional-renewal-tau-k} must include both the path and the translated environment. The key point is that the regeneration construction forces the path to advance by at least $2R(\lambda)$, where $R(\lambda)$ is the regeneration scale fixed below, in the $e_1$-direction at each regeneration time. This creates a spatial gap between the past and future half-spaces. Since $2R(\lambda)>r_0$, the finite range of dependence makes the environment in these two half-spaces independent under $\Q$, which is the factorization used in the proof.

		\subsection{Setup}\label{ss.shen-renewal-setup}

		Fix $\lambda\in(0,1]$ throughout this section and set
		\[
			\Delta\coloneqq\lambda^{-2},\qquad R\coloneqq R(\lambda)=l/\lambda\, .
		\]
		For $x\in\R^d$, let
		\[
			B^x\coloneqq B_R(x+9Re_1)
		\]
		be the bridge endpoint ball. For $t\in\R$, write
		\[
			\mathsf L(t)\coloneqq\{z\in\R^d:e_1\cdot z\le t\},\qquad
			\mathsf R(t)\coloneqq\{z\in\R^d:e_1\cdot z\ge t\},
		\]
		and set $\mathsf R_+\coloneqq\mathsf R(-2R)$. For a Borel set $S\subseteq\R^d$, $\mathcal H_S$ denotes the $\sigma$-algebra generated by $(\a,e^{-2V})|_S$.

		The canonical time-shift on path-mark space is
		\[
			\theta_t(X^\lambda_\cdot,(Y_n)_{n\ge0})=(X^\lambda_{t+\cdot},(Y_{\lambda^2 t+n})_{n\ge0})\qquad\text{for }t\in\Delta\N\, .
		\]
		Let $p_*\in(0,1)$ be the Bernoulli success probability supplied by \citet[display~(5.3) and Proposition~5.4(i)]{Einstein2012gantert}. The notation $T_y(\a,e^{-2V})$ denotes the deterministic coefficient field $z\mapsto(\a,e^{-2V})(y+z)$. Stationarity is used only after averaging this field over~$\Q$.

		The proof has four steps. Lemma~\ref{lem.regeneration-toolkit} records the regeneration inputs from \citet*{ballistic2003shen,ballistic2004shen} and \citet*[Section~5]{Einstein2012gantert}, together with the scaled consequences used below. Lemma~\ref{lem.tau-k-decomp} extends the deterministic-time decomposition from $\tau_1$ to a general $\tau_k$ by induction. Lemma~\ref{lem.one-slice-renewal} proves the renewal identity on each deterministic time level, and Lemma~\ref{lem.block-compatibility} verifies the past and future measurability assertions for the clock-duration blocks.

			\begin{lemma}\label{lem.regeneration-toolkit}
			The coupled regeneration construction of \citet*{ballistic2003shen,ballistic2004shen} and \citet*[Section~5]{Einstein2012gantert}, as recalled in Subsection~\ref{ss.renewal-setup}, has the following properties.
		\begin{enumerate}
			\item[\rm(a)] The regeneration times depend only on the path and Bernoulli marks. Under $\widehat{\mathbb P}_0^\lambda$ they are almost surely finite, take values in $\Delta\N$, satisfy $\tau_1\ge2\Delta$, and obey
			\[
				\tau_{k+1}=\tau_k+\tau_1\circ\theta_{\tau_k}\, .
			\]
			\item[\rm(b)] For every $n\ge2$, the first-regeneration slice has an event
			\[
				\Gamma_{1,n}\in\sigma(X^\lambda_s:0\le s\le n\Delta-\Delta)\vee\sigma(Y_0,\ldots,Y_{n-2})
			\]
			such that
			\[
				\{\tau_1=n\Delta\}
				=\Gamma_{1,n}\cap\{Y_{n-1}=1\}\cap\{D\circ\theta_{n\Delta}=\infty\}
			\]
				almost surely, and
				\[
					\Gamma_{1,n}\subseteq
					\left\{\sup_{0\le s\le n\Delta-\Delta}e_1\cdot X^\lambda(s)
					\le e_1\cdot X^\lambda(n\Delta-\Delta)+R\right\}.
				\]
					\item[\rm(c)] At the bridge step ending at $n\Delta$, given the path up to $n\Delta-\Delta$ and the marks through $Y_{n-2}$, the mark $\{Y_{n-1}=1\}$ contributes $p_*$, and the endpoint is averaged uniformly over $B^{X^\lambda(n\Delta-\Delta)}$. Given endpoint $y$, the shifted future path and marks have the coupled law $\widehat P_y^{\lambda,\omega}$.
				\item[\rm(d)] Before the terminal bridge the candidate has advanced at least $2R$ in the $e_1$ direction from the previous regeneration time. The successful bridge satisfies
				\begin{equation}\label{eq.final-segment-ball}
					X^\lambda(s)\in B_{6R}\bigl(X^\lambda(n\Delta-\Delta)+5Re_1\bigr),\qquad n\Delta-\Delta\le s\le n\Delta .
				\end{equation}
					\item[\rm(e)] The half-space measurability bounds are as follows. For the past factor, if $H_{\rm pre}$ is bounded and measurable in the path on $[0,n\Delta-\Delta]$ and the marks $Y_0,\ldots,Y_{n-2}$, then
					\[
						\omega\mapsto\widehat E_0^{\lambda,\omega}\!\left[
							H_{\rm pre}\,
						\mathbf 1_{\{\sup_{0\le s\le n\Delta-\Delta}e_1\cdot X^\lambda(s)\le y\cdot e_1-7R\}}
					\right]
				\]
				is $\mathcal H_{\mathsf L(y\cdot e_1-4R)}$-measurable. For the future factor, if $\Phi$ is a product of a bounded functional of the shifted future path and shifted marks with a bounded functional of the shifted environment restricted to $\mathsf R_+$, then
				\[
					\omega\mapsto \widehat E_y^{\lambda,\omega}[\Phi(\mathcal Y_0);D=\infty]
					\]
					is $\mathcal H_{\mathsf R(y\cdot e_1-2R)}$-measurable.
				\item[\rm(f)] The coupled kernel is spatially covariant:
				\[
					\widehat P^{\lambda,(\a,e^{-2V})}_y
					=\bigl((X^\lambda,Y)\mapsto(y+X^\lambda,Y)\bigr)_{\#}
				\widehat P^{\lambda,T_y(\a,e^{-2V})}_0\, .
			\]
		\end{enumerate}
		\end{lemma}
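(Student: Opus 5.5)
This lemma is a bookkeeping statement: it collects properties of the Shen/Gantert--Mathieu--Piatnitski construction. The proof I would write is therefore mostly a careful citation of the sources, plus short scaling arguments wherever the $\lambda$-dependent scale $R=l/\lambda$, $\Delta=\lambda^{-2}$ changes the constants. I will go through items (a)--(f) in order, checking each against the recalled construction of Subsection~\ref{ss.renewal-setup}.

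\textbf{Items (a), (c), (d), (f).} These are direct readings of the construction.
For (a), the times are defined from the path, the ladder times and the marks only. Finiteness is \citet[Lemma~5.7(ii)]{Einstein2012gantert}. The lattice property, $\tau_1\ge 2\Delta$, and the recursion $\tau_{k+1}=\tau_k+\tau_1\circ\theta_{\tau_k}$ were already recorded around \eqref{eq.lattice-tau}.
For (c), I quote the bridge kernel of \citet[display~(5.3), Proposition~5.4(i)]{Einstein2012gantert}. This is the splitting of the transition density over one $\Delta$-step into $p_*$ times the uniform density on $B^{x}$ plus a remainder. By the Markov property at time $n\Delta$, the continuation from $y$ then has law $\widehat P^{\lambda,\omega}_y$.
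For (d), the ladder rule forces an $e_1$-increase of at least $R$ past the running maximum. The no-backtracking at level $-R$ from the previous regeneration point then gives the $2R$ advance. The ball confinement \eqref{eq.final-segment-ball} is \citet[Proposition~5.4(iii)]{Einstein2012gantert}, rewritten in the present centering.
For (f), the SDE \eqref{eq.SDE-degenerate} has coefficients that are deterministic functions of the field. Translating the field therefore translates solutions, and uniqueness in law identifies the two kernels. The marks are unaffected by the translation.

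\textbf{Item (b).} I would decompose $\{\tau_1=n\Delta\}$ by the last ladder time. Tracing the definition, $\tau_1=n\Delta$ means three things:
(i) some rounded ladder time equals $n\Delta-\Delta$, the process has not backtracked at earlier attempts, and all earlier mark checks were handled. This event is measurable with respect to the path up to $n\Delta-\Delta$ and the marks $Y_0,\dots,Y_{n-2}$, and I call it $\Gamma_{1,n}$.
(ii) $Y_{n-1}=1$.
(iii) $D\circ\theta_{n\Delta}=\infty$.
The containment for $\Gamma_{1,n}$ follows because $X^\lambda(n\Delta-\Delta)$ is at a ladder level: its $e_1$-coordinate exceeds the previous running maximum, up to the overshoot. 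Within one rounding step of length $\Delta$, the confinement estimate keeps that overshoot below $R$.

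\textbf{Item (e).} This is the main obstacle and needs genuine care.
For the past factor: on the indicator event, the path up to $n\Delta-\Delta$ stays in $\mathsf L(y\cdot e_1-7R)$. I would use that the quenched law of a path killed on exiting a half-space depends only on the coefficients inside that half-space, together with a bit more, which is covered by the $3R$ margin up to $\mathsf L(y\cdot e_1-4R)$. I would argue by approximation. Write the expectation as a limit of expectations for the diffusion stopped at the exit time of $\mathsf L(y\cdot e_1-7R)$. That stopped law is determined by the coefficients on the closed half-space (strong uniqueness for the Lipschitz SDE), and the event forces no exit. The marks are independent of $\omega$.
For the future factor: on $\{D=\infty\}$ started at $y$, the path never leaves $\mathsf R(y\cdot e_1-R)$. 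The same killing argument shows that the path part depends only on the field on that set. The environment factor restricted to $\mathsf R_+$, shifted by $y$, is $\mathcal H_{\mathsf R(y\cdot e_1-2R)}$-measurable. For the product, I would cite the half-space measurability lemma of \citet{ballistic2004shen}, adapted to scale $R$; in fact the margins here are generous.
The subtle point is justifying the localization rigorously with continuous paths. I would handle it with the pathwise construction: a strong solution built from coefficients that agree on the half-space coincides with the original solution up to the exit time.
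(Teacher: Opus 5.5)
\textbf{Main gap: the past factor in (e).} Your treatment of (a), (c) and (f) is essentially the paper's: citations to \citet[(5.13)--(5.16), Lemma~5.7(ii), Proposition~5.4]{Einstein2012gantert} plus translation covariance. Your fallback of citing \citet{ballistic2004shen} for the future product is also what the paper does, for both halves of (e). The problem is the self-contained argument you give for the past factor in (e), and for the ``path part'' of the future factor.

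Stopping at the exit from $\mathsf L(y\cdot e_1-7R)$ and using pathwise uniqueness localizes only the path marginal $P^{\lambda,\omega}_0$. But $H_{\rm pre}$ also depends on $Y_0,\dots,Y_{n-2}$, and under $\widehat P^{\lambda,\omega}_0$ the marks are not an $\omega$-free sequence sitting beside the path. On each $\Delta$-step from $x$, the joint law of (mark, segment) is the splitting you describe in (c): $P^{\lambda,\omega}_x=p_*\mu^1_x+(1-p_*)\mu^0_x$. Here $\mu^1_x$ averages bridge laws over $z\in B^x$, and $(1-p_*)\mu^0_x=P^{\lambda,\omega}_x-p_*\mu^1_x$. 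A bridge law is a path density divided by a transition density, so $\widehat P^{\lambda,\omega}_x(Y=1,\ \text{segment}\in A)=p_*\fint_{B^x}q_A(x,z)/p(\Delta,x,z)\,dz$. The normalizer $p(\Delta,x,z)$ sees the field in regions that the paths in $A$ never visit.

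Your claim that ``the marks are independent of $\omega$'' is true only for their marginal law. If the splitting were built from the global density $p^\omega(\Delta,x,\cdot)$, (e) would fail in general; this is exactly the subtlety handled in \citet{ballistic2004shen}. The same blind spot affects (f): covariance of the coupled kernel requires the splitting itself (the balls and the kernel used) to be covariant, not just the SDE.

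\textbf{The missing idea is locality of the coupling.} The mark-$1$ component from $x$ lives on paths confined to $U_x\coloneqq B_{6R}(x+5Re_1)$; this is why \eqref{eq.final-segment-ball} holds deterministically. It depends on $\omega$ only through $\omega|_{U_x}$.

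The margins in (e) are then exactly the geometry of $U_x$ relative to $B^x$. They are not an extra ``bit'' coming from killing the path; the path marginal needs no margin at all. Nor are they ``generous''; they are tight.
\begin{itemize}
\item Past factor: if a mark-$1$ step from $x$ ends in $B^x$ below level $y\cdot e_1-7R$, then $x\cdot e_1\le y\cdot e_1-15R$, so $U_x\subseteq\mathsf L(x\cdot e_1+11R)\subseteq\mathsf L(y\cdot e_1-4R)$. If instead $x\cdot e_1>y\cdot e_1-15R$, the mark-$1$ term vanishes on that event. The $3R$ margin is the overhang $11R-8R$.
\item Future factor: on $\{D=\infty\}$ every step starts above $y\cdot e_1-R$, and $U_x$ reaches back only $R$. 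This gives $\mathsf R(y\cdot e_1-2R)$ for the path--mark part itself, not only for the environment factor.
\end{itemize}

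\textbf{Two smaller points.}
\begin{itemize}
\item In (b), the containment for $\Gamma_{1,n}$ cannot come from a confinement estimate on the rounding interval $[V_k,\lceil V_k\rceil_\lambda]$. The only deterministic confinement is \eqref{eq.final-segment-ball}, which holds on the successful bridge step after the pre-segment; a generic $\Delta$-increment is unbounded, and an almost-sure inclusion cannot follow from a probabilistic bound. The paper takes $\Gamma_{1,n}$ and its containment from the deterministic-time decomposition in the proof of \citet[Theorem~2.4]{ballistic2003shen}, with the scaled definitions \citet[(5.13)--(5.16)]{Einstein2012gantert}.
\item In (d), the no-backtracking event only bounds the path from below by $-R$, so it cannot produce a forward $2R$ advance. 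That advance has to be read off the ladder definitions \citet[(5.9)--(5.16)]{Einstein2012gantert} together with the location of the bridge endpoint in $B_R(x+9Re_1)$.
\end{itemize}
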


		\begin{proof}
		These properties are consequences of the regeneration construction of \citet*{ballistic2003shen,ballistic2004shen}, in the $\lambda$-dependent form of \citet*[Section~5]{Einstein2012gantert}.

			Item {\rm(a)} follows from the construction \citet*[(5.13)--(5.16)]{Einstein2012gantert}, the recursive definition of the later regeneration times, and \citet*[Lemma~5.7(ii)]{Einstein2012gantert}. Item {\rm(c)} follows from \citet*[Proposition~5.4(i)--(iii)]{Einstein2012gantert}. For item {\rm(d)}, the $2R$ advance follows from the ladder-time definitions \citet*[(5.9)--(5.16)]{Einstein2012gantert}: the candidate bridge starts at least $R$ beyond the previous running maximum, and the successful bridge endpoint lies in $B_R(x+9Re_1)$. The bridge confinement is \citet*[Proposition~5.4(iii)]{Einstein2012gantert}. Item {\rm(b)} is the scaled version of the deterministic-time decomposition in the proof of \citet*[Theorem~2.4]{ballistic2003shen}; in the notation of \citet*{ballistic2003shen}, this is the decomposition preceding display~{\rm(2.20)}, with the scaled definitions \citet*[(5.13)--(5.16)]{Einstein2012gantert}. Item {\rm(e)} is the scaled version of \citet[items (1)--(3)]{ballistic2004shen}, with Shen's unit time step replaced by $\Delta=\lambda^{-2}$ and $R$ by $R(\lambda)$. Item {\rm(f)} follows from translation covariance of the diffusion and bridge kernels in the one-step coupling construction of \citet[Theorem~2.1]{ballistic2003shen} and \citet[Proposition~5.4]{Einstein2012gantert}.
		\end{proof}

		By Lemma~\ref{lem.regeneration-toolkit}(a), $\{\tau_k=n\Delta\}=\emptyset$ for $n\le2k-1$.

		\begin{lemma}[Deterministic-time regeneration decomposition]\label{lem.tau-k-decomp}
		For every $k\ge 1$ and $n\ge 2k$, there exists an event
		\[
			\Gamma_{k,n}\in\sigma\bigl(X^\lambda_s:0\le s\le n\Delta-\Delta\bigr)\vee\sigma\bigl(Y_0,\ldots,Y_{n-2}\bigr)
		\]
		such that the following statements hold $\widehat{\mathbb P}_0^\lambda$-a.s.:
		\begin{equation}\label{eq.tau-k-decomposition}
			\{\tau_k=n\Delta\}=\Gamma_{k,n}\cap\{Y_{n-1}=1\}\cap\{D\circ\theta_{n\Delta}=\infty\}\, ,
		\end{equation}
		\begin{equation}\label{eq.tau-k-pre-segment}
			\Gamma_{k,n}\subseteq
			\left\{\sup_{0\le s\le n\Delta-\Delta} e_1\cdot X^\lambda(s)
			\le e_1\cdot X^\lambda(n\Delta-\Delta)+R\right\}\, .
		\end{equation}
		Consequently, also $\widehat{\mathbb P}_0^\lambda$-a.s., for every $y\in\R^d$,
		\begin{equation}\label{eq.tau-k-candidate-confinement}
			\Gamma_{k,n}\cap\{y\in B^{X^\lambda(n\Delta-\Delta)}\}\subseteq\left\{\sup_{0\le s\le n\Delta-\Delta}e_1\cdot X^\lambda(s)\le e_1\cdot y-7R\right\}\, ,
		\end{equation}
		and
		\begin{equation}\label{eq.tau-k-confinement}
			\Gamma_{k,n}\cap\{Y_{n-1}=1\}\subseteq\Bigl\{\sup_{0\le s\le n\Delta-\Delta}e_1\cdot X^\lambda(s)\le e_1\cdot X^\lambda(n\Delta)-7R\Bigr\}\, .
		\end{equation}
	\end{lemma}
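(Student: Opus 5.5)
The plan is induction on $k$, with the case $k=1$ supplied directly by Lemma~\ref{lem.regeneration-toolkit}(b) ($\Gamma_{1,n}$ and \eqref{eq.tau-k-pre-segment}).

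\emph{Inductive step.} Assume the decomposition for $k$ and all $n'\ge 2k$. By Lemma~\ref{lem.regeneration-toolkit}(a), $\tau_{k+1}=\tau_k+\tau_1\circ\theta_{\tau_k}$. Splitting over the value of $\tau_k$ gives
\[
	\{\tau_{k+1}=n\Delta\}=\bigcup_{n'=2k}^{n-2}\{\tau_k=n'\Delta\}\cap\theta_{n'\Delta}^{-1}\{\tau_1=(n-n')\Delta\}\, .
\]
Substituting the decompositions gives
\[
	\Gamma_{k,n'}\cap\{Y_{n'-1}=1\}\cap\{D\circ\theta_{n'\Delta}=\infty\}\cap\theta_{n'\Delta}^{-1}\Gamma_{1,n-n'}\cap\{Y_{n-1}=1\}\cap\{D\circ\theta_{n\Delta}=\infty\}\, .
\]
The main obstacle is the factor $\{D\circ\theta_{n'\Delta}=\infty\}$, which depends on the whole future. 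I would remove it by showing that, on $\theta_{n'\Delta}^{-1}\{\tau_1=(n-n')\Delta\}$, it is equivalent to the finite-time event
\[
	\{D_{n'}>(n-n')\Delta\}\, ,\qquad D_{n'}\coloneqq\lceil T_{-R}\rceil_\lambda\circ\theta_{n'\Delta}\, ,
\]
that is, the path does not drop $R$ below $e_1\cdot X^\lambda(n'\Delta)$ during $[n'\Delta,n\Delta]$. The argument for this equivalence is as follows. After $n\Delta$, the event $\{D\circ\theta_{n\Delta}=\infty\}$ keeps the path above $e_1\cdot X^\lambda(n\Delta)-R$. Lemma~\ref{lem.regeneration-toolkit}(d) places $e_1\cdot X^\lambda(n\Delta)$ at least $2R$ (indeed the $7R$ of Lemma~\ref{lem.regeneration-toolkit}(b) combined with the bridge ball $B^x$) beyond the earlier running maximum. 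Hence the path never drops below $e_1\cdot X^\lambda(n'\Delta)-R$ after $n\Delta$. The rounding $\lceil\cdot\rceil_\lambda$ needs care here: a backtrack in the final bridge interval rounds up to $n\Delta$ or later. I would handle this with the bridge confinement \eqref{eq.final-segment-ball}, which keeps the bridge to the right of $X^\lambda(n\Delta-\Delta)-R$.

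With that equivalence in hand, set
\[
	\Gamma_{k+1,n}\coloneqq\bigcup_{n'}\Gamma_{k,n'}\cap\{Y_{n'-1}=1\}\cap\{D_{n'}>(n-n')\Delta\text{ up to time }n\Delta-\Delta\}\cap\theta_{n'\Delta}^{-1}\Gamma_{1,n-n'}\, .
\]
This set lies in the $\sigma$-algebra generated by the path up to $n\Delta-\Delta$ and the marks through $Y_{n-2}$. To see this, note that $\theta_{n'\Delta}^{-1}\Gamma_{1,n-n'}$ uses the path up to $n\Delta-\Delta$ and marks $Y_{n'},\ldots,Y_{n-2}$. Remaining within the window up to time $n\Delta-\Delta$ is legitimate because of the confinement point above. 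This gives \eqref{eq.tau-k-decomposition}.

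\emph{Confinement bounds.} For \eqref{eq.tau-k-pre-segment}, the shifted block satisfies Lemma~\ref{lem.regeneration-toolkit}(b), so on $[n'\Delta,n\Delta-\Delta]$ the path stays below $e_1\cdot X^\lambda(n\Delta-\Delta)+R$. On $[0,n'\Delta]$ the earlier maximum is at most $e_1\cdot X^\lambda(n'\Delta)-7R$ by the inductive hypothesis \eqref{eq.tau-k-confinement}. Since $X^\lambda(n'\Delta)$ is itself within the shifted range, the bound follows. Then \eqref{eq.tau-k-candidate-confinement} follows from $y\in B_R(X^\lambda(n\Delta-\Delta)+9Re_1)$, which gives $e_1\cdot y\ge e_1\cdot X^\lambda(n\Delta-\Delta)+8R$. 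Finally, \eqref{eq.tau-k-confinement} follows because on $\{Y_{n-1}=1\}$ the endpoint $X^\lambda(n\Delta)$ lies in that bridge ball, by Lemma~\ref{lem.regeneration-toolkit}(c)--(d), up to a null set.
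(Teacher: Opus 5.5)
Your architecture is the paper's own. You use the same slice decomposition over the value of $\tau_k$. You replace the intermediate event $\{D\circ\theta_{n'\Delta}=\infty\}$ by no-backtracking on $[n'\Delta,n\Delta-\Delta]$ and recover it interval by interval. Your $\Gamma_{k+1,n}$ is the same. The gap is in your proof of \eqref{eq.tau-k-pre-segment} at level $k+1$. The induction hypothesis \eqref{eq.tau-k-confinement} bounds the supremum over $[0,n'\Delta-\Delta]$ only. Over $[0,n'\Delta]$ your stated bound $e_1\cdot X^\lambda(n'\Delta)-7R$ is false, since that supremum is at least $e_1\cdot X^\lambda(n'\Delta)$. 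The shifted $k=1$ bound only starts at $n'\Delta$. So the terminal bridge $[n'\Delta-\Delta,n'\Delta]$ of the $k$-th block is never controlled. By \eqref{eq.final-segment-ball} that bridge can reach $e_1\cdot X^\lambda(n'\Delta-\Delta)+11R$. The endpoint only satisfies $e_1\cdot X^\lambda(n'\Delta)\ge e_1\cdot X^\lambda(n'\Delta-\Delta)+8R$, so the bridge can exceed $e_1\cdot X^\lambda(n'\Delta)$ by up to $3R$. Your remark that $X^\lambda(n'\Delta)$ lies in the shifted range gives only $e_1\cdot X^\lambda(n'\Delta)\le e_1\cdot X^\lambda(n\Delta-\Delta)+R$. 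That yields a bound of $e_1\cdot X^\lambda(n\Delta-\Delta)+4R$ on this interval, not $+R$.

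The missing input is the ladder advance of Lemma~\ref{lem.regeneration-toolkit}(d), $e_1\cdot X^\lambda(n\Delta-\Delta)\ge e_1\cdot X^\lambda(n'\Delta)+2R$. Combined with the $8R$ endpoint shift, it gives $e_1\cdot X^\lambda(n\Delta-\Delta)\ge e_1\cdot X^\lambda(n'\Delta-\Delta)+10R$. Hence the old bridge stays below $e_1\cdot X^\lambda(n\Delta-\Delta)+R$, with no room to spare. This is the bridge estimate in the paper's Step~4, propagated block by block.

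You also need this advance, stated explicitly, for the new bridge $[n\Delta-\Delta,n\Delta]$ in your no-backtracking recovery. Confinement keeps that bridge above $e_1\cdot X^\lambda(n\Delta-\Delta)-R$. This exceeds $e_1\cdot X^\lambda(n'\Delta)-R$ only if $e_1\cdot X^\lambda(n\Delta-\Delta)\ge e_1\cdot X^\lambda(n'\Delta)+2R$. The finite-time event on $[n'\Delta,n\Delta-\Delta]$ alone would place the bridge only above $e_1\cdot X^\lambda(n'\Delta)-2R$.
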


			\begin{proof}
				\emph{Step 1.} The case $k=1$ is Lemma~\ref{lem.regeneration-toolkit}(b).

			\emph{Step 2.} The induction step. Assume $k\ge 2$ and $n\ge 2k$. The regeneration recursion gives
		\begin{equation}\label{eq.tau-k-slice-decomp}
			\{\tau_k=n\Delta\}=\bigsqcup_{2(k-1)\le j\le n-2}\{\tau_{k-1}=j\Delta\}\cap\{\tau_1\circ\theta_{j\Delta}=(n-j)\Delta\}\, .
		\end{equation}
			Applying the induction hypothesis at $k-1$ to the first factor and the shifted $k=1$ case to the second, the natural factorization contains the intermediate no-backtracking event $\{D\circ\theta_{j\Delta}=\infty\}$. This event depends on the path after $n\Delta-\Delta$, so we replace it temporarily by its finite-time consequence
			\[
				D^{j,n}\coloneqq\Bigl\{e_1\cdot X^\lambda(s)>e_1\cdot X^\lambda(j\Delta)-R\;\text{for all }s\in[j\Delta,n\Delta-\Delta]\Bigr\}\, .
			\]
			Define
			\[
				\Gamma_{k,n}^{(j)}\coloneqq\Gamma_{k-1,j}\cap\{Y_{j-1}=1\}\cap(\Gamma_{1,n-j}\circ\theta_{j\Delta})\cap D^{j,n},\qquad \Gamma_{k,n}\coloneqq\bigcup_j\Gamma_{k,n}^{(j)}\, .
			\]
			Each factor in $\Gamma_{k,n}^{(j)}$ depends only on the path on $[0,n\Delta-\Delta]$ and the marks $Y_0,\ldots,Y_{n-2}$, so $\Gamma_{k,n}$ has the required measurability.

			\emph{Step 3.} Recovery of $\{D\circ\theta_{j\Delta}=\infty\}$. A successful bridge ending at $m\Delta$ satisfies
			\[
				e_1\cdot X^\lambda(m\Delta-\Delta)-R
			\le e_1\cdot X^\lambda(s)
			\le e_1\cdot X^\lambda(m\Delta-\Delta)+11R,\qquad m\Delta-\Delta\le s\le m\Delta,
		\]
		by \eqref{eq.final-segment-ball}. It remains to show that on the slice
		\[
			\Gamma_{k,n}^{(j)}\cap\{Y_{n-1}=1\}\cap\{D\circ\theta_{n\Delta}=\infty\}\, ,
		\]
		the full no-backtracking event from $j\Delta$ follows by controlling three intervals. First, $D^{j,n}$ gives
		\[
			e_1\cdot X^\lambda(s)>e_1\cdot X^\lambda(j\Delta)-R,\qquad j\Delta\le s\le n\Delta-\Delta.
		\]
		The shifted first-regeneration construction also gives the ladder advance
		\[
			e_1\cdot X^\lambda(n\Delta-\Delta)\ge e_1\cdot X^\lambda(j\Delta)+2R\, .
		\]
		The bridge bound with $m=n$ then gives, for $s\in[n\Delta-\Delta,n\Delta]$,
		\[
			e_1\cdot X^\lambda(s)\ge e_1\cdot X^\lambda(n\Delta-\Delta)-R\ge e_1\cdot X^\lambda(j\Delta)+R>e_1\cdot X^\lambda(j\Delta)-R\, .
		\]
		Finally, on $[n\Delta,\infty)$, the endpoint condition $X^\lambda(n\Delta)\in B^{X^\lambda(n\Delta-\Delta)}$ together with $\{D\circ\theta_{n\Delta}=\infty\}$ gives
		\[
			e_1\cdot X^\lambda(s)\ge e_1\cdot X^\lambda(n\Delta)-R\ge e_1\cdot X^\lambda(n\Delta-\Delta)+7R\ge e_1\cdot X^\lambda(j\Delta)+9R>e_1\cdot X^\lambda(j\Delta)-R\, .
		\]
			Hence $\{D\circ\theta_{j\Delta}=\infty\}$ holds on the slice $\Gamma_{k,n}^{(j)}\cap\{Y_{n-1}=1\}\cap\{D\circ\theta_{n\Delta}=\infty\}$. Together with the implication from $\{D\circ\theta_{j\Delta}=\infty\}$ to $D^{j,n}$ used in Step~2, this gives the decomposition \eqref{eq.tau-k-decomposition}.

				\emph{Step 4.} Bounds \eqref{eq.tau-k-pre-segment}--\eqref{eq.tau-k-confinement}. We first propagate \eqref{eq.tau-k-pre-segment} through the completed blocks in a fixed sub-slice of the induction. Write $r\Delta$ for a completed regeneration time and $q\Delta-\Delta$ for the pre-bridge time of the next successful candidate. The endpoint condition and the next ladder advance give
			\[
				e_1\cdot X^\lambda(r\Delta)\ge e_1\cdot X^\lambda(r\Delta-\Delta)+8R,
			\]
			\[
				e_1\cdot X^\lambda(q\Delta-\Delta)\ge e_1\cdot X^\lambda(r\Delta)+2R.
			\]
			Consequently,
			\[
				e_1\cdot X^\lambda(q\Delta-\Delta)\ge e_1\cdot X^\lambda(r\Delta-\Delta)+10R.
			\]
			The terminal bridge ending at $r\Delta$ is controlled by
			\[
				\sup_{r\Delta-\Delta\le s\le r\Delta} e_1\cdot X^\lambda(s)
				\le e_1\cdot X^\lambda(r\Delta-\Delta)+11R
				\le e_1\cdot X^\lambda(q\Delta-\Delta)+R.
			\]
			The shifted $k=1$ pre-segment bound inside the next block gives
			\[
				\sup_{r\Delta\le s\le q\Delta-\Delta}e_1\cdot X^\lambda(s)
				\le e_1\cdot X^\lambda(q\Delta-\Delta)+R.
			\]
			If the bound holds up to $r\Delta-\Delta$, the inequality above and the previous bridge estimate make it hold up to $q\Delta-\Delta$. Iterating over the finitely many completed blocks in the sub-slice proves \eqref{eq.tau-k-pre-segment}.

			For \eqref{eq.tau-k-candidate-confinement}, every $y\in B^{X^\lambda(n\Delta-\Delta)}=B_R(X^\lambda(n\Delta-\Delta)+9Re_1)$ satisfies
			\[
				e_1\cdot y\ge e_1\cdot X^\lambda(n\Delta-\Delta)+8R.
			\]
			Combining this with \eqref{eq.tau-k-pre-segment} yields \eqref{eq.tau-k-candidate-confinement}.

			On $\{Y_{n-1}=1\}$, the endpoint condition $X^\lambda(n\Delta)\in B^{X^\lambda(n\Delta-\Delta)}$ holds, so \eqref{eq.tau-k-confinement} follows by taking $y=X^\lambda(n\Delta)$ in \eqref{eq.tau-k-candidate-confinement}.
	\end{proof}

		\begin{lemma}[Renewal identity on a deterministic slice]\label{lem.one-slice-renewal}
		Fix $k\ge1$ and $n\ge2k$. Let $H_{\rm past}$ be a bounded random variable measurable with respect to
		\[
			\sigma(X^\lambda_s:0\le s\le n\Delta-\Delta)\vee\sigma(Y_0,\ldots,Y_{n-2}),
		\]
			let $G$ be a Borel subset of $\R^d$, and let $F\colon\R^d\times\Omega\to\R$ be bounded measurable with $F(y,\cdot)$ $\mathcal H_{\mathsf L(y\cdot e_1-4R)}$-measurable for each $y$. Define
				\begin{equation}\label{eq.product-test}
					H=\mathbf 1_{\{\tau_k=n\Delta\}} H_{\rm past} \mathbf 1_{\{X^\lambda(n\Delta)\in G\}} F(X^\lambda(n\Delta),\omega),\qquad n\ge 2k\, ,
				\end{equation}
			and let $\Phi$ be a bounded function of the shifted future data that has the following form: for bounded measurable $f$ of the shifted path and marks and bounded measurable $g$ of the shifted environment restricted to $\mathsf R_+$,
			\[
				\Phi(\gamma,\beta,\omega_+)=f(\gamma,\beta)g(\omega_+)\, .
			\]
			Then
			\[
				\widehat{\mathbb E}_0^\lambda[\Phi(\mathcal Y_k)H]
				=\widehat{\mathbb E}_0^\lambda[\Phi(\mathcal Y_0)\mid D=\infty]\,
			\widehat{\mathbb E}_0^\lambda[H]\, .
		\]
		\end{lemma}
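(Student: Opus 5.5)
The plan is to condition on the path up to the pre-bridge time $n\Delta-\Delta$ and on the marks $Y_0,\ldots,Y_{n-2}$, then factor the expectation over the environment. First I use Lemma~\ref{lem.tau-k-decomp} to replace $\mathbf 1_{\{\tau_k=n\Delta\}}$ by $\mathbf 1_{\Gamma_{k,n}}\mathbf 1_{\{Y_{n-1}=1\}}\mathbf 1_{\{D\circ\theta_{n\Delta}=\infty\}}$. On this event $\mathcal Y_k$ is the shifted future data at time $n\Delta$.

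Next I apply the Markov property at $n\Delta-\Delta$ and use Lemma~\ref{lem.regeneration-toolkit}(c). For fixed $\omega$, the quenched expectation $\widehat E_0^{\lambda,\omega}[\Phi(\mathcal Y_k)H]$ equals
\[
	p_*\,\widehat E_0^{\lambda,\omega}\Bigl[\mathbf 1_{\Gamma_{k,n}}H_{\rm past}\fint_{B^{X^\lambda(n\Delta-\Delta)}}\mathbf 1_G(y)F(y,\omega)\,g\bigl(T_y(\a,e^{-2V})|_{\mathsf R_+}\bigr)\,\widehat E_y^{\lambda,\omega}\bigl[f(X^\lambda-y,Y);D=\infty\bigr]\,dy\Bigr].
\]
Here I use (f) to rewrite $\widehat E_y^{\lambda,\omega}[\cdots]$ as $\widehat E_0^{\lambda,T_y\omega}[f;D=\infty]$. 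I then swap the $y$-integral with the path expectation by Fubini. By \eqref{eq.tau-k-candidate-confinement} I may insert the indicator $\mathbf 1\{\sup_{s\le n\Delta-\Delta}e_1\cdot X^\lambda(s)\le e_1\cdot y-7R\}$ for free. This writes the quenched quantity as an integral over $y\in\R^d$ of a product $P_y(\omega)\,Q_y(\omega)$, where
\[
	P_y(\omega)=\widehat E_0^{\lambda,\omega}\Bigl[\mathbf 1_{\Gamma_{k,n}}H_{\rm past}\,\mathbf 1\{y\in B^{X^\lambda(n\Delta-\Delta)}\}\,\mathbf 1\{\sup e_1\cdot X^\lambda\le e_1\cdot y-7R\}\Bigr]\,F(y,\omega)\mathbf 1_G(y)
\]
and
\[
	Q_y(\omega)=g\bigl(T_y\omega|_{\mathsf R_+}\bigr)\,\widehat E_y^{\lambda,\omega}[f;D=\infty].
\]
The ball indicator is handled inside $H_{\rm pre}$, with the uniform density $|B_R|^{-1}$ carried along.

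By Lemma~\ref{lem.regeneration-toolkit}(e), $P_y$ is $\mathcal H_{\mathsf L(e_1\cdot y-4R)}$-measurable, and $F(y,\cdot)$ is too by hypothesis. Also $Q_y$ is $\mathcal H_{\mathsf R(e_1\cdot y-2R)}$-measurable. The two half-spaces are at distance $2R>r_0$ by \eqref{eq.regen-scale-choice}, so (S5) gives $\E_\Q[P_yQ_y]=\E_\Q[P_y]\,\E_\Q[Q_y]$. Stationarity (S4) and (f) then give $\E_\Q[Q_y]=\widehat{\mathbb E}_0^\lambda[f\,g;D=\infty]=\widehat{\mathbb E}_0^\lambda[\Phi(\mathcal Y_0);D=\infty]$, which is independent of $y$. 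Integrating in $y$ yields
\[
	\widehat{\mathbb E}_0^\lambda[\Phi(\mathcal Y_k)H]=K\cdot\widehat{\mathbb E}_0^\lambda[\Phi(\mathcal Y_0);D=\infty],
\]
where $K$ collects $p_*$ times the $y$-integral of $\E_\Q[P_y]$.

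Running the identical computation with $\Phi\equiv1$ gives $\widehat{\mathbb E}_0^\lambda[H]=K\,\widehat{\mathbb P}_0^\lambda(D=\infty)$. Dividing the two identities yields the claim.

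The main obstacle is the measurability step. The conditioning must be arranged so that $F(y,\omega)$ and the pre-bridge quenched expectation together live on the left half-space, even though $F$ is evaluated at the random endpoint. I would resolve this by integrating out the endpoint $y$ before taking the $\Q$-expectation, as above. I would also check carefully that the bridge kernel in (c) does not itself depend on the environment in a way that breaks the factorization. If it did, I would fold the kernel density into $P_y$, using that the bridge stays in $B_{6R}(X^\lambda(n\Delta-\Delta)+5Re_1)$, whose environment dependence is absorbed into the past half-space in Shen's construction.
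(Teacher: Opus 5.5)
Your proposal is correct and follows essentially the same route as the paper's proof. Both arguments use the slice decomposition from Lemma~\ref{lem.tau-k-decomp}, average over the bridge via Lemma~\ref{lem.regeneration-toolkit}(c), apply Fubini in the endpoint $y$, and get half-space measurability from item~(e) after restricting with \eqref{eq.tau-k-candidate-confinement}. They then use finite-range independence at separation $2R>r_0$, stationarity together with item~(f) to make the future mean independent of $y$, and normalize by the case $\Phi\equiv1$. Your concern about the bridge kernel is already settled by item~(c): it makes the endpoint law uniform on $B^{X^\lambda(n\Delta-\Delta)}$, hence environment-free, so the fallback you describe is not needed.
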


			\begin{proof}
				Since $\Gamma_{k,n}$ contains only finite-time past data, the terminal mark, bridge endpoint, and no-backtracking event can be averaged as in the proof of \citet*[Theorem~2.4]{ballistic2003shen}, around the endpoint integration leading to display~(2.21).

						On the slice $\{\tau_k=n\Delta\}$, the trace of $\mathcal G_k$ is generated by the path on $[0,n\Delta-\Delta]$, the marks $Y_0,\ldots,Y_{n-2}$, the endpoint $X^\lambda(n\Delta)$, and the environment in $\mathsf L(e_1\cdot X^\lambda(n\Delta)-4R)$. As $H_{\rm past}$, $G$, and $F$ range over their generating classes, the variables $H$ in \eqref{eq.product-test} generate this trace, with $F(y,\cdot)$ supplying the environment generators.

				\emph{Step 1.} Slice formula. With this form of $\Phi$,
				\[
					\Phi(\mathcal Y_k)=f\bigl(X^\lambda(\tau_k+\cdot)-X^\lambda(\tau_k),(Y_{\lambda^2\tau_k+j})_{j\ge0}\bigr) g\bigl(T_{X^\lambda(\tau_k)}(\a,e^{-2V})|_{\mathsf R_+}\bigr)\, ,
				\]
			and $H$ as in \eqref{eq.product-test}. The decomposition \eqref{eq.tau-k-decomposition} from Lemma~\ref{lem.tau-k-decomp} restricts the integration to the slice
		\[
			\{\tau_k=n\Delta\}=\Gamma_{k,n}\cap\{Y_{n-1}=1\}\cap\{D\circ\theta_{n\Delta}=\infty\}\, .
		\]
					By Lemma~\ref{lem.regeneration-toolkit}(c), the mark $\{Y_{n-1}=1\}$ contributes $p_*$, the endpoint is averaged uniformly over $B^{X^\lambda(n\Delta-\Delta)}$, and the shifted future path and marks restart from $y$. Thus
	\begin{equation}\label{eq.slice-factorization}
		\widehat{\mathbb E}_0^\lambda\bigl[\Phi(\mathcal Y_k) H\bigr]
		= p_* \widehat{\mathbb E}_0^\lambda \left[\mathbf{1}_{\Gamma_{k,n}} H_{\rm past}  \fint_{B^{X^\lambda(n\Delta-\Delta)}}  \mathbf{1}_{\{y\in G\}} F(y,\omega) \widehat E_y^{\lambda,\omega}\bigl[\Phi(\mathcal Y_0); D=\infty\bigr] dy\right]\, .
	\end{equation}
				To factor the expectation using finite-range independence, first take the quenched expectation over the path and marks before $n\Delta$. For fixed $y\in\R^d$ define
		\[
				A^{\rm pre}_{k,n}(y,\omega)
			\coloneqq
			\widehat E_0^{\lambda,\omega}\!\left[
				\mathbf 1_{\Gamma_{k,n}}H_{\rm past}
				\mathbf 1_{\{y\in B^{X^\lambda(n\Delta-\Delta)}\}}
		\right],
	\]
		and set
		\[
				A^{\rm past}_{k,n}(y,\omega)
				\coloneqq \mathbf 1_{\{y\in G\}}F(y,\omega)A^{\rm pre}_{k,n}(y,\omega),
				\qquad
				A^{\rm future}_{\Phi}(y,\omega)
				\coloneqq \widehat E_y^{\lambda,\omega}\bigl[\Phi(\mathcal Y_0);D=\infty\bigr] .
		\]
							The factors $A^{\rm past}_{k,n}$ and $A^{\rm future}_{\Phi}$ are jointly Borel measurable in $(y,\omega)$. When $H_{\rm past}$ is an indicator function, measurability of the past factor follows from the finite-time construction of $\Gamma_{k,n}$ and the measurable coupled kernels; bounded $H_{\rm past}$ then follow by the monotone-class theorem. The future factor is measurable because $\{D=\infty\}$ is a monotone limit of finite-time events.
		By Fubini's theorem, \eqref{eq.slice-factorization} is equivalently
		\begin{equation}\label{eq.slice-A1-A2}
			\widehat{\mathbb E}_0^\lambda\bigl[\Phi(\mathcal Y_k) H\bigr]
			=\frac{p_*}{|B_R|}\int_{\R^d}
				\E_\Q\!\left[A^{\rm past}_{k,n}(y,\cdot)A^{\rm future}_{\Phi}(y,\cdot)\right]\,dy .
		\end{equation}

					\emph{Step 2.} Half-space factorization. The event $\Gamma_{k,n}\cap\{y\in B^{X^\lambda(n\Delta-\Delta)}\}$ appearing in $A^{\rm pre}_{k,n}(y,\omega)$ is supported, by \eqref{eq.tau-k-candidate-confinement}, on
		\[
			\left\{\sup_{0\le s\le n\Delta-\Delta}e_1\cdot X^\lambda(s)\le y\cdot e_1-7R\right\}.
		\]
						Lemma~\ref{lem.regeneration-toolkit}(e) therefore makes $A^{\rm pre}_{k,n}(y,\cdot)$ $\mathcal H_{\mathsf L(y\cdot e_1-4R)}$-measurable, first for indicator $H_{\rm past}$ and then, by monotone class, for bounded $H_{\rm past}$. Multiplying by $F(y,\cdot)$ preserves this measurability, so $A^{\rm past}_{k,n}(y,\cdot)$ is also $\mathcal H_{\mathsf L(y\cdot e_1-4R)}$-measurable. The future factor $A^{\rm future}_{\Phi}(y,\cdot)$ is $\mathcal H_{\mathsf R(y\cdot e_1-2R)}$-measurable by the future half-space statement in the same lemma.

		Since the half-spaces $\mathsf L(y\cdot e_1-4R)$ and $\mathsf R(y\cdot e_1-2R)$ are separated by distance $2R\ge 2l>r_0$, Assumption~\ref{a.environment} gives, for every fixed $y$,
	\[
				\E_\Q\!\left[A^{\rm past}_{k,n}(y,\cdot)A^{\rm future}_{\Phi}(y,\cdot)\right]
				=\E_\Q\!\left[A^{\rm past}_{k,n}(y,\cdot)\right]\,
				\E_\Q\!\left[A^{\rm future}_{\Phi}(y,\cdot)\right].
	\]

				\emph{Step 3.} Conclusion on the slice. Lemma~\ref{lem.regeneration-toolkit}(f) gives
	\[
		\widehat E_y^{\lambda,\omega}\bigl[\Phi(\mathcal Y_0); D=\infty\bigr]=\widehat E_0^{\lambda,T_y(\a,e^{-2V})}\bigl[\Phi(\mathcal Y_0); D=\infty\bigr]
	\]
	for every $y\in\R^d$. Combined with the spatial stationarity of the law of $(\a,e^{-2V})$ under $\Q$, this makes the $\E_\Q$-mean of $A^{\rm future}_{\Phi}(y,\cdot)$ independent of $y$ and equal to $\widehat{\mathbb E}_0^\lambda[\Phi(\mathcal Y_0);D=\infty]$. Applying \eqref{eq.slice-A1-A2} with $\Phi\equiv1$ gives
	\[
		\widehat{\mathbb E}_0^\lambda[H]
		=\frac{p_*}{|B_R|}\widehat{\mathbb P}_0^\lambda(D=\infty)
			\int_{\R^d}\E_\Q\!\left[A^{\rm past}_{k,n}(y,\cdot)\right]\,dy .
	\]
	Since $\widehat{\mathbb P}_0^\lambda(D=\infty)>0$ by \eqref{eq.regen-scale-choice}, the formula for $\widehat{\mathbb E}_0^\lambda[H]$ above and \eqref{eq.slice-A1-A2} give
		\[
		\widehat{\mathbb E}_0^\lambda\bigl[\Phi(\mathcal Y_k) H\bigr]=\widehat{\mathbb E}_0^\lambda[\Phi(\mathcal Y_0)\mid D=\infty]\cdot\widehat{\mathbb E}_0^\lambda[H]\, .
	\]
			\end{proof}

		To extend the identity proved on each slice $\{\tau_k=n\Delta\}$ to the clock-duration blocks $M_i$, past blocks must be $\mathcal G_k$-measurable and future blocks must be determined by $\mathcal Y_k$.

		\begin{lemma}[Past and future measurability of clock blocks]\label{lem.block-compatibility}
		For every $k\ge1$, the following hold.
		\begin{enumerate}
			\item[\rm(i)] If $1\le i\le k-2$, then $M_i$ is $\mathcal G_k$-measurable.
			\item[\rm(ii)] There is a measurable map $\Psi$ on the common state space of the shifted future data such that
			\[
				(M_k,M_{k+1},\ldots)=\Psi(\mathcal Y_k)
			\]
			$\widehat{\mathbb P}_0^\lambda$-almost surely.
		\end{enumerate}
		\end{lemma}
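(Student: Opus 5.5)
Part (ii) is the simpler half, so I will do it first. By Lemma~\ref{lem.regeneration-toolkit}(a), $\tau_{k+j}-\tau_k=\tau_j\circ\theta_{\tau_k}$ for every $j\ge0$. Moreover, the regeneration times are functionals of the path and the Bernoulli marks only. Consequently $\Delta\tau_{k+j}$ is a measurable function of the shifted path and the shifted marks, which are the first two coordinates of $\mathcal Y_k$. For the clock block we write
\[
\Delta A^\lambda_{k+j}=\int_{\tau_{k+j}-\tau_k}^{\tau_{k+j+1}-\tau_k} e^{-2V}\bigl(X^\lambda(\tau_k)+\gamma(s)\bigr)\,ds,
\]
where $\gamma$ is the shifted path. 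The integrand is the translated field $T_{X^\lambda(\tau_k)}e^{-2V}$ evaluated along $\gamma$, so the only remaining point is that $\gamma$ stays inside $\mathsf R_+=\{e_1\cdot z\ge-2R\}$. This follows from the regeneration property at $\tau_k$: since $\tau_k=\tau_{k-1}+\tau_1\circ\theta_{\tau_{k-1}}$, Lemma~\ref{lem.tau-k-decomp} gives $D\circ\theta_{\tau_k}=\infty$. Hence $e_1\cdot\gamma(s)>-R\ge-2R$ for all $s$, almost surely. I then define $\Psi(\gamma,\beta,\omega_+)$ coordinatewise by these formulas, with $e^{-2V}$ read off from $\omega_+$, setting $\Psi$ equal to zero off the measurable set of paths that leave $\mathsf R_+$. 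Joint measurability follows from the measurability of $\tau_j$ in $(\gamma,\beta)$ and from Fubini.

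For part (i), fix $i\le k-2$. Then $M_i$ depends on the path on $[\tau_i,\tau_{i+1}]$, on the marks used to build $\tau_i$ and $\tau_{i+1}$, and on $e^{-2V}$ along that piece of path. The time $\tau_{i+1}\le\tau_{k-1}$ is at most $\tau_k-2\Delta$ by Lemma~\ref{lem.regeneration-toolkit}(a), since $\tau_1\circ\theta\ge2\Delta$. So the path and the marks needed are contained in the data (a)--(b) of $\mathcal G_k$. Here $\tau_i$ and $\tau_{i+1}$ are determined by the path up to $\tau_k-\Delta$ and by the marks $Y_n$ with $n<\lambda^2\tau_k-1$. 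I will check this on each slice $\{\tau_k=n\Delta\}$ using the event $\Gamma_{k,n}$ of Lemma~\ref{lem.tau-k-decomp}, which is built from the sub-slices $\{\tau_{i}=j\Delta\}$ with finite-time data only.

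For the environment part of (i), I need $\sup_{s\le\tau_{i+1}}e_1\cdot X^\lambda(s)\le e_1\cdot X^\lambda(\tau_k)-4R$, which places the piece of path inside $\mathsf L_k$. Here I use \eqref{eq.tau-k-confinement}, which gives $\sup_{s\le\tau_k-\Delta}e_1\cdot X^\lambda(s)\le e_1\cdot X^\lambda(\tau_k)-7R$. That margin is more than enough, and it already covers every block up to $\tau_{k-1}$. Thus $M_i$ equals $\int$ of $e^{-2V}|_{\mathsf L_k}$ along a path segment recorded in $\mathcal G_k$. On each slice, the map $(\text{path},\omega)\mapsto M_i$ is measurable with respect to the path data and $\mathcal H_{\mathsf L(e_1\cdot X^\lambda(n\Delta)-4R)}$. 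This agrees with the convention for $\sigma$-algebras indexed by a random point $Y=X^\lambda(\tau_k)$, and summing over $n$ gives the claim.

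The main obstacle is measurability bookkeeping rather than any estimate. The clock integrand involves $e^{-2V}$ at a random location, and the half-space $\mathsf L_k$ itself depends on $X^\lambda(\tau_k)$. I expect to handle this by approximating $e^{-2V}$ along the path with a Riemann sum. For each fixed $y$, the term $\mathbf 1\{x\in\mathsf L(e_1\cdot y-4R)\}e^{-2V}(x)$ is $\mathcal H$-measurable, and a monotone-class argument then passes to the limit. The restriction $i\le k-2$ seems stronger than the geometry requires, so I would not need to examine the block $i=k-1$.
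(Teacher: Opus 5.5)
Your proposal is correct and follows essentially the same route as the paper. For (i), you work slicewise on $\{\tau_k=n\lambda^{-2}\}$, use the finite-time structure of $\Gamma_{k,n}$ to pin down $\tau_i$ and $\tau_{i+1}\le\tau_k-\lambda^{-2}$, and use \eqref{eq.tau-k-confinement} to put the block inside $\mathsf L_k$. For (ii), you use the no-backtracking factor $\{D\circ\theta_{\tau_k}=\infty\}$ to keep the shifted path in $\mathsf R_+$, and you define $\Psi$ by the deterministic regeneration recursion together with Fubini.

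One correction to your closing aside: the range $i\le k-2$ is not stronger than needed. $\Delta A^\lambda_{k-1}$ integrates over the final bridge segment $[\tau_k-\lambda^{-2},\tau_k]$, whose path is excluded from $\mathcal G_k$ and which leaves $\mathsf L_k$ (it ends at $X^\lambda(\tau_k)$). So $M_{k-1}$ is in general not $\mathcal G_k$-measurable, and this is exactly where the dependence between neighbouring clock blocks enters.
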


		\begin{proof}
					\emph{Step 1.} We prove {\rm(i)}. Fix an integer $i$ with $1\le i\le k-2$ and work on the slice $\{\tau_k=n\Delta\}$ with $n\ge2k$.

			Unfold the recursive construction of $\Gamma_{k,n}$ in Lemma~\ref{lem.tau-k-decomp}, starting from the terminal slice $\tau_k=n\Delta$ and continuing down to $\tau_1$. This gives a finite disjoint family of finite-time cells $E_{\mathbf n}$ indexed by tuples
			\[
					\mathbf n=(n_0,\ldots,n_k),\qquad n_0=0,\quad n_k=n,\quad n_j-n_{j-1}\ge2\quad(1\le j\le k)\, .
				\]
				After intersecting with $\{Y_{n-1}=1\}\cap\{D\circ\theta_{n\Delta}=\infty\}$, these cells partition $\{\tau_k=n\Delta\}$ up to null sets, and on each cell $\tau_j=n_j\Delta$ for every $0\le j\le k$. Every intermediate no-backtracking condition has already been replaced by a finite-time condition before $n\Delta-\Delta$, so each $E_{\mathbf n}$ is measurable with respect to
				\[
					\sigma(X^\lambda_s:0\le s\le n\Delta-\Delta)\vee\sigma(Y_0,\ldots,Y_{n-2}).
				\]

			By \eqref{eq.lattice-tau}, $\tau_{k-1}\le\tau_k-\Delta=n\Delta-\Delta$ on the slice. Since $i+1\le k-1$, also $\tau_{i+1}\le n\Delta-\Delta$, so $\Delta\tau_i$ is determined on each cell $E_{\mathbf n}$. For the clock increment, \eqref{eq.tau-k-confinement} places the path on $[\tau_i,\tau_{i+1}]$ inside $\mathsf L_k$. Hence
		\[
			\Delta A_i^\lambda=\int_{\tau_i}^{\tau_{i+1}}e^{-2V(X^\lambda(s))}\,ds
		\]
	is a measurable function of the path on $[\tau_i,\tau_{i+1}]$ and the restricted field $e^{-2V}|_{\mathsf L_k}$, that is, of the data in items {\rm(a)} and {\rm(c)} of Lemma~\ref{lem.conditional-renewal-tau-k}. Since $\tau_k$ is recorded in $\mathcal G_k$, the slices are $\mathcal G_k$-measurable; assembling these slicewise representatives over the countably many slices gives a $\mathcal G_k$-measurable representative of $M_i$. This proves~{\rm(i)}.

			\emph{Step 2.} We prove {\rm(ii)}. We work on the $\widehat{\mathbb P}_0^\lambda$-full event $\{\tau_j<\infty\text{ for every }j\ge1\}$. On the slice $\{\tau_k=n\Delta\}$, the factor $\{D\circ\theta_{n\Delta}=\infty\}$ in \eqref{eq.tau-k-decomposition} gives
	\[
		e_1\cdot(X^\lambda(\tau_k+s)-X^\lambda(\tau_k))\ge -R\qquad\text{for every }s\ge0\, .
	\]
			Thus the shifted post-$\tau_k$ path lies in $\mathsf R_+$. Every future clock integral uses only the restricted environment in $\mathsf R_+$, and the future bridge balls also stay in $\mathsf R_+$: their starts have shifted $e_1$-coordinate at least $-R$, while Lemma~\ref{lem.regeneration-toolkit}(d) centers each radius-$6R$ bridge ball another $5R$ forward.

					Let $\mathfrak Y$ be the common state space of triples
		\[
			(\gamma,(\beta_n)_{n\ge0},\omega_+)
			\in C(\R_+;\R^d)\times\{0,1\}^{\N}\times \Omega_{\mathsf R_+},
		\]
		with its product Borel $\sigma$-field, where $\Omega_{\mathsf R_+}$ is the restriction space for the field on $\mathsf R_+$. On a triple in $\mathfrak Y$, run the deterministic regeneration recursion using $\gamma$ and $\beta$. The hitting times, rounded hitting times, candidate times, and Bernoulli tests are Borel functionals of $(\gamma,\beta)$.

			Let $s_0=0<s_1<s_2<\cdots$ be the resulting regeneration times when they are all finite. Since the fields are uniformly Lipschitz, the evaluation map $(\omega_+,z)\mapsto e^{-2V_+}(z)$ is Borel. On the Borel event that all $s_j$ are finite and $\gamma([s_j,s_{j+1}])\subseteq\mathsf R_+$ for every $j\ge0$, set
		\[
			\Psi_j(\gamma,\beta,\omega_+)
			\coloneqq\left(\int_{s_j}^{s_{j+1}}e^{-2V_+(\gamma(u))}\,du,\ s_{j+1}-s_j\right),
			\qquad j\ge0,
		\]
		where $e^{-2V_+}$ denotes the clock integrand in $\omega_+$. Off this event, assign a fixed deterministic default sequence. The map $(\gamma,\omega_+,u)\mapsto e^{-2V_+(\gamma(u))}$ is jointly Borel, so the time integrals are measurable by Fubini's theorem applied to the jointly Borel integrand, after truncating on $\{s_{j+1}\le m\}$ and letting $m\uparrow\infty$. This gives a measurable map
		\[
			\Psi\colon\mathfrak Y\to([0,\infty)\times[\Delta,\infty))^{\N}.
		\]
				On the actual shifted future data $\mathcal Y_k$, no-backtracking keeps the shifted path in $\mathsf R_+$ for all future times. The path--mark recursion read from $\mathcal Y_k$ is, by translation covariance, the regeneration recursion for the blocks after $\tau_k$. Each future clock integral uses only the displayed environment restriction, so $(M_k,M_{k+1},\ldots)=\Psi(\mathcal Y_k)$ almost surely. This proves~{\rm(ii)}.
		\end{proof}

		Combining these lemmas via a monotone-class argument gives Lemma~\ref{lem.conditional-renewal-tau-k}.

		\begin{proof}[Proof of Lemma~\ref{lem.conditional-renewal-tau-k}]
			Lemma~\ref{lem.one-slice-renewal} proves \eqref{eq.shen-renewal} on a fixed deterministic slice for functions $\Phi$ that split into a path-mark factor and an environment factor. The slices $\{\tau_k=n\Delta\}$ are countable, disjoint, and exhaustive up to a null set by Lemma~\ref{lem.regeneration-toolkit}(a). On each slice, the variables $H$ in \eqref{eq.product-test}, with $H_{\rm past}$, $G$, and $F$ ranging over their generating classes, generate the trace of $\mathcal G_k$.

		A functional monotone-class argument therefore extends \eqref{eq.shen-renewal} first to all bounded $\mathcal G_k$-measurable $H$ supported on one slice, then by summing over slices to all bounded $\mathcal G_k$-measurable $H$. The same argument extends $\Phi$ from functions of this separated form to all bounded measurable functionals of $\mathcal Y_k$. The measurability assertions are exactly Lemma~\ref{lem.block-compatibility}.
		\end{proof}

		\bibliographystyle{plainnat}
	\bibliography{Ref}

\end{document}